\newlength{\halfbls}\setlength{\halfbls}{.5\baselineskip}
\DeclareRobustCommand{\SkipTocEntry}[9]{}
\DeclareMathOperator*\geht\longrightarrow
\DeclareMathAlphabet\mathbfcal{OMS}{cmsy}{b}{n}
\theoremstyle{plain}
\newtheorem{Defi}{Definition}[section]  \newtheorem{definition}[Defi]{Definition}
   \newtheorem{proposition}[Defi]{Proposition}
\newtheorem{lemma}[Defi]{Lemma}    \newtheorem{corollary}[Defi]{Corollary}
\newtheorem{theorem}[Defi]{Theorem}
\theoremstyle{remark}
\def\={\;=\;}  \def\+{\,+\,}
\DeclareMathAlphabet{\eucal}{U}{eus}{m}{n}
\DeclareMathAlphabet{\newcal}{U}{dutchcal}{m}{n}
\def\be{\begin{equation}}   \def\ee{\end{equation}}     \def\bes{\begin{equation*}}    \def\ees{\end{equation*}}
\def\ba{\be\begin{aligned}} \def\ea{\end{aligned}\ee}   \def\bas{\bes\begin{aligned}}  \def\eas{\end{aligned}\ees}
\newcounter{savedtocdepth}
\newcommand*{\SaveTocDepth}[1]{%
  \addtocontents{toc}{%
    \protect\setcounter{savedtocdepth}{\protect\value{tocdepth}}%
    \protect\setcounter{tocdepth}{#1}%
  }%
}
\DeclareDocumentCommand{\LMS}{ O{\mu} O{g,n}} {\Xi\overline{\mathcal{M}}_{#2}(#1)}
\DeclareDocumentCommand{\kLMS}{ O{\mu} O{g,n}} {\Xi^k\!\overline{\mathcal{M}}_{#2}(#1)}
\newcommand{\hslashslash}{%
\lapbox[\width]{-0.15em}{\raisebox{.05ex}{%
    \scalebox{.7}{%
      \rotatebox[origin=c]{22}{$-$}%
    }%
  }%
}}
\newcommand{\bslash}{%
  {%
   \vphantom{d}%
   \ooalign{\kern.05em\smash{\hslashslash}\hidewidth\cr$b$\cr}%
   \kern.05em
  }%
}
\newcounter{tmp}
\def\chaptermark#1{}%whatever
\def\chapter{%
  \if@openright\cleardoublepage\else\clearpage\fi
  \thispagestyle{plain}\global\@topnum\z@
  \@afterindenttrue \secdef\@chapter\@schapter}
\def\@chapter[#1]#2{\refstepcounter{chapter}%
  \ifnum\c@secnumdepth<\z@ \let\@secnumber\@empty
  \else \let\@secnumber\thechapter \fi
  \typeout{\chaptername\space\@secnumber}%
  \def\@toclevel{0}%
  \ifx\chaptername\appendixname \@tocwriteb\tocappendix{chapter}{#2}%
  \else \@tocwriteb\tocchapter{chapter}{#2}\fi
  \chaptermark{#1}%
  \addtocontents{lof}{\protect\addvspace{10\p@}}%
  \addtocontents{lot}{\protect\addvspace{10\p@}}%
  \@makechapterhead{#2}\@afterheading}
\def\@schapter#1{\typeout{#1}%
  \let\@secnumber\@empty
  \def\@toclevel{0}%
  \ifx\chaptername\appendixname \@tocwriteb\tocappendix{chapter}{#1}%
  \else \@tocwriteb\tocchapter{chapter}{#1}\fi
  \chaptermark{#1}%
  \addtocontents{lof}{\protect\addvspace{10\p@}}%
  \addtocontents{lot}{\protect\addvspace{10\p@}}%
  \@makeschapterhead{#1}\@afterheading}
\newcommand\chaptername{Chapter}
\def\@makechapterhead#1{\global\topskip 7.5pc\relax
  \begingroup
  \fontsize{\@xivpt}{18}\bfseries\centering
    \ifnum\c@secnumdepth>\m@ne
      \leavevmode \hskip-\leftskip
      \rlap{\vbox to\z@{\vss
          \centerline{\normalsize\mdseries
              \uppercase\@xp{\chaptername}\enspace\thechapter}
          \vskip 3pc}}\hskip\leftskip\fi
     #1\par \endgroup
  \skip@34\p@ \advance\skip@-\normalbaselineskip
  \vskip\skip@ }
\def\@makeschapterhead#1{\global\topskip 7.5pc\relax
  \begingroup
  \fontsize{\@xivpt}{18}\bfseries\centering
  #1\par \endgroup
  \skip@34\p@ \advance\skip@-\normalbaselineskip
  \vskip\skip@ }
\def\appendix{\par
  \c@chapter\z@ \c@section\z@
  \let\chaptername\appendixname
  \def\thechapter{\@Alph\c@chapter}}
\newcounter{chapter}
\newif\if@openright
\newtheorem*{corollary*}{Corollary}
\newtheorem*{exercise}{Exercise}
\newtheorem*{remark}{Remark}
\title[Groups of piecewise isometric permutation]{Groups of piecewise isometric permutations of lattice points \\
-- or --\\
\emph{finitary rearrangements of tessellations}}
\begin{document}
\author{Robert Bieri}
\email{bieri@math.uni-frankfurt.de\\rbieri@math.binghamton.edu}
\address{Binghamton University and Goethe--Universit\"at Frankfurt}
\author{Heike Sach}
\email{heike\_sach@yahoo.de}
\thanks{The first named author thanks the referee for an important question (leading to Section~ 4) and for his patience to wait for the answer.\\
And he would like to particularly honor the work of his coauthor who took 
over the leading role in Section 5 and 6 to extending the results of her excellent Diploma thesis (written 22 years before) to the remarkable new generality. It could well be the basis of a PhD }

\begin{abstract}
Through the glasses of didactic reduction: We consider a (periodic) tessellation $\Delta$ of either 
Euclidean or hyperbolic $n$-space $M$. By a 
piecewise isometric rearrangement of $\Delta$ we 
mean the process of cutting $M$ along corank-1 
tile-faces into finitely many convex polyhedral pieces, and rearranging the 
pieces to a new tight covering of the 
tessellation $\Delta$. Such a rearrangement defines 
a permutation of the (centers of the) tiles of $
\Delta$, and we are interested in the group $PI(\Delta)$ of all piecewise 
isometric rearrangements of $\Delta$.

In 
this paper we offer: a) An illustration of piecewise isometric rearrangements in the visually attractive 
hyperbolic plane, 

b) an explanation how this is related to Richard Thompson's groups, 

c) a chapter on the structure of the group pei$(\mathbb Z^n)$ of all piecewise Euclidean rearrangements of the standard tessellation of $\mathbb R^n$ by unit-cubes, and 

d) results on the finiteness properties of some subgroups of pei$(\mathbb 
Z^n)$.
\end{abstract}

\maketitle
\tableofcontents
%\noindent
\SaveTocDepth{1} 

\begin{small}
\begin{footnotesize}
2010 Mathematics Subject Classification: Primary 20F65; Secondary 20J05, 22E40. Key words and phrases: Houghton groups, homological
finiteness properties of groups, piecewise isometric infinite permutations, rearrangements of tessellations.  
\end{footnotesize}
\end{small} 

\newpage

\chapter{Introduction}
\section{Generalities and main result}
\subsection {The groups.}\label{the groups}

Let $M$ denote either Euclidean or hyperbolic $n$-space, $N\in \mathbb N$, and \mbox{$\Gamma \le$ Isom$(M)$} a discrete group of isometries of $M$
with the property that $\Gamma$ admits a finite sided convex fundamental polyhedron $D$ with finite volume\footnote{In the hyperbolic case this implies
that $D$ is actually a generalized polytope - see Thm. 6.4.8 in [Ra94].}. 
We aim to study certain groups of permutations of 
the orbit $\Omega: =\Gamma p$, for a given point $p 
\in M$. The major part of this paper is concerned 
with the most down-to-earth case when $
\Omega:=\mathbb{Z}^n$, viewed as the set of tile 
centers of the tessellation dual to the standard 
tessellation of Euclidean $\mathbb{R^n}$ by unit 
cubes.

To define the notion of a piecewise $\Gamma$-isometric permutation $\pi: \Omega \rightarrow~\Omega$ requires a notion of $\Gamma$\textit{-polyhedral pieces} of 
$\Omega$ on which $\pi$ should be isometric, and it is reasonable to require that the geometry of these pieces be related to the geometry of $\Gamma$.
Thus, together with the base point $p \in M$ we choose a finite set $\mathcal{H}$ of ``$\Gamma$-\textit{relevant}'' \textit{closed
half-spaces} of $M$, and the resulting groups will - to some extent - depend on this choice: We fix a (finite sided convex) fundamental polyhedron 
$D$ and take $\mathcal{H}$ to be an irredundant finite set of half spaces 
with the property that D is the intersection $D = \bigcap_{H\in \mathcal{H}}H$ and each member of $\mathcal{H}$ has its boundary spanned by a side of $D$.
\par
By a \textit{convex $\Gamma$-polyhedral} subset $P$ of $M$ we mean any finite intersection of 
$\Gamma$-translates H$\gamma$, where $\gamma \in\Gamma$ and $H\in \mathcal{H}$. And a general $\Gamma$-\textit{polyhedral} subset of $M$ is a finite 
union of convex ones. By abuse of language, we call the intersection $S = 
\Omega\cap P$ a \textit{(convex)} $\Gamma$-\textit{polyhedral piece of} 
$\Omega$ whenever $P\subseteq M$ is a (convex) $\Gamma$-polyhedral subset.

\textbf{Definition.}  Let $S\subseteq\Omega$ be a $\Gamma$-polyhedral set, and $\Gamma^\ast\leqslant\Gamma $ a subgroup. A permutation $g: S \to S$ is said to be \emph{piecewise
$\Gamma^\ast$-isometric} if $S$ can be written as a disjoint union of finitely many $\Gamma$-polyhedral pieces $S = S_1 \cup S_2 \cup... \cup S_k$ 
with the property that the restriction of $g$ to each $S_i$  is also the restriction of an isometry $\varphi _i\in \Gamma^\ast$. 

We write $G_{\Gamma^\ast}(S)$ for the group of all piecewise isometric $\Gamma^\ast$-permutations of $S$. The permutations in $G_{\Gamma^\ast}(S)$ 
with finite support form a normal subgroup of $G_{\Gamma^\ast}(S)$ which we denote by ${\rm sym}(S)$; the quotient group $$G_{\Gamma^\ast}(S)/{\rm 
sym}(S)$$ is often particularly interesting.

\textbf{Remark} 1) Particularly nice is the situation when $M$ comes with 
a regular tessellation. In that case we take $\Omega$ as the centers of mass of the tiles, and~ $G(\Omega)$ 
could be viewed (and termed) as the \textit{group of all piecewise isometric tile-rearrangements}: Here, $\Gamma$ is the group of all isometries of $M$ compatible with the tessellation, and  $\mathcal{H}$ as the set of half-spaces bounded by the span of a corank-1 face of a tile-fundamental domain.

2) In a recent preprint \cite{fh20} Farley and Huges present a promising general abstract approach to the finiteness properties of what they call \textit{locally defined groups}. In their terminology our piecewise isometric permutations are locally defined by isometries and hence appear as a 
special case. The authors obtain unified proofs for (the positive direction of) type $F_n$, for several generalized Thompson groups, but they add the remark that our examples \cite{bisa16} appear to pose a more substantial challenge.
\\[1em]
In this paper we consider the group $G_{\Gamma^\ast}(S)$ in two special cases:
\par
a) When $M=\mathbb H^2$ is the hyperbolic plane we 
consider \emph{triangle groups} and their 
orientation preserving subgroups $\Gamma^{\ast}
\leqslant\Gamma$ acting on the tessellation $\Delta$ 
by the $\Gamma$-translates of a hyperbolic triangle $D$. In the special case when $D$ is the \emph{ideal} triangle (all three vertices at infinity) the quotient $G_{\Gamma^\ast}(\Omega)/{\rm sym}(\Omega)$ is Richard Thompson's groups $V$. In the more general case when $D$ has at least one vertex at infinity we can assume that one of these corresponds to the point 
$\infty\in\partial\mathbb H^2$ in the upper half plane model, and that all tile-vertices of $\Delta$ in  $\partial\mathbb H^2$ correspond to rational numbers.

We show that in this situation the group $G_{\Gamma^\ast}(S)$ has a description in terms of the spine $T$ of the tessellation $\Delta$, which is a 
bipartite tree. This description can be used to prove finiteness properties of $G_{\Gamma^\ast}(\Omega)/{\rm sym}(\Omega)$ if and only if $\Gamma$ 
contains no hyperbolic elements with rational fixed points.  We also outline how one could attack the general case.

b) Our main concern then is the case when $M=\mathbb R^n$ is Euclidean $n$-space, $\Gamma$=Isom$(\mathbb Z^n)$, and $\Gamma^\ast$ either equal 
to $\Gamma$ or its translation subgroup $T\leqslant\Gamma$. We call the $\Gamma$-polyhedral pieces $S\subseteq\mathbb Z^n$ the \emph{orthohedral} subsets of $\mathbb Z^n$, and consider the \emph{piecewise Euclidean isometry} groups ${\rm pei}(S) = G_{\Gamma}(S)$ and its subgroup $\textrm{pet}(S)\leqslant{\rm pei}(S)$,
the \emph{piecewise Euclidean translation} groups $ G_T(S)$ of arbitrary orthohedral subsets $S\subseteq \mathbb Z^n$. 

If $S$ is the disjoint union of $h$ copies of $\mathbb{N}$ then the pet-group $pet(S)= G_T(S)$ is 
Houghton's 
group $H_n$ \cite{ho78}. Known for more 
than 38 years was also the pet group pet$(S)$ when 
$S=\bigcup_{1\leq i\leq h}\mathbb N^2$ is a 
disjoint union of $h$ quadrants: This was the topic of the second author's diploma thesis \cite{sa92} 
in
which she proved, among other things, that 
$pet(S)$ 
is of type $F_{h-1}$  (see 
Subsection \ref{history} for more information). 
\par
The fact that our groups have prominent relatives is not our only motivation: In Chapter 3 we make an effort to analyze the structure of pei$(S)$, 
and this culminates at the end of Section  \ref{structure of pei(S)} with 
full information on the normal subgroup lattice of $pei(S)$. And in Chapter 4 we get concrete information on finiteness properties (finite presentability and high finiteness length -- see Subsection \ref{finiteness length}) of  ${\rm pet}(S)$ and ${\rm pei}(S)$. Thus, here is a new playground --  prominently located in a good neighborhood -- to studying the interaction between structure and finiteness properties.
  From the tree-hyperbolic world where the monsters live (like Thompsons' 
group $V$), we have gotten used to seeing many examples which are simple groups of type $F_{\infty}$. What we find in our Euclidean analogue is similar, but interestingly different: Instead of simplicity we find the \emph{Bottleneck Theorem} (Theorem \ref{Bottleneck Theorem}) which excludes hidden normal subgroups; and instead of $F_\infty$ we find, e.g., that $pei(\mathbb Z^n)$ is of type $F_{2^n-1}$ -- for the main results see Subsection \ref{results}.

\subsection{The finiteness length of a group.} \label{finiteness length}
Every group is \emph{of type} $F_0$; every finitely generated group is \emph{of type} $F_1$; every finitely presented group
(equivalently: every fundamental group $\pi_1(X)$ of a finite cell complex $X$) is \emph{of type} $F_2$ ; and $\pi_1(X)$ is \emph{of type} $F_m$
($m\geq2$) if $X$ is a finite cell complex and $\pi_1(X) = 0$, for all $i$ with $2\leq i<m$.  

Ten years after C.T.C. Wall introduced these finiteness properties, Borel 
and Serre \cite{bs73}/\cite{bs76} showed that all semi-simple
S-arithmetic groups have special homological features; in particular they 
are of type $F_{\infty}$ (equivalently, type $F_m$ for all $m \in \mathbb 
N$). And
this was only the first of a number of important infinite families of groups that turned out to be of type $F_{\infty}$ in the following decades; many of them, just like arithmetic
groups, in the center of mainstream group theory: automorphism groups of free groups \cite{cv86}, Richard Thompson's groups \cite{bg84}, etc.
More recent results in this direction are based on Ken Brown's topological discrete Morse theory technique \cite{br87} and its powerful
CAT$(0)$-version of Bestvina-Brady \cite{bb97}.

The insight that many important groups have much further reaching finiteness properties than finite presentability is great progress - but having ``good'' finiteness properties is only one 
side of the concept: The focus on the  \emph{finiteness length} function $fl: \mathbf{Gr}
\rightarrow \mathbb N\cup \{0,\infty\}$, defined on all groups~ $G$ by
\[fl(G) := \{\sup{m~|~G~ \mbox{is of type}~ F_m}\}\]
takes both sides into account. Analogous \emph{algebraic length functions} $afl_A$ are defined for every $G$-module $A$,
to be the supremum of all non-negative integers $m$ with the property that $A$ admits a free resolution which is finitely
generated in  all dimensions $\leq m$. The functions $afl_A$  have the considerable advantage that they extend
immediately to monoids $G$. We write $afl$ for $afl_{\mathbb Z}$, where $\mathbb Z$ stands for the infinite cyclic group with the trivial
$G$-action; by the Hurewicz Theorem we know that $afl$ coincides with $fl$ on all finitely presented groups (i.e., whenever $fl(G) \geq 2)$. An important feature of both $fl$ and $afl_A$ is that they are constant on 
commensurability classes of groups.

In general, the finiteness length of a group is notoriously difficult to compute. Nevertheless, to study and interpret
accessible parts of the pattern that these functions carve into group theory can be very fruitful. A convincing example is
the following: If we fix a finitely generated group $G$, then the function $Hom(G,\mathbb R_{add}) \rightarrow \mathbb N 
\cup\{0,\infty\}$, which associates with each homomorphism $\chi: G \rightarrow \mathbb R_{add}$ the value of $afl_A$ on the 
submonoid $\chi^{-1}([0,\infty)) \subseteq G$, imposes in the finite dimensional $\mathbb R$-vector space $Hom(G,\mathbb
R_{add})$ the pattern exhibited by the homological $\Sigma$-invariants $\Sigma^k(G;A)$ of \cite{br88}. On the other hand, we can 
also evaluate $fl$ and $afl_A$ on the commensurability classes of subgroups containing $G'$, and this yields patterns on
the rational Grassman space of $\mathbb Q$-linear subspaces of $G/G'\otimes\mathbb Q$ (which parametrizes these classes).
The core of the main $\Sigma$-results of \cite{bns86}, \cite{br88}, \cite{r89}, \cite{bge03} consists then of exhibiting
the precise relationship between the two patterns. 
\par
An intriguing point is that in all computable examples the finiteness length patterns have a polyhedral flavor: they
turn out to be expressible in terms of finitely many inequalities. One of 
the few general results here, polyhedrality of 
$\Sigma^0(G;A)$ when $G$ is Abelian, was proved in \cite{bgr84} by methods which were later partly re-detected in tropical
geometry. But polyhedrality questions on $\Sigma^k(G;A$) for non-Abelian $G$ and $k>0$ are wide open.

\subsection{The results.}\label{results} Chapter 2 has two goals: it illustrates piecewise 
isometric permutations in the  visually attractive 
area of two dimensional hyperbolic geometry, and 
it links our piecewise isometric permutations to the group theory revolving around Thompson's groups. 

We mentioned already that for non-cocompact triangle groups $\Gamma$ we can express 
$G_{\Gamma^\ast}(\Omega)$ in terms of the spine $T$ of $\Delta$. This exhibits $G_{\Gamma^\ast}(\Omega)$ as a permutation group on the vertices of 
the tree $T$. We discuss whether this action respects almost all edges and cyclic star-orderings of $T$ (following the terminology of  \cite{leh08}, \cite{ls07}, \cite{bmn13}, and \cite{nst15}, this would be an action by \emph{quasi planar-tree automorphisms}). 

We find: If $\Gamma$ has signature $[\infty,\infty,\infty]$ then $T$ is the dyadic tree and the action of $G_{\Gamma^\ast}(\Omega)$ on it is the one that has always been used to describe the elements of Thompson`s groups in terms of generators (and is obviously quasi isometric). In the general case the action is always by piecewise planar-tree isometries, and by quasi-isometries if and only if the signature is $[p,q,\infty]$, with at least one of $p, q$ infinite or odd.

Chapter 3 and 4 are about the Euclidean case -- more precisely, we restrict attention to the case when $M$ is Euclidean and carries the standard tessellation by unit cubes, i.e., $\Gamma={\rm Isom}(\mathbb Z^n)$, and $\Gamma^\ast$ is either equal to $\Gamma$ or its translation 
subgroup $T\leqslant\Gamma$, and the goal is to 
make first steps towards evaluating the finiteness 
length functions $fl$ and $afl$ on what we like to 
view as the \emph{pei- and pet-clouds} around Isom$
(\mathbb Z^n)$, resp.~ $\mathbb Z^n$: the groups $
\textrm{pei}(S)=G_\Gamma(S)$, resp. 
${\rm pet}(S)=G_T(S)$, as $S$ runs through all orthohedral subsets of some $\mathbb Z^N$.

To state the main results requires the following notation: By an \emph{orthant of rank} $n$ ($n\in\mathbb N$) we mean any subset $L\subseteq\mathbb Z^N$ isometric to the standard rank-$n$ orthant $\mathbb N^n$. Each
orthohedral set $S\subseteq\mathbb Z^N$ is the disjoint union of finitely 
many orthants $S = L_1\cup L_2\cup \ldots\cup L_k.$

 \textbf{Definition (rank and height)} (By the \emph{rank} of $S$, denoted ~$n={\rm rk}S$, we mean the maximum rank of the orthants $L_i$; and the \emph{height} of $S$, denoted~ $h(S)$, is the number of orthants of rank ${\rm rk}S$ among the $L_i$. 
 
 One observes that the orthohedral sets 
with the piecewise Euclidean-isometric maps between 
them (called \textit{pei-maps}) form a category. 
Clearly, the pei-isomorphisms are the bijective pei-isometries; and in Section 3.3 we observe that orthohedral sets are pei-isomorhic if and only if their rank and height agree.

Chapter 3 starts with introducing these basic concepts then turns to analyzing the group theoretic structure of $G:=\emph{pei}(S)$ for an arbitrary orthohedral subset $S\subseteq\mathbb Z^N$. The results are summarized in some detail at the end of Section \ref{subsection4.1}. The key here is a structure at infinity of the orthoedral set $S$ -- analogous to the structure at infinity of the tessellated hyperbolic plane which was used above  to relate groups of piecewise hyperbolic isometries to Thompson's groups. This structure at infinity of $S$ consists of

\begin{enumerate}
\item a rank-graded $G$-set $\Gamma^\ast (S)=
\bigcup_k\Gamma^k(S)$, called the set of \textit{germs of} $S$,
\item 
a family of rank-k cosets $\langle\gamma\rangle\subset\mathbb{Z}^N$ attached at the germs $\gamma\in\Gamma^k(S)$ which we call the \textit{rank-k 
tangent coset of} $\gamma$.  (The product 
$\prod_{\gamma\in\Gamma^k(S)}\langle\gamma\rangle$ can be viewed as a rank-k \textit{tangent space of} $S$ at infinity),
\item an induced action of $G$ on $\Gamma^k(S)$, 
and an induced action of $G$ on each $\langle\gamma\rangle$ 
by isometries $g_\gamma : \langle\gamma
\rangle\rightarrow\langle\gamma g\rangle$.
\end{enumerate} 
The definition of germs is 
in Section \ref{Germs of orthants}, and their tangent cosets crop up first in Section
\ref{Stabilizers of rank-k germs}. Here we mention merely: 
\begin{itemize}
\item Two orthants $L,L'\subset\mathbb{Z}^N$ are \textit{commensurable} if $L, L'$ and $L\cap L'$ have the same rank, and the commensurability classes represented by rank-k orthants $L\subset S$ are the  germs $\gamma\in\Gamma^k(S)$.
\item The tangent coset $\langle\gamma\rangle$ of $\gamma\in\Gamma^k(S)$ is the union of all members of the commensurability class $\gamma$ and thus $\langle\gamma\rangle\cong\mathbb Z^k$. 
\item
For any given pair $(\gamma,g)\in\Gamma^kS)
\times G$, an orthant $L$ representing $\gamma$ can 
be chosen sufficiently far out to make that the 
restriction of $g$ to $L$ is an isometry $g
\lvert_L: L\rightarrow Lg$ (see Lemma 
\ref{lemma3.3}). This restriction defines the action on both $\Gamma^k(S)$, and $T$.
\end{itemize}

Single elements $g\in G$ are supported on a finite 
set of orthants, and the maximum rank of these 
orthants is the \textit{rank of $g$}, denoted rk$
(g)$. From this we infer that $g_\gamma$ is the 
identity if and only if rk$(g)<rk(\gamma)$. 

Now we 
consider the normal series
\begin{equation*}
1=G_{-1} ~\leqslant~ G_0 ~\leqslant ~G_1~ 
\leqslant~ ...~ \leqslant~ G_k~ \leqslant~ ... ~\leqslant~ G_n~ =~ G,
\end{equation*}
where the \textit{rank-k subgroup} $G_k$ consist of all elements $g\in G$ of rank $\leq k$. 

Its factors $G_k/G_{k-1}$ exhibit clear footprints of the structure of Isom$(\mathbb Z^n)$ which is exhibited by the refinement
\begin{equation*}
G_{k-1}\leqslant C^{ord}(\Gamma^k(S))\leqslant 
C(\Gamma^k(S))\leqslant G_k,
\end{equation*}
where $C(\Gamma^k(S))$ consists of all elements $g
\in G$ which fix all rank-k germs of $S$, and 
$C^{ord}(\Gamma^k(S))$ consists of all $g\in G$ with the property 
that, in addition, for all germs $\gamma\in 
\Gamma^k(S)$ the isometry $g_\gamma: \langle\gamma\rangle\rightarrow\langle\gamma g\rangle$ is a translation. 

We prove that the quotient $A_k:=C^{ord}(\Gamma^k(S))/G_{k-1}$ is free-Abelian (of rank $\infty$ for $k<n)$; $C(\Gamma^k(S))$ is the direct product of symmetric groups of degree k, and $G_k/C(\Gamma^k(S))$ is the finitary symmetric group of degree $\lvert\Gamma^k(S)\rvert$ -- for more details see Theorem \ref{classifies G_k(S)/G_k-1(S)}.

In 
particular, $G$ is elementary amenable, and in the $G_k/C^{ord}(\Gamma^k(S))$-module $A_k$ we can track a congruence-
subgroup type property -- see Section \ref{subsection4.7}.

The Bottleneck Theorem (Theorem \ref{Bottleneck Theorem}) in Section \ref{subsection4.9} finally shows that the rank-subgroups $G_k$ are characteristic; and, up to an index 2, all normal subgroups of $G$ can be tracked by the ones in the quotients $G_k/G_{k-1}$.

In Chapter \ref{The Finiteness Length} we use Brown's approach in \cite{br87} to compute the finiteness lengths of pet$(S)$ and a lower bound on the finiteness lengths of pei$(S)$. Just as in Brown's paper each $fl$-result comes together with a parallel $afl$-result, hence our results have the same feature. We found 

\begingroup
\setcounter{tmp}{\value{theorem}}% store current value of theorem counter
\setcounter{theorem}{0} %assign desired value to theorem counter
\renewcommand\thetheorem{\Alph{theorem}}% locally redefine the representation of the theorem counter

\begin{theorem}
Rank and height of an orthohedral set $S$ determines the group \emph{pei}$(S)$ up to isomorphism, and we have  $\mbox fl({\rm pei}(S))\geq h(S)-1$;  in particular, $fl\big({\rm pei}(\mathbb Z^n))\geq 2^n - 1$.
\end{theorem}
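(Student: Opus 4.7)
The first assertion --- that rank and height determine $\mathrm{pei}(S)$ up to isomorphism --- follows immediately from the categorical observation announced in Section 3.3. If $S$ and $S'$ share the same rank and height then there exists a pei-isomorphism $\phi\colon S\to S'$ (decompose both sides into matching orthants and glue Euclidean isometries between corresponding pieces); conjugation $g\mapsto\phi g\phi^{-1}$ is then a group isomorphism $\mathrm{pei}(S)\cong\mathrm{pei}(S')$. The corollary for $\mathbb{Z}^n$ reduces to counting: $\mathbb{Z}^n$ decomposes into $2^n$ disjoint orthants of rank $n$ (one for each sign pattern of the coordinates), so $h(\mathbb{Z}^n)=2^n$ and the bound $fl(\mathrm{pei}(\mathbb{Z}^n))\geq 2^n-1$ falls out of the general estimate.

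The substantive content is the lower bound $fl(\mathrm{pei}(S))\geq h(S)-1$, which I would establish by the strategy of Ken Brown in \cite{br87}. The plan is to construct a contractible (or at least $(h(S)-1)$-connected) CW-complex $X$ on which $G=\mathrm{pei}(S)$ acts, and to exhibit a cocompact $G$-invariant filtration $X_0\subseteq X_1\subseteq\cdots\subseteq X$ satisfying (i) each cell-stabilizer is of type $F_\infty$, and (ii) for $m$ sufficiently large the pair $(X_m,X_{m-1})$ is $(h(S)-2)$-connected. Brown's criterion then yields type $F_{h(S)-1}$, and applied homologically gives the parallel $afl$-bound. A natural candidate for $X$ is the geometric realization of the poset whose vertices record, for each of the $h(S)$ top-rank germs $\gamma\in\Gamma^n(S)$, a choice of how deep into the corresponding orthant one has ``cut off'' a finite orthohedral piece, with higher simplices encoding compatible nested choices; the filtration $X_m$ is then indexed by total cutting-depth.

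The stabilizer hypothesis is the easier half: a cell-stabilizer in $X$ sits inside an extension built from the normal series $G_{-1}\leqslant G_0\leqslant\cdots\leqslant G_n$ analyzed in Chapter~3, whose factors are finitary symmetric, finite symmetric, or free-abelian groups --- each of type $F_\infty$ --- so the extension property gives the required finiteness. The main obstacle is the connectivity estimate. Because $G$ acts on the set $\Gamma^n(S)$ of $h(S)$ top-rank germs through a surjection onto the finite symmetric group $\mathrm{Sym}(\Gamma^n(S))$, the natural $X$ has a ``join-of-$h(S)$-factors'' flavor whose expected connectivity is exactly $h(S)-2$. This is consistent with the rank-one case $\mathrm{rk}(S)=1$, where pet$(S)$ is the Houghton group $H_{h(S)}$ and the classical bound $h(S)-1$ is realized.

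The delicate point in the connectivity analysis is that tangent cosets at higher-rank germs contribute additional, higher-dimensional ``translation'' freedom that could in principle either raise or depress the connectivity, and one must verify inductively on rank that this structure does not spoil the $(h(S)-2)$-connectivity of the sublevel pairs. I would proceed by a discrete Morse-theoretic argument in the spirit of Bestvina--Brady \cite{bb97}, using the rank-filtration from Chapter~3 to stratify both $X$ and the descending links, and then reducing the higher-rank contributions to product-of-trees pieces whose connectivity can be controlled by the Nerve Lemma. This induction, modeled on Brown's original analysis of $H_n$, is where the real work of the proof should live.
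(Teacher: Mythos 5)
Your opening paragraph and the computation $h(\mathbb{Z}^n)=2^n$ are correct and track the paper (Corollary 3.6, Corollary 4.1). The overall framework --- Brown's criterion applied to a poset of pei-injections $f\colon S\to S$, cocompact $G$-invariant filtration by a height function, improving connectivity of sublevel pairs --- is also the strategy the paper follows. But the sketch has two genuine gaps and one outright error.

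First, the error: you assert that each cell-stabilizer is of type $F_\infty$, reasoning from the normal series $1=G_{-1}\leqslant G_0\leqslant\cdots\leqslant G_n$ and the claim that its sections are finitary symmetric, finite symmetric, or free-abelian, hence each $F_\infty$. But the finitary symmetric group on the infinite set $\Gamma^k(S)$ (for $k<n$) is not even finitely generated, so these sections are far from $F_\infty$, and the extension argument does not apply. If the stabilizers really were $F_\infty$ and the connectivity kept improving, the group would be $F_\infty$ --- which is precisely what the theorem is \emph{not} claiming. What the paper actually uses is that the stabilizer of a vertex $f$ is isomorphic to the diagonal pei-group of the complement $S-Sf$, which is an orthohedral set of strictly lower rank; the finiteness type of that stabilizer is supplied by induction on rank, and one arranges it to be high enough by cutting off the filtration below a large height $r$, forcing $h(S-Sf)$ to be large (Corollary 5.11 and the final proof in Section 5.6).

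Second, the construction is missing the key restriction to the \emph{diagonal} submonoid. It is not enough to take arbitrary pei-injections, or arbitrary ``cutting-off'' operations. The paper passes from $\mathrm{pei}(S)$ to the finite-index translation subgroup, then to the diagonal subgroup $\mathrm{pei}_{\mathrm{dia}}(S)$ whose elements act on each maximal tangent coset $\langle\gamma\rangle$ by \emph{diagonal} translations (scalar multiples of the vector $u_L$ summing the canonical basis). Correspondingly, the monoid is $M_{\mathrm{dia}}(S)$, partially ordered by left multiplication by the free abelian monoid $\mathrm{mon}(T)$ generated by the diagonal unit-translations $t_L$, one per maximal orthant. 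This is what makes the poset directed with a clean cone structure: the maximal elements below $f$ are classified exactly by picking which $t_L$ to factor out (Lemma 5.4), and greatest lower bounds exist (Lemma 5.6). Without the diagonal restriction this bookkeeping breaks down. The reduction from $\mathrm{pei}$ to $\mathrm{pei}_{\mathrm{dia}}$ uses that the quotient is finitely generated abelian, of $fl=\infty$.

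Third, the connectivity estimate --- which you yourself flag as ``where the real work should live'' --- is left as an appeal to Bestvina--Brady Morse theory that the paper does not use and that is not obviously applicable. The paper's argument is elementary: cover the downset $|M_{<f}|$ by the closed cones $|M_{\leq b}|$ over the maximal elements $b<f$, observe that finite intersections are either empty or contractible cones (by the greatest-lower-bound Lemma 5.6), and so pass to the nerve. The nerve is the flag complex of an $h(S)$-colored graph whose homotopy type is a bouquet of $(h(S)-1)$-spheres; this is the content of the key combinatorial Lemma 5.8 (a generalization of Sach's Lemma 4.7, itself modeled on Brown's Lemma 5.3), and its hypotheses are verified by a height count (Lemma 5.9). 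Your ``join-of-$h(S)$-factors'' heuristic points in the right direction, but the proof needs this explicit colored-graph lemma; replacing it with a vaguely invoked discrete Morse stratification of $X$ and its descending links is not a substitute.
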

For a more precise result see Section \ref{theorem5.1}.

The exact value of $fl\big({\rm pei}(\mathbb Z^n))<\infty$ remains a challenging open problem.
In the pet-case we know more: The isomorphism class of ${\rm pei}(S)$ is not determined by rank and height of the orthohedral set $S$. But we do have a precise result for the special case when $S$ is a stack of $h$ parallel orthants of the same rank:

\begin{theorem}
If $S$ is a stack of $h$ rank-n 
orthants then $fl\big({\rm pet}(S)\big) = h(S) - 1.$
\end{theorem}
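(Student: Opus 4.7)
The plan is to prove both bounds simultaneously by the discrete Morse theory technique of Brown \cite{br87}. This extends Brown's computation $fl(H_h) = h-1$ for the Houghton groups (our theorem in the case $n=1$) and the second author's thesis \cite{sa92} which treats $n=2$; the striking feature is that the answer depends only on $h$ and not on $n$, so the Morse-theoretic analysis must be organized so that exactly $h$ join factors appear in the descending links while the orthant rank $n$ is absorbed into the cell stabilizers.

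After applying a pet-isomorphism, normalize $S = \bigsqcup_{i=1}^{h} L_i$ to a standard stack of $h$ parallel translates of $\mathbb{N}^n$. I would build a contractible CW-complex $X$ whose vertices are ``sub-stacks'' $S' = \bigsqcup_{i=1}^{h} L_i'$ with each $L_i' \subseteq L_i$ a translated rank-$n$ sub-orthant of finite codimension, and whose higher cells record chains of strict inclusions of such sub-stacks. The group $\mathrm{pet}(S)$ acts cellularly on $X$ with cell stabilizers that, modulo finitary symmetries, are direct products $\prod_{i=1}^h \mathbb{Z}^n$ --- in particular of type $F_\infty$ --- and cocompactly on each sublevel of the height function $\nu(S')=\sum_i \mathrm{codim}(L_i'\subseteq L_i)$. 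Brown's criterion then reduces the determination of $fl(\mathrm{pet}(S))$ to the connectivity of the descending links.

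The descending link at a vertex should decompose as a join of $h$ independent ``corner complexes'', one for each orthant $L_i'$, each describing locally how the sub-stack may be enlarged at that particular corner. I would argue that each corner complex is non-empty and has a distinguished ``minimal enlargement'' vertex so that the $h$-fold join is exactly $(h-2)$-connected. This yields the lower bound $fl(\mathrm{pet}(S)) \geq h-1$ directly from Brown's criterion. For the matching upper bound, the same join structure shows the descending link is \emph{not} $(h-1)$-connected; the resulting non-trivial $H_{h-1}$-class is detected by the surjection of $\mathrm{pet}(S)$ onto its translation-at-infinity quotient (a free abelian group of rank $n(h-1)$ cut out by the sum-zero constraint on translations across the $h$ tangent cosets $\langle\gamma_i\rangle$), via a Hochschild--Serre spectral sequence argument showing that the resulting $H_{h-1}$-contribution is not finitely generated as a $G$-module, obstructing type $F_h$.

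The main technical obstacle is the rank-$n$ corner analysis. For $n=1$ the corner complex is essentially a single point and the $h$-fold join is transparently an $(h-2)$-sphere; for general $n$ the corner of $\mathbb{N}^n$ admits an $(n{-}1)$-dimensional family of reduction moves, and one must verify by an explicit shellability or discrete-Morse argument on the corner poset that this higher-dimensional local geometry contributes no spurious connectivity that would push $fl$ above $h-1$. Disentangling this local rank-$n$ geometry from the global stack-counting behaviour --- so that $n$ influences only the stabilizer side of Brown's criterion and not the join width --- is the crux of the generalization from \cite{sa92} to arbitrary $n$.
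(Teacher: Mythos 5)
Your outline captures the Brown-style strategy at the level of generalities, but the crucial step that makes the generalization from $n=1$ nontrivial is misidentified, and the resulting argument as written would not go through.

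The central problem is your claim that the cell stabilizers are, modulo finitary symmetries, $\prod_{i=1}^h\mathbb{Z}^n$ and hence of type $F_\infty$. This is not what one gets from any natural choice of poset. If the vertices of $X$ are pet-injections $f\colon S\to S$ (as in Brown's original argument, and as the paper uses), the stabilizer of $f$ under the right action is the full group ${\rm pet}(S-Sf)$, where $S-Sf$ is a rank-$(n{-}1)$ orthohedral subset whose height grows with $h(f)$. If instead you take vertices to be sub-stacks $S'\subseteq S$ as sets, the setwise stabilizer contains ${\rm pet}(S')\cong{\rm pet}(S)$ and is even larger. Either way, the stabilizers are not free-abelian-by-locally-finite; they are pet-groups of lower-rank orthohedral sets. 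This matters, because Brown's criterion requires the stabilizers to have finiteness length at least the target, and establishing that for ${\rm pet}(S-Sf)$ is exactly where the induction has to bite. Worse, for $n\geq 2$ the complement $S-Sf$ of a diagonal injection is generically \emph{not} pet-isomorphic to a stack of orthants (its rank-$(n{-}1)$ germs lie in several parallelism classes), so the naive induction on rank has no base to land on. The paper resolves this by proving the statement in the greater generality of stacks of rank-$n$ skeletons of an orthant (Proposition \ref{proposition6.2}), and by restricting to \emph{super-diagonal} injections whose complements are again stacks of $(n{-}1)$-skeletons (Lemma \ref{lemma6.9}). Your proposal contains no analogue of this maneuver, and your stated ``main technical obstacle'' --- the connectivity of the rank-$n$ corner complex --- is not in fact where the difficulty lies; that part is handled by the colored-graph Lemma \ref{lemma5.8}, which is uniform in $n$.

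Your upper-bound sketch is also substantially different from the paper and, as stated, incomplete. Detecting a nonvanishing class in $H_{h-1}$ of the descending link is not by itself an obstruction to type $F_h$; one must further show the class survives in the limit, which Brown's criterion packages carefully and your Hochschild--Serre appeal leaves unargued. The paper instead proves the upper bound by an entirely separate and more elementary retraction argument: ${\rm pet}(\partial_x S)$ is a retract of ${\rm pet}(S)$, a retract of a group has finiteness length at least that of the ambient group, and iterating the projections $\partial_x,\partial_{x'},\ldots$ lands on a stack of rank-$1$ orthants whose pet-group is (virtually) a Houghton group with known $fl$. That route sidesteps any essential-class bookkeeping.
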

\endgroup

A generalization to a stack of $k$-skeletons of an orthant is in Theorem \ref{stack of n-skeletons}.

\begin{remark}
Highly complex elementary amenable groups with high finiteness length can 
also be constructed in terms of permutational wreath products $A\wr_X B$. 
Bartholdi, de 
Cornulier, and Kochloukova \cite{bck157} provide the technique to compute 
$fl(A\wr_X B)$ in favorable situations; and Kropholler-Martino \cite{km16} apply this to construct a sequence of wreath powers of Houghton's group 
$H_n$, $P(m):=H_n( \wr_X  H_n)^m$ with constant finiteness length $fl(P(m))=fl(H_n)=n-1$ for all m. Thus they take $fl(H_n$) for granted and 
provide a method to increase the complexity of $H_n$, whereas in the present work we extend Brown's computation of $fl(H_n)$ to new groups which are 
poly-(locally Houghton-by-finite) and analyze their structure.
\end{remark}

\subsection{Outlook.}

Let $\Gamma$ be a discrete group of (Euclidean or hyperbolic) isometries with polyhedral fundamental domain of finite volume. By generalizing the
definition of the group ${\rm pei}(\mathbb Z^n)$ to the groups G$_\Gamma(\Omega)$ of all piecewise $\Gamma$-isometric permutations of the orbit $\Omega =\Gamma p$, we have endowed each such group $\Gamma$ with the $G_\Gamma$-\emph{cloud} of all piecewise $\Gamma$-isometric permutation groups
$G_\Gamma(S)$ where $S$ runs through the $\Gamma$-polyhedral subsets of $\Omega$.  The success with evaluating the finiteness length function on the clouds around
Isom$(\mathbb Z^n)$ and $\mathbb Z^n$, together with the observation that 
the groups around $SL_2(\mathbb Z)$ are closely related to the highly 
respected Thompson groups, indicates that finding more of this might be a difficult but worthwhile program.
\par
Particularly promising projects would be:
\begin{enumerate}[i)]
\item 
Finding the phi$(\Omega)$ when $\Omega$ is given by a regular tessellation of the hyperbolic plane, and the precise relationship between the induced group on the boundary, phi$(\Omega)/{\rm sym}(\Omega)$, and Thompson's 
groups.
First steps in this direction based on (a slight generalization of) Theorem \ref{Refining tessellations by convex pieces} are suggested in Section 
\ref{Connection with Thompson's groups}.
\item There are strong indications that $fl(pei(S))>h(S)-1$. In particular, Thomas Kilcoyne has a proof that $pei(S)/{\rm sym}(S)$ is finitely presented if $S$ is a stack of at least 2 quadrants. Thus, progress in this direction seems accessible -- whether  $pei(S)/{\rm sym}(S)$ is better behaved than  $pei(S)$ itself remains to be seen. For the Houghton groups this is trivially true, and the subtle difference between $QV$ and $\Tilde{QV}$ (see \cite{nst15}) might indicate that this is indeed the case. 
\item Defining and studying a group pal$(\mathbb{Z}^n)$ of \emph{piecewise afine-linear permutations} on $\mathbb{Z}^n$, and find the footprints of the structure of $SL_n(\mathbb{Z}^n)$ in its structure.

\end{enumerate}

\subsection{Remark on the history of this paper.}\label{history}

Houghton originally introduced his groups in~\cite{ho78}. Theorem~B, in the Houghton
group case, i.e., when S is a stack of rays, is due to Brown \cite{br87}, 
and we follow his footsteps.

The inequality $fl\big({\rm pet}(S)\big) \ge h(S) - 1$ in the rank-2 case 
when~$S$ is a stack of quadrants (as well as the equality for a certain ``diagonal subgroup'' of ${\rm pet}(S)$) is due to the second author and appears in her Diploma thesis (Frankfurt 1992 \cite{sa92}), to which the first author contributed little more than the
definition\footnote{Influenced by Peter Greenberg's courage to define $SL_2(\mathbb Z)$-geometry (\cite{gr93}) -- which is similar to but different from ours.} of the group. Her Diploma thesis could have been the starting point of a promising PhD project -- but she preferred starting a true-to-life career in software development. 

Back then, hunting for further generalizations of such groups was not the 
first author's priority either -- they looked artificial and in those days only of use as counterexamples to questions that nobody asked. Therefore the project went dormant for 22 years, until an increasing number of publications on Houghton's groups (\cite{abm14}, \cite{bcmr14}, \cite{st15}, \cite{leh08}, \cite{za15} etc.) suggested that Heike Sach's Diploma thesis~\cite{sa92} should be published, translated, and generalized. We started our collaboration in 2014. 

The (back then surprising) insight that our groups are not only \emph{generalized Houghton groups} but fit in a more interesting general (Euclidean or hyperbolic) geometric framework, which could be described as the groups of \emph{tile-permutations induced by finitary rearrangements of tessellations}, was added when we put the preprint (together with the original Diploma thesis) on the arXiv in June 2016 \cite{bisa16}, \cite{sa92}.

\chapter{On the hyperbolic case}

\section{Planar hyperbolic examples}\label{Planar hyperbolic examples}

\subsection{Piecewise $\Gamma$-hyperbolic triangle groups}
\label{section2.1}
Let $D$ be a hyperbolic triangle of finite area 
in the compactified (Poincaré disk model of the) 
hyperbolic plane $\mathbb{H}^2\cup\partial
\mathbb{H}^2$. We write $v_i$ for the vertices and $e_i$ for edges of ~$D$, $1\leq i\leq 3$, and use the convention that $v_i$ is oposite to $e_i$. We write $H_i\subset\mathbb H^2$ for the half-plane which contains $D$ and is bounded by the line spanned by $e_i$, and we put $\mathcal{H}:=\{H_1,H_2,H_3\}$ to be the irredundant finite set of half-spaces as in the definition in Subsection \ref{the groups}.

Regardless whether some of the $v_i$ 
are on $\partial\mathbb{H}^2$, the triangle $D$ 
has a unique inscribed circle (exhibited in the fundamental triangle in Figure~\ref{(4,4)_inf} and Figure~\ref{Hyperbolic triangle II (inf, inf, inf)}). We will take its hyperbolic center as the base point $p\in D$ and call it the tile-center of $D$. Let $t_i\in e_i$ denote the touching point of the inscribed circle on the edge $e_i$. Elementary geometric arguments show that if $e_i, e_j$ are the two edges emanating from $v_k$ then there is a hyperbolic disk ~$B_k$ centered at $v_k$ 
 which has $t_i$ and $t_j$ on its boundary (if $v_k\in\partial\mathbb{H}^2$ then $B_k$ is understood to be a horodisk centered at $v_k$).

We assume that the angle of $D$ at $v_i$ is $
\frac{\pi}{q_i}$, where $q_i\in \mathbb N\cup\{\infty
\}$ with $\frac{1}{q_1}+\frac{1}{q_2}+\frac{1}{q_3}
<1$. Then the hyperbolic reflections $\sigma_i$ over 
the edges $e_i$ define a particularly nice 
tessellation on $\mathbb{H}^2$ and generate the 
isometry group $\Gamma$. The triple  $[q_1,q_2,q_3]$ is the \emph{signature} of the triangle group $\Gamma$. $\mathbb{H}^2$ is now endowed with a simplicial $\Gamma$-complex $\Delta$. Here are some elementary facts: \\
1. If $P\subset\mathbb{H}^2$ is $\Gamma$-polyhedral, so is the closure of 
its complement. \\
2. Each edge $e$ of $\Delta$ spans a hyperbolic line $h[e]$ in the 1-skeleton $\Delta^1$. \\
3. A hyperbolic line $h\subset\Delta^1$ is tessellated by infinitely many 
finite edges if and only if $h$ is a \emph{hyperbolic axis} (the axis of a hyperbolic element $g\in\Gamma$).

We put $\Omega=\Gamma p$ and are interested 
in the group $G(\Omega)=\mathrm{phi}(\Omega)$ of all 
piecewise $\Gamma$-isometric permutations, and in 
subgroups
$G_{\Gamma^\ast}(\Omega)\leqslant G(\Omega)$, when $\Gamma^{\ast}\leqslant\Gamma$ is a specified subgroup of $\Gamma$. 
$\mathbb H^2$ is now equipped with three $\Gamma$-orbits $\Gamma B_i$ of disks  centered at the tile-vertices $gv_i$; these disks touch each other, and their mutual touching points coincide with the points $gt_i$. The hyperbolic segments connecting the tile centers of edge-neighboring tiles cross vertically at the points $gt_i$ through the edge $ge_i$ of $\Delta$ 
and constitute the edges of the dual tessellation $\Delta^\ast$ of $\Delta$. 
The dual tile with center $gv_i\in\mathbb H^2$ is a convex $2q_i$-gon around the inscribed disk $gB_i$ (in the case when $gv_j\in \partial\mathbb H^2$ this is the area bounded by a doubly infinite sequence of finite dual edges tangent to the horodisk $gB_j$ with center $gv_j$).

The charm of $\Delta^\ast$ is that $\Omega$ stands for the tiles, and the 
edges of $\Delta^\ast$ indicate  how tiles are glued together. This opens 
the possibility that the $\Gamma$-polyhedral pieces can be described in terms of the 1-skeleton of $\Delta^\ast$. If $D$ is compact this remais a challenge to be addressed somewhere else.

\subsection{The case when $\Gamma$ is a non-cocompact triangle group}

The 
situation is simpler when $D$ is not compact, and from now on we assume that at least $q_3=\infty$: The 
point is that we have now a $\Gamma$-equivarant 
retraction of the hyperbolic plane $\mathbb H^2$ along 
the hyperbolic lines emanating out of the horodisk 
centers $gv_j$ for $g\in\Gamma$ and $v_j\in\partial
\mathbb H^2$, and terminating on the boundaries of 
the horodisks $gB_j$. The truncated hyperbolic plane
$$\mathbf T~:=~\mathbb H^2~~-~\Big(\bigcup_{g\in\Gamma,~v_j\in \partial\mathbb H^2}\rm{Int}(gB_j)\Big),$$ 
obtained by excision of all open horodisks 
$gB_j$, closely approximates the finite part $\Delta^\ast_{fin}$ of the complex $\Delta^\ast$. $\mathbf T$ is a tree shaped union of bands meandering between the horodisks $gB_l$ towards  $\partial\mathbb H^2$, and it contains the finite part of the dual tessellation $\Delta^\ast_{fin}\subset\mathbf T$ is the cell complex with vertex set $\Omega=\Gamma p$, all dual edges of length equal to the diameter of the inscribed circle of $D$, and 2-cells semi regular $2q_i$-gons around the finite disks $gB_i$. 

That 
$\mathbf T$ is tree shaped can be seen by referring to either the retraction of $\mathbb H^2$ onto $\mathbf T$, or to the fact that whenever a band enters an area bounded by two horodisks through its tight entrance there is no escape on a different route through another exit.

\emph{ILLUSTRATIONS with signature} $[2,3,\infty],
[3,\infty,\infty]], [\infty,\infty,\infty]$ \emph{are exhibited in Figure~\ref{(4,4)_inf}, Figure~\ref{((6, inf,inf)(1)} and Figure~\ref{Hyperbolic triangle II (inf, inf, inf)}. The coloured part of the pictures exhibits the set 
$\mathbf T$ (the finite disks $B_i$ in green/blue, and the rest of $\mathbf T$ consists of little 
triangular red shapes, each containing exactly one 
point of $\Omega$). As the neighboring ones touch 
each other at a kissing point this red part of $
\mathbf T$ actually contains and outlines the 1-
skeleton of $\Delta^\ast_{fin}$}.

\begin{figure}
\centering
\includegraphics[scale=0.2]{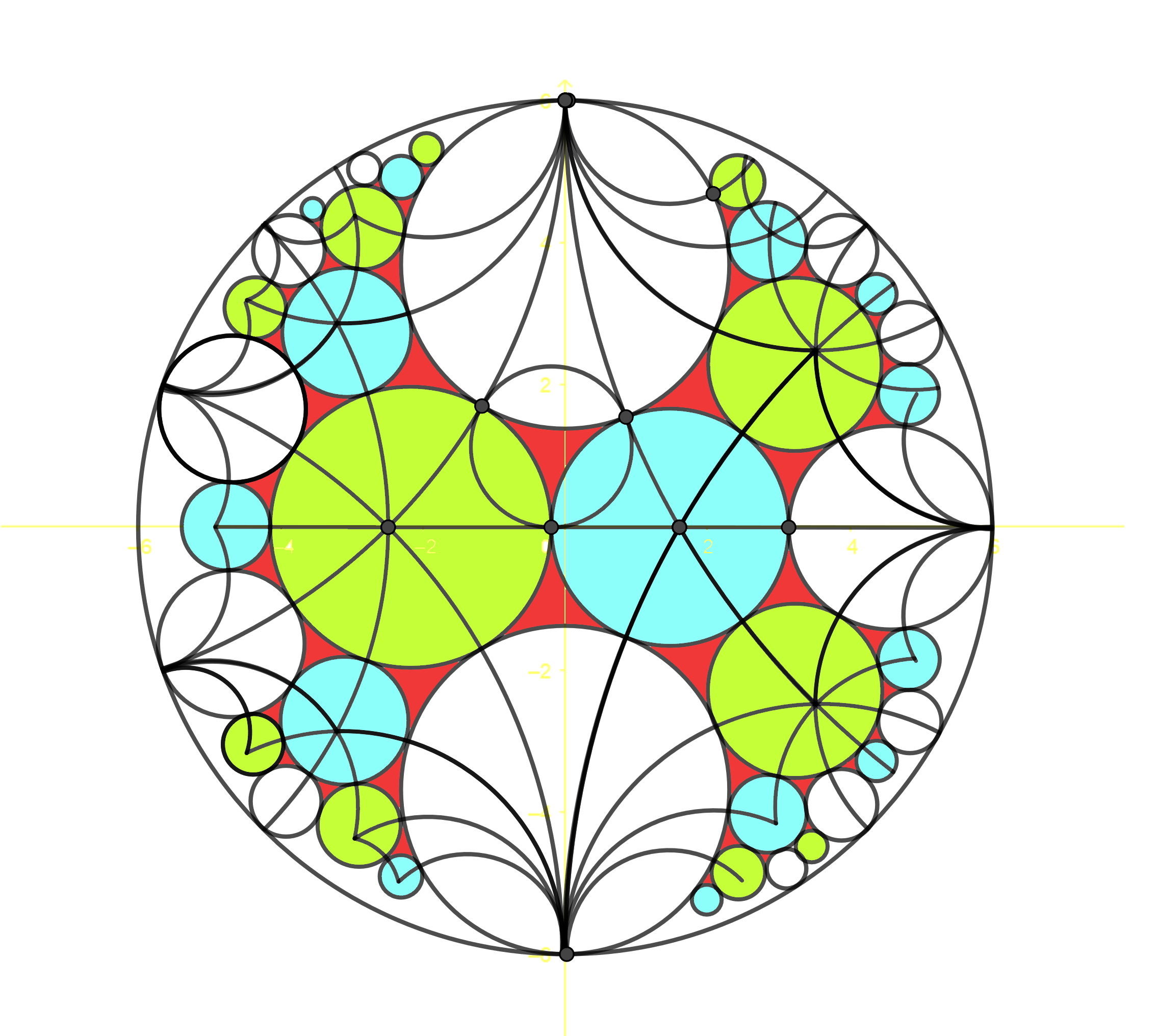}
\caption{\label{(4,4)_inf}The signature $[3, 4, \infty]$ case.}
\end{figure}

\begin{figure}
\centering
\includegraphics[scale=0.2]{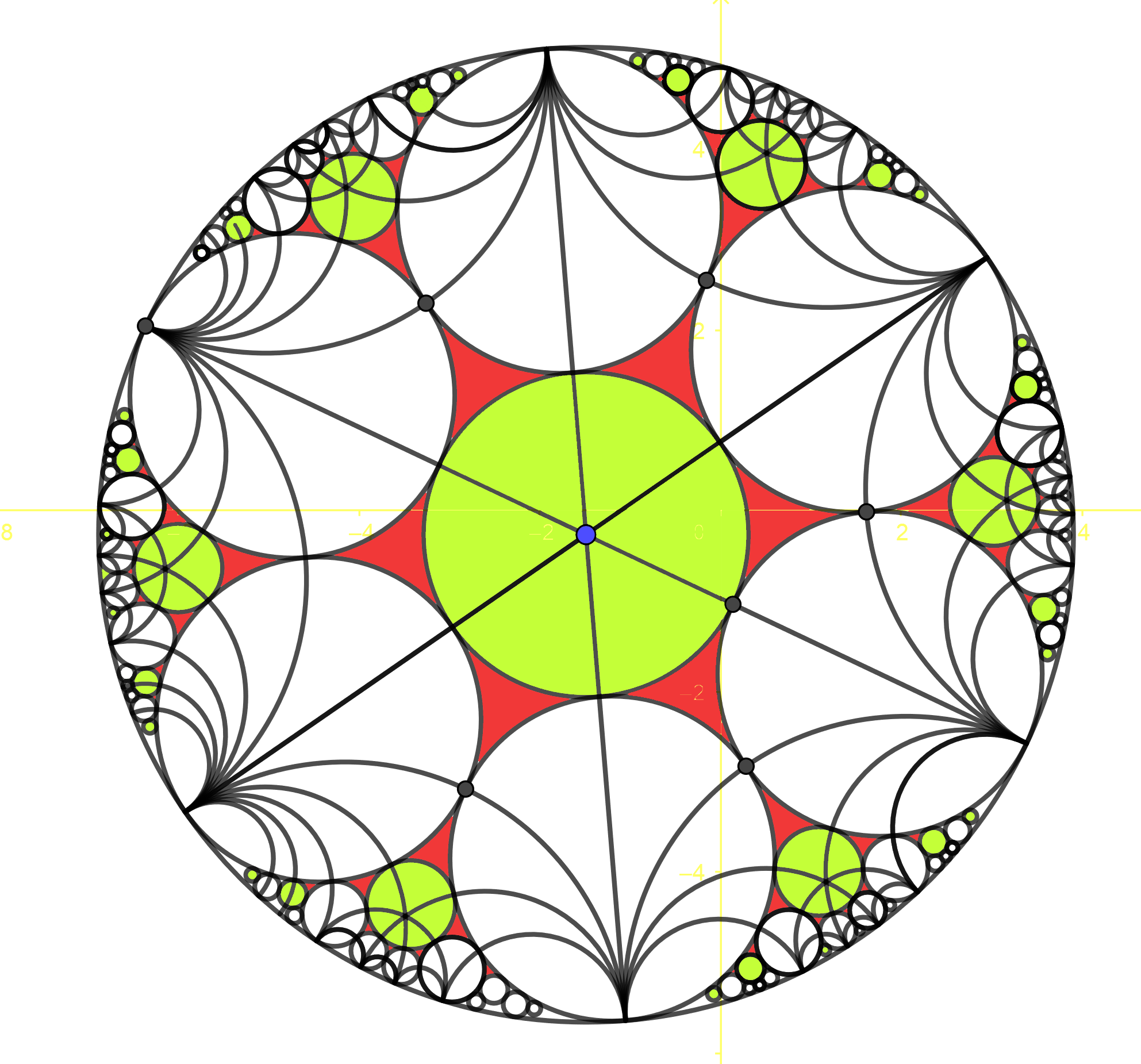}
\caption{\label{((6, inf,inf)(1)}The signature $[3,\infty,\infty]$ case.}
\end{figure}

\begin{figure}
\centering
\includegraphics[scale=0.2]{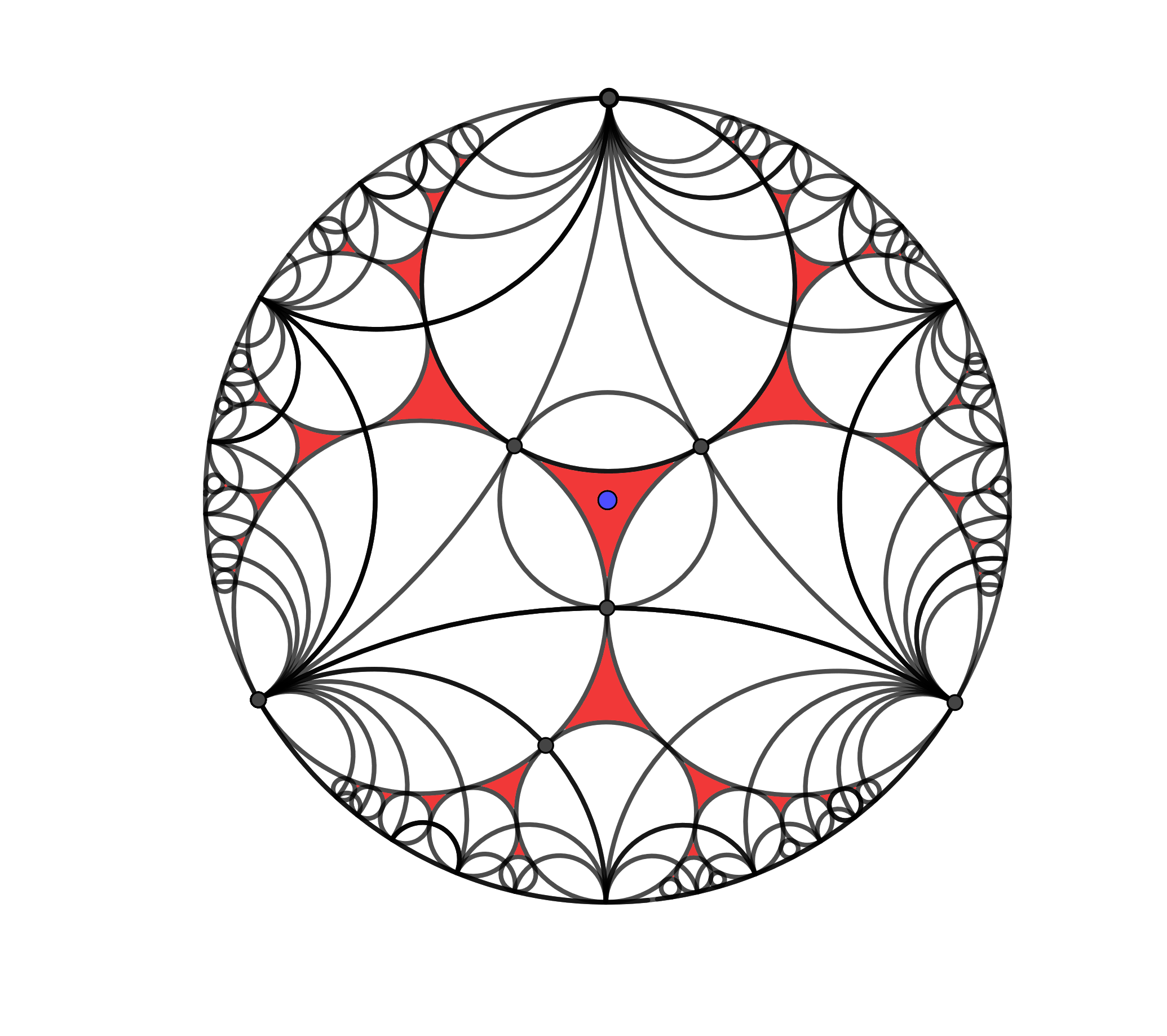}
\caption{\label{Hyperbolic triangle II (inf, inf, inf)}The signature $[\infty, \infty, \infty]$ case.}
\end{figure}

                                                   The retraction of $\mathbb H^2$ onto $\mathbf T$ (or $\Delta^\ast_{fin}$) can be prolongated to 
a retraction $\rho:\mathbb H^2\rightarrow T$  onto a $\Gamma$-invariant tree $T\subseteq\mathbf T$ which we call the \emph{spine} (of $X$). If ~$D$ has a finite edge $e$ this retraction pushes each  triangle $gD$ onto $ge$. Hence ~$T$ is just the finite part of the 1-skeleton of $\Delta$. It 
contains vertices of degrees ~$q_1$ and vertices of degree $q_2$.

                           If $\Gamma$ has signature $[q,\infty,\infty]$ only one vertex $v$ of $D$ is in $\mathbb H^2$ and its opposite edge $e$ is a line. We write $h$ for the hyperbolic segment connecting $v$ with the nearest point $t$ on $e$. Then centered at the endpoints of each $ge$ we have two horodisks $gB_j, gB_k$ which touch each other in the point $gt$. The prolongated retraction sends the whole of $gD$ to the the segment $gh$ which is one half of an edge of $T$ -- the second half is its reflection over the axis $ge$. Hence, as above, the vertex set of $T$ is the set of vertices of $\Delta$ in $\mathbb H^2$, but now the edges of $T$ are  
the geodesic segments connecting vertices with their images under reflection over the opposite sides of their tiles.

If                                                                                                                        $\Gamma$ is of type $[\infty,\infty,\infty]$ then $T=\Delta^\ast_{fin}$ and all vertices are of degree 3.

\subsection{Tessellated sectors and rooted subtrees of the spine.} 

By an (open or closed) \emph{sector} $S\subset\mathbb H^2$ we mean a subset bounded by two rays emanating from $v\in\mathbb H^2$. The two rays are 
the \emph{legs} and their endpoints the \emph{feet} of $S$. Occasionally it is convenient to include \emph{ideal sectors} which have line-legs and 
their tip and feet in $\partial\mathbb H^2$. $S$ is a \emph{tessellated sector} (or a \emph{sector of $\Delta$}) if its tip is a vertex and its legs lie in the 1-skeleton of $\Delta$ and hence are tessellated by edges of $\Delta$.

We call a tessellated sector $S$ \emph{small} if its 
legs are single edges of $\Delta$ and no proper 
subsector with the same tip has the same property. 
One observes that the star of a small sector 
consists of two tiles if the triangle $D$ has 
exactly one vertex at infinity, and of a single tile 
in all other cases. In any case two small sectors 
are $\Gamma$-translates of each other if and only if their tips are in the same $\Gamma$-orbit.

Closely related to sectors of $\Delta$ are the rooted subtrees of the spine $T$. If $v\in{\rm ver}R$ is a vertex of a subtree $R\subset T$ we write
deg$_T(v)$ for the degree of ~$v$ as a vertex in $T$. If deg$_R(v)=1$ then $v$ is a \emph{leaf} of $R$, and if deg$_R(v)={\rm deg}_T(v)$ then $v$ is an \emph{inner vertex} of ~$R$. The subforest spanned by all inner 
vertices of~ $R$ is the \emph{inner part} of $R$, denoted $\mathring R$, and the set of vertices of~$R$ which are not inner is the \emph{boundary} 
of $R$ in $T$, denoted  $\partial R\subset{\rm ver} R$.  A \emph{rooted subtree} $R\subset T$ is a subtree whose boundary in $T$ consists of a single vertex $r$, the \emph{root} of $R$; and if the root is a leaf of $R$ we say that $R$ is a \emph{leaf-rooted} subtree. The inner part of a leaf-rooted subtree is a rooted tree and best described as a \emph{half-tree} 
of $T$, i.e., one of the two connected components obtained by removing the interior of an edge of $T$.

\begin{lemma}\label{rooted subtree} 
Under the assumption that $\Gamma$ is a hyperbolic triangle group with signature $[q_1,q_2,\infty]$ we have a 1-1-correspondence between the small 
tessellated sectors $S$ of $\Delta$ and the leaf-rooted subtrees of the spine $T$. This correspondence associates to $S$ the maximal subtree of $S\cap T$ and to $R$ the minimal subcomplex of $\Delta $ containing the convex closure $\overline R$ of $R$. 

\end{lemma}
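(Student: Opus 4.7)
The crucial elementary geometric fact is that two distinct edges of the tessellation $\Delta$ can meet only at a common vertex of $\Delta$, and that an infinite edge of $\Delta$ has only its finite endpoint as a $\Delta$-vertex lying in $\mathbb{H}^2$. Combined with the convexity of sectors and of edges of $\Delta$ as geodesic segments, this will let us ``trap'' $T$ inside a small sector.

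For the direction $S \rightsquigarrow R$, let $S$ be a small tessellated sector with tip $r$, a $\Gamma$-translate of $v_1$ or $v_2$, and let $q_i = \deg_T(r)$. The two legs of $S$ are adjacent infinite edges at $r$ flanking exactly one $T$-edge $e = rw$, which is therefore the unique edge of $T$ at $r$ lying in the closed angular interior of $S$. For any vertex $u$ of $\Delta$ in the interior of $S$, every $T$-edge at $u$ must stay in $\overline{S}$: its other endpoint cannot lie on a leg (whose only $\Delta$-vertex in $\mathbb{H}^2$ is $r$), and convexity of $S$ then forces the whole edge into $\overline{S}$. Iterating outward from $w$ shows that $S \cap T$ is the connected subtree $R(S)$ consisting of $r$, the edge $e$, and the half-tree of $T$ obtained by removing the interior of $e$ and taking the component containing $w$. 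In this subtree, $r$ has $\deg_{R(S)}(r) = 1 < q_i$, so $r$ is a leaf and $r \in \partial R(S)$; every other vertex is inner. Hence $R(S)$ is leaf-rooted with root $r$.

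For the direction $R \rightsquigarrow S$, let $R$ be leaf-rooted with root $r$ and unique edge $e = rw$ at $r$. Let $S$ be the small sector with tip $r$ whose legs are the two infinite edges at $r$ flanking $e$. The previous step applied to $S$ produces a leaf-rooted subtree $R(S)$ governed by the same data $(r,e)$; since a leaf-rooted subtree is determined by its root and unique edge at the root, $R(S) = R$ and $R \subset \overline{S}$. Now $\overline{S}$ is itself a subcomplex of $\Delta$, since its boundary in $\mathbb{H}^2$ consists of the two legs, which are edges of $\Delta$. To identify $\overline{S}$ as the \emph{minimal} subcomplex of $\Delta$ containing $\overline{R}$, fix a tile $\tau \subset \overline{S}$. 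Every finite vertex of $\tau$ lies in $T \cap \overline{S} = R$. Each ideal vertex $p \in \partial\mathbb{H}^2$ of $\tau$ lies on the arc cut out by the two legs, and the tiles of $\Delta$ incident to $p$ and contained in $\overline{S}$ provide a horocyclic sequence of finite $\Delta$-vertices in $R$ converging to $p$; hence $p \in \overline{R}$. Convexity of $\overline{R}$ then yields $\tau \subset \overline{R}$, so no tile of $\overline{S}$ can be omitted.

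The two assignments are inverse to one another, since both $R(S)$ and $S(R)$ depend only on the pair (tip, unique $T$-edge at the tip interior to the sector). The main technical point is the horocyclic density claim for the ideal vertices $p$ of tiles in $\overline{S}$: it reduces to the defining property of a leaf-rooted subtree—every non-root vertex is inner—so that $R$ contains all $T$-neighbours of each of its inner vertices and therefore exhausts the finite corners of every tile of $\Delta$ meeting $\overline{S}$ at $p$ from the $S$-side.
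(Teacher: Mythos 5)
The paper gives no proof to compare against (Lemma~\ref{rooted subtree} is ``left to the reader''), so I can only assess the argument on its own terms. Your proof is correct, and pleasantly explicit, in the case where both $q_1$ and $q_2$ are finite, which is exactly the case where $T$ is contained in the $1$-skeleton of $\Delta$. There the ``crucial elementary geometric fact'' does the work: a $T$-edge is a finite $\Delta$-edge, a leg is an infinite $\Delta$-edge, two $\Delta$-edges meet at most in a common vertex, and the only $\mathbb{H}^2$-vertex of a leg is the tip $r$, so a $T$-edge not incident to $r$ cannot cross a leg. That traps the half-tree inside $\overline{S}$, and the horocyclic-convergence argument for the ideal corners then gives $\overline{S}=\overline{R}$. (A small stylistic point: ``its other endpoint cannot lie on a leg'' is weaker than what you actually need, namely that the $T$-edge cannot \emph{cross} a leg; what you intend is clearly the crossing statement via the shared-vertex argument, but the sentence as written elides that step.)

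The genuine gap is one of scope. The lemma is stated for signature $[q_1,q_2,\infty]$ in the sense used throughout Section~2, where $q_1$ or $q_2$ may be $\infty$ (see the illustrations $[3,\infty,\infty]$, $[\infty,\infty,\infty]$ and Corollary~\ref{quasi-automorph}, which explicitly invokes the infinite case). For $[q,\infty,\infty]$ the spine $T$ is \emph{not} contained in $\Delta^1$: its edges are segments $g v_1\,\sigma_{ge_1}(gv_1)$ crossing the line-edges, so the fact ``two $\Delta$-edges meet only at a $\Delta$-vertex'' cannot be applied to a $T$-edge versus a leg. One can still make the argument go through --- a $T$-edge is contained in a union of two tiles and the separating line-edge never meets a leg, so the $T$-edge still cannot escape $\overline{S}$ --- but that reasoning has to be supplied. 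For $[\infty,\infty,\infty]$ the situation is qualitatively different: $\Delta$ has no finite vertices at all, so the tip $r$ is an ideal vertex, your sentence ``let $r$ be a $\Gamma$-translate of $v_1$ or $v_2$'' is vacuous, the legs are lines rather than rays, and $T=\Delta^\ast_{\mathrm{fin}}$ is the dual trivalent tree whose edges \emph{do} cross edges of $\Delta$ (including legs). The intended correspondence there is ``small ideal sector bounded by two consecutive edges at $p_\infty$'' $\leftrightarrow$ ``half-tree of the dual tree rooted at the enclosed tile's center,'' and that needs its own short argument. As it stands, your proof establishes the lemma only when $q_1,q_2<\infty$.
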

\begin{proof} Elementary and left to the reader.
\end{proof}

\subsection{The limit sets of $\Gamma$-polyhedral pieces.} 
Let $P\subset\mathbb H^2$ be an arbitrary $\Gamma$-polyhedral subset.   We consider the boundary of $P$ in the \emph{completed} hyperbolic plane $\mathbb H^2\cup\partial\mathbb H^2$, and denote it by $\partial P=:\partial_{fin}P\cup\partial_{\infty}P$, where $\partial_{fin}P$ consists 
of finitely many edge paths, and $\partial_{\infty}P$ is the \emph{limit-set of $P$} and consists of finitely many segments of $\partial\mathbb H^2$ -- the connected components of $\partial_{\infty}P$ with respect to the $S^1$-topology of the disk model.

We need a slightly stronger connectivity concept: A 
segment $[x,y]\subset\partial_{\infty}P$ is \emph{strongly connected}, if 
none of its inner points is a limit point of the complement $P^c:=\mathbb H^2-P$; and the maximal strongly connected segments are the \emph{strongly connected components} of $\partial_{\infty}P$. We claim that each connected component of $\partial_{\infty}P$ is "tessellated" by (i.e., the union with pairwise disjoint interiors of) its finite set of strongly connected components. Indeed, the only reason why a connected component might not be strongly connected is the possibility that it might contain a tip of the complement $P^c:=\mathbb H^2-P$. As we know that $\Gamma$-polyhedrality of $P$ implies that the closure of $P^c$ is also $\Gamma$-polyhedral (see Subsection \ref{section2.1}), $P^c$ has only finitely many tips. This proves the claim; and at the same time also the

\emph{Observation}: If $P$ is convex $\Gamma$-polyhedral then $\partial_{\infty}P$ is not necessarily connected, but all its connected components are strongly connected.

\begin{lemma}\label{canonical tiles at infinity}
Let  $P$ be a convex $\Gamma$-polyhedral set, $C\subset\partial_{\infty}P$ a connected component of its limit set, and assume that $C$ has two endpoints $x,y\in C$, but at neither of them $\partial P$ continues with a ray in $\Delta$ which is part of a hyperbolic axis. Then there is a canonical finite gallery $G[C]\subset P$, consisting of small sectors and single tiles, which tessellates a neighborhood of $C$ in $P$.
\end{lemma}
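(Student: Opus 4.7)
My plan is to build $G[C]$ in three stages: a canonical corner piece at each endpoint of $C$, canonical small sectors at finitely many distinguished tile-vertices in the interior of $C$, and single-tile fillers between them. First I analyze $\partial P$ near the endpoint $x$. Since $C$ is a connected component of $\partial_\infty P$ ending at $x$, the finite boundary $\partial_{fin}P$ must approach $x$ along some ray of $\Delta^1$; by hypothesis this ray is not contained in a hyperbolic axis, so fact~(3) of Subsection~\ref{section2.1} forces it to be a single half-infinite edge $e_x$ of $\Delta$ with $x$ as ideal endpoint, and $x$ must therefore be a horodisk centre (tile-vertex at infinity). I take the canonical corner piece at $x$ to be the small sector with tip~$x$, one leg $e_x$, and the other leg the next half-infinite edge of $\Delta$ at $x$ on the $C$-side; this small sector is uniquely determined, in accordance with Lemma~\ref{rooted subtree}. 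The same construction applies at $y$.

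Next I walk along $C$ from $x$ to~$y$ staying on the tile side of~$P$. The strong connectivity of $C$, which holds for convex $\Gamma$-polyhedral $P$ by the observation preceding the lemma, ensures that no tip of $P^c$ lies in the interior of~$C$. Combined with the fact that $\partial_{fin}P$ consists of only finitely many edge paths, this means $\partial_{fin}P$ comes close to~$C$ at only finitely many horodisk centres $z_1,\dots,z_r$ in the interior of~$C$, namely those at which $\partial_{fin}P$ ``dips in'' to a half-infinite edge with ideal endpoint $z_i$. At each such $z_i$ I attach the small sector with tip~$z_i$ whose two legs are the half-infinite edges of $\Delta$ at $z_i$ bounding the strip of $P$ near~$z_i$. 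Between two consecutive pieces in this chain, the remaining portion of the strip of~$P$ adjacent to~$C$ is tessellated by finitely many tiles of $\Delta$, which enter $G[C]$ as single-tile pieces.

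The principal obstacle is verifying these two finiteness claims and then checking that the assembled pieces really cover a disk-topology neighborhood of~$C$ in~$P$. The endpoint hypothesis rules out the only way $\partial_{fin}P$ could accumulate infinitely many edges at $x$ or $y$, namely by running along a hyperbolic axis (fact~(3)); this is where the assumption on hyperbolic axes enters essentially. The strong connectivity of $C$, together with the polyhedrality of $P$ (which bounds the combinatorics of $\partial_{fin}P$), rules out any infinite ``zig-zag'' of $\partial_{fin}P$ along the interior of~$C$, giving the finite list $z_1,\dots,z_r$. Once finiteness is established, canonicity is automatic: each small sector is determined by its tip together with the two adjacent edges of $\partial_{fin}P$ via Lemma~\ref{rooted subtree}, and each single-tile filler is determined by its neighbors in the chain. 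The resulting finite union of small sectors and tiles tessellates a neighborhood of $C$ in $P$ by construction.
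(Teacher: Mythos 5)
Your proof contains a genuine gap. You correctly deduce from fact~(3) that $\partial P$ exits $x$ (and $y$) along a single ray-edge $e_x$, so $x$ is a horodisk centre; this matches the paper's opening step. But the rest of the construction goes astray in a way that cannot be repaired without a new idea.

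The central problem is your identification of finitely many interior points $z_1,\dots,z_r$ of $C$ ``at which $\partial_{fin}P$ dips in to a half-infinite edge with ideal endpoint $z_i$''. No such points exist: by the strong connectivity of $C$ (which you correctly invoke), no interior point of $C$ is a limit point of $P^c$, and a ray of $\partial_{fin}P$ ending at an interior point of $C$ would make that point a limit point of $P^c$. The finite boundary $\partial_{fin}P$ simply does not approach the interior of $C$; the only approach is via $e_x$ at $x$ and $e_y$ at $y$. Consequently you have no intermediate ``chain'' of pieces, and your phrase ``the remaining portion of the strip of~$P$ adjacent to~$C$'' has no definition — there is no inner boundary cutting the strip off from the rest of $P$, so nothing bounds it and no finiteness follows. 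The horodisk centres $z$ that actually appear in the paper's proof are not where $\partial_{fin}P$ approaches $C$; they are the ideal vertices of the tiles meeting the \emph{spine path} $\omega$, a completely different object lying in the interior of $P$.

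That spine path is precisely the missing ingredient. Because the spine $T$ is a tree, there is a unique reduced edge path $\omega\subset T$ joining $e_x\cap T$ to $e_y\cap T$, and $C\cup e_x\cup\omega\cup e_y$ bounds a topological disk $B$; the gallery is $G[C]:=B\cap\mathbb H^2$, which the paper then decomposes into the fans $F_z$ (tiles with an edge on $\omega$ and opposite vertex $z\in C$) and the sectors with tips on $\omega$ — these sectors have \emph{finite} tips and correspond via Lemma~\ref{rooted subtree} to half-trees of $T$. Your ``small sectors with tip at $x$ or $z_i$'' have ideal tips; an ideal sector between two consecutive edges at a horodisk centre is just a single tile, not a small sector in the paper's sense, and it cannot do the job of the sectors with finite tips that interpolate along $\omega$. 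Without the canonical path $\omega$ your gallery has neither a canonical definition nor a finiteness guarantee, so the proof does not go through.
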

\begin{proof} By assumption $\partial P$ continues at $x$ and $y$ with a ray-edge or a line-edge $e_x, e_y$ of $\Delta$.
As $T$ is a tree we have a canonical shortest path in $\Delta^1$ starting 
in $x$, passing through a unique reduced edge path $\omega\subset T$ of length $m\geq 0$ from $e_x\cap T$ to $e_y\cap T$, and ending at $y$. As $T\cap C=\emptyset$ the simple closed path $C\cup\omega$ is the boundary of a topological disk $B$.

For simplicity we first deal with the case when $T$ is contained in the 1-skeleton of $\Delta$, i.e., the triangle $D$ has a unique vertex $v_{\infty}$ in $\partial\mathbb H^2$. In that case $B\cap\mathbb H^2$ is a subcomplex of $\Delta$, and we claim that this is the gallery we are looking for.

Each of the $m$ finite edges of $\omega$ is now the edge of a uniquely defined tile with its opposite vertex in $C$, and we can describe the union 
of these tiles as a finite set of pairwise disjoint \emph{fans} $F_z$ (= 
finite gallery of tiles with the comon vertex $z$), where $z$ runs through a finite subset of $C\cap\Gamma v_{\infty}$. As $C$ is strongly connected each $F_z$ is contained in $P$. The closure of each connected component of the complement  $B-\bigcup_zF_z$ is a sector of $\Delta$ with tip in 
$\omega$ and both legs ray-edges. Such sectors are finite unions of small 
sectors with the same tip. This shows that $G[C]:=B\cap\mathbb H^2$ is the gallery along $C$ as asserted.

In the signature $[q_1,\infty,\infty]$ case the 
vertices of $T$ are in $\Delta$ but not the edges. 
Nevertheless the argument follows the same line: 
Instead of considering tiles with an edge $e\in\omega$ 
we now consider the quadrilaterals $\square$ consisting of 
tile pairs sharing a line-edge of $\Delta$, with the edge $e\in T$ on the 
short diagonal and a vertex of $\square$ in $C$. The signature $[\infty,\infty,\infty]
$ case is even simpler. Here $\omega$ connects the 
centers of the tiles with vertices in the endpoints of 
$C$, and we argue by considering the gallery of tiles 
covering $\omega$.
\end{proof}

We will now consider arbitrary $\Gamma$-polyhedral sets, i.e., finite unions of convex $\Gamma$-polyhedral pieces, $U:=\bigcup_iP_i$, and we can 
assume that the convex polyhedral sets $P_i$ have pairwise disjoint interiors. We find it convenient to express this by saying $U$ is \emph{tessellated} by the \emph{pieces} $P_i$, or that the family $\mathcal{P}:=(P_i)_i$ is a \emph{tessellation} of $U$. But we will avoid calling these pieces "tiles" -- they are infinite unions of the original tiles. 

By a \emph{refinement} of $\mathcal P$ we mean a tessellation $\mathcal P'$ of $U$ with the property that each $P'\in\mathcal P'$ is contained in some $P\in\mathcal P$.

\begin{theorem}\label{Refining tessellations by convex pieces} Each tessellation $\mathcal P$ of $\mathbb H^2$ by a finite set of convex $\Gamma$-polyhedral pieces admits a refinement of the following kind: There is a tessellation $\mathcal P'$ of $\mathbb H^2$ by finitely many single tiles and small sectors, which turns into a refinement of $\mathcal P$ when we take all pieces $P'\in\mathcal P'$ which are small sectors whose middle ray $m$ is a hyperbolic axis, cut them along $m$ in two, and replace $P'$ by the two fragments.  
\end{theorem}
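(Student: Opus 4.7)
The plan is to construct $\mathcal{P}'$ directly by covering, for each $P_i \in \mathcal{P}$ and each limit component $C \subset \partial_\infty P_i$, a neighborhood of $C$ in $\mathbb{H}^2$ using small sectors and single tiles --- via Lemma~\ref{canonical tiles at infinity} at non-axis endpoints and via axis-straddling sectors shared with the neighboring piece at axis endpoints --- and then covering the bounded residue by finitely many single tiles.

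Concretely, for each convex $P_i$ enumerate the (strongly connected) components $C$ of $\partial_\infty P_i$. Classify each endpoint $z$ of each such $C$ as \emph{non-axis} (the boundary $\partial P_i$ continues at $z$ with an edge of $\Delta$ that is not part of a hyperbolic axis) or \emph{axis} (it continues along an edge that is part of a hyperbolic axis $\ell$). When both endpoints of $C$ are non-axis, Lemma~\ref{canonical tiles at infinity} produces a canonical finite gallery $G[C] \subset P_i$ of small sectors and single tiles tessellating a neighborhood of $C$; put its pieces into $\mathcal{P}'$. When $z$ is an axis endpoint, the axis $\ell$ lies on $\partial P_i \cap \partial P_j$ for a unique neighbor $P_j \in \mathcal P$; choose a vertex $v$ on $\ell$ sufficiently near $z$ and define the small sector $S_z$ with tip $v$ and legs the pair of adjacent edges at $v$ symmetric about $\ell$. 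Then $S_z$ has middle ray along $\ell$, covers a full neighborhood of the portion of $\ell$ from $v$ to $z$ on both sides of $\ell$, and is shared between $P_i$ and $P_j$. A component $C$ having one axis endpoint and one non-axis endpoint is handled by truncating the construction of Lemma~\ref{canonical tiles at infinity} at $v$ and gluing in $S_z$ there.

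After filling all limit-component neighborhoods, what remains of $\mathbb{H}^2$ is a bounded region bounded by segments of the various $\partial_{fin} P_i$ together with the inner boundaries of the galleries and straddling sectors already placed. By local finiteness of $\Delta$ this residue meets only finitely many tiles, which tessellate it; add them to $\mathcal{P}'$. The resulting $\mathcal{P}'$ is a finite tessellation of $\mathbb{H}^2$ by single tiles and small sectors. Any small sector of $\mathcal{P}'$ that is not contained in a single piece of $\mathcal{P}$ must be one of the straddling sectors $S_z$; cutting each such $S_z$ along its middle ray $\ell$ splits it into two parts lying in $P_i$ and $P_j$ respectively, and doing this for every straddling sector converts $\mathcal{P}'$ into the desired refinement of $\mathcal{P}$.

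The main obstacle is global consistency: on a shared finite boundary edge between $P_i$ and $P_k$ the galleries $G[C]$ produced from the two sides must agree, and at a shared axis $\ell$ the straddling sector $S_z$ defined from the $P_i$-side must coincide with the one defined from the $P_j$-side. Both checks reduce to the canonical nature of the Lemma~\ref{canonical tiles at infinity} construction and to the reflection symmetry of $\Delta$ about $\ell$; the correspondence between small sectors and leaf-rooted subtrees of the spine $T$ (Lemma~\ref{rooted subtree}) is the most convenient combinatorial framework for making these matches precise, and in the signature $[\infty,\infty,\infty]$ case one additionally has to phrase the argument in terms of ideal sectors since tips at finite vertices are unavailable.
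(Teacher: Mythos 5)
Your proposal is correct and follows essentially the same argument as the paper: apply Lemma~\ref{canonical tiles at infinity} along the limit components, insert canonical axis-straddling small sectors at the axis-endpoints, and cut those along the axis at the end. The one notable difference is organizational --- the paper first excises \emph{all} the straddling sectors $S_x$ from $\mathcal P$, producing a tessellation $\mathcal P^\ast$ of $\mathbb H^2 \smallsetminus \bigcup_x S_x$ in which every remaining limit-component endpoint is non-axis, and only then applies Lemma~\ref{canonical tiles at infinity} uniformly to the pieces of $\mathcal P^\ast$; this sidesteps your ad hoc truncation of the gallery construction, dissolves your global-consistency worry (canonicity takes care of it), and makes clear that your ``bounded residue'' is actually empty, since the galleries and sectors are unions of whole tiles and a bounded union of (necessarily unbounded) tiles must be empty --- which is precisely how the paper concludes that $\mathcal P'$ covers all of $\mathbb H^2$.
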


\begin{proof}  The boundary $\partial\mathbb H^2$ is tessellated by the connected components of the 
limit set $\partial_{\infty}P$, with $P$ running 
through $\mathcal P$. The points on $\partial\mathbb 
H^2$ which are endpoints of these connected 
components are finite in numbers, and we consider the 
subset $X$ consisting of those points $x$ which are 
corner points of some $P\in\mathcal P$, in a position 
where a section of $\partial_{\infty}P$ turns into a 
section of $\partial_{fin}P$ which lies on a 
hyperbolic axis $h_x\subset\Delta^1$. In order to 
apply Lemma \ref{canonical tiles at infinity} we have 
to get around the points $x\in X$. Note that $X$ is 
empty unless the signature of $\Gamma$ has two 
finite entries.

Each vertex on $h_x$ is the tip of a unique 
small sector with its middle ray on $h_x$ and with limit point $x$. These 
small sectors are nested and their intersection is the singleton set $\{x\}$. Since the legs of each sector $S$ isolate~ $x$ from the complement of $S$ we find that infinite edge paths of $\Delta^1$ can only reach $x$ through a ray on $h_x$. This shows: 1) $x$ is a limit point of only the two convex polyhedral corner pieces at $x$, $P_x^+ , P_x^-\in\mathcal P$; and 2) if the tip of such a small sector $S$ is sufficiently close to $x$ then $S\subset U=P_x^+\cup P_x^-$. 

Thus, we have a
canonical choice by taking the one small sector $S_x$ with its middle line on the axis $h_x$, its tip in a vertex of $e_x$, and the property that $S_x$ is maximal with respect $S_x\cap(\mathbb H^2-U)=\emptyset$ Now we 
excise the interior of $S_x$ from the corner pieces $P_x^+$ and $P_x^-$, noting that cutting off the corner $x$ along a ray-edge preserves both convexity and $\Gamma$-polyhedrality. Therefore we can replace in $\mathcal 
P$ the pieces $P_x^+,P_x^+$ by the remaining fractions. The result is a tessellation of $\mathbb H^2-S_x$. It avoids the corner $x$ as its boundary turns into the legs of $S_x$ before it meets the remaining finite fragment of the ray $e_x$.

When we have removed all corners $x\in X$ in this way 
we have transformed the tessellation $\mathcal P$ into a tessellation $\mathcal P^\ast$ of $\mathbb 
H^2-\bigcup_{x\in X}S_x$. Then we can apply Lemma  
\ref{canonical tiles at infinity} to all convex $
\Gamma$-polyhedral pieces $P\in\mathcal P^\ast$ to find, along the connected components of their limit sets $\partial_\infty P$, galleries consisting of single tiles and small sectors. 
Together with the small sectors $S_x, x\in X$ they provide a tessellation 
$\mathcal P'$ of a neighborhood of $\mathbb H^2$. As all our tiles have a 
vertex at infinity, such a tessellation must cover all tiles of $\Delta$; 
hence $\mathcal P'$ is the tessellation of $\mathbb H^2$ claimed to exist 
in the assertion of the theorem.
\end{proof}

\begin{remark}\begin{bf}The scaly spider\end{bf}.
Retrospectively, the type of tessellation that we can always achieve is easy to describe:  We pick a finite subtree $T_0$ of the spine $T$ (the spider's body). Attached to the body are 1) all rays which emanate in the 1-skeleton of $\Delta$ out of $T_0$ and are not a single ray-edge (the spider's legs); and 2) all tiles of $\Delta$ that contain a 1-dimensional part of an edge of $T$ (the spider's scales). Then the complement of the scaled body of the spider is the disjoint union of small sectors with the spider's legs on their middle-line.
\end{remark}

Theorem \ref{Refining tessellations by convex pieces} shows that modulo finite permutations each piecewise isometric pemutation of the set $\Omega$ of all tile-centers is given by isometries restricted to small tessellated sectors and halfs of small sectors (split along a hyperbolic axis), which form a finite gallery along the $\partial\mathbb H^2$. Helpful is the simple fact that describing the restriction of an isometry $\varphi$ to 
a small sector ~$S$ requires only the image of the edge emanating at the root together with the information whether  $\varphi$ preserves the orientation. Restricted to the spine, this corresponds to displacing a leaf-rooted subtree $R$ to another position (at a vertex with the same degree in 
$T$). To describe the half-sector moves in the spine is more subtle: They 
are not quasi-autmorphisms of $T$; rather one has to split the rooted subtree along its tree trunk (which lies on the hyperbolic axis) in two, and 
accept a copy of the trunk in both fragments to keep them connected.

\subsection{Piecewise planar tree isometric permutations.} \label{piecewise planar tree isometric permutations}

For simplicity we will from now on restrict the focus to the case when $\Delta^1$ contains no hyperbolic axis, and when the acting group is the orientation preserving subgroup of $\Gamma^\ast\leqslant\Gamma$. Then Theorem \ref{Refining tessellations by convex pieces} asserts that each tessellation $\mathcal P$ of $\mathbb H^2$ by a finite set of convex $\Gamma$-polyhedral pieces admits a refinement consisting of single tiles and small 
sectors. 

Thus, a piecewise $\Gamma^\ast$-isometric permutation $g\in G_{\Gamma^\ast}(\Omega)$ is now given by the restriction of isometries to finitely many tiles and sectors. 
As isometries respect the spine $T$, Lemma 
\ref{rooted subtree} tells us how to translate this 
information to the corresponding tessellation of the 
spine $T$ by a finite set of edges and a subforest $F$ of finitely many leaf-rooted subtrees $R_i$ (the maximal subtrees of $S\cap T$ as $S$ runs through the small sectors of the gallery along $\partial\mathbb H^2$). As 
$T$ and  $R_i$ intersect the boundary of small sectors only in their tips 
the subtrees $R_i$ tessellate $T$ outside a finite subgraph. More precisely: The restriction of $g$ embeds each $R_i$ by a planar-tree isomorphism 
onto the trees of a subforest ~$F'$ such that $T-F$ and $T-F'$ have the same number of vertices. 

In the present framework it is natural to say that $g$ induces on the spine $T$ a \emph{piecewise planar-tree isometric} (ppti-isometric) vertex permutation. Thus we found a homomorphism of $G_{\Gamma^\ast}(\Omega)$ into the group of all a piecewise planar-tree isometric vertex permutation of $T$ which we term ppti$(T)$. 

Conversely: Using Lemma \ref{rooted subtree} one observes that each tessellation of $T$ by finitely many leaf-rooted subtrees and single edges can 
be refined to a tessellation by finitely many single edges and leaf-rooted $R_i$ whose convex closure are (or at least contained in) small sectors 
$S_i$; and each planar-tree isometry on $R_i$ can be represented by a metric-tree isometry on $R_i$ and then extended to an isometry on $S_i$. Hence $G_{\Gamma^\ast} (\Omega)\cong$ppti$(T)$.

By definition piecewise planar-tree isometric vertex permutations respect 
not only the pairs of end points of almost all edges of $T$ but also the cyclic ordering of the stars at almost all vertices of $T$. Hence $g$ can 
also be referred to as a quasi-(planar-tree) automorphism of the spine $T$ -- see (\cite{leh08}, \cite{ls07}, \cite{bmn13}, and \cite{nst15}).

Hence we can also summarize:
\begin{corollary}\label{quasi-automorph} Let $\Gamma^\ast\leqslant\Gamma$ 
be the orientation preserving subgroup of a triangle group with signature 
$[q_1,q_2,\infty]$. If at least one of $q_1 $or $q_2$ is odd or infinite, 
then the group of all piecewise $\Gamma^\ast$-isometric permutations of the tile centers, $G_{\Gamma^\ast}(\Omega)$ coincides with the quasi-(planar-tree) automorphism group {\rm  ppti(ver}(T)) of the spine $T$ of the tessellation $\Delta$.\hfill$\square$
\end{corollary}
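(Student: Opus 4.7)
The plan is to observe that the corollary follows from the preceding discussion together with the fact that the stated hypothesis on $(q_1,q_2)$ implies the standing assumption of Subsection~\ref{piecewise planar tree isometric permutations} that $\Delta^1$ contains no hyperbolic axis. The main obstacle is precisely this no-hyperbolic-axis claim; once it is in place, the rest reduces to the already-established correspondence between $\Gamma^\ast$-piecewise isometric rearrangements and piecewise planar-tree isometries of the spine.

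First I would verify the no-hyperbolic-axis claim. A hyperbolic axis in $\Delta^1$ would be a geodesic line $\ell$ tessellated by infinitely many finite edges; at every vertex $v$ of $T$ through which $\ell$ passes, $\ell$ must continue along a pair of ``opposite'' finite edges meeting at angle $\pi$. Since the $q$ finite edges at a degree-$q$ vertex of $T$ are equally spaced at angle $2\pi/q$, an opposite pair exists only when $q$ is even and finite. In the signature $[q_1,q_2,\infty]$ case with both $q_1,q_2$ finite, $T$ is bipartite and $\ell$ alternates between $v_1$- and $v_2$-type vertices, so both $q_1$ and $q_2$ would then have to be even and finite; and when $q_2=\infty$, the spine is built from reflection-segments and $\Delta^1$ contains no finite edges at all, so no such axis can exist. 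Either way, the hypothesis excludes hyperbolic axes.

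With $\Delta^1$ axis-free, Theorem~\ref{Refining tessellations by convex pieces} refines every $\Gamma$-polyhedral tessellation into single tiles and whole small sectors (no ``half-sectors'' from axis-splitting), and Lemma~\ref{rooted subtree} identifies these small sectors with leaf-rooted subtrees of $T$. The discussion preceding the corollary then produces the isomorphism $G_{\Gamma^\ast}(\Omega)\cong \mathrm{ppti}(T)$ by realizing each planar-tree isomorphism of leaf-rooted subtrees --- uniquely determined by the matching of directed root edges, with the cyclic orderings at all inner vertices then propagating the map forward inductively --- as an orientation-preserving isometry in $\Gamma^\ast$, via the transitivity of $\Gamma^\ast$ on the relevant directed root edges coming from its cyclic vertex stabilizers. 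The corollary is the same isomorphism rephrased in the terminology of quasi-(planar-tree) automorphisms: a ppti-permutation preserves all but finitely many edges and cyclic orderings of stars, and conversely every such quasi-automorphism can be cut along its finite defect and reassembled piecewise from planar-tree isomorphisms exactly as above.
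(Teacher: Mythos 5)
Your proposal is correct and matches the intent of the paper, which leaves this corollary as a summary (``Hence we can also summarize'') of the discussion in Subsection~\ref{piecewise planar tree isometric permutations}. The one genuinely non-trivial step the paper leaves implicit --- and which you correctly identify and supply --- is the verification that the signature condition forces $\Delta^1$ to be free of hyperbolic axes, so that the standing hypothesis of that subsection applies. Your angle count is right: in the $[q_1,q_2,\infty]$ case with both $q_i$ finite, the $q_i$ finite $T$-edges at a degree-$q_i$ vertex are spaced $2\pi/q_i$ apart (consecutive finite edges being separated by two $\pi/q_i$ triangle angles), so an opposite pair at angle $\pi$ requires $q_i$ even; since an axis, being tessellated by copies of $e_3$, alternates between $v_1$- and $v_2$-type vertices, it forces both $q_1,q_2$ even, which the hypothesis excludes. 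And when some $q_i=\infty$ the $1$-skeleton $\Delta^1$ contains no finite edges at all, so no line in $\Delta^1$ is tessellated by finite edges. With that in place, the translation to $\mathrm{ppti}(T)$ via Theorem~\ref{Refining tessellations by convex pieces} and Lemma~\ref{rooted subtree} is exactly what the preceding text establishes, and your closing remark about reading a ppti-permutation as a quasi-automorphism (and back, by enclosing the finite defect in a subtree) matches how the paper uses the terminology. So this is the same route as the paper, with the unstated geometric hypothesis made explicit.
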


\subsection{Connection with Thompson's groups}\label{Connection with Thompson's groups}
Interpreting the statement of Corollary \ref{quasi-automorph} for the triangle group $\Gamma$ with signature $[\infty,\infty,\infty]$ yields the connection with Thompson's groups: in this case the spine $T$ is the the infinite binary (planar) tree $T_2$ that has always been around when Thompson's groups (and their generalizations) have been investigated in terms of generators or as groups of piecewise linear homeomorphisms of the Cantor set on the real line or on $\partial\mathbb H^2$. 

Thus, we infer that for triangle groups with signature $[q_1,q_2,\infty]$, and at least one of $q_1, q_2$ odd or infinite, the quotients $G_{\Gamma^\ast}(\Omega)/{\rm sym}(\Omega)$ are straigtforward generalizations of Thompson's group $V$. Hence roofs in the literature (e.g., \cite{wz14}), showing that these generalized Thompson groups are of type $F_\infty$, also apply in our situation. 

However, to extend the results on $G_{\Gamma^\ast}
(\Omega)/{\rm sym}(\Omega)$ for more 
general $\Gamma$, or on the groups 
$G_{\Gamma^\ast}(\Omega)$ themselves, it seems more rewarding to 
skip the detour to the spine $T$ but rather try 
to modify tools that were successful for Thompson's 
groups: Instead of tree-parameters and the 
partially ordered set of rooted subtrees of $T$ one 
might be able to use hyperbolic plane parameters 
and the partially ordered set of small sectors (or 
halfs of small sectors) of the tessellated 
hyperbolic plane to find some understanding of $G_
\Gamma(\Omega)$ when the 1-skeleton of $\Delta$ contains a hyperbolic axis -- with a bit of luck even in the case when $\Gamma$ is a cocompact triangle group.

Instead of trying to do this one-handedly it would be interesting to know 
how much of that can already be covered (or promoted) by the cloning systems of \cite{wz14} or the abstraction of \cite{fh20}.

\chapter{The Euclidean case I: The structure of pei(S)}

\section{Orthohedral sets}

\subsection{Integral orthants in \texorpdfstring{$\mathbf{\mathbb Z^N}$}{Z^N}.}\label{Integral orthants}~ In the standard $N$-dimensional Euclidean integral lattice 
$\mathbb Z^N $ we consider affine-orthogonal transformations
$$\begin{aligned}
\mbox{$\tau_{a,A}:  \mathbb Z^N \to
\mathbb Z^N$}\\
\tau_{a,A}(x) =  a+Ax,\end{aligned}$$
where $A\in O(N,\mathbb Z)$ is an
integral orthogonal matrix and $a \in \mathbb Z^N $.
Inside $\mathbb Z^N$ we have the \emph{standard orthant of rank} $N, \mathbb N^N
\subseteq\mathbb Z^N$, and all images of its $k$-dimensional faces, $0\le 
k\le
N$, under affine-orthogonal transformations.
More precisely: the subsets $L = \tau_{a,A} \langle Y \rangle \subseteq\mathbb Z^N $, where $\langle Y \rangle $ stands for the monoid generated 
by the $k$-element set $Y$ of canonical basis vectors.
We call $L$ an \emph{integral orthant} (of rank $k$, and based at $a\in L$) of
$\mathbb Z^N $ or just a \emph{rank-$k$} orthant. 

We write $\Omega^k$ for the set of all rank-$k$ orthants of $\mathbb Z^N$ 
and $\Omega^\ast$ for the union $\bigcup_k\Omega^k$.
$\Omega^\ast$ is partially ordered by inclusion, with $\Omega^0=\mathbb 
Z^N$. The subset of all orthants based at the origin 0
will be denoted by $\Omega_0^\ast \subseteq \Omega^\ast$; it retracts the 
order preserving projection $\tau:\Omega^\ast
\rightarrow \Omega_0^\ast$ which associates to each orthant $L\in \Omega^\ast$ based at $a\in\mathbb Z^N $ its unique
parallel translate $\tau(L)= -a + L\in\Omega_0^\ast$. $\tau(L)$ is characterized by its canonical basis $Y = \{y \in
\pm X~|~ a+\mathbb Ny \subseteq L\}$ which indicates the directions of $L$; hence we call $\tau(L)$ the indicator of $L$.
Note that $Y$ is given by the function $f$: $X$ $ \rightarrow \{0, 1, -1\}$ with  $f(x) = \epsilon\in \{1, -1\}$ if  
$\epsilon x\in Y$, and $f(x) = 0$ if  $\{x, -x\}\cap Y = \emptyset$; hence $|\Omega_0^\ast| = 3^N$. 

We call a subset $S\subseteq\mathbb Z^N$ \emph{orthohedral} if it is the union of a finite set of orthants - without losing
generality we can assume that the union is disjoint. The \emph{rank} of $S$, denoted ${\rm rk}S$, is the maximum rank of
an orthant contained in $S$. If $S$ is isometric to $\mathbb N^k \times \{1, 2, \ldots , h\}$, we call it a \emph{stack of orthants of height} $h$. The terminology agrees with the \emph{height} $h(S)$
of an arbitrary orthohedral set $S\subseteq\mathbb Z^N$, defined as the number of orthants of maximal rank, ${\rm rk}S$, which
participate in a pairwise disjoint finite decomposition of $S=L_1\cup L_2\cup \ldots \cup L_m$ -- see Section \ref{the groups} in the
introduction.

\begin{lemma}\label{closed under complements}
Orthohedrality of subsets $S\subseteq\mathbb Z^N$ is closed under the set 
theoretic operations of taking intersections, 
unions, and complements.
\end{lemma}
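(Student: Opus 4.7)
The plan is to prove the three closure properties in the order: union, intersection, complement, with intersection and complement both reducing to claims about a single pair of orthants.

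Closure under finite unions is immediate from the definition: a finite union of finite unions of orthants is again a finite union of orthants (and we may always split overlaps into singletons afterwards to make the union disjoint if needed). Using De Morgan's law, closure under intersection together with closure of complements of single orthants will yield closure under complements in general, because
\[
\left(\bigcup_{j=1}^m L_j\right)^{\!c} \;=\; \bigcap_{j=1}^m L_j^{\,c}.
\]
So the work reduces to two lemmas: (a) the intersection $L_1 \cap L_2$ of two orthants is orthohedral, and (b) the complement $L^c$ of a single orthant is orthohedral.

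For (a), I would argue coordinate by coordinate. Each orthant $L_j$ is specified by a base point $a_j$ and an indicator $f_j \colon \{1,\dots,N\} \to \{-1,0,1\}$; in the $i$-th coordinate the defining condition is $x_i = a_j(i)$ when $f_j(i)=0$ and $f_j(i)(x_i - a_j(i)) \ge 0$ otherwise. Intersecting the two conditions in coordinate $i$ gives one of four possibilities: empty, a single point, a ray, or a finite interval (which happens exactly when $f_1(i)$ and $f_2(i)$ are nonzero with opposite signs). Since $L_1\cap L_2$ is the product over $i$ of these one-dimensional sets, and a finite interval is a finite disjoint union of singletons (rank-$0$ orthants), the full intersection is a finite disjoint union of orthants. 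Distributing then gives orthohedrality of arbitrary finite intersections of orthohedral sets.

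For (b), I would use a "first-violation" decomposition. Fix an ordering of the coordinates and partition
\[
L^c \;=\; \bigsqcup_{i=1}^{N}\Big\{x\in\mathbb Z^N : C_1(x_1),\ldots,C_{i-1}(x_{i-1}) \text{ hold, but } C_i(x_i) \text{ fails}\Big\},
\]
where $C_i$ is the one-dimensional condition imposed by $L$ in coordinate $i$. In each summand the first $i-1$ coordinates are constrained as in $L$ (a point or a ray), the $i$-th coordinate ranges over the one-dimensional complement of $C_i$ (one or two half-rays, possibly minus a point, all orthohedral in $\mathbb Z$), and the remaining coordinates range over all of $\mathbb Z = \mathbb N \cup \{-1,-2,\dots\}\cup\{0\}\smallsetminus\{0\}\cup\{0\}$, which itself is an orthohedral subset of $\mathbb Z$. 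Taking products, each summand is orthohedral in $\mathbb Z^N$, so $L^c$ is orthohedral, completing (b).

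The only genuine bookkeeping point -- and what I expect to be the main (very mild) obstacle -- is verifying that products of one-dimensional orthohedral sets really are orthohedral in $\mathbb Z^N$; this follows because the indicator/base description factors over coordinates, so a product of (ray or point) in each coordinate is an orthant, and finite unions distribute over products.
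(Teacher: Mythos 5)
Your proof is correct, and it reaches the result by a somewhat different decomposition strategy than the paper, though both arguments are in the same elementary, coordinate-by-coordinate spirit. The paper's route is to observe that orthants, their complements, and their intersections are all governed by finite intersections of coordinate half-spaces $\{x : \pm x_i \ge c\}$, and to prove by induction on the number of half-spaces (with a case split on whether two of them are parallel) that such an intersection is a finite disjoint union of orthants; the closure properties then follow by De Morgan-style ``set-theoretic tautologies.'' You instead prove two separate lemmas directly in terms of orthants: that the intersection of two orthants factors over coordinates into pieces that are points, rays, or finite intervals (the last of which you split into singletons), and that the complement of a single orthant admits a ``first-violation'' decomposition indexed by the first coordinate where the defining condition fails, each piece being a product of one-dimensional orthohedral sets. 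Both arguments buy the same thing; yours is perhaps slightly more explicit about the disjointness of the resulting decomposition (the paper leaves the disjointification implicit), while the paper's reduction to half-spaces makes all three closure operations fall out of a single statement rather than two. One small inaccuracy worth noting: in your case analysis for the intersection, you say the finite-interval case happens ``exactly when'' $f_1(i)$ and $f_2(i)$ are nonzero with opposite signs, but in that configuration the intersection can also be empty or a single point; this does not affect the conclusion, since you already list those among the four possibilities.
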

\begin{proof} The main observation here is that the intersection of a finite set of half-spaces of $\mathbb Z^N$ (each defined by an upper or lower bound on one coordinate) is a disjoint union of finitely many orthants. 
We prove this by induction. If two of these half-spaces, $H, H'$, are bounded by parallel hyperplanes then either one of them is redundant or their intersection $ H\cap H'$ is a (possibly empty) finite union of lower dimensional subspaces. In both cases we are reduced to the intersection of fewer half-spaces. If no pair of the half spaces have parallel boundary there are only $k\leq N$ of them, and their intersection is isometric to $\mathbb Z^{N-k} \oplus\mathbb N^k $ and hence is a finite union of $2^{N-k}$ rank-N orthants. The assertion of the lemma follows now by set-theoretic tautologies.
\end{proof}
\begin{remark}
As a consequence we note that the orthohedral subsets of $\mathbb Z^N $ are precisely the $\mathbb Z^N $-polyhedral subsets of the lattice $\mathbb Z^N $ as defined in Section~ \ref{Integral orthants}.
\end{remark}

We write $\Omega^k(S)=\{L\in\Omega^k | L\subseteq S\}$ for the set of all rank-$k$ orthants of $S$, $\Omega^\ast(S)$ for the disjoint union over 
k, and $\Omega_0^\ast(S)$ for the
set of all orthants of S based at the origin 0. We consider the restriction of the indicator map $\tau:\Omega^\ast(S)
\rightarrow \Omega_0^\ast$. We write $S_\tau\subseteq \mathbb Z^N$ for the union of all orthants in $\tau(\Omega^\ast(S))$
and call this the indicator image of S. Note that $\tau(\Omega^\ast(S))=\Omega_0^\ast(S_\tau)$, and we can view the
indicator map as a rank preserving surjection \mbox{$\tau:\Omega^\ast(S) \twoheadrightarrow \Omega_0^\ast(S_\tau)$}.

\subsection{Germs of orthants.}\label{Germs of orthants}
Two orthants $L, L'$  in $\Omega^\ast$ are said to be
\emph{commensurable} if ${\rm rk}L = {\rm rk}(L\cap L') = {\rm rk}L'.$ We write $\gamma (L)$ for the commensurability class of $L$ and call 
it the \emph{germ} of $L.$ The union of all members of $\gamma (L)$ is a coset of a subgroup of $\mathbb Z^N$; we denote it by $\langle L\rangle$ $\subseteq\mathbb Z^N$ and call it the tangent coset of S at $\gamma$. The germs inherit from their representing orthants $L$ the \emph{rank}, 
relations like \emph{parallelism and orthogonality}, and also a \emph{partial ordering} defined as follows: given two
germs $\gamma, \gamma'$ we put $\gamma\leq\gamma'$ if they can be represented by orthants $L, L' \in\Omega^\ast$ with
$L\subseteq L'$. Note that if $L$ and $L'$ are arbitrary orthants representing $\gamma$ and $\gamma'$, respectively, then
$\gamma\leq\gamma'$ if and only if (1) $L'$ contains an orthant parallel to $L$ and (2) $L\subseteq\langle L'\rangle$.

We write $\Gamma^\ast(S)= \bigcup_k\Gamma^k(S)$ for the set of all germs of orthants in $S$ and $\Gamma_0^\ast(S)$ for
the set of all germs represented by an orthant of $S$ based at the origin 
0. $\Gamma^\ast(\mathbb Z^N)$ and
$\Gamma_0^\ast(\mathbb Z^N)$ are abbreviated by  $\Gamma^\ast$ and  $\Gamma_0^\ast$, respectively. As $\Gamma_0^\ast$
and $\Omega_0^\ast$ are canonically bijective, we will identify them when 
this is convenient. Note that $\Gamma^\ast(S)$
is a convex subset of $\Gamma^\ast$ in the sense that if $\gamma\in\Gamma^\ast(S)$ then $\{\gamma'\in\Gamma^\ast~|~
\gamma'\leq\gamma\}\subseteq\Gamma^\ast(S)$. We can interpret the indicator map as an order and rank preserving surjection
\mbox{$\tau:\Gamma^\ast(S) \rightarrow \Gamma_0^\ast$} with $\tau(\Gamma^\ast(S))= \Gamma_0^\ast(S_\tau)$. By
max$\Gamma^\ast(S)$ we mean the set of all maximal germs of $S$.

\begin{exercise}Observe that $\tau({\rm max}\Gamma^\ast(S)) \supseteq {\rm max}\Gamma_0^\ast(S_\tau)$, but this is not, in general, an equality.
\end{exercise}

\begin{lemma}\label{lemma3.2} \emph{max}$\Gamma^\ast(S)$ is finite for each orthohedral set $S$. The set of all germs of
rank $n = {\emph rk}S$ is a subset of \emph{max}$\Gamma^\ast(S)$, whose 
cardinality coincides with the height $h(S)$.
Hence $h(S)$ is independent of the particular decomposition of $S$.
\end{lemma}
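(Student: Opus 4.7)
The plan is to fix a disjoint decomposition $S = L_1 \sqcup \cdots \sqcup L_m$ into orthants and to show that every germ of $S$ is realized by a suborthant of some $L_i$. The main technical step is the following \emph{density observation}: if $L \subseteq S$ is any rank-$k$ orthant, then some $L \cap L_i$ contains a rank-$k$ suborthant parallel to $L$. To see this, note that each $L \cap L_i$ is orthohedral by Lemma~\ref{closed under complements}. If none of them contained a rank-$k$ parallel suborthant, each $L \cap L_i$ would be a finite union of orthants of rank $< k$. A rank-$j$ orthant meets the Euclidean ball $B_R$ in $\Theta(R^j)$ lattice points, so $|L \cap L_i \cap B_R| = O(R^{k-1})$ for each $i$; summing over $i$ this contradicts $|L \cap B_R| = \Theta(R^k)$ for large $R$. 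The resulting rank-$k$ suborthant $L' \subseteq L_i$ is automatically commensurable with $L$, hence represents the same germ $\gamma(L)$.

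Armed with this, every germ $\gamma \in \Gamma^\ast(S)$ admits a representative lying inside some $L_i$. If $\gamma$ is maximal, such a representative $L' \subseteq L_i$ must in fact have the same rank as $L_i$: otherwise $\gamma = \gamma(L') < \gamma(L_i) \in \Gamma^\ast(S)$ would violate maximality. Two orthants in $L_i$ of rank equal to $\mathrm{rk}\,L_i$ are commensurable, so $\gamma = \gamma(L_i)$. Consequently $\max\Gamma^\ast(S) \subseteq \{\gamma(L_1),\ldots,\gamma(L_m)\}$, which is finite.

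For the rank-$n$ statement ($n = \mathrm{rk}\,S$), any rank-$n$ germ is automatically maximal, since no higher-rank germ exists in $\Gamma^\ast(S)$; by the previous paragraph it coincides with $\gamma(L_i)$ for some $i$ with $\mathrm{rk}\,L_i = n$. Distinct indices $i \neq j$ with $\mathrm{rk}\,L_i = \mathrm{rk}\,L_j = n$ give distinct germs, for commensurability would force $L_i \cap L_j$ to have rank $n$ and in particular to be nonempty, contradicting the disjointness of the decomposition. Hence the rank-$n$ germs are in bijection with the rank-$n$ members of the decomposition, and their count equals $h(S)$; since the left-hand side is intrinsic to $S$, this equality simultaneously certifies that $h(S)$ does not depend on the chosen decomposition. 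The only delicate point is the density observation, but the point-counting bound makes it robust.
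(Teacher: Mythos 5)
Your argument is correct and follows essentially the same route as the paper's proof: intersect an arbitrary orthant $L\subseteq S$ with the pieces of a fixed disjoint decomposition, conclude that every germ of $S$ is dominated by some $\gamma(L_j)$ (so the maximal germs lie among the finitely many $\gamma(L_j)$), and identify the rank-$n$ germs bijectively with the rank-$n$ orthants of the decomposition. The only difference is that your lattice-point-counting \emph{density observation} explicitly justifies the step the paper merely asserts, namely that (exactly) one piece $L\cap L_j$ contains a rank-$\mathrm{rk}(L)$ suborthant and is therefore commensurable with $L$.
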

\begin{proof} Let $S = \bigcup_j L_j$ be an arbitrary decomposition of $S$ as a finite pairwise disjoint union of
orthants $L_j$. Each orthant $L\subseteq$ is the disjoint union of the orthants $M_j = L\cap L_j$, and
exactly one of them is commensurable to $L$. Hence $\gamma(L)  =  \gamma(M_j)\subseteq\gamma(L_j)$. This shows that each
germ $\gamma\in\Gamma^\ast(S)$ is smaller than or equal to one of the $\gamma(L_j)$. In particular, max$\Gamma^\ast(S)$ is
contained $\{\gamma(L_j)~|~j\}$ and hence finite. The orthants $L_j$  of rank $n$ form a complete set of representatives
of all orthants of rank $n$.
\end{proof}

\begin{remark}
We leave it to the reader to deduce that $h(S\cup S') = h(S)+h(S')$, if 
$S$ and $S'$ are orthohedral sets with ${{\rm rk}}(S) = {{\rm rk}}(S') > {{\rm rk}}(S\cap S')$. 
\end{remark}

\subsection{Piecewise isometric maps.}
Let $S\subseteq\mathbb Z^N$ be an orthohedral subset. We call a map $f: S\to\mathbb Z^N$ \emph{piecewise-Euclidean-isometric} (abbreviated as \emph{pei-map}), if $S$ is covered by a finite set $\Lambda$ of pairwise disjoint orthants, with the property that the restriction of $f$ to each orthant $L\in\Lambda$ is an isometric embedding $f_{\mid{L}}:L\rightarrow S$. 
The \emph{support} of $f\in G(S)$,~ ${\rm supp}(f)=\{a\in S\mid ag~\neq~a\}$, is orthohedral, and we refer to its rank also as \emph{the rank of 
$f$}, denoted $rk(f)$.
\par
Correspondingly, we call $f$  \emph{piecewise Euclidean-translation map} (abbreviated as \emph{pet-map}), if  $S$ is a
finite disjoint union of orthants with the property, that the restriction 
of $f$ to each of them is given by the
restriction of a translation.
\par
If a bijection $f: S\to S'$ is a pei-map (respectively, a pet-map), so is 
$f^{-1}$ and we say that $S$ and $S'$ are
\emph{pei-isomorphic} (respectively, a \emph{pet-isomorphic}). 

By the argument used in the proof of Lemma \ref{lemma3.2} above one shows 
that if $f$ is a pei-map, then each orthant
$L\subseteq S$ contains a commensurable suborthant on which $f$ restricts 
to an isometric embedding. In fact, we leave it
to the reader to observe the following:
\begin{lemma}\label{lemma3.3} Let \mbox{$f:S\to \mathbb Z^N$} be an injective map on an orthohedral set
$S\subseteq\mathbb Z^N$. Then $f$ is a pei(resp. pet)-injection if and only if every orthant $L$ of $S$
contains a commensurable suborthant $L'\subseteq L$ on which $f$ is given 
by an isometry (respectively, a translation) onto
$f(L')\subseteq\mathbb Z^N$.
\end{lemma}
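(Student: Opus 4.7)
The plan is to handle the two directions separately. The forward direction is essentially sketched in the paragraph preceding the lemma: if $S = L_1 \sqcup \cdots \sqcup L_m$ is a pei-decomposition with $f|_{L_j}$ the restriction of an isometry, and $L\subseteq S$ is any orthant, write $L = \bigsqcup_j (L\cap L_j)$. By the rank argument used in the proof of Lemma~\ref{lemma3.2}, exactly one piece $M_{j_0}=L\cap L_{j_0}$ is commensurable with $L$; setting $L':=M_{j_0}$ exhibits a commensurable suborthant on which $f$ restricts to an isometry. The pet-version is identical.

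For the converse I would proceed by induction on $n=\mathrm{rk}(S)$. The base case $n=0$ is immediate, since then $S$ is a finite union of singleton (rank-$0$) orthants and any injective map is trivially pet. For the inductive step, fix an arbitrary orthant decomposition $S=L_1\sqcup\cdots\sqcup L_m$ and apply the hypothesis to each $L_j$ to obtain a commensurable suborthant $L_j'\subseteq L_j$ on which $f$ is an isometry (resp.\ a translation).

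The crucial observation is that each $L_j\smallsetminus L_j'$ is orthohedral of rank strictly less than $\mathrm{rk}(L_j)$. Orthohedrality is immediate from Lemma~\ref{closed under complements}. For the rank drop, commensurability forces $L_j'$ to share the directional indicator of $L_j$, so after an isometry of $\mathbb Z^N$ we may write $L_j=\mathbb N^k$ and $L_j'=c+\mathbb N^k$ with $c=(c_1,\dots,c_k)\in\mathbb N^k$; then
\[
L_j\smallsetminus L_j'\;=\;\bigcup_{i=1}^{k}\{x\in\mathbb N^k : x_i<c_i\},
\]
a finite union of orthants of rank $\leq k-1$. Consequently $S':=\bigsqcup_j (L_j\smallsetminus L_j')$ is orthohedral with $\mathrm{rk}(S')<\mathrm{rk}(S)$.

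The local hypothesis is inherited by $S'$: any orthant $K\subseteq S'\subseteq S$ still admits, by the hypothesis on $S$, a commensurable suborthant $K'\subseteq K$ on which $f$ is isometric (resp.\ a translation), and $K'\subseteq S'$ automatically. By induction $S'$ admits a pei- (resp.\ pet-) decomposition $M_1\sqcup\cdots\sqcup M_p$, and concatenating with the $L_j'$ gives the required decomposition
\[
S\;=\;L_1'\sqcup\cdots\sqcup L_m'\;\sqcup\;M_1\sqcup\cdots\sqcup M_p.
\]
The only step that needs care is the rank-drop claim, which genuinely uses that $L_j'$ is \emph{commensurable} with $L_j$ (same rank and same direction indicator) rather than just a suborthant; once this is in place the induction runs smoothly and the pei- and pet-cases are proved in parallel.
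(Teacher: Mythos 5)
Your proof is correct. The paper explicitly leaves this lemma to the reader, offering only a one-line hint for the forward direction, so there is no paper proof to compare against; your induction on rank supplies the content that is actually missing. In the forward direction you repeat (and inherit) a slight imprecision already present in the paper's proof of Lemma~\ref{lemma3.2}: the pieces $M_j = L\cap L_j$ of $L$ are orthohedral but need not be single orthants (coordinates where the indicators of $L$ and $L_j$ point in opposite directions produce bounded intervals), so ``$M_{j_0}$ commensurable with $L$'' is not literally defined, and $L':=M_{j_0}$ may fail to be an orthant. The fix is the obvious one: $M_{j_0}$ has rank ${\rm rk}(L)$, hence contains a rank-${\rm rk}(L)$ orthant $L'\subseteq L$, which is automatically commensurable with $L$ and still lies in $L_{j_0}$. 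The converse is the substance, and the induction on rank is exactly the right mechanism. The rank-drop claim is the crux, and your computation correctly exploits that commensurability pins down the direction indicator, so that after a coordinate change $L_j'=c+\mathbb N^{k_j}$ and $L_j\setminus L_j'$ decomposes into finitely many rank-$(k_j-1)$ slabs $\{x:x_i<c_i\}$; hence ${\rm rk}(S')<{\rm rk}(S)$. The inheritance of the local hypothesis by $S'$ is trivial, as you note, since every orthant of $S'$ is an orthant of $S$ and the commensurable suborthant it yields lies back inside $S'$. Concatenating the $L_j'$ with the inductively obtained decomposition of $S'$ closes the argument, in both the pei and pet cases.
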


It follows that every injective pei-map \mbox{$f: S \to \mathbb Z^N$} induces a rank preserving injection \mbox{$f_\ast:
\Gamma^\ast(S) \to \Gamma^\ast(f(S))$}. $f_\ast$ does not preserve the ordering -- not even if $f$ is a pet-map. But
since it is rank-preserving, it does induce a bijection between the germs 
of maximal rank of $\Gamma^\ast(S)$ and
$\Gamma^\ast(f(S))$, whence $h(f(S)) = h(S)$.
The following observations can be left as an exercise:
\begin{lemma}\label{lemma1.4} If \mbox{$f: S \to \mathbb Z^N$} is a pet-map, then $f_\ast(\gamma)$ is parallel to $\gamma$
for each $\gamma\in \Gamma^\ast(S)$. Hence $\tau(f_\ast(\gamma)) = \tau(\gamma)$, and  $S_\tau = f(S)_\tau$. In other
words we have the commutative diagram
$$
\begin{xy}
  \xymatrix{
      \Gamma^\ast(S)  \ar[r]^{f_\ast} \ar[d]_\tau &   \Gamma^\ast(f(S)) \ar[d]_\tau \\
      \Gamma_0^\ast(S) \ar@{=}[r] & \Gamma_0^\ast(f(S)_\tau)   
  }
\end{xy}
$$
 \end{lemma}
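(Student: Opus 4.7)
The plan is to lift the statement from orthants to germs by means of Lemma \ref{lemma3.3}, and then to deduce the indicator statements as formal consequences of parallelism.

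First, given a germ $\gamma \in \Gamma^\ast(S)$, represent it by an orthant $L \subseteq S$. Since $f$ is a pet-map, Lemma \ref{lemma3.3} supplies a commensurable suborthant $L' \subseteq L$ on which $f$ restricts to a translation. A translation preserves the canonical basis directions of an orthant, so $f(L')$ is an orthant parallel to $L'$. But commensurable orthants are parallel (they have the same rank and share a common suborthant, hence the same spanning directions), so $L \parallel L' \parallel f(L')$, and therefore $f_\ast(\gamma) = \gamma(f(L'))$ is parallel to $\gamma$. Since the indicator $\tau$ depends only on the direction set $Y \subseteq \pm X$, parallel orthants have identical indicators, which yields $\tau(f_\ast(\gamma)) = \tau(\gamma)$ and thus commutativity of the displayed diagram.

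For the equality $S_\tau = f(S)_\tau$, recall that $S_\tau$ is the union of the orthants in $\tau(\Omega^\ast(S)) = \Omega_0^\ast(S_\tau)$. The inclusion $S_\tau \subseteq f(S)_\tau$ is immediate from the previous paragraph: for every $L \subseteq S$ the suborthant $L' \subseteq L$ produces $f(L') \subseteq f(S)$ with $\tau(L) = \tau(f(L'))$. For the reverse inclusion, start with an orthant $M \subseteq f(S)$ and decompose $S = L_1 \sqcup \cdots \sqcup L_m$ into the orthants on which $f$ is given by translations. Then $f(S) = f(L_1) \sqcup \cdots \sqcup f(L_m)$, and arguing as in Lemma \ref{lemma3.2}, exactly one of the intersections $M \cap f(L_i)$ is commensurable with $M$; call it $M'$. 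Since $f|_{L_i}$ is an injective translation, $f^{-1}(M') \subseteq L_i$ is an orthant in $S$ parallel to $M'$, hence to $M$, so $\tau(M) \in \tau(\Omega^\ast(S))$, proving $f(S)_\tau \subseteq S_\tau$.

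I do not expect a real obstacle: the only point that requires slightly more care than a one-line verification is the reverse inclusion for $S_\tau = f(S)_\tau$, where one must locate an orthant of $S$ parallel to a prescribed orthant $M \subseteq f(S)$. This is handled by the same ``commensurability pigeonhole'' argument used in the proof of Lemma \ref{lemma3.2}, applied to the fixed translation-decomposition of $f$.
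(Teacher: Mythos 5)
Your proof is correct. The paper states this lemma without proof, explicitly remarking that ``The following observations can be left as an exercise,'' and the route you take is the natural one: extract a commensurable suborthant via Lemma~\ref{lemma3.3} on which $f$ is a translation, observe that both commensurability and translation preserve the indicator $\tau$, and run the ``commensurability pigeonhole'' from the proof of Lemma~\ref{lemma3.2} for the reverse containment $f(S)_\tau \subseteq S_\tau$. One small notational remark: where you write $f^{-1}(M') \subseteq L_i$, you really mean $(f|_{L_i})^{-1}(M')$ (since $M' = M \cap f(L_i)$ and $f|_{L_i}$ is a bijective translation onto $f(L_i)$); this is indeed an orthant of $S$ parallel to $M$, and it is all the argument requires, so no global injectivity of $f$ is actually needed at that step.
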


\subsection{Normal forms.}
Consider the disjoint union of orthants
$$S =  L_1\cup L_2\cup \ldots \cup L_m$$
in $\mathbb Z^N$. Assuming that
${\rm rk}S<N$ we have enough space to parallel translate each $L_i$ to an 
orthant $L'_i$ in such a way that the $L'_{i}$ are
still pairwise disjoint, but that each (oriented) parallelism class of the orthants $L'_i$ is assembled to a stack.
This describes a pet-bijection $S\to S' = \bigcup_jS_j$ , where the $S_j$
stand for pairwise disjoint and non-parallel stacks of orthants. We can go one step further by observing that when the maximal orthants of a stack 
$S_i$ are parallel to suborthants of the stack $S_j$, then there is a pet-bijection   $S_i\cup S_j \to S_j$ which feeds $S_i$ into $S_j$. Hence we 
can delete all stacks $S_i$ of orthants that are parallel to a suborthant 
of some other $S_j$ and find

\begin{proposition}\label{proposition3.5}(pet-normal form
Each orthohedral set $S$ is pet-isomor\-phic to a disjoint union of stacks of
orthants $S' =\bigcup_ j S_j$, with the property that no maximal orthant of any
$S_j$ is parallel to a suborthant in some  $S_k$, if $k\not= j.$
\end{proposition}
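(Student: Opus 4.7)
The plan is to formalize the two-step procedure sketched by the authors right before the statement. I would start from a finite pairwise disjoint decomposition $S = L_1 \cup L_2 \cup \ldots \cup L_m$ into orthants and partition the indices by the indicator map of Section~\ref{Integral orthants}: put $L_i$ and $L_j$ in the same class $\mathcal C$ iff $\tau(L_i)=\tau(L_j)$, i.e.\ they are parallel with matching orientations. This yields finitely many classes $\mathcal C_1,\ldots,\mathcal C_s$ with distinct common indicators $\tau_1,\ldots,\tau_s\in\Omega_0^\ast$.

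The first step is to assemble each $\mathcal C_j$ into a single stack. Embedding $\mathbb Z^N$ into a larger $\mathbb Z^{N'}$ gives us, as the authors note, enough room to parallel-translate; I would exploit this by picking, for each class $\mathcal C_j$, a fresh coordinate direction $v_j$ perpendicular to $\tau_j$ and not shared with any other class, and then translating the orthants of $\mathcal C_j$ along $v_j$ into a regular stack $T_j$ of height $|\mathcal C_j|$. Because the extra coordinates used for different classes are independent, the resulting stacks $T_j$ lie in pairwise disjoint half-spaces of $\mathbb Z^{N'}$, are pairwise disjoint and pairwise non-parallel, and the composite map $S\to\bigcup_j T_j$ is a pet-bijection by construction.

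The second step iteratively absorbs redundant stacks. If two of the current stacks $T_i,T_j$ satisfy $\tau_i<\tau_j$ in the germ partial order -- i.e.\ the maximal orthants of $T_i$ are parallel to suborthants of those of $T_j$ -- I would realize the claimed pet-bijection $T_i\cup T_j\to T_j$ by a Hilbert-hotel move inside one chosen maximal orthant $L\cong\mathbb N^{\operatorname{rk}\tau_j}$ of $T_j$: slice $L$ as $\mathbb N^{\operatorname{rk}\tau_i}\times\mathbb N^{\operatorname{rk}\tau_j-\operatorname{rk}\tau_i}$ and shift the second factor by $|\mathcal C_i|$ units along one of its coordinate axes. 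This shift is a pet-bijection on $L$ (it is a pure translation on each slab $\mathbb N^{\operatorname{rk}\tau_i}\times\{m\}$), and it opens up $|\mathcal C_i|$ empty parallel slabs at the boundary of $L$ which receive the orthants of $T_i$ by pure translations. Deleting $T_i$ reduces the stack count, and iterating terminates because the number of stacks strictly decreases; the final configuration satisfies the required non-parallelism condition by construction.

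The main obstacle is making the Hilbert-hotel move of the second step genuinely piecewise-translational rather than just set-theoretic. One must exhibit an explicit finite orthant decomposition of $L$ on which the shift acts by translations, verify that the combined map $T_i\cup T_j\to T_j$ is globally injective with orthohedral image, and repeat this while keeping the book-keeping of further absorptions consistent (the absorbed $T_i$'s may themselves be candidates for absorption into still larger stacks). Once the slicing is fixed this is routine, but it is the step that carries the geometric content of the proposition.
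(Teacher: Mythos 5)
Your overall strategy is the same two-step procedure the paper sketches in the paragraph preceding the Proposition: first translate parallel orthants (after embedding into a larger $\mathbb Z^{N'}$ to gain room) into pairwise disjoint, pairwise non-parallel stacks, then absorb any stack whose maximal orthants are parallel to suborthants of another. The first step is fine. The paper itself only asserts the existence of the absorbing pet-bijection $S_i \cup S_j \to S_j$, and your attempt to make it explicit is where the difficulty lies.

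The Hilbert-hotel move you describe does not close up into a bijection once the rank gap $\operatorname{rk}\tau_j - \operatorname{rk}\tau_i$ exceeds $1$. If you slice $L \cong \mathbb N^{\operatorname{rk}\tau_i}\times\mathbb N^{\operatorname{rk}\tau_j-\operatorname{rk}\tau_i}$ and shift the entire second factor by $c := |\mathcal C_i|$ along a single axis $y$, the region vacated inside $L$ is $\mathbb N^{\operatorname{rk}\tau_i}\times\{0,\dots,c-1\}_y\times\mathbb N^{\operatorname{rk}\tau_j-\operatorname{rk}\tau_i-1}$, which has rank $\operatorname{rk}\tau_j-1$. When $\operatorname{rk}\tau_j-\operatorname{rk}\tau_i\ge 2$ this region is strictly larger (in fact has strictly higher rank) than the $c$ rank-$\operatorname{rk}\tau_i$ orthants of $T_i$ you are trying to feed in, so translating those orthants into it leaves the vacated region only partially filled and the resulting map $T_i\cup L\to L$ is injective but not surjective. (Also note that the shift by itself is a pet-\emph{injection} of $L$ into $L$, not a pet-bijection on $L$ as you state.)

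The fix is to localize the shift to a rank-$(\operatorname{rk}\tau_i+1)$ suborthant. Choose one maximal orthant $L$ of $T_j$, a coordinate direction $y\in\tau_j\setminus\tau_i$, and let $A\cong\mathbb N^{\operatorname{rk}\tau_i}\times\mathbb N_y$ be the face of $L$ spanned by $\tau_i$ and $y$. Shift $A$ into itself by $c$ units along $y$, keep the identity on the orthohedral complement $L\setminus A$ (Lemma \ref{closed under complements} guarantees this complement admits a finite orthant decomposition), and translate the $m$-th orthant of $T_i$ onto $\mathbb N^{\operatorname{rk}\tau_i}\times\{m\}\subseteq A$ for $m=0,\dots,c-1$. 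Now the vacated region is exactly $c$ disjoint rank-$\operatorname{rk}\tau_i$ orthants, the translations fill it bijectively, and the combined map $T_i\cup T_j\to T_j$ is a pet-bijection. With this replacement your induction on the number of stacks goes through as you describe.
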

\begin{corollary}\label{corollary3.6}(pei-normal form)
Each orthohedral set $S$ is pei-isomorphic to a stack of orthants.
\end{corollary}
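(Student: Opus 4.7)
The plan is to build directly on Proposition~\ref{proposition3.5}. Since every pet-isomorphism is in particular a pei-isomorphism, we may assume $S$ is already in pet-normal form $S=\bigcup_j S_j$, a disjoint union of non-redundant stacks. Set $n={\rm rk}S$ and $h=h(S)$; the target will be the single stack $T=\mathbb N^n\times\{1,\dots,h\}$. The rank-$n$ stacks among the $S_j$ contribute exactly $h$ maximal-rank orthants to $S$, while the remaining stacks give finitely many orthants $L_1,\dots,L_r$ of rank strictly less than $n$.

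First, use isometries (rotations and reflections followed by translations) to align all rank-$n$ stacks in parallel and place them consecutively so that they jointly cover $T$. This defines a pei-bijection between the rank-$n$ portion of $S$ and $T$. The genuine work is then to absorb the lower-rank orthants $L_1,\dots,L_r$ into $T$ while preserving bijectivity.

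The key tool is an \emph{absorption lemma}: if $L$ is a rank-$k$ orthant with $k<n$ and $M$ is a disjoint rank-$n$ orthant, then $L\cup M$ is pei-isomorphic to $M$. The proof is by induction on $n-k\geq 1$, based on the splitting $M\cong\mathbb N^n=F\cup M'$, where $F=\mathbb N^{n-1}\times\{0\}$ is a rank-$(n-1)$ slice and $M'=\mathbb N^{n-1}\times\{1,2,\dots\}$ is isometric to $M$ via the unit translation in the last coordinate. In the base case $k=n-1$, map $L$ isometrically onto $F$ and $M$ onto $M'$. For $k<n-1$, invoke the inductive hypothesis to absorb $L$ into the rank-$(n-1)$ orthant $F$, then combine with $M'$ to recover $M$.

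To complete the proof, fix one rank-$n$ orthant $M$ in $T$ and apply the absorption lemma $r$ times iteratively: after absorbing $L_1$ into $M$ the target is again $M$ (as an orthohedral set), so the same $M$ is available to absorb $L_2$, and so on. Composing these with the identity on $T\setminus M$ and with the initial pei-bijection aligning the rank-$n$ part of $S$ with $T$, we obtain the desired pei-bijection $S\to T$. The main obstacle is the bookkeeping in the iterated absorption step — verifying that the successive pei-bijections compose cleanly into a single pei-map — but this reduces to routine composition once the absorption lemma is in hand.
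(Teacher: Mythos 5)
Your proof is correct and takes essentially the same route the paper intends: pass to the pet-normal form of Proposition~\ref{proposition3.5}, and then use the extra freedom of pei-maps (reflections and rotations, not just translations) to line everything up and feed lower-rank orthants into a rank-$n$ orthant. The paper leaves Corollary~\ref{corollary3.6} unproved because the ``feeding'' observation already appears in the proof of Proposition~\ref{proposition3.5}; your absorption lemma and its induction on $n-k$ merely spell out that feeding step explicitly, and the iteration and composition are routine.
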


As observed in Section \ref{Integral orthants}, rank ${\rm rk}S$ and height $h(S)$ are pei-invariant; hence they can be read off from the pei-normal
form; and the pair $({\rm rk}S, h(S))$ characterizes $S$ up to pei-isomorphism. For the corresponding pet-result we consider the
height function
\begin{equation}
h_S: \Gamma_0^\ast \longrightarrow \mathbb N\cup\{0\},
\end{equation}\label{eq2}
which assigns to each 0-based orthant  $L\in \Omega_0^\ast =\Gamma_0^\ast$ the number of maximal germs
$\gamma\in{\rm max}\Gamma^\ast(S)$ with  $\tau(\gamma) = L$, which is finite by Lemma \ref{lemma3.2}. The support
${\rm supp}(h_S) \subseteq \Gamma_0^\ast$ is the set of all 0-based orthants $L$ with $h_S(L) > 0$. From the Exercise in Section
3.2 we infer that max$\Gamma_0^\ast(S_\tau)\subseteq {\rm supp}(h_S)$, and that this is not, in general an equality. One
observes easily that the equality 
\begin{equation}\label{eq3}
\tau(\max\Gamma^{\ast}(S)) = \max\Gamma_0^\ast(S_\tau) \text{ or equivalently: } \max\Gamma_0^\ast(S_\tau) = {\rm supp}(h_S)
\end{equation}
is a necessary condition for $S$ to be in pet-normal form. Thus we call $S$ \emph{quasi-normal} if the equation (\ref{eq3})
holds. Of course, a quasi-normal orthohedral set is not necessarily in pet-normal form. But as quasi-normality implies
that $\tau$ restricts to a surjection \mbox{$\tau : {\rm max}\Gamma^\ast(S) \twoheadrightarrow
{\rm max}\Gamma_0^\ast(S_\tau)$}, max$\Gamma^\ast(S)$ is the pairwise disjoint union of the fibers $f^{-1}(\gamma)$, which consist of  $h_S(\gamma)$  germs
parallel to $\gamma$. This can be viewed as a weak germ-version of the pet-normal form.

\begin{lemma}\label{lemma1.7}
 If \mbox{$f:S \to \mathbb Z^N$} is a pet-injection of a quasi-normal orthohedral set $S\subseteq\mathbb Z^N$,
 then $f_\ast(\emph{max}\Gamma^\ast(S)) \subseteq \emph{max}\Gamma^\ast(f(S))$.
\end{lemma}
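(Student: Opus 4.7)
The plan is to argue by contradiction, pushing failure of maximality of $f_\ast(\gamma)$ in $\Gamma^\ast(f(S))$ down through the indicator map $\tau$ to a contradiction with maximality of $\tau(\gamma)$ in $\Gamma_0^\ast(S_\tau)$, which is guaranteed by quasi-normality of $S$.

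First I would fix $\gamma\in\max\Gamma^\ast(S)$ and record two consequences of the hypotheses. From quasi-normality (equation~(\ref{eq3})) we get $\tau(\gamma)\in\max\Gamma_0^\ast(S_\tau)$. From Lemma~\ref{lemma1.4}, applied to the pet-injection $f$, we have $\tau(f_\ast(\gamma))=\tau(\gamma)$ and $S_\tau=f(S)_\tau$, so in particular $\Gamma_0^\ast(S_\tau)=\Gamma_0^\ast(f(S)_\tau)$.

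Next I would assume for contradiction that $f_\ast(\gamma)\notin\max\Gamma^\ast(f(S))$ and pick $\gamma'\in\Gamma^\ast(f(S))$ with $f_\ast(\gamma)<\gamma'$. A short observation I would make (and likely want to state once for the record) is that two germs $\delta\leq\delta'$ of equal rank must be equal: if $L\subseteq L'$ represent them with $\mathrm{rk}\,L=\mathrm{rk}\,L'$, then $L\cap L'=L$ has the common rank, so $L$ and $L'$ are commensurable. Consequently the strict inequality $f_\ast(\gamma)<\gamma'$ forces $\mathrm{rk}(\gamma')>\mathrm{rk}(f_\ast(\gamma))=\mathrm{rk}(\gamma)$.

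The main (and only) step of substance is then to apply $\tau$, which is order- and rank-preserving (as noted just before Lemma~\ref{lemma3.2}). We obtain $\tau(\gamma)=\tau(f_\ast(\gamma))\leq\tau(\gamma')$ with $\mathrm{rk}\,\tau(\gamma')>\mathrm{rk}\,\tau(\gamma)$, so in particular $\tau(\gamma)<\tau(\gamma')$ in $\Gamma_0^\ast(f(S)_\tau)=\Gamma_0^\ast(S_\tau)$. This contradicts $\tau(\gamma)\in\max\Gamma_0^\ast(S_\tau)$ and completes the argument. I do not expect a real obstacle here; the entire content of the lemma is bookkeeping around the identification $S_\tau=f(S)_\tau$ supplied by Lemma~\ref{lemma1.4} together with the defining property of quasi-normality.
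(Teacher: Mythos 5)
Your proof is correct and follows essentially the same route as the paper: use Lemma~\ref{lemma1.4} to identify $S_\tau = f(S)_\tau$ and $\tau(f_\ast(\gamma)) = \tau(\gamma)$, then push a hypothetical witness of non-maximality through the rank- and order-preserving map $\tau$ to contradict the quasi-normality condition~(\ref{eq3}). The one point where you are more careful than the paper is in justifying that the strict inequality $f_\ast(\gamma) < \gamma'$ survives under $\tau$: the paper simply asserts that non-maximality of $f_\ast(\gamma)$ forces non-maximality of $\tau(f_\ast(\gamma))$, whereas you supply the small missing observation that comparable germs of equal rank coincide, so a strict inequality entails a rank jump, which $\tau$ preserves. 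That is a worthwhile clarification of a step the paper leaves implicit, not a different argument.
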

\begin{proof}
By Lemma \ref{lemma3.3} $f$ induces a rank preserving bijection
$$f_\ast: \Gamma^\ast(S) \to
\Gamma^\ast(f(S)),$$
and by Lemma \ref{lemma1.4} $f(S)_\tau = S_\tau.$ Let $\gamma\in{\rm max}\Gamma^\ast(S)$. Then
we know that $\tau(\gamma)$ is maximal in $\Gamma_0^\ast(S_\tau)$. Since $f$ is a pet map, we also know that
$\tau(f_\ast(\gamma)) = \tau(\gamma)$; hence $\tau(f_\ast(\gamma))$ is maximal in  $\Gamma_0^\ast(S_\tau) =
\Gamma_0^\ast(f(S)_\tau)$. We claim that $f_\ast(\gamma)$ is maximal in $\Gamma^\ast(f(S)_\tau)$. Indeed, if $f_\ast(\gamma)$ is
not in max$\Gamma^\ast(f(S)_\tau)$, then $\tau(f_\ast(\gamma))$ cannot be 
maximal in $\Gamma_0^\ast(f(S)_\tau)$. This
shows that  $f_\ast({\rm max}\Gamma^\ast(S)) \subseteq {\rm max}\Gamma^\ast(f(S))$, as asserted.
\end{proof}

\begin{corollary}\label{corollary3.8}
If \mbox{$f:S \to S'$} is a pet-isomorphism between quasi-normal orthohedral sets, then $f_\ast(\emph{max}\Gamma^\ast(S))
= \emph{max}\Gamma^\ast(S')$ and $h_S = h_{S'}$.
\end{corollary}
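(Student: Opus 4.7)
The plan is to deduce the corollary from Lemma \ref{lemma1.7} by a symmetry argument, applying it both to $f$ and to $f^{-1}$, and then use Lemma \ref{lemma1.4} to get the equality of height functions from the equality of the germ sets.

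First I would observe that since $f: S \to S'$ is a pet-isomorphism, the inverse $f^{-1}: S' \to S$ is also a pet-isomorphism, hence a pet-injection whose source $S'$ is quasi-normal. Lemma \ref{lemma1.7} applied to $f$ gives
\[
f_\ast(\max\Gamma^\ast(S)) \;\subseteq\; \max\Gamma^\ast(S'),
\]
while the same lemma applied to $f^{-1}$ gives
\[
(f^{-1})_\ast(\max\Gamma^\ast(S')) \;\subseteq\; \max\Gamma^\ast(S).
\]
Since $f$ is a bijection, the induced rank-preserving map $f_\ast$ on germs (Lemma \ref{lemma3.3}) is also a bijection with inverse $(f^{-1})_\ast$. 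Combining the two inclusions therefore yields the desired equality $f_\ast(\max\Gamma^\ast(S)) = \max\Gamma^\ast(S')$.

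For the height identity, I would restrict $f_\ast$ to the bijection $\max\Gamma^\ast(S) \to \max\Gamma^\ast(S')$ established above. By Lemma \ref{lemma1.4}, $\tau(f_\ast(\gamma)) = \tau(\gamma)$ for every germ $\gamma \in \Gamma^\ast(S)$, so this bijection commutes with the indicator map $\tau$ into the common target $\Gamma_0^\ast$ (note $S_\tau = S'_\tau$, again by Lemma \ref{lemma1.4}). Consequently, for each $L \in \Gamma_0^\ast$, the fibers $\tau^{-1}(L) \cap \max\Gamma^\ast(S)$ and $\tau^{-1}(L) \cap \max\Gamma^\ast(S')$ are in bijection, which is exactly the statement $h_S(L) = h_{S'}(L)$. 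Hence $h_S = h_{S'}$ as functions on $\Gamma_0^\ast$.

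I do not anticipate any serious obstacle here: all the substantive work has been done in Lemmas \ref{lemma3.3}, \ref{lemma1.4} and \ref{lemma1.7}, and the corollary is essentially a formal consequence together with the trivial remark that a pet-isomorphism has a pet-inverse. The only subtlety worth checking explicitly is that $f^{-1}$ is again a pet-map — this is immediate since the restriction of a translation to an orthant has an inverse given by the opposite translation, and the finite partition of $S'$ witnessing the pet-property for $f^{-1}$ is the image under $f$ of the one witnessing it for $f$.
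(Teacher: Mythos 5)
Your proof is correct and is the natural argument the paper implicitly intends: the corollary follows from Lemma~\ref{lemma1.7} applied to both $f$ and $f^{-1}$ (using that a pet-bijection has a pet-inverse), together with the $\tau$-compatibility from Lemma~\ref{lemma1.4} to transfer the equality of maximal germ sets into equality of fiber counts $h_S=h_{S'}$. The only implicit step worth noting is functoriality of the germ map, $(f^{-1})_\ast=(f_\ast)^{-1}$, which is immediate from the definition via restriction to commensurable suborthants, and which you invoke correctly.
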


This shows, in particular, that the stack heights in a pet-normal form are uniquely determined and characterize S up to
pet-isomorphism.

\section{Permutation groups supported on orthohedral sets}\label{structure of pei(S)}
\subsection{pei- and pet-permutation groups.}\label{subsection4.1}
Let $G = {\rm pei} (\mathbb Z^N)$ denote the group of all pei-permutations of $\mathbb Z^N$. From now on it will be convenient to follow the permutation-group tradition to have the permutation group G act on its set $\Omega = \mathbb Z^N $ from the \emph{right} and interpret the product $ gf $ of elements $ g,f\in G$ as $g$ \emph{followed by} $f$.
\par
The \emph{support} of an element $g\in G$ is defined as the union of all orthants on which $g$ restricts to a non-trivial isometry: $${\rm supp}(g): = \bigcup \{ L\in\Omega^\star\mid g_{\mid L} \text{ is an isometric embedding} \neq id_L\}.$$
To say that a given subset $S\subseteq\mathbb{Z}^N$ \emph{supports g} merely means that that ${\rm supp}(g)\subseteq S$. 
\begin{exercise}
Prove that ${\rm supp}(g)$ is the minimal orthohedreal subset containing the set $\{a\in\mathbb{Z}^N\mid ag\neq a\}$.
\end{exercise}
As we know that the support of an element $g\in G$ is orthohedral it makes sense to put $rk(g):= rk({\rm supp}(g))$, and call this the \emph{rank} of the element $g$.\par
The \emph{support} of a subgroup $H\leqslant G$ is the union of the supports of its elements.
\par
The product of a finite number of elements $g_i\in G$ is \emph{disjoint} if ${\rm supp}(g_i)\cap {\rm supp}(g_j)=\emptyset$ for all i$\neq$j.

If 
$S\subseteq\mathbb Z^N$ is orthohedral we write $G(S):= \{g\in G
\mid {\rm supp}(g)\subseteq S\}$ for the subgroup of $G$ supported on $S$. 
As we know, by Lemma \ref{closed under complements}, that the complement of $S$ is also 
orthohedral each pei-bijection of $S$ extends to an element of $G$; 
hence the subgroup $G(S)\leqslant G$ is also the pei-automorphism 
group of $S$. We write also ${\rm pei}(S)$ for $G(S)$ when this is 
convenient.
\par
As an immediate consequence of Corollary \ref{corollary3.6} we have 
\begin{corollary}\label{corollary4.1}  If $S\subseteq\mathbb Z^N$ is an
orthohedral subset, then ${pei}(S)$ is isomorphic to ${pei}(S')$, where $S'$ is a stack of orthants of rank
$\emph{rk}S$ and height $h(S)$.
\end{corollary}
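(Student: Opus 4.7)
The plan is to reduce the statement to the pei-normal form already established in Corollary~\ref{corollary3.6}, combined with the fact that the pair $(\operatorname{rk}S,h(S))$ is a complete pei-invariant.

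First, I would invoke Corollary~\ref{corollary3.6} to produce a pei-isomorphism $\varphi\colon S\to S'$, where $S'$ is a stack of orthants. Next, I would verify that $S'$ has rank $\operatorname{rk}S$ and height $h(S)$: rank is preserved because Lemma~\ref{lemma3.3} shows that a pei-bijection induces a rank-preserving bijection on germs, and height is preserved because the same lemma yields a rank-preserving bijection between $\Gamma^{\ast}(S)$ and $\Gamma^{\ast}(S')$, and in particular between the subsets of maximal rank (this is the remark following Lemma~\ref{lemma3.3} giving $h(f(S))=h(S)$). Thus $S'$ is a stack of orthants of the stipulated rank and height.

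Finally, the pei-isomorphism $\varphi$ induces a group isomorphism
\[
\Phi\colon {\rm pei}(S)\longrightarrow{\rm pei}(S'),\qquad \Phi(g)\;=\;\varphi^{-1}\, g\,\varphi,
\]
with inverse $\Phi^{-1}(g')=\varphi\, g'\,\varphi^{-1}$. The only point requiring a word of justification is that $\Phi(g)$ really lies in ${\rm pei}(S')$: if $g$ is piecewise isometric with respect to a decomposition $S=\bigcup_i L_i$, then by Lemma~\ref{lemma3.3} one may refine the $L_i$ so that $\varphi$ is itself isometric on each piece, and then $\varphi^{-1}g\varphi$ is isometric on $\varphi(L_i)$, which is a finite union of orthants by Lemma~\ref{closed under complements}. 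Hence $\Phi(g)$ is a pei-permutation of $S'$, and $\Phi$ is a group homomorphism by standard conjugation, bijective because $\varphi$ is a bijection.

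The main (and essentially only) obstacle is the compatibility check in the last step, namely ensuring that conjugating a pei-permutation by a pei-isomorphism yields a pei-permutation; this is handled by simultaneously refining the piecewise decompositions of $g$ and $\varphi$ to a common refinement of orthants, which is possible by Lemma~\ref{closed under complements}. Everything else is bookkeeping.
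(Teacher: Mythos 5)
Your approach is the paper's: the corollary is treated as an immediate consequence of the pei-normal form (Corollary~\ref{corollary3.6}) together with the pei-invariance of rank and height, and you then spell out the conjugation isomorphism. The one place to tighten is the final compatibility check. After refining so that both $g$ and $\varphi$ are isometric on each piece $L_i$, the restriction of $\varphi^{-1}g\varphi$ to $\varphi(L_i)$ factors as $\varphi(L_i)\to L_i\to L_ig\to\varphi(L_ig)$, so you additionally need $\varphi$ to be isometric on the \emph{image} pieces $L_ig$, not merely on the $L_i$; the common refinement of the decompositions of $g$ and $\varphi$ does not by itself secure this. It is fixable by one further refinement (intersect the $L_i$ also with the preimages $M_jg^{-1}$, where $\{M_j\}$ is an orthant decomposition on which $\varphi$ is isometric, then split the resulting orthohedral pieces into orthants via Lemma~\ref{closed under complements}). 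Cleaner still is to invoke the observation made in the Introduction that orthohedral sets and pei-maps form a category, i.e.\ the composite of two pei-maps is again a pei-map (this follows readily from the criterion in Lemma~\ref{lemma3.3}); then $\varphi^{-1}g\varphi$ is automatically a pei-permutation of $S'$ with no refinement bookkeeping at all.
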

The set of all pet-permutations on the orthohedral set $S$ is the pet-subgroup $pet(S) \leqslant G(S)$. As an immediate consequence of  Proposition~ \ref{proposition3.5} and Corollary  \ref{corollary3.8} we find 
\begin{corollary} \label{corollary4.2} If $S\subseteq\mathbb Z^N$ is an
orthohedral subset and $S' = \bigcup_j S_j$  its \emph{pet}-normal form, then $\emph{pet(S)}$ is isomorphic to $\emph{pet}(S')$.
\end{corollary}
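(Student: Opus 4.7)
The plan is to read off the isomorphism directly from a pet-bijection provided by the pet-normal form, verifying that conjugation by a pet-bijection carries pet-permutations to pet-permutations.

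First, I would apply Proposition~\ref{proposition3.5} to obtain an explicit pet-bijection $\varphi \colon S \to S'$. Conjugation by $\varphi$ defines a map
\[
\Phi \colon \mathrm{Sym}(S) \longrightarrow \mathrm{Sym}(S'), \qquad g \longmapsto \varphi^{-1} g \varphi,
\]
which is an isomorphism of abstract permutation groups. The content of the corollary is therefore that $\Phi$ restricts to a bijection $\mathrm{pet}(S) \to \mathrm{pet}(S')$.

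The key step is the observation that a composition of two pet-maps is again a pet-map. Given pet-bijections $\alpha \colon A \to B$ and $\beta \colon B \to C$, one chooses finite decompositions $A = \bigcup L_i$ and $B = \bigcup M_j$ into orthants on which $\alpha$ and $\beta$ respectively act by translations, and uses Lemma~\ref{closed under complements} (intersections of orthohedral sets are orthohedral) together with Lemma~\ref{lemma3.3} to refine the decomposition of $A$ to orthants on which both $\alpha$ and $\beta \circ \alpha$ are translations. Applied to $\varphi^{-1} \circ g \circ \varphi$ with $g \in \mathrm{pet}(S)$, this shows $\Phi(g) \in \mathrm{pet}(S')$. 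The reverse inclusion follows by applying the same argument to $\varphi \circ h \circ \varphi^{-1}$ for $h \in \mathrm{pet}(S')$, so $\Phi$ restricts to the desired isomorphism.

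The only real obstacle is the closure-under-composition fact above; everything else is formal. I would note that Corollary~\ref{corollary3.8} plays a complementary role: it guarantees that the pet-normal form $S'$ (in particular the multiset of stack heights) is an invariant of the pet-isomorphism type of $S$, so the statement of Corollary~\ref{corollary4.2} is unambiguous. Once the composition lemma is in hand, the proof is a one-line diagram chase and can indeed be presented as an \emph{immediate consequence} of Proposition~\ref{proposition3.5} and Corollary~\ref{corollary3.8}.
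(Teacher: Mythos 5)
Your proof is correct and follows the argument the paper intends when it calls the corollary an ``immediate consequence'' of Proposition~\ref{proposition3.5} and Corollary~\ref{corollary3.8}: conjugate by the pet-bijection supplied by the normal form, and use the (category-of-pet-maps) fact that compositions and inverses of pet-maps are again pet-maps. You have usefully made explicit the closure-under-composition step via Lemmas~\ref{closed under complements} and~\ref{lemma3.3}, which the paper leaves implicit in its remark that orthohedral sets and pei-maps form a category, and you have correctly identified Corollary~\ref{corollary3.8} as serving only to make ``its pet-normal form'' well-defined.
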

\begin{definition}[The \emph{rank groups~$G_k$}]
As conjugation in $G = pei(\mathbb{Z^N})$ preserves the rank of the elements, putting $G_{-1}:=1$, and for $k\geq0$
$$G_k := \{g\in G~\mid~ rk(g)\leq k~\}, $$
yields the normal series $1=G_{-1}\leqslant G_0 
\leqslant ... \leqslant G_k \leqslant ...\leqslant G_N = G$ 
which plays the key role to understanding the structure of G. 
For each orthohedral subset $S\subseteq\mathbb Z^N, G_k(S):= G_k\cap G(S)$ yields the corresponding normal sequence for $G(S)$.
\end{definition}
Note that by the pei-normal form we have $$\emph{pei}(S)~=~G_{rkS}(S)~\cong~ G_{rkS}(\bigcup_{1\leq i\leq h(S)}\mathbb{N}^{rkS}),$$ for 
every orthohedral set $S\subseteq \mathbb{Z}^N.$

In this section we are aiming for insight into the group theoretic structure of pei$S$, are now in a position to outline its main results in a nutshell: 
\begin{theorem}\label{theorem4.4} \label{preview} If $G=pei(S)$, with $S$ an orthohedral set of rank $rkS=n$ then the following holds:
\begin{enumerate}[i)]
\item $G_k/G_{k-1}$ is an extension of a free-Abelian normal subgroup (of 
infinite rank when $1\leq k<rk(S)$) with a locally finite factor group. In particular $G$ is elementary amenable.  
\item The rank-groups $G_k$ are characteristic in $G$. Each normal subgroup $N\leqslant G$ is contained in some $G_k$ and intersects $G_{k-1}$ in a subgroup of index $\leq 2$. Consequently every Abelian-by-locally-finite section of $G$ is a section of $G_k/{\rm alt}G_{k-1}$ for some $k\leq n$, and n+1 is the minimum length of normal series of $G$ with Abelian-by-locally-finite factors. 
\item $G$ satisfies the maximal condition for 
normal subgroups.
\end{enumerate}
\end{theorem}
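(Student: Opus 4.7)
The plan is to consolidate the structural results from Section \ref{structure of pei(S)}---in particular Theorem \ref{classifies G_k(S)/G_k-1(S)} on the rank-quotients and the Bottleneck Theorem \ref{Bottleneck Theorem}---and combine them.

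For (i), I would use the refinement
\[
G_{k-1}\;\leqslant\;C^{\mathrm{ord}}(\Gamma^k(S))\;\leqslant\;C(\Gamma^k(S))\;\leqslant\;G_k.
\]
By Theorem \ref{classifies G_k(S)/G_k-1(S)} the bottom factor $A_k=C^{\mathrm{ord}}(\Gamma^k(S))/G_{k-1}$ is free-abelian, of infinite rank precisely when $\Gamma^k(S)$ is infinite, i.e.\ for $1\leq k<n$. The middle factor is a restricted direct product of finite symmetric groups of degree $k$, one per rank-$k$ germ, and the top factor is the finitary symmetric group on $\Gamma^k(S)$; both are locally finite, and so is their extension. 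Hence $G_k/G_{k-1}$ is free-abelian-by-locally-finite, and since the rank series has finite length $n+1$ and elementary amenable groups are closed under extensions, $G$ is elementary amenable.

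For (ii), the Bottleneck Theorem \ref{Bottleneck Theorem} directly delivers that each $G_k$ is characteristic and that every normal $N\leqslant G$ lies in some $G_k$ with $[G_{k-1}:N\cap G_{k-1}]\leq 2$. Combined with (i) this squeezes every Abelian-by-locally-finite normal section of $G$ into a single rank layer up to the alternating-subgroup ambiguity, presenting it as a section of $G_k/{\rm alt}\,G_{k-1}$. The rank series itself realises a normal series of length $n+1$ with Abelian-by-locally-finite factors, giving the upper bound. For the lower bound I would argue that any such series $1=N_0\leqslant\cdots\leqslant N_r=G$ must, by the Bottleneck Theorem applied to each $N_j$, visit every rank level $k=0,1,\dots,n$; otherwise two successive rank layers would collapse into one factor $N_j/N_{j-1}$, contradicting Abelian-by-locally-finiteness of that factor since the top finitary-symmetric-group action on $A_{k-1}$ precludes its coexistence with $A_k$ inside a single Abelian-by-locally-finite extension.

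For (iii), I would argue by induction on $n$. An ascending chain $N_1\leqslant N_2\leqslant\cdots$ of normal subgroups of $G$ has integer-valued bounded ranks, so these stabilize at some $k$. The intersections $N_i\cap G_{k-1}$ then form an ascending chain of $G$-normal subgroups of rank $<k$, which stabilize by the inductive hypothesis. It remains to bound the images $N_i/(N_i\cap G_{k-1})$ inside $G_k/G_{k-1}$: these are $G$-invariant subgroups of the free-abelian-by-locally-finite group $G_k/G_{k-1}$, and the Bottleneck Theorem restricts them to a short list of possibilities (trivial, full, or related by the alternating twist). The main obstacle will be precisely this last step---controlling the $G$-invariant subgroup lattice of $G_k/G_{k-1}$ finely enough. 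I expect it to come down to describing $A_k$ as an explicit permutation module for the finitary symmetric group on $\Gamma^k(S)$ twisted by the sign character, and observing that it has only finitely many $G$-stable submodules of the kind the Bottleneck index-$2$ constraint permits.
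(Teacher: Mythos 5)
Your proposal captures the paper's overall strategy correctly: part (i) is precisely Theorem~\ref{classifies G_k(S)/G_k-1(S)} read off the refinement $G_{k-1}\leqslant C^{\mathrm{ord}}(\Gamma^k(S))\leqslant C(\Gamma^k(S))\leqslant G_k$, and parts (ii)--(iii) rest on the Bottleneck Theorem~\ref{Bottleneck Theorem}. However there are two substantive misconceptions in your treatment of (iii).

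First, the claim that the Bottleneck Theorem ``restricts [the images in $G_k/G_{k-1}$] to a short list of possibilities (trivial, full, or related by the alternating twist)'' is not correct. The Bottleneck Theorem pins a rank-$k$ normal subgroup $N$ to the interval ${\rm alt}\,G_{k-1}\leqslant N\leqslant G_k$, but \emph{inside} that interval the normal-subgroup lattice is genuinely rich: it contains the endotranslation subgroup $E_k$, the stagnant subgroup $ST_k$, $C^{\mathrm{ord}}(\Gamma^k)$, $C(\Gamma^k)$, ${\rm alt}\,G_k$, and on top of that every $Q_k(S)$-submodule of the free-abelian layer $A^k(S)$. Likewise, your expectation that $A^k(S)$ has ``only finitely many $G$-stable submodules'' is false: Theorem~\ref{theorem4.12} exhibits infinite families of them ($pD$ for all $p\in\mathbb N$, $qE$ for all $q\in\mathbb N$, $pD+qE$, \dots). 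What the paper actually needs, and proves in Corollary~\ref{Corollary4.13}, is that $A^k(S)$ is a \emph{Noetherian} $Q_k(S)$-module, i.e.\ satisfies the ascending chain condition, which together with the local finiteness of the other two layers of $G_k/G_{k-1}$ and the Bottleneck squeeze yields the maximal condition. Your argument should be routed through Noetherianity, not finiteness, or it will not close.

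Second, the proposed induction on $n={\rm rk}\,S$ to stabilize the chain $N_i\cap G_{k-1}$ does not apply as stated: $G_{k-1}(S)$ is \emph{not} a group of the form ${\rm pei}(S')$ for a smaller orthohedral set $S'$ (it still acts on the ambient rank-$n$ set and has infinitely many rank-$(k-1)$ germs), so the inductive hypothesis gives you nothing about it. Fortunately the induction is unnecessary: once the ranks stabilize at $k$ and the Bottleneck Theorem applies, you have ${\rm alt}\,G_{k-1}\leqslant N_i\leqslant G_k$, which already forces $N_i\cap G_{k-1}\in\{{\rm alt}\,G_{k-1},\,G_{k-1}\}$, so that chain stabilizes in at most one step. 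The work is entirely in controlling $N_i$ modulo $G_{k-1}$, and that is where Corollary~\ref{Corollary4.13} does the job. Your heuristic for the lower bound $n+1$ on the poly-(Abelian-by-locally-finite) length --- that two consecutive rank layers cannot collapse into one Abelian-by-locally-finite factor because the symmetric-group action on $A^{k-1}$ and the presence of $A^k$ are incompatible with a single abelian normal subgroup --- is the right idea but needs to be made precise by noting that rank-$k$ translations in $C^{\mathrm{ord}}(\Gamma^k)$ conjugate nontrivially on $G_{k-1}/G_{k-2}$, so no abelian normal subgroup of $G_k/{\rm alt}\,G_{k-2}$ can have torsion quotient.
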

For the proofs see Theorem \ref{classifies G_k(S)/G_k-1(S)}, Theorem \ref{Bottleneck Theorem}, and Corollary \ref{corollary4.19}.

\subsection{The action of G on the germs.}
By Lemma \ref{lemma3.3} we know that given a pei-permutation $g\in G$, each germ $\gamma \in \Gamma^\ast$ represented by an orthant $L$ contains a 
suborthant $L'$ commensurable with $L$ on which g restricts to an isometric embedding of $L'$ into $Lg$. Hence putting $\gamma g := \gamma(L'g)$ 
 $\in \Gamma^\ast$ well defines a rank preserving action of $G$ on $\Gamma^\ast$. For each orthohedral subset $S\subseteq\mathbb Z^N$ this action restricts to an action of $G(S)$ on $\Gamma^\ast(S)$.

\begin{lemma}\label{transitive action} $G_k$ acts on the set $\Gamma^k$ of rank-k germs by finite permutations; and for each orthohedral set S, the restricted action of $G_k(S)$ on $\Gamma^k(S)$ is highly transitve in the sense that each bijection $f: F \rightarrow F' $ between finite subsets of $\Gamma^k(S)$ is induced by the action of some $g\in G_k(S)$.
\end{lemma}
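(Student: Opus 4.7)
The plan is to handle the two assertions in turn.

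For the first, I would take any $g \in G_k$ and write $T = {\rm supp}(g)$, an orthohedral set of rank $\leq k$. By Lemma \ref{lemma3.2} the set $\Gamma^k(T)$ is finite, so it suffices to show that $g$ fixes every rank-$k$ germ $\gamma \notin \Gamma^k(T)$. Choosing a rank-$k$ representative $L = a + \langle Y \rangle$ of $\gamma$, the intersection $L \cap T$ must have rank $<k$ (otherwise $\gamma \in \Gamma^k(T)$). Writing $L \cap T$ as a finite disjoint union of orthants $M_j$, each $M_j \subseteq L$ has direction set $Y_j \subsetneq Y$ and hence bounded coordinate along any $y \in Y \setminus Y_j$. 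The deep commensurable suborthant $L' := \bigl(a + s \sum_{y \in Y} y\bigr) + \langle Y \rangle$ therefore avoids every $M_j$ once $s$ is larger than finitely many explicit bounds, is disjoint from $T$, and carries the identity action of $g$; by Lemma \ref{lemma3.3} one gets $\gamma g = \gamma(L' g) = \gamma$.

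For the second assertion, my plan is to construct $g$ from cyclic shifts between carefully chosen orthant representatives. I would extend $f \colon F \to F'$ to a bijection $\tilde f \colon F \cup F' \to F \cup F'$ by combining $f$ with any bijection $F' \setminus F \to F \setminus F'$ (which exists since $|F| = |F'|$), and decompose $\tilde f$ into disjoint cycles. For every $\gamma \in F \cup F'$ I would choose a single rank-$k$ orthant representative $L_\gamma \subseteq S$; these can be made pairwise disjoint across distinct germs by iterative translation far along each orthant's own directions -- germs with different indicators intersect only in a rank-$<k$ set and can be separated by the same escape argument as in part one, while germs with the same indicator sit on disjoint tangent cosets (because commensurability forces equality of germs) and so have automatically disjoint representatives. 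For each cycle $\gamma_1 \to \gamma_2 \to \cdots \to \gamma_n \to \gamma_1$ of $\tilde f$ I would fix isometries $\varphi_j \colon L_{\gamma_j} \to L_{\gamma_{j+1}}$ (indices mod $n$), available because any two rank-$k$ orthants of $\mathbb Z^N$ are related by an element of ${\rm Isom}(\mathbb Z^N)$, and define $g$ to act as $\varphi_j$ on each $L_{\gamma_j}$ and as the identity elsewhere.

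The global disjointness of the chosen orthants will make $g$ a well-defined pei-bijection with support a finite union of rank-$k$ orthants in $S$, so $g \in G_k(S)$; and by construction $\gamma g = \tilde f(\gamma) = f(\gamma)$ for every $\gamma \in F$. The only technically delicate step in the whole argument is the escape-to-infinity claim needed in part one (and re-used for disjointness in part two): a rank-$k$ orthant $L$ with $L \cap T$ of rank $<k$ contains a commensurable suborthant disjoint from $T$. Everything else is set-theoretic bookkeeping combined with Lemma \ref{lemma3.3}.
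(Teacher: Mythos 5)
Your proof is correct and takes essentially the same route as the paper's: both first observe that $g\in G_k$ can only move the finitely many rank-$k$ germs of $\Gamma^k(\mathrm{supp}(g))$ (Lemma~\ref{lemma3.2}), then extend $f$ to a permutation of $F\cup F'$, represent the germs of $F\cup F'$ by pairwise disjoint rank-$k$ orthants in $S$, and lift that permutation by isometries between these orthants. You spell out the escape-to-infinity and disjointness steps that the paper leaves implicit, and you rightly drop the paper's stipulation that the chosen isometries compose to the identity around each cycle; that coherence condition is superfluous, since a union of bijective isometries between pairwise disjoint orthants is automatically a pei-permutation.
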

\begin{proof}
An element $g\in G_k$ can only dislocate the rank-k germs in
$$\Gamma^k({\rm supp}(g))\subseteq\Gamma^k,$$
and these are finite in number.

We claim that the bijection $f$ 
extends to a permutation $\pi$ of $F\cup F'$. To 
see this consider the graph $\mathfrak{G}$ with 
vertex set ver$\mathfrak{G}=F\cup F'$, and the 
oriented edge set 
edg$\mathfrak{G} = \{(a,f(a))\mid a\in F\}$. 
Then one observes that $\mathfrak{G}$ can be completed to a permutation graph since $\rvert F-F\cup F'\lvert=\rvert F'-F\cup F'\lvert$.

\par
Now we represent the elements of $F\cup F'$ by a set of pairwise disjoint 
orthants $\{L_\gamma\mid\gamma\in F\cup F'\}$, and we lift the graph $\mathfrak{G}$ as follows: we choose for each $\gamma\in F\cup F'$ an isometry $\tilde{\pi}_\gamma: L_\gamma\rightarrow L_{f(\gamma)}$, but make sure that along each simple closed path the product of the chosen isometries is the identity. The union of thes isometries is an element of $G_k(S)$, and induces the map $f$.
\end{proof}

\subsection{Stabilizers of rank-k germs} \label{Stabilizers of rank-k germs} Next we consider the stabilizer
\begin{equation}\label{eq4}
 C(\gamma): = \{g\in G_k\mid \gamma g = \gamma \}, ~~~\gamma\in\Gamma^ k.
\end{equation}
We attach to $\gamma$ the union $\langle \gamma \rangle \subseteq \mathbb 
Z^{N}$ of all orthants of $\mathbb Z^{N}$ representing $\gamma$. $\langle 
\gamma \rangle$
is a coset of a coordinate supgroup of $\mathbb Z^{N}$ and isometric to $\mathbb Z^k$; we call it the \emph{tangent coset of S at}
 $\gamma$. The stabilizer $C(\gamma)$ acts canonically on $\langle \gamma 
\rangle$: Indeed, given $g\in C(\gamma)$, we find an orthant $L'$ representing $ \gamma$ with the property that $g$ maps $L'$ isometrically to $L'g$ ~which is commensurable to $L$, and that isometry extends canonically to an isometry of $\langle \gamma \rangle$ onto itself. This yelds a homomorphism
 \begin{equation}\label{eq5}
\varphi_\gamma: ~ C(\gamma )\to
\emph{Isom}\langle \gamma \rangle. 
\end{equation}
$\langle \gamma \rangle$ carries additional $C(\gamma)$-invariant structure: As commensurable orthants are canonically linked by a unique parallel 
translation we can endow the canonical basis of the orthants representing 
$\gamma$ with compatible orderings. Hence $\langle \gamma \rangle$ comes endowed with a canonical $C(\gamma)$-invariant set $X(\gamma)$ of k pairwise orthogonal coordinate directions. $C(\gamma)$ acts k-transitively on $X(\gamma)$; and the homomorphism \eqref{eq2} factors, modulo translations, through an epimorhism onto the symmetric permutation group on $X(\gamma)$,
\begin{equation}\label{eq6}     %old eq3%
\overline{\varphi}_\gamma: ~ C(\gamma )\twoheadrightarrow {\rm sym}_k(X(\gamma)). 
\end{equation}
By an \emph{ordered germ} we mean a germ $\gamma $ together with an ordering on the canonical basis-directions of $\langle \gamma \rangle$; and we 
write $C^{ord}(\gamma)$ for the stabilizer of the ordered germ $\gamma$. By choosing an ordering of the canonical monoid basis of $\mathbb{Z^N}$ we can impose germ orderings simultaneously on all germs of $\mathbb{Z^N}$; these orderings are preserved by all maps induced by inclusions and parallel translations, and in this situation we say that the germs are endowed with \emph{compatible orderings}.
It is easy to observe  
\begin{lemma}\label{germ stabilizer}
\begin{itemize}
\item[i)] $G_{k-1} \leqslant C^{ord}(\gamma) \leqslant C(\gamma)$,
\item[ii)] $\ker(\overline{\varphi}_\gamma) = C^{ord}(\gamma)$,
\item[iii)] $\ker(\varphi_{\gamma}) = \{g\in G\mid g$ fixes an orthant representing $\gamma$ pointwise\}.
\end{itemize}
\end{lemma}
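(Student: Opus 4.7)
The plan is to handle the three assertions in turn, using the fact (Lemma \ref{closed under complements}) that the intersection of an orthant with an orthohedral set of strictly smaller rank is orthohedral of the smaller rank, so that on any rank-$k$ orthant one can always find a commensurable sub-orthant on which a prescribed rank-$(k-1)$ element is inactive.

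For (i), the inclusion $C^{ord}(\gamma)\leqslant C(\gamma)$ is immediate from the definitions: preserving the ordered germ entails preserving the underlying germ. For $G_{k-1}\leqslant C^{ord}(\gamma)$, first I would take $g\in G_{k-1}$ and any orthant $L$ representing $\gamma$; since $\operatorname{supp}(g)\cap L$ is an orthohedral subset of $L$ of rank at most $k-1$, its complement in $L$ contains a rank-$k$ sub-orthant $L'\subseteq L$ commensurable with $L$. The restriction $g_{|L'}$ is then the identity, so $\gamma(L'g)=\gamma(L')=\gamma$, and the canonical extension of this identity to $\langle\gamma\rangle$ fixes the set of positively oriented basis-directions pointwise; hence $g\in C^{ord}(\gamma)$.

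For (ii), recall from \eqref{eq6} that $\overline\varphi_\gamma$ is $\varphi_\gamma$ followed by the quotient of $\operatorname{Isom}\langle\gamma\rangle\cong\langle\gamma\rangle\rtimes\mathrm{sym}_k(X(\gamma))$ by its normal subgroup of translations. Therefore $\ker\overline\varphi_\gamma$ consists exactly of those $g\in C(\gamma)$ whose image $\varphi_\gamma(g)$ is a translation of $\langle\gamma\rangle$. A translation preserves each of the $k$ oriented basis-directions of $X(\gamma)$, hence preserves the chosen ordering; conversely, an isometry of $\langle\gamma\rangle$ whose linear part fixes every ordered basis-direction is necessarily a translation. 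This gives the equality with $C^{ord}(\gamma)$.

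For (iii), suppose first that $g$ fixes some orthant $L'$ representing $\gamma$ pointwise. Then $g\in C(\gamma)$, and $\varphi_\gamma(g)$ is defined as the unique isometric extension of $g_{|L'}=\mathrm{id}_{L'}$ to $\langle\gamma\rangle$, which is $\mathrm{id}_{\langle\gamma\rangle}$; so $g\in\ker\varphi_\gamma$. Conversely, if $g\in\ker\varphi_\gamma$, choose, by Lemma \ref{lemma3.3}, an orthant $L'$ representing $\gamma$ on which $g$ acts as an isometric embedding onto $L'g\subseteq\langle\gamma\rangle$; the canonical extension of $g_{|L'}$ to $\langle\gamma\rangle$ is by hypothesis the identity, and since this extension is uniquely determined by its restriction to $L'$, the map $g_{|L'}$ itself must be the identity. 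The only delicate point in all three parts is the usual ``shrinking to a commensurable sub-orthant'' step, but this is already packaged in Lemma \ref{lemma3.3} and the intersection observations of Lemma \ref{closed under complements}, so no further obstacle arises.
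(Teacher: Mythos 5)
The paper does not supply a proof of this lemma, stating only that it is ``easy to observe,'' so there is no argument in the text to compare against; your sketch supplies the correct routine verification. For (i) the decisive step is that a rank-$(k-1)$ element has orthohedral support of rank $\le k-1$, so one can shift a representing orthant $L$ diagonally far enough into its own interior to find a commensurable sub-orthant $L'$ disjoint from ${\rm supp}(g)$, on which $g$ is then literally the identity; this is exactly what you do. For (ii) and (iii) you correctly unwind the definitions of $\overline\varphi_\gamma$ and $\varphi_\gamma$. One small imprecision in (ii): ${\rm Isom}\langle\gamma\rangle$ is in fact $\langle\gamma\rangle\rtimes O(k,\mathbb Z)$, with the full signed permutation group as linear part, not $\langle\gamma\rangle\rtimes{\rm sym}_k(X(\gamma))$. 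What is true is that the \emph{image} of $\varphi_\gamma$ lies in the subgroup $\langle\gamma\rangle\rtimes{\rm sym}_k(X(\gamma))$, since an element of $C(\gamma)$ carries orthants in the class $\gamma$ to commensurable orthants and hence preserves each positive axis direction of $X(\gamma)$, not merely the unordered pair $\{\pm x\}$. With this adjustment your conclusion, that $\ker\overline\varphi_\gamma$ consists exactly of those $g$ whose $\varphi_\gamma$-image is a translation, and that this coincides with $C^{ord}(\gamma)$, is exactly right.
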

\begin{exercise}
The following conditions are equivalent for an orthhedral set S:\\
1) S has a germ $\gamma$ with ${\rm coker} (\varphi_{\gamma})$ non-zero,\\
2) S has a germ $\gamma$ with ${\rm coker} (\varphi_{\gamma}) = \mathbb{Z}$,\\ 
3) there is a number $k\in\mathbb{N}$ with $\Gamma^k(S)$ a singleton set,\\
4) S is pei-isomorphic to $\mathbb{N}^k$ with $k \geq 1$.
\end{exercise}
{
By Lemma \ref{transitive action} the stabilizers of all rank-k germs are conjugates of one another; hence their intersection $C(\Gamma^k(S))$ := 
$~\bigcap_{\gamma \in \Gamma^k(S)}C(\gamma)$ is a normal subgroup of G(S), and so is $C^{ord}(\Gamma^k(S))$ := $~\bigcap_{\gamma \in \Gamma^k(S)}C^{ord}(\gamma)$. We claim that this yields the following refinement of the normal series based on ranks:
\begin{equation}\label{normal series} %(=old eq4%
G_{k-1}(S)~ \leq~C^{ord}(\Gamma^k(S)) ~\leq ~C(\Gamma^k(S))~ \leq~ G_k(S),
\end{equation}
for all $k\leq rkS$.\\ 
Indeed, the first two inclusions immediate from the first part of Lemma~ \ref{germ stabilizer}, while the remaining inclusion is the following observation: Given $g\in G(S)$, any rank-(k+1) germ $\gamma$ is represented by an orthant $L$ on which $g$ restricts to an isometric embedding $ f = 
g\mid_L: L\rightarrow S$, and $f$ maps each rank-k face $F$ of $L$ to a face $Fg$ of $Lg$. Assuming $g\in C(\Gamma^k(S))$ implies that each $Fg$ is commensurable to $F$ hence $f$ parallel shifts each $F$ to $Fg$, and these shifts can be interpreted in the tangent coset $\langle \gamma \rangle$. Since $f$ is an isometry it follows that $f$ can only be the identity 
of $L$, whence $rk(g) \leq k$.

\subsection{Dynamics of the action of $G_k$ on \texorpdfstring{$\mathbf{\Gamma^{k-1}}$}{TEXT}}
If~$S$ is an orthohedral set of rank n then $\Gamma^n(S)$ is finite. $
\Gamma^{n-1}(S)$ is infinite and comes with a rank-1 orthohedral 
structure: Each rank-n orthant $L\subset S$ contributes $n-1$ maximal 
parallelism classes of rank-$(n-1)$ germs represented by parallel 
cross-sections of $L$; we call these the rays of  $\Gamma^{n-1}(S)$. As $S$ is orthohedral we find that the union of $(n-1)h(S)$ such rays is cofinite in $\Gamma^{n-1}(S)$. Thus $\Gamma^{n-1}(S)$ has a 1-dimensional piecewise isometric structure and one observes readily the induced action of $G(S)$ is piecewise isometric. 

Let 
$g\in C(\gamma)$ for some $\gamma\in\Gamma^k$. Let $L$ be a rank-k orthant representing $\gamma$, with the 
property that $g$ restricted to $L$ is isometric. Then $Lg$ is a rank-k orthant commensurable to $L$; we put $fl_\gamma(g):= h(L-Lg)-h(Lg-L)
$ and note that this is an integer which does not depend on the particular choice of $L$. Thus, $fl_\gamma:C(\gamma)\rightarrow\mathbb{Z}$ is a well defined homomorphism for all $\gamma\in\Gamma^k$. It measures the balance of trading rank-$(k-1)$ germs towards and away from $\Gamma$, and we call it the \emph{corank-1 germ flow of $g\in C(\gamma)$ at $\gamma\in\Gamma^k$}. As the action of $g\mid_L$ can be monitored in the tangent coset 
$\langle\gamma\rangle$ via the homomorphism \eqref{eq5} we have $fl_\gamma(g)= fl_\gamma(\varphi_\gamma(g))$.
\par
If $fl_\gamma(g)$ is positive $\gamma$ is a \emph{sink} of $g$; if $fl_\gamma(g)$ is negative it is a \emph{source} of $g$. Clearly, $fl_\gamma(g)$ vanishes when $\gamma\notin\Gamma^k({\rm supp}(g))$. This shows that $g\in G$ has -- if any -- only finitely many sources and sinks in $\Gamma^{rk(g)}$. Hence collecting the flow maps $fl_\gamma$ as $\gamma$ runs through $\Gamma^k$ yields the \emph{global flow homomorphism}
\begin{equation}\label{flow map}  %old eq5%
fl:C(\Gamma^k)\rightarrow \oplus_{\gamma\in\Gamma^k}\mathbb{Z}.
\end{equation}
We say that the elements of its kernel are \emph{stagnant}, call $\ker fl$ the \emph{stagnant} subgroup of $G_k$, denoted $ST_k(S)$, and note that 
$G_{k-1}\leqslant ST_k\leqslant G_k.$
Next 
we claim that the total flow-sum function vanishes on $C(\Gamma^k(S))$, i.e., we have for each k, 
\begin{equation}\label{total flow sum}  %old eq6%
\sum_{\gamma\in\Gamma^k(S)}fl_\gamma(g)=0.
\end{equation}

\begin{proof}
We choose, for a given $g\in C(\Gamma^k(S))$, a finite set $\Lambda$ of pairwise disjoint orthants representing the rank-k germs of ${\rm supp}(g)$, and with the property that $g$ restricted to each $L\in\Lambda$ is an isometric embedding $L\rightarrow S$. As $g$ fixes all rank-k germs $Lg$ is commensurable to $L$ for each $L\in\Lambda$.
\par
We claim that without loss of generality we can assume that as $L$ runs through $\Lambda$, the sets $L\cup Lg$ are pairwise disjoint. Indeed, the intersections  $L'g\cap L$ are necessarily of rank $<$ k when $L'$ and $L$ are different members of $\Lambda$; hence we find in $L$ a commensurable suborthant $K$ that avoids intersecting any of the $L'g$ with $L\neq L'$. Replacing $L$ by such a suborthant $K$ for all $L\in\Lambda$ justifies 
the claim.
\par
Let $T:=\bigcup_{L\in \Lambda}L$.
The complements of both $T$ and $Tg$ in ${\rm supp}(g)$ are of rank $\leq 
k-1$ and since g yields a pet-isomorphism between them we have $h({\rm supp}(g)-T)=h(\rm{supp}(g)-Tg)$. On the other hand, the two complements have decompositions into disjoint unions
$$ {\rm supp}(g)-T~ = ~({\rm supp}(g)-T\cup Tg)~ \cup~(Tg-T),$$
$${\rm supp}(g)-Tg~ =~ ({\rm supp}(g)-T\cup Tg) ~\cup~(T-Tg).$$
From which we infer that $h(T-Tg)=h(Tg-T)$, This establishing (\ref{total flow sum}).
\end{proof}

\subsection{Generation in $G_k$.}\label{Generation in G_k}
We start by introducing special elements $g\in G=G(\mathbb{Z}^N)$.
\begin{itemize}
\item We call $g$ a \emph{single-orthant-isometry} if ${\rm supp}(g)$ is a single orthant $L$ on which $g$ restricts to an isometry of $L$. 

\item We call $g$ an \emph{orthant-n-cycle} if we are given a set of 
pairwise disjoint orthants, cyclically connected 
by a sequence of n isometries $$L_1\overset{f_1}\rightarrow L_2
\overset{f_2}\rightarrow ... \overset{f_{n-1}}\rightarrow L_n
\overset{f_n}\rightarrow L_1,~~ with~~~ f_1f_2...f_n=Id_{L_1},$$ 
and $g$ is the union $g=\bigcup_{1\leq i\leq n}f_i$. An orthant-2-cycle 
is also called an \emph{orthant-transposition}.

\item We call $g$ a \emph{pei-translation from $L$ to $L'$ (or between $L$ and $L'$)}  if ${\rm supp}(g)= L\cup L'$ is the union of two disjoint 
orthants containing commensurable suborthants $K\subseteq L,~ K'\subseteq 
L'$ such that $g$ restricted to $K$ is the parallel shift that sends $K$ to $L$ and $g$ restricted to $L'$ is the parallel shift that sends $L'$ to $K'$. This implies that ${\rm supp}(g)$ has exactly two rank-k germs (a 
source and a sink), fixes them, and restricts to a pei-isomorphism $g\mid_{L-K}: (L-K)\rightarrow (L'-K')$, whence $h(L-K) = h(L'-K')$, as is also seen from the vanishing of the total flow function, cf. (\ref{flow map}). 

\item We call $g$ a \emph{an endotranslation} if it is supported on an orthant $L$ and parallel shifts a commensurable suborthant $K\subseteq L$ to a commensurable suborthant $Kg\subseteq L$. This implies that $h(L-K)=h(L-Kg)$. Note that this includes all elements $g\in G_{k-1}$ as the special case when $G\mid_K= id_K$.
\par
One observes easily that endotranslations are stagnant, cf. \eqref{flow map}, and that products of  endotranslations with commensurable supports are again endotranslations. Hence the set of all endotranslations supported on orthants with one and the same germ $\gamma\in\Gamma^k(S)$ forms a subgroup $E_k(\gamma)$ with 
$G_{k-1}\leqslant E_k(\gamma)\leqslant ST_k(S)$, for all $\gamma$.
\par
We write $E_k(S)$ for the group generated by all $E_k(\gamma)$ with $\gamma\in\Gamma^k(S)$.
\begin{exercise} 1. Prove that $g$ is a rank-k endotranslation if and only if $g\in C$ is supported on a rank-k orthant $L$, $\Gamma^k(L)$ is the and singleton set $\{\gamma(L)\}$, and $g$ fixes the ordering of its boundary directions.\\ 
2. Prove that $E_k(L)$ is a normal subgroup of $pei(L)$, and $pei(L)$ is the semi direct product of  is the semi direct product of $E_k(L)$ with the subgroup $Isom(L)\leqslant pei(L)$.
\end{exercise}

\item Special pei-translations $g:L\rightarrow L'$ are those when $L-K$ and $L'-K'$  are different corank-1 faces $F, F'$ of $L, L'$ and the restriction of $g$ to $F$ is an order preserving isometry. We call these the unit-pei-translations from $L$ into $L'$ and note that they are uniquely determined by the face pair $(F,F')$ and a given ordering on the canonical 
basis of $\mathbb N^N$. For simplicity we will often use "(unit)-tanslation" for "(unit)-pei-translation" when this is unambiguous.
\par
Similarly, we consider the special 
endotranslations $g:L\rightarrow L$ with the property that $K$ and $K$ and $Kg$ are the complements of two different corank-1 faces $F=L-K$ and $F'=L-Kg$. We call these the unit-endotranslations, noting that they are uniquely given by the pair $(F,F')$ of different faces and the isometry 
 $g\lvert_{F\cup F'}$.   
There are two 
possible canonical requirements that we can ask 
 $g\lvert_{F\cup F'}$ to fulfill: 
1. $g\lvert_{F\cup F'}$ is the orthant transposition given by the restriction of a 
reflection on $L$, or 2.  $g\lvert_{F\cup F'}$ is 
given by the the uniquely defined order 
preserving isometry $f:F\rightarrow 
F'$. We will always use the first option unless 
making the statement to the contrary. Thus the 
unit-endotranslations on $L$ are uniquely 
determined by their face pair $(F,F')$.
\end{itemize} 

For later reference we collect some elementary facts on the arithmetics of these special elements.
\newpage
\begin{lemma}\label{lemma4.7} \textbf{ A) Orthant-transpositions and pei-translations}
\begin{enumerate}[i)]
\item[i)] If $\tau:L\rightarrow L'$ is an orthant-transposition and $\alpha:L\rightarrow L$ a single-orthant-isometry then $\alpha=\alpha\tau\cdot\tau$ exhibits $\alpha$ as the product of two orthant-transpositions. \\
If $\sigma, \sigma'$ are single orthant reflections of $L. L'$, respectively then there is an orthant-transposition $\tau: L\rightarrow L'$ with  $\sigma\sigma'=[\sigma,\tau]$.
 
\item[ii)] Every unit-pei-translation $\lambda$ of rank k is the product of two orthant-transpositions $\lambda =\tau\tau'$ of rank k. Related to this is the observation that $\lambda^2=\tau\tau^\lambda=[\tau,\lambda]$. The translation $\lambda$ itself is not necessarily a commutator (cf. Theorem \ref{alternation} below). However, if $\rvert\Gamma^k\rvert\geq 3$ then there is an orthant-transposition $\tau$ and a unit-pei-translation $\mu$, both of rank $k$, with $\lambda=[\mu,\tau]$.

\item[iii)] Assume we are given two disjoint rank-k orthants $L, L',$ together with rank-k suborthants $K\subseteq L$ and $K'\subseteq L'$. If $h(L-K)=h(L'-K')$. Then there is a pei-translation $\lambda$ from $L$ to $L'$ which parallel shifts $K$ to $L$ and $L'$ to $K'$; and  $\lambda$ can 
be chosen as a product of unit-pei-translations.
\par
Moreover, each pei-translation of rank $k$ is equal, modulo $G_{k-1}$, to 
a product of unit-pei-translations of rank $k$.

\textbf{B) 
Single-orthant-reflections and endotranslations} 

\item[iv)] Assume 
we are given a rank-k orthant $L$ 
with two rank-k suborthants $K, K'\subseteq L$. 
If $h(L-K)=h(L-K')$ then there is an 
endotranslation  $\eta$ on $L$ which parallel 
shifts $K$ to $K'$, and $\eta$ can be chosen as a 
product of unit-endotranslations.\\ 
Moreover, each endotranslation of rank $k$ is equal, modulo $G_{k-1}$, to 
a product of unit-endotranslations of rank $k$.

\item[v)] Let 
$\sigma_{xy}:L\rightarrow L$ be the reflection of the orthant $L$ interchanging the canonical axes $x,y$ (orthogonal to faces $F_x, F_y)$ and fixing the remaining ones. Let $t_y$ denote the parallel shift of $L$ in direction $y$ by one unit into itself, and $\sigma_{xy}^{t_y}$ the corresponding reflection of $Lt_y$. Putting $\eta_{xy}:=\sigma_{xy}\sigma_{xy}^{t_y}:L\rightarrow L$ yields an explicit description of the unit-endotranslation of $L$ defined on the face pair $(F_x, F_y)$ by the restriction of $\sigma_{xy}$: We have $L=Lt_x\cup F_x$; on $Lt_x$, $\eta_{xy}$ is the diagonal shift by one (diagonal) unit in direction $y-x$ onto $Lt_x=L-F_y$, and on $F_x$ it is the restriction $\sigma_{xy}\lvert_{F_x}$ which maps $F_x$ onto $F_y$. Then we have
$$\eta_{xy}^{\sigma_{xy}}~ = ~\eta_{yx}~ =~ (\eta_{xy})^{-1},~~ [\sigma_{xy},\eta_{xy}]~=~\eta_{xy}^2,~~ and~~
\eta_{xy}\eta_{yx}^{t_y}~ = ~\sigma_{xy}\lvert_{F_x\cup F_y}.$$
We observe that if $t_y$ is induced by a unit-pei-translation $\lambda$, then $\eta_{xy}$ is the commutator $
\eta_{xy}=[\sigma_{xy},\lambda ]$, and ~ $\sigma_{xy}
\lvert_{F_x\cup F_y}=[\eta_{xy},\lambda]$. \\
Moreover, if $x,y,z$ are three pairwise different canonical basis elements of $L$ then
\begin{equation}\label{eq10}  %old eq7$
\eta_{xy}\eta_{yz}\eta_{zx}~=~\sigma_{yz}\mid_{F_x}, ~~~~~ and  ~~~~~~
\eta_{xz}\eta_{yx}\eta_{zy}~=~\sigma_{yz}\mid_{F_x\cup F_y},
\end{equation}
and note that $\sigma_{yz}$ is a reflection of the face $F_x$ of $L$ and $\sigma_{yz}\mid_{F_x\cup F_y}$ is a canonical orthant-transposition of the form $(F_x-F_x\cap F_y~,F_y-F_x\cap F_y)$.
\end{enumerate}
\end{lemma}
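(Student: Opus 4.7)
The lemma collects elementary identities among orthant-transpositions, (unit-)pei-translations and (unit-)endotranslations, and the plan is to verify each part by a direct bookkeeping calculation on the at most two or three orthants in the support of the elements involved, systematically exploiting the fact that orthant-transpositions and single-orthant reflections are involutions. For part (i), I would first observe that $\alpha\tau$ is itself an orthant-transposition between $L$ and $L'$: on $L$ it acts as $\alpha$ followed by $\tau|_L$, on $L'$ it acts as $\tau|_{L'}$, and the two halves are mutually inverse because $\alpha$ fixes $L'$ pointwise. Then $\alpha=(\alpha\tau)\tau$ is immediate from $\tau^2=\mathrm{id}$ on $L\cup L'$. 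The reflection identity $\sigma\sigma'=[\sigma,\tau]$ collapses, using $\sigma^2=1$, to the requirement that $\sigma^\tau=\sigma'$, which is arranged by picking any isometry $f\colon L\to L'$ with $f^{-1}\sigma f=\sigma'$ and taking $\tau$ to be the orthant-transposition built from $f$.

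Part (ii) rests on a single geometric observation: every unit-pei-translation $\lambda$ between $L$ and $L'$ admits a reflection symmetry, namely an orthant-transposition $\tau$ of rank $k$, built from the reflection through the hyperplane equidistant between the faces $F=L-K$ and $F'=L'-K'$, which satisfies $\tau\lambda\tau=\lambda^{-1}$. Once this $\tau$ is in hand, $\lambda^2=\tau\cdot\tau^\lambda=[\tau,\lambda]$ is a formal rewrite, and $\lambda=\tau\cdot(\tau\lambda)$ displays $\lambda$ as the product of two orthant-transpositions after a short support check on $\tau\lambda$. For the final assertion of (ii) one uses a third rank-$k$ germ $\gamma''\in\Gamma^k$: representing $\gamma''$ by an orthant $L''$ disjoint from $L\cup L'$, one chooses a unit-pei-translation $\mu$ and an orthant-transposition $\tau$ both involving $L''$ so that conjugation by $\tau$ modifies $\mu$ into $\mu\lambda$, and the commutator identity $\mu^{-1}\mu^\tau=\lambda$ then yields $[\mu,\tau]=\lambda$.

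For parts (iii) and (iv) the plan is to peel off the ``extra shell'' one unit at a time. The hypothesis $h(L-K)=h(L'-K')$ (resp.\ $h(L-K)=h(L-K')$) together with Corollary~\ref{corollary3.6} supplies a pei-bijection between the lower-rank complements; composing a carefully chosen sequence of unit-pei-translations (resp.\ unit-endotranslations), each of which replaces one face by a parallel neighbour, assembles the required element as an explicit product. The ``modulo $G_{k-1}$'' addendum is automatic because any pei-isomorphism between the complements is supported on orthohedral sets of rank strictly less than $k$ and therefore lies in $G_{k-1}$.

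Finally, part (v) is a pure computation on the single orthant $L$. Splitting $L=Lt_x\cup F_x=Lt_y\cup F_y$ into the four pieces $Lt_x\cap Lt_y$, $Lt_x\cap F_y$, $F_x\cap Lt_y$ and $F_x\cap F_y$, one evaluates both sides of each stated identity on each piece; the products of reflections then collapse predictably to a diagonal one-unit shift on the ``interior'' piece and to canonical orthant-transpositions on the face pieces. The commutator formulas $\eta_{xy}=[\sigma_{xy},\lambda]$ and $\sigma_{xy}|_{F_x\cup F_y}=[\eta_{xy},\lambda]$ when $t_y$ is induced by a unit-pei-translation $\lambda$ follow from the general principle, already exploited in (ii), that conjugating a reflection by a one-unit translation produces the product of the reflection with its translate. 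The main obstacle throughout is notational rather than conceptual: one must choose the auxiliary orthant-transposition in (ii), and align the successive unit steps along a path of commensurable orthants in (iii)--(iv), so that the claimed factorisations hold on the nose, not merely up to lower-rank corrections.
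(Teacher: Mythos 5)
Your proposal follows the paper's own strategy very closely for parts (ii)--(v): for (ii) the key step in both proofs is to produce an orthant-transposition $\tau$ with $\tau\lambda\tau=\lambda^{-1}$ and then write $\lambda=\tau\cdot(\tau\lambda)$ (the paper's $\tau'=(K,L-F\lambda)$ is exactly $\tau\lambda$), for (iii) and (iv) both proceed by peeling off one corank-$1$ face per unit step until the shell is consumed, and for (v) both are a case split over the four pieces cut out by the faces $F_x,F_y$. Your appeal to Corollary~\ref{corollary3.6} in (iii)/(iv) is a clean way to supply the ``pei-bijection between the lower-rank complements,'' which the paper handles implicitly.

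There is, however, a genuine gap in your treatment of the first assertion of part~(i). You argue that $\alpha\tau$ is an orthant-transposition because ``the two halves are mutually inverse because $\alpha$ fixes $L'$ pointwise.'' This does not hold: the forward half is $\alpha;\tau|_{L}\colon L\to L'$, the backward half is $\tau|_{L'}\colon L'\to L$, and their composition $L\to L'\to L$ is $\alpha;\tau|_L;\tau|_{L'}=\alpha|_L$, which is the identity only when $\alpha$ is trivial. The hypothesis that $\alpha$ fixes $L'$ pointwise is what guarantees the restriction of $\alpha\tau$ to $L'$ is $\tau|_{L'}$ (rather than $\tau|_{L'}\alpha$), but it says nothing about the two halves being inverse. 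Concretely, for $x\in L$ one computes $(x)(\alpha\tau)^2=(x)\alpha$ and for $y\in L'$ one gets $(y)(\alpha\tau)^2=(y)\alpha^\tau$, so $(\alpha\tau)^2=\alpha\cdot\alpha^\tau\ne1$ whenever $\alpha\ne1$; hence $\alpha\tau$ is not an orthant-$2$-cycle in the sense of the paper's definition, which requires $f_1f_2=\mathrm{Id}_{L_1}$. (The identity $\alpha=\alpha\tau\cdot\tau$ is of course tautologically true since $\tau^2=1$, but it does not decompose $\alpha$ into two orthant-transpositions.) The lemma's own phrasing of this sub-claim is also delicate -- what the product $\tau\cdot\tau^\alpha$ of two genuine orthant-transpositions actually yields is $\alpha\cdot(\alpha^{-1})^\tau$, which for an involution $\alpha$ is $\alpha\alpha^\tau$, the object appearing in the second sentence of (i) -- so the step needs a repair rather than the justification you gave. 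Your treatment of the second sentence of (i), reducing $\sigma\sigma'=[\sigma,\tau]$ to $\sigma^\tau=\sigma'$ and choosing $\tau$ accordingly, is correct.
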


\begin{proof}
Assertion \textit{i)} is obvious.

\textit{ii)} 
Let $\lambda$ be a unit-translation from $K$ to $L$ which 
maps the face $F$ of $K$ isometrically onto the face $F
\lambda$ of $L$. Then the isometry $\lambda\lvert_F:F
\rightarrow F\lambda$ extends uniquely to an isometry $K
\rightarrow L$ which fixes $F\lambda$ pointwise, and 
thus defines an orthant-transposition $\tau=(K,L)$. Correspondingly, the restriction of $\lambda^2\lvert_F: F
\rightarrow F\lambda^2$ extends uniquely to an isometry $K
\rightarrow L-F\lambda$ and hence defines a pei-transposition $\tau'=(K,L-F\lambda)$. Both $\tau\tau'=(L,K)(K,L-F\lambda)=\lambda$ and the formula $\lambda^2=\tau\tau^\lambda=[\tau,\lambda]$ are easily seen by 
inspection.\\
If there is  a third rank-k orthant $M$ disjoint to both $K$ and $L$ we consider a new pei-transposition $\tau:=(L,M)$. Then $\lambda^{\tau}$ is 

a unit-translation from $K$ to $M$. $\lambda^{-1}
\lambda^\tau=[\lambda,\tau]$ is a unit-pei-
translation from $M$ to $L$. One checks that $
[\lambda,\tau]$ is conjugate, by an appropriate 
choice of an orthant-3-cycle of the form
$\pi=(M,L,K)$, to $\lambda=[\lambda,\tau]^\pi$. 
This shows that $\lambda=[\mu,(K,L)]$ with $\mu=
\lambda^{(K.L.M)}$.

\textit{iii)} 
This assertion is easy to accept by viewing unit-translations from $L$ to 
$L'$ as 
the process of cutting a rank-(k-1) orthant off 
from a face of $L$ and pushing it down onto a 
face of $L'$. By repeating this process with 
changing face pairs one constructs an orthant-
translation $\lambda$ from $L$ to $L'$ which 
parallel shifts $K$ to $L$, and since arbitrary 
face pairings are possible we can achieve that $
\lambda$ parallel shifts $L'$ onto an arbitrary 
given rank-k suborthant $K'$ with $h(L'-K') = 
h(L-K)$.\\
If $\lambda$ is is an arbitrary pei-translation  
of rank-k the procedure above constructs a 
product $\pi$ of unit-pei-translations of rank $k
$ that coincides with $\lambda$ on the one rank-
$k$ orthant on which $\lambda$ is a non-zero 
isometry. $\pi$ depends on the special procedure, 
but ${\rm supp}(\lambda\pi^{-1}$ is always of rank 
$k-1$. This shows that modulo $G_{k-1}$, $\lambda$ is equal to $\pi$. 

\textit{iv)}
The argument for \textit{iv)} is similar to the one in \textit{iii)} above: instead of moving $K'$ to $L$ by  sequence of parallel shifts along coordinate axes we have to move $K'$ directly to $L'$ by a sequence of pushing/pulling pairs along two axis - details left to the reader.

\textit{v)}
All formulae are proved by inspection which can be left to the reader as an exercise (in the case of formulae \eqref{eq10} start by showing that the restriction of $\eta_{xy}\eta_{yz}\eta_{zx}$ to
$(1,0,0)+L$ is the identity, and so is the restriction of $\eta_{xz}\eta_{yx}\eta_{zy}$ to $(1,0,0)+L$.
\end{proof}
The following classifies $G_k(S)/G_{k-1}(S)$ for an arbitrary orthohedral 
set $S$ up to extensions. 

\begin{theorem}\label{classifies G_k(S)/G_k-1(S)}
\begin{enumerate}[i)]
\item Each transposition $(\gamma,\gamma')$ of germs in $\Gamma^k(S)$ lifts to an orthant-transposition of representing orthants in $S$, and the action of $G_k(S)$ on $\Gamma^k(S)$ induces an isomorphism onto the finitary symmetric group, $$G_k(S)/C(\Gamma^k(S)) ~ \cong  ~sym(\Gamma^k(S)). $$

\item The action of $C(\Gamma^k(S)$ on the canonical coordinate directions $X(\gamma)$ in each $\langle\gamma\rangle$, $\gamma\in\Gamma^k(S)$ defines an isomorphism $C(\Gamma^k(S)$ onto the finitary direct product of the symmetric permutation groups of degree k,

$$C(\Gamma^k(S))/C^{ord}(\Gamma^k(S))~ \cong 
\bigoplus_{\gamma\in\Gamma^k(S)}sym_kX(\gamma)$$.
 
\item The homomorphism $\varphi =\oplus_\gamma \varphi_\gamma$ 
restricted to $C^{ord}(\Gamma^k(S))$ induces a short exact sequence
$$0\rightarrow C^{ord}(\Gamma^k(S))/G_{k-1}(S)\xrightarrow{\varphi} 
\bigoplus_{\gamma\in\Gamma^k(S)}Trans\langle\gamma\rangle \xrightarrow{\oplus_\gamma fl_\gamma}\mathbb{Z} \rightarrow 0, $$ 
In particular, $A^k(S):=C^{ord}(\Gamma^k(S))/G_{k-1}(S)$ is free-Abelian of rank $\rvert\Gamma^k(S)\lvert-1$ (which is infinite for $k< rk(S)$. Each $a\in A^k(S)$ can be represented by an element $g \in C^{ord}(\Gamma^k(S))$ with the property that~${\rm supp}(g)$ is the union of a finite set of pairwise disjoint orthants that represent the non-trivial components of $\varphi(a)$; and $A^k(S)$ is generated by unit-translations and unit-endotranslations.
\end {enumerate}
\end{theorem}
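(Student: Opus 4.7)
The plan is to analyze each successive factor in the normal series \eqref{normal series} via the natural homomorphisms already introduced and by realizing prescribed germ-level data with the elementary elements of Lemma \ref{lemma4.7}.

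For (i), Lemma \ref{transitive action} tells us the action $G_k(S)\to\mathrm{sym}(\Gamma^k(S))$ is highly transitive onto the finitary symmetric group, and \eqref{eq4} identifies its kernel as $C(\Gamma^k(S))$; a germ-transposition $(\gamma,\gamma')$ is lifted by picking two disjoint representing rank-$k$ orthants and any isometry between them to form an orthant-transposition. For (ii), \eqref{eq6} together with Lemma \ref{germ stabilizer}(ii) gives $C(\gamma)/C^{ord}(\gamma)\cong\mathrm{sym}_kX(\gamma)$; assembling these into $\oplus_\gamma\bar\varphi_\gamma$ has kernel $\bigcap_\gamma C^{ord}(\gamma)=C^{ord}(\Gamma^k(S))$, and surjectivity onto the finitary direct product follows by placing single-orthant basis permutations on pairwise disjoint representative orthants of the nontrivially prescribed $\gamma$'s.

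For (iii), the map $\varphi=\oplus_\gamma\varphi_\gamma$ is well defined into the direct sum because $g\in C^{ord}(\Gamma^k(S))$ has support meeting only finitely many rank-$k$ germs, and each $\varphi_\gamma(g)$ is a translation by Lemma \ref{germ stabilizer}(ii). The inclusion $G_{k-1}(S)\subseteq\ker\varphi$ is immediate; for the reverse, if $\varphi_\gamma(g)=0$ for every $\gamma\in\Gamma^k(S)$ then by Lemma \ref{germ stabilizer}(iii) $g$ fixes some representing orthant of each such $\gamma$ pointwise, so $\Gamma^k(\mathrm{supp}\,g)=\emptyset$ and hence $\mathrm{rk}(g)<k$. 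Containment $\mathrm{im}(\varphi)\subseteq\ker(\oplus fl_\gamma)$ is precisely the total-flow identity \eqref{total flow sum}, and $\oplus fl_\gamma$ surjects onto $\mathbb Z$ because any single unit-pei-translation contributes flow $\pm 1$ at one of its two germs.

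The main obstacle is surjectivity of $\varphi$ onto $\ker(\oplus fl_\gamma)$. Under the identification $\mathrm{Trans}\langle\gamma\rangle\cong\mathbb Z^{X(\gamma)}$, $fl_\gamma$ is the sum-of-coordinates augmentation, so $\ker(\oplus fl_\gamma)$ is generated by (a) vectors $e_y-e_x$ supported at a single $\gamma$, which generate each individual $\ker fl_\gamma$, and (b) cross-germ vectors with component $-e_x$ at $\gamma$ and $e_y$ at a different $\gamma'$. Lemma \ref{lemma4.7}(v) realizes (a) via the unit-endotranslation $\eta_{xy}$ on a representing orthant of $\gamma$, which induces precisely the translation $e_y-e_x$ on $\langle\gamma\rangle$ and zero on every other tangent coset; a unit-pei-translation between disjoint representatives of $\gamma$ and $\gamma'$ realizes (b). Choosing these building blocks with pairwise disjoint supports on representatives of only the nontrivially prescribed germs simultaneously yields surjectivity and the normal-form statement that each $a\in A^k(S)$ has a representative supported on a finite union of pairwise disjoint orthants realizing the nonzero components of $\varphi(a)$, whence the generation of $A^k(S)$ by unit-translations and unit-endotranslations. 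The free-Abelian structure of $A^k(S)$ is then inherited from the ambient direct sum $\bigoplus_\gamma\mathrm{Trans}\langle\gamma\rangle$ via the short exact sequence.
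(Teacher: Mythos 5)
Your proof is correct and follows the same high-level structure as the paper for parts (i) and (ii), and also for the well-definedness, kernel computation, and the containment $\mathrm{im}\,\varphi\subseteq\ker(\oplus fl_\gamma)$ in (iii). Where you genuinely diverge is the surjectivity argument in (iii): you first decompose the target $\ker(\oplus fl_\gamma)$ into the obvious generators (intra-germ differences $e_y-e_x$ and cross-germ pairs), then realize each by a unit-endotranslation via Lemma~\ref{lemma4.7}(v) or a unit-pei-translation, choosing disjoint supports so that the product lies in $C^{\mathrm{ord}}(\Gamma^k(S))$ with $\varphi$-image the prescribed element. The paper goes in the opposite direction: it starts from an arbitrary $g\in C^{\mathrm{ord}}(\Gamma^k(S))$, uses the total-flow identity~\eqref{total flow sum} and the pei-normal form to build a pei-bijection $\beta\colon Tg-T\to T-Tg$ that corrects $g$ to a representative $\mu$ supported on \emph{one} pairwise-disjoint orthant per rank-$k$ germ, and then inducts on the number of non-vanishing $\varphi_\gamma$-components to express $\mu$ as a product of translations and endotranslations. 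Both routes are valid; yours buys a more direct proof of the generation statement, while the paper's buys the stronger normal-form conclusion that the support has exactly one orthant per germ in $\Gamma^k(\mathrm{supp}(g))$ (your construction may place several disjoint orthants at a single germ when both an endotranslation and a cross-germ translation touch it, which still satisfies the theorem's wording but is weaker). One minor slip: your justification that $\oplus_\gamma fl_\gamma$ surjects onto $\mathbb Z$ by appeal to a unit-pei-translation is confused, since a unit-pei-translation has total flow zero; surjectivity is trivial because a single coordinate vector $e_x$ in one $\gamma$-block already has flow $1$, but it is this standalone vector, not the image of any element of $C^{\mathrm{ord}}(\Gamma^k(S))$, that exhibits surjectivity.
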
 

\begin{proof}
\textit{i)} As $\gamma, \gamma'$ are different germs they can be represented by a pair of disjoint orthants, and any orthant-transposition between 
those lifts the transposition of the germs. The rest of assertion \textit{i)} is immediate from Lemmas \ref{transitive action} and \ref{germ stabilizer}.

\textit{ii)} 
Combining the homomorphisms \eqref{eq6} with $\gamma$ running through $\Gamma^k(S)$ yields a homomorphism $\overline{\varphi} = \prod_\gamma\overline{\varphi}_                                                           
\gamma: C(\Gamma^k(S))\rightarrow \prod_{\gamma}sym(X(\gamma))$, and by Lemma \ref{germ stabilizer} its kernel is $\bigcap_\gamma C^{ord}(\gamma) = C^{ord}(\Gamma^k(S))$. If $\gamma$ is an arbitrary rank-k germ with $\overline{\varphi}_\gamma (g)\neq Id$ then any orthant representing $\gamma$ is commensurable to an orthant in~${\rm supp}(g)$. This shows that $\overline{\varphi}_\gamma (g)$ has only finitely many non-vanishing components; hence we can infer that the image of $\overline{\varphi}$ is in the 
finitary product. As each permutation in sym$(X(\gamma))$ can be lifted by a single orthant isometry, the image of $\overline{\varphi}$ is, in fact, the full finitary product.

\textit{iii)} 
Combining the homomorphisms \eqref{eq5} with $\gamma$ running through $\Gamma^k(S)$ yields a homomorphism $\varphi$ of $C(\Gamma^k(S))$ into the product $\prod_\gamma$Isom$(\langle\gamma\rangle)$. As above in ii) one argues that for $g$ of rank k, $\varphi (g)$ has only finitely many non-vanishing components; hence we can infer that the restriction of $\varphi$ to the ordered germs yields a homomorphism into the direct sum
\begin{equation}\label{eq11}  %0ld eq8%
\varphi: C^{ord}(\Gamma^k(S)) \rightarrow \bigoplus_{\gamma \in \Gamma^k(S)}\emph{Trans}\langle\gamma\rangle. 
\end{equation}
By Lemma \ref{germ stabilizer} the kernel of $\varphi$ consists of the elements $g$ that pointwise fix in each rank-k orthant a commensurable suborthant; that means ${\rm supp}(g)$ contains no rank-k orthant. Hence $\ker(\varphi)= G_{k-1}(S)$ and $\varphi$ induces an embedding of $C^{ord}(\Gamma^k(S))/G_{k-1}(S)$ into the Abelian group $\bigoplus_\gamma$Trans$(\langle\gamma\rangle)$. 

We choose, for a given $g\in C^{ord}(\Gamma^k(S))$, a finite set $\Lambda$ of pairwise disjoint orthants representing the rank-k germs of ${\rm supp}(g)$, and with the property that $g$ restricted to each $L\in\Lambda$ is an isometric embedding $L\rightarrow S$. As $g$ fixes all rank-k germs 
$Lg$ is commensurable to $L$ for each $L\in\Lambda$. 

As we saw in the proof of \eqref{total flow sum} we can assume, without loss of generality, that as $L$ runs through $\Lambda$, the sets $L\cup Lg$ are pairwise disjoint, and as in that proof we put 
$T:=\bigcup_{L\in \Lambda}L$. Then we observe that $h(T-Tg)-h(Tg-T)$ 
is the total flow $fl$ and deduce from \eqref{total flow sum} that $T-Tg$ 
 and $Tg-T$ are pei-isometric.

Thus, by the pei-normal form, there is an pei-bijection $\beta: Tg-
T\rightarrow T-Tg$. Let $\alpha: T\rightarrow Tg$ denote the 
restriction of $g$ to $T$ and put $K:=\alpha^{-1}(T\cap Tg)$. 
The composition of $\alpha$ with the union $\emph{id}_{T\cap Tg}\cup\beta: Tg\rightarrow T$ now yields a pei-permutation $\mu:T \rightarrow T$ which coincides on $K$ with the restriction of $g$. Thus, the composition $g\mu^{-1} $                                                                
                          is supported in~${\rm supp}(g)$ and fixes $K$ pointwise, whence $rk(g\mu^{-1})<k$. This shows that $g=\mu$ modulo $G_{k-1}(S)$. Hence every element of $C^{ord}(\Gamma^k(S))/G_{k-1}(S)$ can be 
represented by an element $g \in C^{ord}(\Gamma^k(S))$ with the property that g is supported on a set of pairwise disjoint orthants $L_\gamma$ each of which represents its index $\gamma \in\Gamma^k({\rm supp}(g))$.

From here it is easy to prove that $C^{ord}(\Gamma^k(S))/G_{k-1}(S)$ 
is represented by a product of translations. We use induction on the number $\rvert\{\gamma\mid \varphi_\gamma(g)\neq 0\}\lvert$:  Pick a pair of 
germs $\gamma, \gamma'$, both with $\varphi_\gamma(g)\neq 0$. Then 
multiply g with a sequence of translation $\mu_i$ from $L_\gamma$ 
to $L_\gamma'$ in coordinate directions such that the product $
\mu = \prod_i\mu_i$ reverses the restriction of $g$ to a 
commensurable suborthant $K\subseteq L_\gamma$ that $g$ parallel 
shifts within $L_\gamma$. Then $g\mu$ fixes $K$ pointwise, 
hence modulo $G_{k-1}(S),~ g\mu$ is equal to a an element of $C^{ord}
(\Gamma^k(S))$ with smaller number $\rvert\{\gamma\mid \varphi_\gamma(g\mu)\neq 0\}\lvert$. The procedure ends when the $\varphi_\gamma(g) = 0$ except for one germ $\gamma$, and then $g$ is mod $G_{k-1}(S)$ is an endotranslation. In view of parts v) and vi) of Lemma \ref{lemma4.7} this proves iii).
\end{proof}
As a consequence of Theorem \ref{classifies G_k(S)/G_k-1(S)} we obtain economical generation properties. The obvious crucial fact that if $S$ is an orthohedral set of rank $rkS=n$ then $\rvert\Gamma^n(S)\lvert\in\mathbb N$, while $\rvert\Gamma^k(S)\lvert=\infty$ when $k<n$. The \emph{exceptional case} when $\rvert\Gamma^n(S)\lvert=1$ -- equivalently: $G_n$ contains no rank-n orthant-transpositions --  requires special treatment: 
In that case all rank-n elements of $G_n$ are rank-n stagnant, and this is a serious restriction on the rank-(n-1) elements that are products of rank-n orthant transpositions. E.g., non-trivial pei-translations cannot be products of single-orthant-reflections.

\begin{corollary}\label{generation properties} Let $S$ be  a stack of $h(S)$ rank-n orthants, $G_k:=G_k(S)\leqslant pei(S)$, and $\Gamma^k = \Gamma^k(S)$, with $0\leq k\leq n\in\mathbb N$. 
\begin{enumerate}[i)]
\item If $\rvert\Gamma^k\lvert\geq2$ then $G_k$ is generated by its orthant-transpositions of rank k. \\
If $\rvert\Gamma^k\lvert=1$ then $k=n$ and $G_n=pei(\mathbb N^n)$ is the normal subgroup generated by all single-orthant-reflections of rank 
n.

\item If $\rvert\Gamma^k\lvert\geq2$ then $C^{ord}(\Gamma^k)$ is generated by its pei-translations of rank-k.\\If $\rvert\Gamma^k\lvert=1$ then $C^{ord}(\Gamma^k)$ is the normal subgroup generated by the the endotranslations of rank-k.

\item If $\rvert\Gamma^k\lvert\geq 5$ then every product of two orthant-transpositions $g=\tau\tau'$, where~$\tau, \tau'\in  G_k$, can be written as a product $g=v_1v_2v_3$, where each $v_i$ is either trivial or a product $v_i=\tau_i\tau_i'$ of two \emph{disjoint} orthant-transpositions (i.e., ${\rm supp}(\tau_i)\cap {\rm supp}(\tau_i')=\emptyset$, for each $1\leq i\leq3$).
\end {enumerate}
\end{corollary}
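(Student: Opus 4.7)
The overall plan is to induct on $k$, using the composition series
\[ G_{k-1}\leqslant C^{\mathrm{ord}}(\Gamma^k)\leqslant C(\Gamma^k)\leqslant G_k \]
whose successive quotients are described by Theorem~\ref{classifies G_k(S)/G_k-1(S)}. At each step I exhibit generators of the quotient and check that each lies in the subgroup generated by the advertised generators (rank-$k$ orthant-transpositions for (i), rank-$k$ pei-translations for (ii)); then I show the lower term $G_{k-1}$ itself is already produced by these generators, which is the mildly subtle part of the induction.

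For (i) with $|\Gamma^k|\geq 2$: The quotient $G_k/C(\Gamma^k)\cong\mathrm{sym}(\Gamma^k)$ is generated by germ-transpositions, each lifted by a rank-$k$ orthant-transposition. The factor $C(\Gamma^k)/C^{\mathrm{ord}}(\Gamma^k)$ is generated by axis-swaps, lifted by single-orthant-reflections; Lemma~\ref{lemma4.7}(i) rewrites each such reflection as a product of two orthant-transpositions, using the second germ supplied by $|\Gamma^k|\geq 2$. Finally $C^{\mathrm{ord}}(\Gamma^k)/G_{k-1}$ is generated by unit-pei-translations (decomposed by Lemma~\ref{lemma4.7}(ii)) and unit-endotranslations (decomposed by Lemma~\ref{lemma4.7}(v) into two reflections), so all three factors are covered. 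To absorb $G_{k-1}$, identity~\eqref{eq10} rewrites a rank-$(k-1)$ orthant-transposition as a product of three rank-$k$ unit-endotranslations whenever $k\geq 3$; for $k\leq 2$ I instead form $\tau\cdot\tau^{t}$, where $t$ is a unit shift moving one support orthant strictly into itself, producing rank-$(k-1)$ transpositions directly. The remaining case $|\Gamma^k|=1$ forces $k=n$ and $S=\mathbb N^n$; no orthant-transpositions exist, but Theorem~\ref{classifies G_k(S)/G_k-1(S)}(ii) and Lemma~\ref{lemma4.7}(v) supply the axis-swaps and endotranslations needed for $C(\Gamma^n)$ and $C^{\mathrm{ord}}(\Gamma^n)/G_{n-1}$, and then \eqref{eq10} yields rank-$(n-1)$ orthant-transpositions inside the normal closure, reducing to the $|\Gamma^{n-1}|\geq 2$ case already handled.

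For (ii) the plan is structurally identical, with pei-translations replacing orthant-transpositions. When $|\Gamma^k|\geq 2$, Lemma~\ref{lemma4.7}(iii) reduces an arbitrary pei-translation modulo $G_{k-1}$ to a product of unit-pei-translations, and the unit-endotranslations in Theorem~\ref{classifies G_k(S)/G_k-1(S)}(iii) are obtained from pairs of unit-pei-translations by detouring through a second germ, via the commutator identities of Lemma~\ref{lemma4.7}(v). When $|\Gamma^k|=1$ no pei-translations exist, so the claim reduces to endotranslations normally generating $C^{\mathrm{ord}}(\Gamma^k)$; this is Theorem~\ref{classifies G_k(S)/G_k-1(S)}(iii), with $G_{k-1}$ absorbed exactly as in (i).

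For (iii) the plan is combinatorial. Pass to the germ image $\bar g=\bar\tau\bar\tau'\in\mathrm{sym}(\Gamma^k)$, a product of two transpositions, hence either $1$, a disjoint double-transposition, or a $3$-cycle. Because $|\Gamma^k|\geq 5$, each of these can be rewritten in $\mathrm{sym}(\Gamma^k)$ as a product of at most two genuinely \emph{disjoint} double-transpositions; the model identity is $(1,2,3)=(1,3)(4,5)\cdot(2,3)(4,5)$. Lifting each disjoint double-transposition of germs to a product of two rank-$k$ orthant-transpositions with disjoint orthant supports (using distinct representatives of the four germs involved) realises $g$ up to an element of $C(\Gamma^k)$ whose support sits on the orthants already in play; a third factor $v_3=\tau_3\tau_3'$, built on two fresh germs outside the previous support via Lemma~\ref{lemma4.7}(i), absorbs this residual single-orthant-isometry correction. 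I expect the main obstacle to be in (iii): verifying that the correction factor $v_3$ can always be chosen with $\tau_3,\tau_3'$ \emph{disjoint} rather than merely distinct — this is precisely why $|\Gamma^k|\geq 5$ and not merely $\geq 4$ is imposed. A lesser nuisance is the base case $k\leq 2$ of (i), where identity~\eqref{eq10} is unavailable and the shift-and-compose substitute must be checked by hand.
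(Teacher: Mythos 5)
Your outline for parts (i) and (ii) follows the paper's skeleton — lift generators of the three quotients $G_k/C(\Gamma^k)$, $C(\Gamma^k)/C^{ord}(\Gamma^k)$, $C^{ord}(\Gamma^k)/G_{k-1}$ via Theorem~\ref{classifies G_k(S)/G_k-1(S)} and Lemma~\ref{lemma4.7}, then absorb $G_{k-1}$ by downward induction — but the absorption step is handled differently and less cleanly than in the paper. The paper proves directly that \emph{every} rank-$(k-1)$ orthant-transposition $(F,F')$ factors through rank-$k$ ones: if $F,F'$ lie in disjoint rank-$k$ orthants $L,L'$ then $(F,F')=(L,L')(L-F,L'-F')$, and if they share a rank-$k$ orthant one conjugates by $(F',F'')$ for an auxiliary $F''$. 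Your route through identity \eqref{eq10} only produces one \emph{specific} rank-$(k-1)$ orthant-transposition $\sigma_{yz}|_{F_x\cup F_y}$ (the one built on a face pair of a single rank-$k$ orthant), so getting all rank-$(k-1)$ transpositions would still require a separate transitivity argument, and it needs $k\geq 3$; your proposed substitute $\tau\cdot\tau^t$ for $k\leq 2$ is not checked, and in any case the paper's factorization works uniformly for all $k$. This is a longer detour that would need extra work to close.

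For (iii) there is a genuine gap. After lifting at most two disjoint double-transpositions of germs to products of disjoint orthant-transpositions and forming $v_1 v_2$, the residual $r = g(v_1 v_2)^{-1}$ lies in $C(\Gamma^k)$, but it is a product of single-orthant-isometries distributed over several orthants (all the orthants of $\mathrm{supp}(g)$ and of $\mathrm{supp}(v_1 v_2)$), not a product of two disjoint orthant-transpositions. It also need not be a single-orthant-isometry, so there is nothing for one $v_3$ of the required shape to absorb; and even if it were a single-orthant-isometry, Lemma~\ref{lemma4.7}(i)'s factorization $\alpha = (\alpha\tau)\cdot\tau$ gives two transpositions that share the orthant $L$, hence are \emph{not} disjoint. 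Passing to $\mathrm{sym}(\Gamma^k)$ discards exactly the information you then need to recover. The paper works entirely at the orthant level: it inserts auxiliary rank-$k$ orthants $X,Y$ disjoint to the relevant support and splits $g=(K,L)(M,N)$ into $(K,L)(X,Y)\cdot(X,Y)(M,N)$, or in the commensurable case into $(K,L)(X,N)\cdot(X,N)(Y,L)\cdot(Y,L)(M,N)$, each factor visibly a pair of disjoint transpositions. The key structural fact you do not exploit is that $S$ is a stack, so for finite $k\geq 5$ we have $k=\mathrm{rk}\,S$ and any two rank-$k$ orthants of $S$ are either disjoint or commensurable; that is what makes the case analysis and the choice of $X,Y$ possible.
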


\begin{proof}
\textit{i)} We start by proving that the claim holds true modulo $G_{k-1}$. By Theorem \ref{classifies G_k(S)/G_k-1(S)} this amounts to lift generators of the three sections $Q_1:=G_k/C(\Gamma^k)$, $Q_2:=C(\Gamma^k)/C^{ord}(\Gamma^k)$, and $Q_3:=C^{ord}(\Gamma^k)/G_{k-1}$. Now, $Q_1$ is generated by germ transpositions, and those lift to orthant transpositions. $Q_2$ is generated by transpositions of face directions, and those lift to single orthant reflections. $Q_3$ is generated by unit-pei-translations and unit-endotranslations. By Lemma \ref{lemma4.7}~iii) and~iv) we can thus infer that $G_k/G_{k-1}$ is generated by orthant-transpositions, 
single-orthant-reflections, unit-pei-translations, and unit-endotranslations.

In the \emph{exceptional case} where $G_k=pei(\mathbb N^n)$ contains neither orthant-trans\-posi\-tions nor pei-tranlations of rank~$k$, $G_k/G_{k-1}$ is thus generated by single-orthant-reflec\-tions and unit-endotranslations. Moreover, we know from Lem\-ma \ref{lemma4.7}~v) that unit-endotranslations are products of two single-orthant-reflec\-tions and that single-orthant-reflections actually suffice to generate $G_k/G_{k-1}$ in that case.

The case when $\rvert\Gamma^k\lvert\geq2$ is similar: here the existence of a rank-k orthant-transposition $\tau\in G_k$ allows to apply Lemma \ref{lemma4.7}~i) showing that all single-orthant-reflections of rank k can now be replaced by of products of two orthant-transpositions. Hence $G_k/G_{k-1}$ is generated by its rank-k orthant-transpositions in this case.

Next we prove that if $\rvert\Gamma^k\lvert\geq2$ then every rank-(k-1) orthant transposition $(F,F')
$ is a product of rank-k orthant transpositions. 
This is easy when  $F$ and $F'$ are contained in 
disjoint rank-k orthants, for then they are, in 
fact, faces of disjoint rank-k orthants $(L,L')$, 
and $(F,F')=(L,L')(L-F,L'-F')$. And if $F$ and $F'$ 
are contained in the same rank-k orthant $L$, we 
find a rank-(k-1) orthant $F''$ supported in a 
rank-k orthant disjoint to $L$, and therefore $
(F',F'')(F,F'')(F',F'')=(F,F')$. The corresponding 
weaker result in the exceptional case $\rvert
\Gamma^k\lvert=1$ is obvious: If $(F,F')$ is an 
arbitrary rank-(k-1) orthant-transposition then we 
find a rank-k orthant $L$ disjoint to $F\cup F'$,  
and by Lemma \ref{lemma4.7}v) a face-transpositions 
$(F_x,F_y)$ which is a product single orthant 
reflections of rank k. As $\rvert\Gamma^{k-1}
\lvert=\infty$ any two rank-[k-1) orthant 
transpositions are conjugate in $pei(S)$ hence the 
normal subgroup generated by $(F_x,F_y)$ contains $
(F,F')$.

Now assertion \textit{i)} follows by induction on k: Let $H\leqslant G_k$ 
be the subgroup generated by all rank-k orthant-transpositions (resp. the 
normal subgroup generated by all rank-k reflections). In the case $k=0$, we have $H=G_0$ because  $G_0$ is the finitary countable symmetric group and hence generated by its transpositions. If $k>1$ we have seen that 
the subgroup  generated by rank-k transpositions (resp. the normal subgroup generated by all rank-k reflections) contains all rank-(k-1) orthant-transpositions, and by induction those generate $G_{k-1}$. Thus the rank-k 
orthant-transpositions   
(resp. single orthant-reflections generate both $G_k/G_{k-1}$ and $G_{k-1}$, and hence $G_k$.

\textit{ii)} 
The proof along the lines of assertion \textit{i)} and can be left for the reader.

\textit{iii)}
Let  $\tau =(K,L),~\tau'=(M,N)$. If $\rvert\Gamma^k(S)
\lvert =\infty$ one finds rank-k orthants $X,Y$ 
such that $K,L,X,Y$ and $M,N,X,Y$ are pairwise disjoint 
quadruples, and $(K.L)(M,N)=(K,L)(X.Y)(X;Y)(M,N)$ as 
needed. As $S$ is a stack of orthants we infer that if $k$ is finite $\geq 5$ then $k=rkS$ and any two rank-n orthants are either disjoint or commensurable. If 
both $K$ and $L$ are commensurable to $M$ or $N$ we find 
two rank-k orthants $X,Y$ as above, the argument above 
applies. In the remaining case we may assume that $K$ is 
commensurable to $M$ but $L\cap N=\emptyset$; then we find 
two rank-n orthants $X,Y$ such that both $K,L,N,X,Y$ and 
$M,L,N,X,Y$ are pairwise disjoint quintuples, and  $(K,L)
(M,N)=(K,L)(X,N)(X,N)(Y,L)(Y,L)(M,N)$.
\end{proof}

\begin{exercise}
Prove that a) The stagnant subgroup  of $G_k$, 
$$ST_k=\ker\Big(fl:C(\Gamma^k)\rightarrow \bigoplus_{\gamma\in\Gamma^k(S)}\mathbb{Z}\Big),$$ 
is generated by $G_{k-1}$ together with all single orthant isometries of rank $k$, and also equal to the normal subgroup generated by all single orthant reflections.

b) The stagnant subgroup of
$$C^{ord}(\Gamma^k(S)) \text{ , i.e., } E_k(S):=ST_k\cap C^{ord}(\Gamma^k(S)),$$ is the normal subgroup of $G_k$ generated by all endotranslations of rank $k$. As a group it is generated by $G_{k-1}$ together with all 
endotranslations of rank $k$.
\end{exercise}

\subsection{Conjugation, Abelianization and alternation.}\label{Conjugation, Abelianization, and alternation.}
As before $S$ is 
a stack of orthants of rank n and $G:=pei(S)$, and we recall that $\rvert\Gamma^k\lvert$ is finite if and only if $k=n$. We 
say an element $g\in G_k$, $k\leq n$, is 
\emph{even} if $g$ is equal to the product of 
an even number of rank-k 
orthant-transpositions -- by Lemma~\ref{lemma4.7} this includes all single-orthant-isometries of rank $k$; and corank-1 faces of a rank-k orthants 
can be written as products of two such. We write ${\rm alt}G_k\leqslant G_k$ for the subgroup consisting of all even elements and observe that alt$G_k$ is the kernel of the \emph{rank-k parity homomorphism}, $par_{\Gamma^k}: G_k\rightarrow\mathbb Z_2$, which sends $g\in G_k$ to the parity of 
the permutation that $g$ induces on the rank-k germs $\Gamma^k$. Hence ${\rm alt}G_k$ is of index 2 in $G_k$. Moreover, by Corollary \ref{generation properties} iii) alt$(G_k)$ is generated by products of pairs of disjoint orthant-transpositions if $\rvert\Gamma^k(S)\lvert\geq5$.

We will also need a refinement of the action of $G_k$ on $\Gamma^k(S)$. When $g\in G_k$ sends the tangent coset $\langle\gamma\rangle$ to $\langle\gamma g\rangle$ then it also induces a map $g: X(\gamma)\rightarrow X(\gamma g)$ between the canonical axes directions of $\langle\gamma\rangle$ and $\langle\gamma g\rangle$.
Given an orthant $L$ representing $\gamma$ on which $g$ is isometric, and 
a canonical axis-direction $x\in X(\gamma)$, we have $x$ orthogonal to a unique corank-1 face $F$ of $L$ and $xg$ is the canonical axis direction orthogonal to $Fg$. Thus $G_k$ acts on the disjoint union $Y:=\bigcup_{\gamma\in\Gamma^k}X(\gamma)$ by finite permutations; and we have a corresponding parity homomorphism  $par_{Y^k}: G_k\rightarrow\mathbb Z_2$.\\ 
Restricted to $C(\Gamma^k)$ the parity map $par_{Y^k}$ is easy to compute: On $C^{ord}(\Gamma^k)$ even the action on $Y$ is trivial, hence we need 
only consider it on
$$C(\gamma^k)/C^{ord}(\Gamma^k)=ST_k/E_k.$$
$ST_k$ is generated by all single-orthant-reflections, and as those are the transpositions of the symmetric groups $sym(X(\gamma))$ an element of $ST_k$ has parity 0 (or is even) if and only if it is the product of an even number of single orthant reflections. This is a subgroup of index 2 in $ST_k$, we call it the alternating subgroup ${\rm alt}ST_k\leqslant ST_k$, and have the normal series
$$G_{k-1}\leqslant E_k\leqslant [ST_k,ST_k]\leqslant{\rm alt}ST_k\leqslant ST_k\leqslant C(\Gamma^k)\leqslant {\rm alt}G_k\leqslant G_k.$$

\begin{theorem}\label{alternation} 
\begin{enumerate}[i)]
\item If $\rvert\Gamma^k(S)\lvert\geq 3$ then the following holds:
	\begin{enumerate}[a)] 
	\item In $G_k$ all unit-pei-translations of rank~$k$ are conjugate, together they generate $C^{ord}(\Gamma^k(S))$, and $(G_k)_{ab}\cong\mathbb Z_2\oplus\mathbb{Z}_2$ (generated by an or\-thant-transposition and a single-orthant-reflection).
	
	\item In $G_k$ all products $\sigma_1\sigma_2$ of pairs of disjoint single-orthant-reflec\-tions of rank k are conjugate,\\
	together they generate ${\rm alt}ST_k\leqslant ST_k$,\\ 
	and
	$(C(\Gamma^k(S))_{ab}\cong (ST_k)_{ab}\cong\bigoplus_{\Gamma^k(S)}\mathbb Z_2$ (generated by single orthant reflections).
	\end{enumerate}
	
\item If $\rvert\Gamma^k(S)\lvert\geq4$ then all orthant-3-cycles $p=(L_1,L_2,L_3)$ of rank~$k$ are conjugate (more generally: If $\rvert\Gamma^k(S)\lvert\geq m$ then any two orthant-$(m-1)$-cycles are conjugate) in $G_k$ and together they generate ${\rm alt} G_k$.	
	
\item If $\rvert\Gamma^k(S)\lvert\geq5$ then all products $\tau_1\tau_2$ of pairs of disjoint rank-k orth\-ant-transpositions are conjugate and together they generate ${\rm alt}G_k$.	
\item If $\rvert\Gamma^k(S)\lvert=2$ then $(G_k)_{ab}\cong \mathbb Z_2\oplus\mathbb Z_2 \oplus \mathbb Z_2$ (generated by an orth\-ant-transposition, a single-orthant-reflection, and a unit-pei-translation).
\item If $\rvert\Gamma^k(S)\lvert=1$ (hence $G_k\cong
pei(\mathbb N^k$)) then $(G_k)_{ab}\cong \mathbb Z_2\oplus\mathbb Z_2$ (generat\-ed by a single-orthant-reflection, and a unit-endotranslation).
\end{enumerate}
\end{theorem}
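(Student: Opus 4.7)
The plan is to treat the five parts in a unified way, building on the high-transitivity Lemma \ref{transitive action} (any bijection between finite sets of rank-$k$ germs of $S$ lifts to an element of $G_k(S)$), the commutator identities of Lemma \ref{lemma4.7}, the normal-series analysis of Section \ref{Conjugation, Abelianization, and alternation.}, and the classification in Theorem \ref{classifies G_k(S)/G_k-1(S)}. Two parity maps $\mathrm{par}_{\Gamma^k},\mathrm{par}_{Y^k}\colon G_k\to\mathbb{Z}_2$ together with the flow homomorphism will organize every abelianization that appears.

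For the conjugacy statements in \emph{i)}, \emph{ii)}, \emph{iii)} I would first use Lemma \ref{transitive action} to move the germ-supports of one element onto those of the other, and then post-conjugate by single-orthant-isometries (or by the explicit transpositions produced in Lemma \ref{lemma4.7}~\emph{i)}) to match the specific local isometries; the cardinality hypotheses $|\Gamma^k(S)|\geq 3,4,5$ are exactly what is needed to find enough pairwise-disjoint ``spare'' rank-$k$ orthants to execute these auxiliary moves without collision. For the generation claims I would start from Corollary \ref{generation properties}: rank-$k$ orthant-transpositions generate $G_k$, rank-$k$ pei-translations generate $C^{ord}(\Gamma^k(S))$, and Corollary \ref{generation properties}~\emph{iii)} splits an arbitrary product of two orthant-transpositions into products of \emph{disjoint} pairs, yielding $\mathrm{alt}(G_k)$ in \emph{iii)} and (after rewriting such a disjoint pair as a product of two 3-cycles sharing a common orthant) in \emph{ii)} as well. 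To refine ``pei-translation'' to ``unit-pei-translation'' I would invoke Lemma \ref{lemma4.7}~\emph{iii)} and then cascade down in rank using the commutator formulas of Lemma \ref{lemma4.7}~\emph{v)} (in particular $\eta_{xy}=[\sigma_{xy},\lambda]$ and $\eta_{xy}\eta_{yz}\eta_{zx}=\sigma_{yz}|_{F_x}$), which produce endotranslations, then lower-rank face-reflections, and eventually all of $G_{k-1}$; induction on $k$ closes the loop.

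For the abelianizations in the generic range $|\Gamma^k(S)|\geq 3$ I would show that the surjection $(\mathrm{par}_{\Gamma^k},\mathrm{par}_{Y^k})\colon G_k\twoheadrightarrow\mathbb{Z}_2\oplus\mathbb{Z}_2$ is in fact an isomorphism on abelianizations. The key input is again Lemma \ref{lemma4.7}~\emph{v)}: every $\eta_{xy}$, every face reflection $\sigma_{xy}|_{F_x\cup F_y}$, every square $\lambda^2$ of a unit-pei-translation, and --- via the last clause of Lemma \ref{lemma4.7}~\emph{ii)}, which requires $|\Gamma^k(S)|\geq 3$ --- the unit-pei-translation $\lambda$ itself are realized as honest commutators in $G_k$. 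Combined with Theorem \ref{classifies G_k(S)/G_k-1(S)}~\emph{iii)}, this forces $[G_k,G_k]\supseteq C^{ord}(\Gamma^k(S))$ and collapses each $\mathrm{sym}_k(X(\gamma))$-factor of $C(\Gamma^k(S))/C^{ord}(\Gamma^k(S))$ down to its sign. The same commutator bookkeeping, localized one germ at a time, yields $(C(\Gamma^k(S)))_{\mathrm{ab}}\cong(ST_k)_{\mathrm{ab}}\cong\bigoplus_{\Gamma^k(S)}\mathbb{Z}_2$ for part \emph{i~b)}.

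The edge cases \emph{iv)} and \emph{v)} are handled by noting precisely where the above machinery fails. For $|\Gamma^k(S)|=2$ the last clause of Lemma \ref{lemma4.7}~\emph{ii)} is unavailable, so $\lambda$ survives in $(G_k)_{\mathrm{ab}}$ as an independent $\mathbb{Z}_2$-generator (only its square is forced to be a commutator, by the universally valid identity $\lambda^2=[\tau,\lambda]$). For $|\Gamma^k(S)|=1$ there are no rank-$k$ orthant-transpositions at all, so $\mathrm{par}_{\Gamma^k}$ disappears; by Corollary \ref{generation properties}~\emph{i)}, $G_k=\mathrm{pei}(\mathbb{N}^k)$ is generated by single-orthant-reflections and unit-endotranslations, and from $\eta_{xy}^2=[\sigma_{xy},\eta_{xy}]$ one reads off that $\eta_{xy}$ contributes exactly one $\mathbb{Z}_2$ independent of $\sigma_{xy}$. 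The main obstacle throughout is the careful bookkeeping of which clause of Lemma \ref{lemma4.7} is available at which threshold on $|\Gamma^k(S)|$; once that is tracked correctly, each subclaim reduces to a routine verification against the normal series displayed just before the theorem.
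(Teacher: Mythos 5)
Your overall plan follows the paper's route closely: the same high-transitivity input (Lemma~\ref{transitive action}), the same commutator arithmetic (Lemma~\ref{lemma4.7}), the same generation inputs (Corollary~\ref{generation properties}), and the same two parity maps $par_{\Gamma^k},par_{Y^k}$ organizing the abelianization. For part~(i) and the conjugacy/generation claims in (ii)--(iii) your sketch is somewhat more compressed than the paper's explicit case analyses (disjoint / nested / general position, and the two-step reduction of 3-cycles), but it is recognizably the same argument and could be filled in.

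There is, however, a genuine gap in your treatment of the edge cases (iv) and (v). You argue that $\lambda^2=[\tau,\lambda]$ (resp.\ $\eta_{xy}^2=[\sigma_{xy},\eta_{xy}]$) forces $\lambda$ (resp.\ $\eta_{xy}$) to have order at most $2$ in $(G_k)_{\mathrm{ab}}$, and then assert that $\lambda$ ``survives as an independent $\mathbb Z_2$-generator'' because the stronger commutator identity from Lemma~\ref{lemma4.7}~ii) is ``unavailable.'' That is not a proof: the failure of one particular method to exhibit $\lambda$ as a commutator does not show that $\lambda\notin[G_k,G_k]$. You have only an upper bound on the contribution of $\lambda$ (resp.\ $\eta$) to the abelianization, not the required lower bound. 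The two parity maps do not see $\lambda$ at all, so some additional quotient must be produced. The paper supplies exactly this: in case $|\Gamma^k(S)|=2$ it exhibits $G_k/ST_k$ as an infinite dihedral group $\langle\lambda,\tau\rangle$ whose abelianization is the Klein four-group, giving the missing $\mathbb Z_2$ for $\lambda$; in case $|\Gamma^k(S)|=1$ it uses the semidirect-product decomposition $G_k=pei(L)\cong E_k\rtimes\mathrm{Isom}(L)$ together with the conjugacy of all endotranslations of $L$ to pin down $(G_k)_{\mathrm{ab}}$ as $\mathbb Z_2\oplus\mathbb Z_2$. You need one of these (or an equivalent homomorphism that is nontrivial on $\lambda$, e.g.\ built from the flow map $fl_\gamma$ reduced mod~$2$) to close the argument.
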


\begin{proof}
\textit{ia)} Let $\lambda$, $\lambda'$ be two rank-k unit-pei-translations from $K$ to $L$, and $K$ to $L'$ respectively. If $L$ and $L'$ are disjoint then $\lambda$ and  $\lambda'$ are conjugate by an orthant transposition $(L,L')$. 
If $L$ and $L'$ are nested (and hence commensurable) an 
auxiliary rank-k orthant is available to construct a translation that sends $L$ to $L'$ or vice versa, and thus a 
conjugation between $\lambda$ and $\lambda'$. In the general 
case one finds inside $L$ a rank-k suborthant which is either 
disjoint to or contained in $L'$ and obtains the required 
conjugation in two steps. The general conjugation assertion is now obvious, and that the unit-pei-translations generate all of $C^{ord}(\Gamma^k(S))$ was established in Lemma \ref{lemma4.7}. \\
The action of the two parity homomorphisms yields a epimorphism $par_{Y^k}\times par_{\Gamma^k}:G_k\rightarrow\mathbb Z_2\oplus\mathbb Z_2$, whose 
kernel is generated by all translations together with all products of two 
single orthant reflections. By Lemma \ref{lemma4.7}i) and ii) both are commutators.

\textit{ib)} 
The proof is analogue and easier than the one of ia).

\textit{ii)}  To prove this we start by observing that if $p=(L_1,L_2,L_3)$ is an orthant-3-cycle of rank k, given by given by the pair of isometries $L_1\xrightarrow{\varphi_1}
L_2\xrightarrow{\varphi_2}L_3$, then the fact that there is an auxiliary rank-k orthant $K$ disjoint to ${\rm supp}(p)$ provides the existence of translations $\vartheta_i\in Isom(K\cup L_i)$, with the property that $\vartheta_i(K)$ is an arbitrary given commensurable suborthant of $L_i$. We 
can put them together to an element $\vartheta\in Isom(K\cup\bigcup_iL_i)$. Hence we find that $p$ is conjugate to orthant 3-cycles $p'=(L'_1,L'_2,L'_3)$, where the $L_i'$ are arbitrary given commensurable suborthants 
of $L_i$.\\
If $q=(M_1,M_2,M_3)$ is an arbitrary second orthant-3-cycle we can choose the suborthants $L_i'\subseteq L_i$ to be either contained in $M_i$ (if $L_i$ and $M_i$ are commensurable) or disjoint to $M_i$ (if $L_i$ and $M_i$ are disjoint). Thus, in order to prove that $p$ and $q$ are commensurable we can now assume, without loss of generality, that each $L_i$ is either contained in or disjoint to $M_i$. 

Now 
we complete the proof of \textit{ii)} in two steps: First we choose, for all indices $i$ with $L_i\cap M_i=\emptyset$, an arbitrary orthant transposition $\tau_i=(M_i,L_i)$. Conjugation with these $\tau_i$ shows that we find a conjugate of $p$ which replaces $L_i$ by $M_i$ whenever $L_i$ 
is not contained in $M_i$. In other words we are now reduced to a case when $L_i\subseteq M_i$ for all $i$. Repeating the first step completes the 
proof. It is clear that the argument proves, in fact, the general statement for orthant-3-cycles generate all pairs of orthant-transpositions.

\textit{iii)}
The argument is exactly like that of ia). Let $\tau_1\tau_2 =(K,L)(M,N)$. We show first that $\tau_1\tau_2$ is conjugate to $(K,L)(M,N')$ for each choice of $N'$ disjoint to $K,L,M$. This is done by the same case distinction as in ia). Then one can repeat the argument with $K$, $L$, and $M$. The generation assertion is covered by Corollary \ref{generation properties}iii).

\textit{iv)}
If $\rvert\Gamma^k(S)\lvert= 2$ then $S$ is the disjoint union of two rank-k orthants $K,L$, and we consider in $G$ an orthant-transposition $\tau=(K,L)$, a unit-pei-translation $\lambda$ from $K$ to $L$, and a single orthant reflection $\sigma$ of $K$. We have $C(\Gamma^k(S))=C^{ord}(\Gamma^k(S))ST_k$, and 
since the stagnant normal subgroup $ST_k$ contains all of 
$E_k$ but no non-trivial pei-translations 
$C(\Gamma^k(S))=gp(ST_k)$ is the semi-direct product of 
the normal $ST_k$ with the infinite cyclic group 
$gp(\lambda)$. As $G_k/C(\Gamma^k(S)$ is cyclic of order 
2 generated by $\tau$ it follows that $G_k/ST_k$ is 
isomorphic to the infinite dihedral group $gp(\lambda,
\tau)$, and its Abelianisation is the Klein-4-group 
generated by $\lambda$ and $\tau$. As $G_k/{\rm alt}
G_k$ is the Klein-4-group generated by $s$ and $\tau$, this shows that all three elements $\lambda$, $\tau, \sigma$ are needed to generate $(G_k)_{ab}$; and as $\lambda^2=[\tau,\lambda]$ we find the asserted result.

\textit{v)}
$\rvert\Gamma^k(S)\lvert= 1$. In that case $G_k=pei(L)$, 
for a single rank-k orthant $L$, and this is easily seen to 
be the semi-direct product $E_k\rtimes Isom(L)$. The 
symmetric group Isom(L) acts transitively on the k axes and 
hence on the unit-endotranslations supported on $L$: Using the notation of Lemma~ \ref{lemma4.7}v) we
have, e.g., $\eta_{xy}^{\sigma_{xz}}=\eta_{zy}$, and can 
infer that $[\sigma_{xz},\eta_{xy}^{-1}]=\eta_{zy}\eta_{xy}
^{-1}$. It follows that all endotranslation of $L$ coincide in the Abelianization, and by Lemma~ \ref{lemma4.7}v) their square is a commutator.

Lemma~ \ref{lemma4.7}v) also shows that $G_{k}'$ contains an orthant-transposition of rank equal to (k-1). As $\rvert\Gamma^{k-1}(S)\lvert=\infty$ all rank-(k-1) orthant-transpositions are conjugate and hence by Corollary \ref{generation properties} all of $G_{k-1}$ is contained in $G_k'$.
This shows that $(G_k)_{ab}$ is the Klein-4 group generated  by $\sigma_{xy}$ and $\eta_{xy}$.
\end{proof}

\subsection{The \texorpdfstring{$\mathbf{G_k/C^{ord}(\Gamma^k)}$}{TEXT}-module structure of \texorpdfstring{$\mathbf{C^{ord}(\Gamma^k(S))/G_{k-1}}$}{TEXT}}\label{subsection4.7}
By using Theorem~\ref{classifies G_k(S)/G_k-1(S)} iii) we can consider $A^k(S):=C^{ord}(\Gamma^k(S))/G_{k-1}$ as the kernel of the corank-1 germ-flow homomorphism sum in the direct sum
$$\bigoplus\limits_{\gamma\in\Gamma^k(S)}{\rm Trans}\langle\gamma\rangle.$$
Thus, each $a\in A^k(S)$ is given by a finitely supported family of translations indexed by the (in general infinite) rank-k germs, $a=(t_\gamma)_{\gamma\in\Gamma^k(S)}$, and each $t_\gamma$ is uniquely determined by its translation vector in $\mathbb{Z}^k$ with respect to the canonical basis of the tangent coset $\langle\gamma\rangle$. Hence we can write $t_\gamma$ as a row-vector $(a_{(\gamma,1)},...,a_{(\gamma,k)})\in\mathbb{Z}^k$, and note that the sum of its entries is the flow value $fl_\gamma(a)$.

Thus, 
in this section we organize the elements of $A^k(S)$  as the additive group of integral $(\Gamma^k(S)\times k)$-matrices $\mathbf{a}=(a_{(\gamma,i)})$ with only finitely many non-zero entries that add up to 0. The row 
indices are the rank-k germs $\gamma\in\Gamma^k(S)$ for a fixed number $k$, and they are endowed with a compatible ordering of the canonical bases 
of  $\langle\gamma\rangle$. The column index $1\leq i\leq k$ stands for the $i^{th}$ canonical basis element in this ordering. 

The 
quotient group $Q_k(S):= G_k/C^{ord}(\Gamma^k)$ is the 
(finitary permutational) wreath product $S_k(\beta)\wr 
{\rm sym}(\Gamma^k(S))$, where $\beta\in\Gamma^k(S)$ is a chosen base germ and $S_k(\beta)$ the symmetric group on the 
canonical basis of $\langle\beta\rangle$. ${\rm sym}(\Gamma^k(S))$ we interpret as the the permutation group on the entries which stabilizes all columns and acts diagonally by the symmetric group the set of all rank-k germs.\\
The flow $f_\gamma(\mathbf a)\in\mathbb Z$ of a matrix $\mathbf 
a\in A^k(S)$ at $\gamma$ is the sum of the entries in the $
\gamma$-row. The total flow $fl(\mathbf a)$ is the sum of all 
entries of $\mathbf a$ and by (2.6) we have $fl(\mathbf a)=0$. 

\emph{Row-subgroups}. 
A general $\gamma$-row-matrix 
represents an element of $A^k(S)$ if and only if its row-sum is zero, and 
then it is represented by an endotranslation on any $\gamma$-representing 
orthant. We write $E_\gamma\leqslant A_k(S)$ for the subgroup of all $\gamma$-row-matrices. The unit-endotranslation at $\gamma$ represent the $\gamma$-row-matrices consisting of a \emph{lone pair of entries} $(1, -1)$, 
by which we mean that all other entries are zero. 
Let $E\leqslant A^k(S)$ denote the subgroup of $A^k(S)$ generated by all row-subgroups, and observe that $E=\ker(fl: A^k(S)\rightarrow \oplus_{\gamma\in\Gamma^k(S)}\mathbb{Z})$. In particular, $E$ is a $Q_k(S)$-submodule of $A^k(S)$, and as any finite set of germs can be represented by pairwise disjoint orthants we have $E=\bigoplus_{\gamma\in\Gamma^k(S)}E_\gamma$. 

\emph{Column-subgroups}. Lei $i$ be a natural number $\leq k
$. A general finite $i^{th}$-column-matrix $(n_\gamma)_{\gamma
\in\Gamma^k}$ is given by a finitely supported map $f_i:
\Gamma^k(S)\rightarrow\mathbb{Z}$, $n_i:=f_i(\gamma)$ and 
defines the parallel shift of a finite set of pairwise disjoint  orthants 
$L_\gamma$ of $\langle\gamma\rangle$ in the direction of their $i^{th}$ axis onto $n_iL_\gamma$. 
This defines an element of $A^k(S)$ if and only if the column sum $\sum_{\gamma\in\Gamma^k}n_\gamma$ is zero, and we write $C_i\leqslant A^k(S), ~i = 1, 2, ..., k$,  for the subgroup of all $i^{th}$-column matrices. Note that every matrix $\mathbf{a}$ with a lone pair of unit entries $1, -1$ in different rows $(\gamma,\gamma')$ is represented by a unit-pei-translation $\lambda$; and $\mathbf{a}$ is a column matrix if and only if the 
two unit shifts of $\lambda$ are anti-parallel.

\emph{The diagonal subgroup $D\leqslant A^k(S)$}. The
column subgroups $C_i$ are invariant under the order preserving action of 
$sym(\gamma^k(S))$ but not, of course, under all of $Q_k(S)$ -- nor is the direct sum $\bigoplus_{1\leq i\leq k}C_i$. Only the diagonally embedded 
copy of $C_1$ into $A^k(S)$, i.e., the group $D\leqslant A^k(S)$ of all matrices with constant rows and zero column sums is actually a $Q_k(S)$-submodule of $A^k(S)$. 

\begin{lemma}\label{lemma4.11} 
For every $Q_k(S)$-submodule $M\leqslant A^k(S)$ we have:
\begin{enumerate}[i)]
\item If $\mathbf m$ is a matrix in $M$, so is the matrix $
(fl_1(\mathbf m),~...~,fl_k(\mathbf m))\in D$ which has in each of its columns the flow-column of $\mathbf m$. 

\item If $\Gamma ^k(S)$ is infinite then we have: for every choice of a 
pair $(\gamma,\gamma')$ of different germs in $\Gamma^k(S)$, $M
$ is generated by $E_\gamma\cap M$ together with the lone-pair-of-rows matrices supported on the  $(\gamma,\gamma')$-rows.
(In other words: $M$ is generated by its endotranslations of $            
                                              \gamma$-orthants and its pei-translations between $\gamma$ and $\gamma'$).  

\item Either $M\leqslant D$ or there is a unique minimal 
natural number $q$ with $qE\leqslant M.$
\item Either $M\leqslant E$ or there is a unique minimal natural number $p$ with $pD\leqslant M$.
\end{enumerate} 
\end{lemma}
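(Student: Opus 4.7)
My plan is to prove the four parts in turn, all leveraging the $Q_k(S) = S_k \wr \mathrm{sym}(\Gamma^k(S))$-module structure on $A^k(S)$; I expect part (ii) to be the main obstacle.

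For (i), I would fix a $k$-cycle $c \in S_k$ and, for each germ $\gamma$ in the finite set $\mathrm{supp}(\mathbf m)$, lift the action of $c$ on the canonical basis of $\langle\gamma\rangle$ to an element $g_\gamma \in G_k$ that acts trivially away from $\gamma$. The product $g := \prod_{\gamma \in \mathrm{supp}(\mathbf m)} g_\gamma$ cyclically permutes each supported row of $\mathbf m$ while leaving all other rows fixed, so the sum $\mathbf d(\mathbf m) := \sum_{j=0}^{k-1} g^j \cdot \mathbf m$ lies in $M$; a direct computation shows that its $\gamma$-row equals $(fl_\gamma(\mathbf m),\ldots,fl_\gamma(\mathbf m))$ for every $\gamma \in \mathrm{supp}(\mathbf m)$ and is zero elsewhere, which is the asserted element of $M \cap D$.

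For (ii), set $N := \langle E_\gamma \cap M,\, L_{\gamma\gamma'} \cap M\rangle_{Q_k(S)}$. Any sym-transposition carries $E_\gamma \cap M$ and $L_{\gamma\gamma'} \cap M$ bijectively onto $E_\delta \cap M$ and $L_{\delta\delta'} \cap M$, so each of these lies in $N$ for every pair of germs. One then argues by induction on $r' := |\mathrm{supp}(\mathbf m) \setminus \{\gamma,\gamma'\}|$ that $\mathbf m \in N$: the base case $r' = 0$ is immediate since $\mathbf m$ is then in $L_{\gamma\gamma'} \cap M$, $E_\gamma \cap M$, or $E_{\gamma'} \cap M$. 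In the inductive step, pick $\delta \in \mathrm{supp}(\mathbf m) \setminus \{\gamma,\gamma'\}$; if $\gamma \notin \mathrm{supp}(\mathbf m)$, the transposition $\sigma = (\gamma,\delta)$ yields $\mathbf m - \sigma\mathbf m \in L_{\gamma\delta} \cap M \subseteq N$, reducing $\mathbf m$ modulo $N$ to $\sigma\mathbf m$ of strictly smaller $r'$; the analogous move with $\gamma'$ handles the case $\gamma \in \mathrm{supp}(\mathbf m)$ but $\gamma' \notin \mathrm{supp}(\mathbf m)$. The subtlest situation, when both $\gamma,\gamma' \in \mathrm{supp}(\mathbf m)$, requires first ``vacating'' row $\gamma$ via an auxiliary transposition $(\gamma,\tilde\delta_0)$ to a fresh germ $\tilde\delta_0 \notin \mathrm{supp}(\mathbf m) \cup \{\gamma,\gamma'\}$ --- which exists because $\Gamma^k(S)$ is infinite --- before applying the previous reductions. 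The main obstacle is the bookkeeping that ensures such a cascade of transpositions produces a net decrease in $r'$ after each round.

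For (iii), if $M \not\leq D$ pick $\mathbf m \in M$ with a non-constant row $\mathbf m_\delta$ and, for each $c \in S_k$, lift $c$ to an element of $G_k$ acting only on the $\delta$-axes, so that $g\mathbf m - \mathbf m \in E_\delta \cap M$ equals $(c - 1)\mathbf m_\delta$ along row $\delta$. The $\mathbb Z$-span of these differences is $I \cdot \mathbf m_\delta$, where $I \subseteq \mathbb Z[S_k]$ is the augmentation ideal; since $E_\delta \otimes \mathbb Q$ is the irreducible standard $S_k$-representation and $\mathbf m_\delta$ is non-constant, $I \cdot \mathbf m_\delta$ has finite index $q_\delta$ in $E_\delta$. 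Sym-transport then yields $q_\delta E \subseteq M$, and well-ordering of $\mathbb N$ furnishes the unique minimum $q$. Part (iv) is the parallel application of (i): if $M \not\leq E$ then some $\mathbf m \in M$ has $fl_\delta(\mathbf m) \neq 0$, and $\mathbf d(\mathbf m) \in M \cap D$ is non-zero. Identifying $D$ with the augmentation kernel $\{(c_\gamma) \in \bigoplus_\gamma \mathbb Z : \sum c_\gamma = 0\}$, the $\mathrm{sym}(\Gamma^k(S))$-orbit of $(fl_\gamma(\mathbf m))_\gamma$ has $\mathbb Z$-span of index $p_{\mathbf m} := \gcd\{fl_\gamma(\mathbf m)\}$ therein (differences of sym-permuted sequences produce lone-pair vectors $p_{\mathbf m}(\mathbf 1_\delta - \mathbf 1_{\delta'})$ that span the $p_{\mathbf m}$-multiple lattice); hence $p_{\mathbf m} D \subseteq M$, and minimisation over $\mathbf m$ gives the asserted $p$.
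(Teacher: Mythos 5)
Your parts (i), (iii) and (iv) are essentially sound: (i) is the same cyclic-sum device the paper uses; (iii) substitutes irreducibility of the standard $S_k$-representation for the paper's more hands-on gcd argument with transpositions, which is a valid alternative; and in (iv) the word ``index'' is misapplied --- $p_{\mathbf m}D$ has infinite index in $D$ when $\Gamma^k(S)$ is infinite --- but the containment $p_{\mathbf m}D\subseteq M$ is what is needed and it does hold. The gap, which you yourself flag, is in the inductive step for (ii): the ``vacate-then-reduce'' cascade in the case $\gamma,\gamma'\in{\rm supp}(\mathbf m)$ does not make progress. Vacating $\gamma$ via $\sigma_0=(\gamma,\tilde\delta_0)$ \emph{raises} $r'$ by one (it removes $\gamma$, which does not count, and introduces $\tilde\delta_0$, which does), the subsequent case-1 move lowers $r'$ by exactly one, and then $\gamma$ is back in the support, so you are in case 3 again with $r'$ unchanged.

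The source of the trouble is that you reduce by replacing $\mathbf m$ with a transform $\sigma\mathbf m$, which merely \emph{permutes} the support; you should instead \emph{subtract} a lone-pair-of-rows matrix belonging to $M$. Concretely: fix $\delta\in{\rm supp}(\mathbf m)\setminus\{\gamma,\gamma'\}$ and a fresh germ $\varepsilon\notin{\rm supp}(\mathbf m)\cup\{\gamma,\gamma'\}$; set $P:=\mathbf m\bigl(1-(\delta,\varepsilon)\bigr)\in M$, which is supported only on $\{\delta,\varepsilon\}$ with row $\mathbf m_\delta$ at $\delta$ and $-\mathbf m_\delta$ at $\varepsilon$, and then conjugate by the row transposition $(\varepsilon,\gamma)$ to obtain $P'\in M$ supported on $\{\delta,\gamma\}$. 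Now $\mathbf m-P'$ has a zero $\delta$-row, a $\gamma$-row equal to $\mathbf m_\gamma+\mathbf m_\delta$, and all other rows unchanged; its support drops $\delta$ and at worst acquires $\gamma$, so $r'(\mathbf m-P')\le r'(\mathbf m)-1$ with no case distinction. Since $P'$ is a lone-pair-of-rows matrix of $M$, 2-transitivity of the row action moves it to the $(\gamma,\gamma')$-rows, and your induction then closes. (The paper performs all such subtractions at once, landing directly on a single $\gamma$-row matrix in $E_\gamma\cap M$; the inductive variant is fine once the transform is replaced by a subtraction.)
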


\begin{proof}
\textit{i)} As $\mathbf m$ has only finitely 
many non-zero rows we find a n element $\vartheta\in Q_k(S)$ 
which permutes its columns cyclically. It follows that $
\mathbf m + \mathbf m\vartheta+ ... + \mathbf m\vartheta^{k-1}
$ is contained in $M$ and has the required form.

\textit{ii)} 
As every matrix $0\neq \mathbf m\in M$ has only 
finitely many non-zero rows, $\mathbf m$ contains both a 
non-zero row and a zero row, and $Q^k(S)$contains a transposition that 
interchanges the two. Thus $M$ contains a lone-row-pair matrix 
of the form $\mathbf m(1-\tau)=\binom{-\alpha}
{\alpha}$ for every row $\alpha$ of $\mathbf m$. As $Q^k(S)$ acts 2-transitively on the rows we can assume that here the entry $-\alpha$ stands in 
a pre-chosen $\gamma$-row while $\alpha$ has its original position. If we 
subtract all these lone-pair-of-rows matrices for all non-zero rows $\neq\gamma$ from $\mathbf m$ we find the $\gamma$-row matrix whose entries are the column sums of $\mathbf m$, i.e., $\mathbf m':=\big(\sum_{\gamma\in\Gamma^k}n_{\gamma 1},~.~.~.~ ,~ \sum_{\gamma\in\Gamma^k}n_{\gamma k}\big)\in M$. The flow of $\mathbf m'$ is the total flow of $\mathbf m$ and hence zero. This shows that $\mathbf m'\in E_\gamma$. Thus $M$ is generated by $E_\gamma\cap M$ together with all lone-pair-of-rows matrices in $M$. As $Q_k(S)$ acts k-transitively, each lone-pair of rows matrix of $M$ is conjugate to a lone $(\gamma,\gamma')$-pair-of-rows matrix of $M$.

\textit{iii)} 
We assume $M$ is not contained in $D$, i.e., it contains a matrix $\mathbf a$ which contains a $\gamma$-row 
with two different entries $x\neq y\in\mathbb Z$. Let $\tau\in S_k(\gamma)$ be the transposition that interchanges those two entries. Then $\mathbf a(1-\tau)$ is a 
matrix in $E_\gamma\cap M$ with a \emph{lone-pair of 
entries} of the form $(z,-z)$. We consider the smallest 
natural number $q$ with the property that $E_\gamma\cap M$ contains a matrix with a lone-pair of entries of the form $(q,-q)$, and call this a \emph{minimal lone-pair-of-entries matrix }$E_\gamma\cap M$. Since $Q_k(S)$ acts highly transitively on the rows this applies to each rank-k germ $\gamma$. With familiar 
arguments one observes that each row matrix of $M$ with a 
lone pair of entries is a multiple of a minimal lone-pair-of-entries row matrix; hence the latter generate in each $E_\gamma$ the subgroup $q(E_\gamma\cap M)\leqslant M)$ and in $E\cap M$ the $Q_k(S)$-submodule $qE\leqslant M$.

\textit{iv)} 
Now we assume that $M$ is not contained in $E$. Then there is a matrix $\mathbf{a}\in M$ with non-zero flow-column $\mathbf{fl(a)}:=(fl_{\gamma} 
(\mathbf{a}))_{\gamma\in
\Gamma}$. As the sum of the entries of $\mathbf{fl(a)}$ is 
zero, its entries cannot be constant; hence $\mathbf{fl(a)}$ 
contains a pair of non-equal entries. Therefore $\mathbf{a}$ 
contains a pair of non-equal rows, and we find a row-transposition $\tau$ 
which interchanges these two rows. $
\mathbf{a}(1-\tau)$ is then a \emph{lone-pair-of-rows} matrix in $M$ of the form $\binom{\alpha}{-\alpha}$ which has a lone pair of non-zero entries in its flow column $\mathbf {fl(a}(1-\tau))$. It follows that there is 
a smallest natural number $p$ with the property that $M$ contains a lone-pair-of-rows matrix $\mathbf{m}=\binom{\alpha}{-\alpha}$  with a 
lone-pair-of-entries flow-column of the form $\binom{\pm p}
{\mp p}$. We call $\mathbf{m}$ a \emph{flow-minimal} lone-pair-of-rows matrix of $M$.\\
By part \textit{i)} it follows that the constant lone-pair-of-rows matrix
$$\pm p\binom{~1, ~.~.~.~, ~1}{-1,~.~.~.~, -1}$$
is also a flow-minimal lone-pair-of-rows matrix in $M$, whence $pD\leqslant M$.
\end{proof}

We are now in a position to describe all $Q_k(S)$-submodules of $A^k(S)$.

\begin{theorem}\label{theorem4.12} The $Q_k(S)$-submodules $0\neq M\leqslant A^k(S)$ are of the following types:
\begin{itemize}
\item $pD\leqslant M\leqslant D$, for some $p\in\mathbb N$, $D/pD$ contains only finitely many $Q_k(S)$-submodules, and if $p$ is minimal and $\Gamma^k(S)$ is infinite then $M=pD$.
\item $qE\leqslant M\leqslant E$, for some $q\in\mathbb N$, $E/qE$ contains only finitely many $Q_k(S)$-submodules, and if $q$ is minimal and $\Gamma^k(S)$ is finite then $M=\mathbb{Z}Q_k(S)(E_\gamma\cap M)$.
\item $pD+qE\leqslant M\leqslant A^k(S)$, for some $p,q\in\mathbb N$, and 
$A^k(S)/pD+qE$ contains only finitely many $Q_k(S)$-submodules.
\end{itemize}
\end{theorem}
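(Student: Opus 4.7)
The plan is to establish the stated trichotomy using Lemma \ref{lemma4.11} and then show that each of the three intermediate quotients $D/pD$, $E/qE$, and $A^k(S)/(pD+qE)$ has only finitely many $Q_k(S)$-submodules. The key structural input is $D\cap E=0$: any matrix in $D$ has constant rows $(c_\gamma,\ldots,c_\gamma)$, and lying in $E$ forces the row-sum $kc_\gamma$ to vanish, so $c_\gamma=0$. Combined with Lemma \ref{lemma4.11}(iii),(iv), this yields an exhaustive trichotomy for $0\ne M\leqslant A^k(S)$: either $M\leqslant D$ (forcing $M\not\leqslant E$, so $pD\leqslant M$ for a unique minimal $p$), or $M\leqslant E$ dually with $qE\leqslant M$, or $M$ is in neither, whence both $pD$ and $qE$ sit in $M$.

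For $M\leqslant D$, I would observe that $Q_k(S)$ acts on $D$ through its quotient $\operatorname{sym}(\Gamma^k(S))$, because each $S_k$-factor of the wreath product fixes constant-row matrices. Identifying $D$ with the augmentation-zero subgroup of the finitary permutation module $\bigoplus_{\Gamma^k(S)}\mathbb{Z}$, a gcd/transposition argument (valid when $\Gamma^k(S)$ is infinite, thanks to the availability of germs outside any finite support) shows the only nonzero $\operatorname{sym}$-submodules are the $pD$, $p\geq 1$; when $\Gamma^k(S)$ is finite, $D$ itself is finitely generated abelian so $D/pD$ is finite. For $M\leqslant E$, write $E=\bigoplus_\gamma E_\gamma$ with $S_k$ acting inside each summand and $\operatorname{sym}(\Gamma^k(S))$ permuting them. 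Since $E/qE$ is a direct sum of copies of the finite module $V:=E_\gamma/qE_\gamma$, finiteness of the $Q_k(S)$-submodule lattice reduces to classifying $(S_k\wr\operatorname{sym}_{\mathrm{fin}}(\Omega))$-submodules of $V^{(\Omega)}$: when $\Omega$ is finite the ambient module is itself finite, and when $\Omega$ is infinite a stabilisation argument bounds the lattice by a finite amount of slice data. The specific identification $M=\mathbb{Z}Q_k(S)(E_\gamma\cap M)$ in the finite $\Gamma^k(S)$ case is read off by combining Lemma \ref{lemma4.11}(ii) with the constraint $qE_\gamma\leqslant E_\gamma\cap M$, which absorbs the lone-pair generators into the $Q_k(S)$-orbit of a single slice.

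For the mixed case $pD+qE\leqslant M\leqslant A^k(S)$, the plan is to use the short exact sequence
\[
0\to (D\oplus E)/(pD\oplus qE)\to A^k(S)/(pD+qE)\to A^k(S)/(D+E)\to 0,
\]
where $D\cap E=0$ gives the direct-sum decomposition on the left. The left term has finitely many submodules by the first two cases applied to $D/pD\oplus E/qE$. For the right, any $\mathbf a\in A^k(S)$ satisfies $k\mathbf a=(f_\gamma(\mathbf a))_\gamma\cdot\mathbf{1}_k+(\mathbf a-\text{flow-row part})\in D+E$, so $A^k(S)/(D+E)$ has exponent dividing $k$ and can be analyzed as a finite-exponent $\operatorname{sym}(\Gamma^k(S))$-module via the same gcd/transposition method as in the $D$-case. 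A standard sub-lattice argument for an extension whose end terms both have finite submodule lattices then yields finiteness in the middle.

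The hard part will be the $\Gamma^k(S)$ infinite analysis of $E/qE$, equivalently the classification of $(S_k\wr\operatorname{sym}_{\mathrm{fin}}(\Omega))$-submodules of $V^{(\Omega)}$ for finite $V$ and infinite $\Omega$. The subtlety is that invariant submodules need not decompose as $\bigoplus_\gamma(M\cap E_\gamma)$: by the flow-counting obstruction sketched in the proof of Lemma \ref{lemma4.11}(iv), a lone pair $(\alpha,-\alpha)_{\gamma,\gamma'}$ in $M$ does not in general yield $\alpha|_\gamma\in M$. I expect to control this by showing that the $\operatorname{sym}_{\mathrm{fin}}(\Omega)$-orbit of a finite-support element in $V^{(\Omega)}$ produces only finitely many new cross-slice configurations per added support point, so that the lattice is ultimately governed by finitely many invariants of the finite module $V$.
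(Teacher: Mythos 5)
Your trichotomy via $D\cap E=0$ together with Lemma~\ref{lemma4.11}(iii),(iv) is correct and matches the paper's setup, and your treatment of the case $M\leqslant D$ is essentially the same gcd/transposition argument the paper uses. Two of the remaining steps, however, have genuine gaps.

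The larger gap is in the mixed case. You reduce to the short exact sequence
$0\to (D\oplus E)/(pD\oplus qE)\to A^k(S)/(pD+qE)\to A^k(S)/(D+E)\to 0$
and invoke ``a standard sub-lattice argument for an extension whose end terms both have finite submodule lattices.'' No such general principle exists: a submodule $N$ of the middle term is recorded by the pair $\bigl(N\cap A,\ (N+A)/A\bigr)$, but the fiber over a fixed such pair is (if nonempty) a torsor over a group of equivariant homomorphisms, which can be infinite even when both ends have only finitely many submodules. The trivial-group example $0\to k\to k^2\to k\to 0$ over an infinite field $k$ already defeats the claimed principle. The paper sidesteps this entirely: from $pD+qE\leqslant M$ it produces, by an explicit computation with a cyclic element $\vartheta$ that rotates row entries, a single-entry matrix showing $kt\,A^k(S)\leqslant M$ for $t=\gcd(p,q)$, and then applies Lemma~\ref{lemma4.11}(ii) once more to conclude that every submodule of $A^k(S)/ktA^k(S)$ is generated by a subset of a fixed finite set. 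If you want to keep the extension framing, you must supply the bound on that fiber by hand for the specific modules at issue, not cite a general theorem.

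The second gap is one you flag yourself: for $M\leqslant E$ with $\Gamma^k(S)$ infinite you write that you ``expect to control'' the submodule lattice of $V^{(\Omega)}$ by a stabilisation argument, but you do not carry it out. The paper's route is direct: Lemma~\ref{lemma4.11}(ii) (specialised to $M\leqslant E$, where the cross-row lone pairs force row-sum zero and hence land in $E_\gamma$) shows that $M$ is generated by $E_\gamma\cap M$ for a single chosen $\gamma$, so every submodule of $E/qE$ is generated by a subset of the finite set $E_\gamma/qE_\gamma$. Your proposal acknowledges the obstruction --- a lone pair $(\alpha,-\alpha)_{\gamma,\gamma'}\in M$ need not a priori give $\alpha|_\gamma\in M$ --- but the resolution is exactly Lemma~\ref{lemma4.11}(ii), already available; replacing it with an unworked stabilisation sketch leaves the proof incomplete.
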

\begin{proof}
Note that $D$ is free-Abelian with a countable basis $X$, and $Q:=Q^k(S)$ acts on $D=\mathbb{Z}[X]$ via the symmetric group $sym(X)$. If $M$ is contained in $D$ it cannot be contained in $E$ hence by Lemma \ref{lemma4.11} vi) there is a unique minimal $p\in\mathbb{N}$ with $pD\leqslant M$. Thus, $D/pD\cong\mathbb{Z}_p[X]$. If $X$ is finite so is $D/pD$. If $X$ is infinite and $x,y\in X$ are two different elements then (a special case of) Lemma  \ref{lemma4.11}ii) implies that every $Q$-submodule of $M/pD\leqslant\mathbb{Z}_p[X]$ is generated by lone pairs of row-matrices in 
$\{tx-ty\mid t\in\mathbb{Z}_p\}$, which is a finite subset of $D/pD$ independent of $M$. Thus, in both cases we find that $D/pD$ contains only finitely many $Q$-submodules.\\
The case when $M\leqslant E$ is similar: $M$ cannot be in $D$; hence Lemma \ref{lemma4.11}iii) applies and asserts that there 
is a unique minimal $q\in \mathbb N$ with $qE\leq M$. If 
$\Gamma^k(S)$ is finite, so is $E/qE$. If $\Gamma^k(S)$ is 
infinite then (a special case of) Lemma ~ \ref{lemma4.11}ii) 
implies that for any chosen $\gamma\in\Gamma^k(S)$ all $Q$-submodules $M\leqslant E$ are generated by $E_\gamma\cap M$. 
Hence every submodule of $E/qE$ is generated by elements in 
the finite set $E_\gamma /qE_\gamma$. Thus, in both cases we find that $D/pD$ contains only finitely many $Q$-submodules.

If $M$ is neither contained in $D$ nor in $E$ then $pD+qE
\leqslant M$, and therefore $t(D+E)\leqslant M$ for 
$t:=gcd(p,q)$. In this situation we fix a germ $\gamma\in
\Gamma^k(S)$ and consider the $\gamma$-row matrices $
\mathbf{a}:=(t,t,...,t),  ~\mathbf{b}:=(t,-t,0,...,0)$, both elements 
of $D+E\leqslant M$, and the 
element $\vartheta\in Q$ which cyclically permutes the entries 
of the $\gamma$-rows. By observing that 
$$\mathbf{a}-\mathbf{b}(1+\vartheta +2\vartheta^2+3\vartheta^3+...+k\vartheta^k) = (kt,0,...,0)\in t(D+E)\leqslant M$$
we can infer that $ktA^k(S)\leqslant D+E\leqslant M$. Hence it remains to 
show that $A^k(S)/ktA^k(S)$ contains only 
finitely many submodules. This is again obvious when $
\Gamma^k(S)$ is finite, for in that case $A^k(S)/ktA^k(S)$ is 
a finite Abelian group. If $\Gamma^k(S)$ is infinite Lemma 
 \ref{lemma4.11} ii) asserts that for any two different germs $
\gamma, \gamma'\in\Gamma^k(S)$ we know that all submodules $M$ 
of $\Gamma^k(S)$ are generated by $E_\gamma\cap M$ together 
with all lone pairs of $(\gamma,\gamma')$-rows in $M$. Modulo 
$kt$, this shows that all submodules of $A^k(S)/ktA^k(S)$ are 
generated by a subset of a finite set which depends only on $D
$ and $E$. Hence $A^k(S)/ktA^k(S)$ contains only finitely many 
submodules.
\end{proof}

The fact that $A^k(S)$ is generated by two 
$Q_k(S)$-orbits -- unit-pei-transla\-tions and 
unit-endotranslations -- shows that $A^k(S)$ is a 
finitely generated $Q_k(S)$-module. From Theorem 
\ref{theorem4.12} we infer that all submodules of 
$A^k(S)$ are finitely generated; in other words:

\begin{corollary}\label{Corollary4.13} 
The $Q^k(S)$-module $A^k(S)$ is Noetherian.\hfill$\square$
\end{corollary}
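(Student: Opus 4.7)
The plan is to deduce Noetherianity directly from the submodule classification of Theorem~\ref{theorem4.12}. It suffices to show that every $Q_k(S)$-submodule of $A^k(S)$ is finitely generated, since a standard union argument then converts this into termination of ascending chains: given $M_1\leqslant M_2\leqslant\cdots$ with union $M_\infty$, the finitely many generators of $M_\infty$ all lie in some $M_N$, so the chain stabilizes at $N$.

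My first step is to observe that each of the ``anchor'' modules appearing in Theorem~\ref{theorem4.12} is finitely generated over $Q_k(S)$. Because $Q_k(S)$ surjects onto $\mathrm{sym}(\Gamma^k(S))$ and acts (highly) transitively on the rows, the module $D$ is $Q_k(S)$-cyclic, generated by any matrix whose only non-zero rows are a constant pair $(1,\ldots,1)$ and $-(1,\ldots,1)$; and $E$ is $Q_k(S)$-cyclic, generated by a single unit-endotranslation matrix (a lone pair of entries $(1,-1)$ in one row). Hence $pD$, $qE$, and $pD+qE$ are all finitely generated.

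Next, for an arbitrary non-zero $M\leqslant A^k(S)$, I would invoke Theorem~\ref{theorem4.12} to place $M$ between a finitely generated submodule $M_0\in\{pD,\,qE,\,pD+qE\}$ and an overmodule $M_1\in\{D,\,E,\,A^k(S)\}$ whose quotient $M_1/M_0$ contains only finitely many $Q_k(S)$-submodules. A module with only finitely many submodules is trivially Noetherian, so the submodule $M/M_0\leqslant M_1/M_0$ is finitely generated; lifting finitely many of its generators and combining with a finite generating set of $M_0$ then produces a finite generating set of $M$.

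The only step that could cause real friction is confirming that the case analysis of Theorem~\ref{theorem4.12} is genuinely exhaustive for an arbitrary non-zero submodule. This comes straight from parts iii) and iv) of Lemma~\ref{lemma4.11}, which respectively force $pD\leqslant M$ whenever $M\not\leqslant D$ and $qE\leqslant M$ whenever $M\not\leqslant E$, together with the trivial calculation $D\cap E=0$ (a matrix with constant rows summing to zero across the finite index set of columns has all entries zero when $k\geq 1$). Everything else is bookkeeping.
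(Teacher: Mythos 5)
Your proof is correct and follows the same route the paper takes: the paper's own justification is essentially a one-sentence pointer to Theorem~\ref{theorem4.12}, and you are filling in the detail of why that classification of submodules actually delivers finite generation of each one. Your extra ingredients — that $D$ and $E$ are each cyclic $Q_k(S)$-modules, and that $D\cap E=0$ closes off the degenerate case — are exactly what the paper leaves implicit, and your sandwiching argument (pass to the finite quotient $M_1/M_0$, deduce finite generation of $M/M_0$, then of $M$) is the standard and intended mechanism.

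One small slip: you have swapped what parts iii) and iv) of Lemma~\ref{lemma4.11} assert. Part iii) says that $M\not\leqslant D$ forces $qE\leqslant M$, and part iv) says that $M\not\leqslant E$ forces $pD\leqslant M$ — the opposite of what you wrote. Since you only ever use their conjunction (both fail, so $pD+qE\leqslant M$), the conclusion and hence the proof are unaffected, but the attribution should be corrected.
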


\subsection{The finite subgroups.}
Our next results will be used in Section 4.9 to classify all normal subgroups of $pei(S)$. On the side it also yields all finite subgroups.

\begin{lemma}\label{lemma4.14}
Let $S$ be orthohedral, $L\subset S$ an orthant of rank k, and $g\in G_k$. If $g$ has the property that its image in $G_k/G_{k-1}$ of finite order $m$ then we can find a commensurable suborthant $K\subset L$ with the property that the sequence
\begin{equation}\label{eq12}
K\xrightarrow{g}Kg\xrightarrow{g}Kg^2\xrightarrow{g} ... \xrightarrow{g}Kg^{\rvert\Lambda\lvert -1}\xrightarrow{g}Kg^{\rvert\Lambda\lvert}=K
\end{equation}
goes through a set $\Lambda$ of pairwise disjoint orthants representing the germs $\gamma (L)g^j$, $j\geq 0$, and ends with an isometry $\alpha:=g^{\rvert\Lambda\lvert}\rvert_K: K\rightarrow K$.
\end{lemma}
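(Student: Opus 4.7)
The plan is to take $\ell := |\Lambda|$ equal to the length of the $g$-orbit of the germ $\gamma(L)$ in $\Gamma^k$ and to shrink $L$ in two stages. By Lemma~\ref{transitive action} the element $g$ permutes $\Gamma^k$ by a finite permutation, and by Lemma~\ref{germ stabilizer}~\textit{iii)} every element of $G_{k-1}$ lies in the kernel of $\varphi_{\gamma(L)}$, so $\ell$ divides $m$, the power $g^\ell$ lies in $C(\gamma(L))$, and the induced isometry $\tilde\alpha := \varphi_{\gamma(L)}(g^\ell)$ of $\langle\gamma(L)\rangle \cong \mathbb{Z}^k$ satisfies $\tilde\alpha^{m/\ell} = \varphi_{\gamma(L)}(g^m) = \mathrm{id}$. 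Hence $\tilde\alpha$ has finite order, say $n$. Because $\tilde\alpha$ maps orthants of the germ $\gamma(L)$ to commensurable orthants of the same germ, its linear part $\sigma$ preserves the positive cone and is therefore an (unsigned) permutation matrix.

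In the first stage I would apply Lemma~\ref{lemma3.3} to each of $g, g^2, \ldots, g^\ell$ and intersect the resulting commensurable suborthants to produce $L_0 \subseteq L$ commensurable with $L$ on which every one of these powers restricts to an isometric embedding. Then I would average over the $\tilde\alpha$-orbit by setting
\[
K_0 := \bigcap_{j=0}^{n-1} \tilde\alpha^{j}(L_0);
\]
this intersection is again an orthant in the germ $\gamma(L)$ (the intersection of finitely many orthants in one germ is an orthant in that germ, with base point the componentwise maximum of the base points) and it is $\tilde\alpha$-invariant by the cyclic symmetry $\tilde\alpha^n = \mathrm{id}$. Since on $L_0$ the map $g^\ell$ coincides with $\tilde\alpha$ by the definition of $\varphi_{\gamma(L)}$, the same holds on $K_0$, and $K_0 g^\ell = \tilde\alpha(K_0) = K_0$.

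The second stage, which I expect to be the main technical obstacle, is to achieve pairwise disjointness of $K_0, K_0 g, \ldots, K_0 g^{\ell-1}$ without destroying the equality $K_0 g^\ell = K_0$. These orthants represent the $\ell$ distinct germs of the orbit, so for each $a \in \{1, \ldots, \ell - 1\}$ the intersection $B_a := K_0 \cap K_0 g^a$ is orthohedral of rank strictly less than $k$; each of its finitely many orthant components therefore fails to use at least one canonical direction of $K_0$ and is bounded in that direction. The vector $\mathbf{1} = (1, \ldots, 1) \in \mathbb{Z}^k$ is $\sigma$-fixed, so the shifted orthant
\[
K := K_0 + \lambda\mathbf{1}
\]
(in the intrinsic coordinates of the germ $\gamma(L)$) satisfies $\tilde\alpha(K) = K$ for every $\lambda \ge 0$, and for $\lambda$ larger than the finitely many coordinate bounds arising from the components of $B_1 \cup \cdots \cup B_{\ell-1}$, one has $K \cap B_a = \emptyset$ for each $a$. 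Since $g$ is a bijection and $K \cap K g^{j-i} \subseteq K \cap B_{j-i} = \emptyset$, the images $Kg^i$ and $Kg^j$ are disjoint whenever $0 \le i < j < \ell$. Setting $\Lambda := \{K, Kg, \ldots, Kg^{\ell-1}\}$ and $\alpha := g^\ell|_K$ then yields the sequence~\eqref{eq12}.
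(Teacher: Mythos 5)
Your proof is correct, and it rests on the same two ingredients as the paper's argument: shrink $L$ to a commensurable suborthant small enough that the orbit $K, Kg, \ldots$ visits pairwise disjoint orthants until it returns to the germ $\gamma(L)$, and then use the finite order of the image of $g$ in $G_k/G_{k-1}$ (via Lemma~\ref{germ stabilizer}~iii) and the finite order of $\tilde\alpha = \varphi_{\gamma(L)}(g^\ell)$) to close the cycle exactly. However, you carry out the two shrinking steps in the opposite order and make both explicit where the paper is terse. The paper first chooses $K \subset L$ with $Kg^j \cap K = \emptyset$ for $1 \le j < t$ (using that the intermediate germs are distinct), and only then remarks that ``further powers $\ldots$ will come back to $K$'' and that ``taking their intersections'' finishes the proof, leaving the reader to supply the averaging over the finite cyclic group $\langle\tilde\alpha\rangle$ and the verification that the shrunk orthant still gives disjoint images. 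You instead secure $\tilde\alpha$-invariance first by intersecting $\bigcap_{j<n}\tilde\alpha^j(L_0)$ — a clean construction that the paper does not spell out — and then restore pairwise disjointness by translating along the $\sigma$-fixed diagonal vector $\mathbf 1$, exploiting that each component of $K_0 \cap K_0g^a$ has rank $<k$ and is therefore bounded in some coordinate direction of $K_0$. The diagonal-shift trick has no counterpart in the paper's proof (the paper achieves disjointness purely by shrinking), but it dovetails nicely with the preceding averaging step since $\sigma\mathbf 1 = \mathbf 1$. You also apply Lemma~\ref{lemma3.3} to all powers $g, g^2, \ldots, g^\ell$ at the outset, which is a point the paper's one-line appeal (``we can assume the restriction of $g$ to $L$ is isometric'') quietly skips; this is needed to ensure every arrow in \eqref{eq12}, and not just the first, is an isometry. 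Net: same underlying strategy, but a genuinely different — and more carefully justified — route through the two-step shrinking.
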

We call (\ref{eq12}) the the \emph{covering orthant-orbit} of the germ-orbit $\gamma(L)gp(g)$.
\begin{remark}
This applies also in the case when $\gamma g=\gamma$ (and even when $rk(g)<k$): Then the assertion is $\gamma$ is represented by a rank-k orthant $K$ pointwise fixed by $g$.
\end{remark}

\begin{proof}
By Lemma \ref{lemma3.3} we can assune that the restriction of $g$ to $L$ is an 
isometric embedding. If $Lg$ is not commensurable 
to $L$ then the intersection $L\cap Lg^j$ is of 
smaller rank for all $1\leq j <t$=length of the length of the $g$-orbit 
of the germ of $L\in\Gamma^k$. Hence we find a commensurable suborthant $K\subset L$ with the property that $Kg^j\cap K=\emptyset$ for all $1\leq j <t$. This implies that we have a sequence like the one asserted to exist in the lemma, except that it ends with an isometry $\alpha: K\rightarrow Kg^t$ onto a commensurable orthant $K	
g^t$. But by assumption $\alpha'$ cannot be of infinite order, hence further powers $g^J$ will, after finitely many steps, come back to $K$. Taking their intersections yields the claimed assertion.
\end{proof}

The following Theorem extends the Lemma from a single element $g$ to a finitely generated subgroup $H$.  

\begin{theorem}\label{theorem4.15} Let $H\leqslant G_k$ be 
a finitely generated subgroup whose rank-$(k-1)$ subgroup 
$N:=H\cap G_{k-1}$ is of finite index in $H$. Then we find   a set $\Lambda$ of pairwise disjoint orthants represening 
the germs in $\Gamma^k({\rm supp}(H))$ with the property that $H
$ acts on $S':=\bigcup_{L\in \Lambda} L$ by isometries  (on and between 
the members of $\Lambda$) -- with the 
understanding this includes the assertion that $N$ fixes $S'$ pointwise.
\end{theorem}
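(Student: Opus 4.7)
My plan is to construct the required family $\Lambda$ one $H$-orbit of germs at a time, the key being that the $H$-action on rank-$k$ germs factors through the finite quotient $H/N$. I begin by noting that ${\rm supp}(H)$ is orthohedral --- it is contained in the union of the supports of any finite generating set, and this is a finite union of orthohedral sets (Lemma~\ref{closed under complements}) --- so by Lemma~\ref{lemma3.2} the set $\Gamma^k({\rm supp}(H))$ is finite. Schreier's theorem gives that $N$ is finitely generated, and since $N \leqslant G_{k-1}$ fixes every rank-$k$ germ, the $H$-action on $\Gamma^k({\rm supp}(H))$ factors through $H/N$ and thus has finite orbits. It suffices to produce, for each $H$-orbit $O$, a compatible family $\{L_\gamma\}_{\gamma \in O}$ of representing orthants on which $H$ acts by isometries and $N$ acts trivially; final pairwise disjointness across orbits will be achieved by a common diagonal inward shift inside each tangent coset.

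Fix an orbit $O$, a representative $\gamma \in O$ with stabilizer $H_\gamma \leqslant H$ (of finite index, hence finitely generated), and an orthant $L^0 \subseteq \mathbb{Z}^N$ representing $\gamma$. The technical heart of the proof is the observation that the image $\overline{H_\gamma} := \varphi_\gamma(H_\gamma) \subseteq {\rm Isom}\langle\gamma\rangle$ actually lies inside the symmetric group ${\rm sym}_k(X(\gamma))$ fixing the origin of $\langle\gamma\rangle$. Indeed, any element of $C(\gamma)$ sends $L^0$ to a commensurable suborthant, so in the coordinates of $\langle\gamma\rangle$ it has the form $(t,p)$ with $p \in {\rm sym}_k(X(\gamma))$ and $t \in \mathbb{N}^k$; the computation
\[
(t,p)^m \;=\; \Big(\sum_{j=0}^{m-1} p^j t,\; p^m\Big)
\]
forces $t = 0$ for any $(t,p)$ of finite order, since each component of $\sum_j p^j t$ is a non-negative sum of components of $t$. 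As $\overline{H_\gamma}$ is a quotient of the finite group $H_\gamma/N$, every element has finite order and hence $\overline{H_\gamma} \subseteq {\rm sym}_k(X(\gamma))$.

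Next I choose a finite generating set for $H_\gamma$ that contains a generating set for $N$ together with coset representatives of $H_\gamma/N$, and I adjoin to it a set of coset representatives $h^{(1)}=1, h^{(2)}, \ldots, h^{(t)}$ of $H_\gamma$ in $H$. For each element $g$ of this finite set, Lemma~\ref{lemma4.14} and its subsequent Remark produce a commensurable suborthant of $L^0$ on which $g$ acts isometrically (and as the identity when $g \in N$, since then $g \in G_{k-1}$); let $M \subseteq L^0$ be the intersection of these finitely many suborthants. Define
\[
L_\gamma \;:=\; \bigcap_{\overline{h}\,\in\,\overline{H_\gamma}} \overline{h}(M),
\]
a finite intersection of commensurable suborthants of $L^0$, hence a commensurable suborthant of $M$. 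The group law in $\overline{H_\gamma}$ immediately gives that $L_\gamma$ is $\overline{H_\gamma}$-invariant as a set. A short induction on word length in the chosen generators of $H_\gamma$ then establishes that every $h \in H_\gamma$ satisfies $h|_{L_\gamma} = \varphi_\gamma(h)|_{L_\gamma}$: at each step the inductive hypothesis sends $L_\gamma$ into $L_\gamma$ by $\overline{H_\gamma}$-invariance, so the next generator acts on a subset of $M$ where it is isometric by construction. In particular every $n \in N$ restricts to the identity on $L_\gamma$.

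Finally, set $L_{\gamma h^{(j)}} := h^{(j)}(L_\gamma)$ for each $j$; this is well-defined because $h^{(j)}$ acts isometrically on $L_\gamma \subseteq M$, and it is independent of the choice of coset representative by the $\overline{H_\gamma}$-invariance of $L_\gamma$. The $H$-action on $\bigcup_j L_{\gamma h^{(j)}}$ is then verified by writing an arbitrary $h \in H$ as $g\,h^{(j)}$ with $g \in H_\gamma$ and combining the two preceding paragraphs. Repeating this construction for every orbit and then applying a common diagonal inward shift in each tangent coset yields the required pairwise disjoint family $\Lambda$. The main obstacle I anticipate is the structural claim that $\overline{H_\gamma} \subseteq {\rm sym}_k(X(\gamma))$: without it, a cohomological obstruction ($H^1(\overline{H_\gamma}, \mathbb{Z}^k)$ need not vanish) could in principle prevent the existence of an $\overline{H_\gamma}$-invariant commensurable suborthant, and everything downstream would collapse; once that observation is in hand, the rest is careful bookkeeping around Lemma~\ref{lemma4.14}.
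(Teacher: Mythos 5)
Your proposal takes a genuinely different route from the paper. The paper works globally: it fixes a transversal $T$ of $H/N$, builds a finite set $P = X \cup Y$ with $X = T^{\pm 1}$ and $Y \supseteq N \cap XXX$ generating $N$ as a monoid, passes to orthants $L_\gamma$ compatible with all of $P$ at once, and then takes $K_\gamma := \bigcap_{t\in T} L_{\gamma t}\,t^{-1}$; the carefully chosen $Y$ is what makes the identity $K_\gamma x = K_{\gamma x}$ verifiable for $x\in X$. You instead work one $H$-orbit of germs at a time, intersecting the $\overline{H_\gamma}$-orbit of a single compatible orthant $M$ inside the tangent coset, and then transport by coset representatives $h^{(j)}$. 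Both approaches rest on Lemma~\ref{lemma4.14} and Lemma~\ref{lemma3.3} to obtain the initial compatible orthants; yours is perhaps more transparent in separating the role of the stabilizer from the transitive part, while the paper's is tighter in that it never needs to choose a base germ per orbit.

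However, there is an error in your proposal — fortunately one that is harmless once spotted. The ``structural claim'' $\overline{H_\gamma} \subseteq \mathrm{sym}_k(X(\gamma))$ is \emph{false}, and so is the argument you give for it. An element $g\in C(\gamma)$ need not send an orthant representing $\gamma$ into a \emph{sub}orthant: it sends it to an arbitrary commensurable orthant, so the translation part of $\varphi_\gamma(g)$ lives in $\mathbb Z^k$, not $\mathbb N^k$, and the positivity argument collapses. Moreover the conclusion itself fails: the map $g'(a,b)=(b+1,a-1)$ is an order-$2$ isometry of $\mathbb Z^2$ that fixes the germ of $\mathbb N^2$ and has $\varphi_\gamma(g') = \bigl((1,-1),(1\ 2)\bigr)$, a finite-order element of $\mathbb Z^2\rtimes\mathrm{sym}_2$ with nontrivial translation part. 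Taking $H=\langle g'\rangle$, $N=\{1\}$ gives a concrete instance of your hypotheses where $\overline{H_\gamma}\not\subseteq\mathrm{sym}_2$.

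The good news is that you never actually need this claim, and your own intersection $L_\gamma := \bigcap_{\overline h\in\overline{H_\gamma}} \overline h(M)$ already does everything required. Since $\overline{H_\gamma}$ preserves the germ $\gamma$, every $\overline h(M)$ is a commensurable orthant in the same parallel class as $M$; the intersection of finitely many such is simply the orthant based at the coordinatewise maximum of their base points, so $L_\gamma$ is a nonempty rank-$k$ orthant, it lies inside $M$ because $\overline 1(M)=M$ is one of the terms, and it is $\overline{H_\gamma}$-invariant because it is the intersection of a full orbit. No cohomological obstruction enters, because you only need $L_\gamma$ to be invariant \emph{as a set}, not fixed pointwise by $H_\gamma$; only the subgroup $N$ has to fix it pointwise, and that is secured by choosing $M$ inside the orthants from the Remark after Lemma~\ref{lemma4.14}. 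Delete the structural claim and the paragraph defending it, keep the intersection and the word-length induction, and the argument is sound.
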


\begin{proof}
The set of rank-k germs $\Gamma^k({\rm supp}{H})$ is finite and permuted by $H$, and we can represent the germs $\gamma\in\Gamma^k(S)$ by pairwise 
disjoint rank-k orthants $L_\gamma$. 
Moreover, by passing, if necessary, to commensurable suborthants we may, for 
a given finite set $P\subseteq H$, assume that the 
restrictions $f\mid_{L_\gamma}$ are isometric for all $f\in P$ and all $\gamma\in\Gamma^k(S)$. The proof of Lemma \ref{lemma4.14} shows that this is true for a single element $g$, and the general case
follows by induction on $\rvert P\lvert$: For the inductive step we can assume that the orthants $L\gamma$ we start with already satisfy the conclusion for a proper subset of $P$ and then go through arguments in the proof of Lemma \ref{lemma4.14} for an additional element. Note also that each $n\in N\cap P$ will have the feature to act trivially on $S'$. 

Now we take advantage of this by applying it to a set $P$ which we choose 
as follows: First we pick a transversal $T\subset H$ of $H/N$ which contains the unit element of $H$ and put $X:=T^{\pm 1}$; then we consider all triple products $XXX\subset H$ and and pick a finite subset $Y\subset N$ which contains the set $N\cap XXX$ and generates $N$ as a monoid; finally we put $P:=X\cup Y$ and note that $P$ generates $H$ as a monoid. Now 
we observe:
\begin{itemize}
\item   As $Y\subset P$ we have ${\rm supp}(N)\cap 
L_\gamma =\emptyset$ (in particular $L_\gamma y=L_\gamma$) 
for all $\gamma\in\Gamma^k(S)$. 
\item As $T^{\pm 1}\subseteq P$ the translates $L_\gamma t$ are 
orthants commensurable to $L_{\gamma t}$ for all $t\in T$, and 
the $T_{\gamma t}t^{-1}$ are orthants commensurable to $L_\gamma$. 
\item For each $\gamma\in\Gamma^k(S)$ we now consider the intersection
$$K_\gamma:=\bigcap_{t\in T}L_{\gamma t}t^{-1}.$$
This is a finite intersection of orthants commensurable to $L_\gamma$ and 
hence is a suborthant contained in and commensurable to $L_\gamma$. Thus $\Lambda :=\{K_\gamma\mid \gamma\in\Gamma^k(S)\}$ is a pairwise disjoint set of representatives of the germs in $\Gamma^k(S)$ on which all restrictions of elements in $P$ are isometric injections.                      
 
\end{itemize} 
We aim to prove that the elements of $P$ act on and/or  permute the members of $\Lambda$ by isometries. For the elements in $Y$ we know this already. To prove it for $x\in X=T^{\pm 1}$ we note that for each pair $(t,x)\in T\times X$ there is a unique $s\in T$ with $n:=t^{-1}xs\in N$. From here we find, on the one hand, $L_{\gamma t}t^{-1}x=L_{\gamma t}(t^{-1}x)=L_{\gamma t}ns^{-1}=L_{\gamma t}s^{-1}$ since $n=\in P$; and on the other hand, $\gamma t=\gamma xsn^{-1}=xsn^{-1}(xs)^{-1}xs=\gamma 
xs$ since $xsn^{-1}(xs)^{-1}\in N$ which acts trivially on 
the rank-k germs. Hence $L_{\gamma t}t^{-1}x=L_{\gamma xs}s^{-1}$, and we find 
$$K_\gamma x=(\bigcap_{t\in T}L_{\gamma t}t^{-1})x=\bigcap_{t\in T}L_{\gamma t}(t^{-1}x)=\bigcap_{t\in T}L_{\gamma xs}s^{-1}=\bigcap_{s\in 
T}L_{\gamma xs}s^{-1}=K_{\gamma x}.$$
This shows that the monoid generators of $H$ and hence $H$ itself acts on 
the union $S'':=\bigcup_{L\in\Lambda}L$ as asserted. Thus
$S$ has the $H$-invariant decomposition of $S$ as the union of $S''$ and its complement $S':=S-S''$; and as $S''$ covers all rank-k germs of $S$ 
its complement is of rank at most $(k-1)$.
\end{proof}

\begin{corollary}\label{corollary2.16}. A subgroup $H\leqslant G_k$ is finite if and only if ${\rm supp}(H)$ is the union of a finite set $\Lambda$ of pairwise disjoint orthants $\bigcup_{L\in\Lambda}L$ on which $H$ acts faithfully by means of isometries on and between the members of $\Lambda$. \hfill$\square$.\end{corollary}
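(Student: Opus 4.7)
The plan is to prove the equivalence by induction on $k$ (the least integer with $H \leqslant G_k$), using Theorem~\ref{theorem4.15} as the main engine for the non-trivial direction. For the easy direction ``$\Leftarrow$'', I would observe that if $\mathrm{supp}(H) = \bigsqcup_{L \in \Lambda} L$ with $\Lambda$ finite and $H$ acting faithfully by isometries permuting the members, then $H$ embeds into the (finite) group of such permutations: for two rank-$k$ orthants an isometry must send apex to apex and permute the $k$ canonical basis directions, giving only $k!$ such maps, and combined with the finite permutation of $\Lambda$ the ambient group is finite.

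For ``$\Rightarrow$'', the base case $k = 0$ is immediate, since $G_0$ is the finitary symmetric group on $\mathbb{Z}^N$ and rank-$0$ orthants are singletons, so $\mathrm{supp}(H)$ is automatically a finite disjoint union of singletons permuted by $H$. For the inductive step, assume the statement for rank $< k$ and let $H \leqslant G_k$ be finite. Since $N := H \cap G_{k-1}$ is also finite we have $[H:N] < \infty$ trivially, and $H$ is finitely generated, so Theorem~\ref{theorem4.15} applies and produces a finite family $\Lambda_k$ of pairwise disjoint rank-$k$ orthants representing the germs in $\Gamma^k(\mathrm{supp}(H))$ whose union $S''$ is setwise $H$-invariant, on which $H$ acts by isometries, and with $N$ fixing $S''$ pointwise. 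Since $S''$ covers every rank-$k$ germ of $\mathrm{supp}(H)$, the complement $\mathrm{supp}(H) \setminus S''$ has rank at most $k-1$.

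To feed this into the induction I would define, for each $h \in H$, the pei-permutation $h''$ which coincides with $h$ on $\mathbb{Z}^N \setminus S''$ and is the identity on $S''$. The $H$-invariance of $S''$ makes $h \mapsto h''$ a group homomorphism whose image $H''$ is a finite subgroup of $G_{k-1}$ with $\mathrm{supp}(H'') = \mathrm{supp}(H) \setminus S''$. Applying the inductive hypothesis to $H''$ yields a finite family $\Lambda'$ of pairwise disjoint orthants of rank $\leq k-1$, automatically disjoint from $S''$, whose union equals $\mathrm{supp}(H) \setminus S''$ and on which $H''$ (and therefore $H$ itself, since the two actions agree off $S''$) acts by isometries. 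Setting $\Lambda := \Lambda_k \cup \Lambda'$ gives the desired decomposition, and faithfulness of the action on $\mathrm{supp}(H)$ is automatic since elements of $H$ are the identity off their support. The main obstacle I expect is the bookkeeping of this reduction --- verifying that $h \mapsto h''$ is a well-defined homomorphism and that $\mathrm{supp}(H'') = \mathrm{supp}(H) \setminus S''$ --- both of which reduce cleanly to the setwise $H$-invariance of $S''$ furnished by Theorem~\ref{theorem4.15}.
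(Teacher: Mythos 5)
Your proposal is correct and takes the approach the paper clearly has in mind: Theorem~\ref{theorem4.15} applies because a finite group is finitely generated and automatically has $N := H\cap G_{k-1}$ of finite index, giving the rank-$k$ layer $S''$; the reduction homomorphism $h\mapsto h''$ onto the complement is well defined precisely because $S''$ is $H$-invariant, and induction on $k$ handles the lower-rank residue. The only spot that deserves slightly more care is the asserted identity $\mathrm{supp}(H'')=\mathrm{supp}(H)\setminus S''$: since $\mathrm{supp}(H)$ is the orthohedral closure of the moved points, the complement may contain points fixed by all of $H$; one can simply adjoin these as additional orthants (on which $H$ acts trivially, hence by isometries) without disturbing the conclusion. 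With that small patch, the argument matches the paper's intended deduction from Theorem~\ref{theorem4.15}.
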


\subsection{The normal subgroups of pei(S)}\label{subsection4.9}  Throughout this section $S$ is an orthohedral set of rank $rkS=n$, germs $\Gamma^k:=\Gamma^k(S)$, and height $h(S)=\rvert\Gamma^n\lvert$; and $G:=pei(S)$. The most important normal subgroups of $G$ we have met so far are the \emph{rank subgroups}, and between them the 
(ordered and unordered) germ stabilizers $C^{ord}(\Gamma^k)\leq C(\Gamma^k)$. But in addition to those we found also the stagnant subgroups $ST_k\leqslant C(\Gamma^k)$ (with $C^{ord}(\Gamma^k)ST_k = C(\Gamma^k)$, see (\ref{eq5})), the endotranslation subgroup $E_k:=ST_k\cap C^{ord}(\Gamma^k)$ (see \emph{Exercise} at the end of Section \ref{Generation in G_k}), and the alternating subgroups alt$G_k$ (see Theorem \ref{alternation}). 
The lattice of these normal subgroups is exhibited in the diagram

\begin{eqnarray}
& \leqslant C^{ord}(\Gamma^k)\leqslant &  \nonumber\\
G_{k-1}\leqslant E_k & & C(\Gamma^k) \leqslant {\rm alt}G_k\leqslant G_k. 
\nonumber\\
 & \leqslant{\rm alt}ST_k\leqslant ST_k\leqslant & 
\end{eqnarray}

The 
$G$-module structure of $A^k(S)=G_k/G_{k-1}$ exhibited in Section \ref{subsection4.7} provides detailed information on the normal subgroups between $G_{k-1}$ and $C^{ord}$. The goal is now to show that what we have seen so far covers essentially all normal subgroups of $G$.\\
To prove this requires detailed information on the normal 
subgroups $gp_G(g)$ of $G$ generated by specific 
elements $g\in G_k$, $k\leq n$, and we start by investigating the special 
case when the canonical image 
of $g$ in $G_k/G_{k-1}$ is of positive finite order $m$. In 
this case Lemma \ref{lemma4.14} describes the 
covering orthant-orbit $\Lambda$ of a given $g$-orbit of $\Gamma^k$, and how $g$ acts on the 
union $\bigcup\Lambda$ by isometries on and 
between its members $L\in\Lambda$.

\begin{lemma}\label{lemma4.17} Let $g\in G_k$, with $k\leq n=rkS$, be an element whose image in $G_k/G_{k-1}$ is of finite order. Then the following holds (but note that if $k=n$ then $\rvert\Gamma^k\lvert$ is finite, and if $k<n$ then $\rvert\Gamma^k\lvert=\infty$ and only assertion \textit{v)} is relevant).
\begin{enumerate}[i)]
\item $rk(g)=k$ alone implies already that $gp_{G}(g)$ contains  ${\rm alt}G_{k-1}$.
\item If $g$ acts non-trivially on $\Gamma^k$ 
then $gp_G(g)$ contains in addition to ${\rm alt} G_{k-1}$ \emph{certain} 
products of pairs of disjoint rank-k elements of the form $\varphi\varphi^g$, where $\varphi\in G_k$ is a single-orthant-reflection or an endotranslation of rank~$k$.
\item 
If $g$ acts non-trivially on $\Gamma^k$ and $\rvert\Gamma^k\lvert\geq3$ then $gp_G(g)$ contains ${\rm alt}ST_k$ (which includes $G_{k-1}$ and $E_k$). In addition we have:\\ 
 $gp_G(g)$ contains either an orthant-3-cycles of rank k, or the product of a pair of disjoint orthant-transpositions. (In the second case we have 
$\rvert\Gamma^k\lvert\geq4$ and further consequences below apply).

\item 
If $g$ acts non-trivially on $\Gamma^k$ and $\rvert\Gamma^k\lvert\geq4$ then $gp_G(g)$ contains
$$C^{ord}(\Gamma^k)^2{\rm alt}ST_k$$
which is a subgroup of finite index in $G$. If g acts on $\Gamma^k$ by a 3-cycle or by a single transposition then $gp_G(g)$ contains the commutator subgroup $G'$.
 
\item
If $g$ acts non-trivially on $\Gamma^k$ and $\rvert\Gamma^k\lvert\geq5$ then $gp_G(g)$ contains  ${\rm alt}G_k$. 
\end{enumerate}
\end{lemma}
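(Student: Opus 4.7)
The unifying strategy is to study commutators $[\varphi,g]$ for carefully chosen elements $\varphi$, using Lemma \ref{lemma4.14} to pass to sufficiently small commensurable suborthants so that the supports of $\varphi$ and $\varphi^g$ become disjoint. Under such disjointness $[\varphi,g]=\varphi^{-1}\varphi^g$ is a disjoint product sitting in $gp_G(g)$, and Theorem \ref{alternation} together with Corollary \ref{generation properties} then promote these building blocks to the various alternating and translation subgroups asserted. Throughout, the hypothesis that the image of $g$ in $G_k/G_{k-1}$ has finite order is what makes the covering orthant orbit of Lemma \ref{lemma4.14} close up after finitely many steps, which is essential for these disjointness arrangements.

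For part (i), I would place a rank-$(k-1)$ orthant-transposition $\tau$ inside an orthant $L$ on which $g$ acts non-trivially and shrink it using Lemma \ref{lemma4.14} so that $\operatorname{supp}(\tau)\cap\operatorname{supp}(\tau^g)=\emptyset$ and $\tau^g\neq\tau$. Then $[\tau,g]\in gp_G(g)$ is a non-trivial product of two disjoint rank-$(k-1)$ orthant-transpositions; since $|\Gamma^{k-1}|=\infty$ for every $k\leq n$, Theorem \ref{alternation}iii applies and gives $\operatorname{alt}G_{k-1}\subseteq gp_G(g)$. For part (ii), the non-triviality of the action of $g$ on $\Gamma^k$ yields a germ $\gamma$ with $\gamma g\neq\gamma$, and Lemma \ref{lemma4.14} provides a representative $K$ with $K\cap Kg=\emptyset$; taking $\varphi$ a single-orthant-reflection or unit-endotranslation supported on $K$, the commutator $[\varphi^{\pm 1},g]$ is exactly of the asserted form $\varphi\cdot\varphi^{\pm g}$.

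Parts (iii) and (v) chain these together. Applying (ii) with $\varphi=\sigma$ a single-orthant-reflection drops into $gp_G(g)$ a product of two disjoint rank-$k$ single-orthant-reflections; Theorem \ref{alternation}ib (which needs $|\Gamma^k|\geq 3$) says all such products are $G_k$-conjugate and generate $\operatorname{alt}ST_k$. Combined with part (i), this establishes the first assertion of (iii). The either/or dichotomy I would split on the cycle type of the permutation of $\Gamma^k$ induced by $g$: if it has a cycle of length $\geq 3$, the covering orbit of Lemma \ref{lemma4.14} makes $g$ look like an orthant-$m$-cycle up to a residual isometry on the return orthant, and multiplying by elements of $\operatorname{alt}ST_k\cdot G_{k-1}\subseteq gp_G(g)$ kills that residue and collapses the $m$-cycle to an orthant-$3$-cycle; otherwise the action consists only of transpositions and (ii) applied to an orthant-transposition $\tau$ directly supplies the disjoint pair $\tau\cdot\tau^g$. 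Part (v) then follows at once when $|\Gamma^k|\geq 5$: Theorem \ref{alternation}iii says the disjoint transposition pair is $G_k$-conjugate to every such pair and together they generate $\operatorname{alt}G_k$, while the $3$-cycle alternative is handled by Theorem \ref{alternation}ii.

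Part (iv) is where I expect the main obstacle. To exhibit $C^{ord}(\Gamma^k)^2\,\operatorname{alt}ST_k$ inside $gp_G(g)$ one must produce the square of every unit-pei-translation; via the identity $\lambda^2=[\tau,\lambda]$ of Theorem \ref{alternation}ia this amounts to releasing enough rank-$k$ orthant-transpositions modulo $\operatorname{alt}ST_k$. Under $|\Gamma^k|\geq 4$ there is still room to choose representative orthants so that the $3$-cycle or disjoint transposition pair from (iii), multiplied by suitable $g$-conjugates, produces the required transpositions modulo the joint kernel of the two parity maps of Section \ref{Conjugation, Abelianization, and alternation.}. The delicate bookkeeping, and the step I expect to absorb the most effort, is tracking the image of $g$ in $(G_k)_{\mathrm{ab}}\cong\mathbb{Z}_2\oplus\mathbb{Z}_2$ from Theorem \ref{alternation}ia and verifying that, when $g$ induces a $3$-cycle or a single transposition on $\Gamma^k$, this image together with what has already been produced exhausts $G'$. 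The finite-index claim for $C^{ord}(\Gamma^k)^2\,\operatorname{alt}ST_k$ should then follow from the wreath-product description of $G_k/G_{k-1}$ in Theorem \ref{classifies G_k(S)/G_k-1(S)} combined with the structure of the finitely many ranks above $k$.
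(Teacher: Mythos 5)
Your overall strategy of producing disjoint commutators $[\varphi,g]$ and then invoking Theorem \ref{alternation} to promote them to alternating subgroups is exactly the paper's strategy, and your treatments of parts i), ii) and v) are correct in outline. But there are two concrete gaps in your case analysis for iii), and iv) is, as you yourself say, not actually established.

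First, your handling of a cycle of length $m\geq 3$ in iii) does not work: you propose to ``collapse the $m$-cycle to an orthant-$3$-cycle'' by multiplying $g$ by elements of $\operatorname{alt}ST_k\cdot G_{k-1}$. But $\operatorname{alt}ST_k$ and $G_{k-1}$ both lie inside $C(\Gamma^k)$, so multiplication by such elements does \emph{not} change the permutation that $g$ induces on $\Gamma^k$ -- in particular it cannot change an $m$-cycle into a $3$-cycle. The paper instead takes the commutator $[g,\tau]=\tau^g\tau=g^{-1}g^{\tau}$, where $\tau=(K,Kg)$ is the orthant-transposition determined by the restriction $g|_K$ and $K,Kg,Kg^2$ come from the covering orthant-orbit of Lemma~\ref{lemma4.14}; this commutator \emph{is} an orthant-$3$-cycle and lies in $gp_G(g)$. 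Second, your ``otherwise the action consists only of transpositions, and a disjoint pair $\tau\tau^g$ is supplied directly'' branch breaks down when $g$ acts by a \emph{single} $2$-cycle $\{\gamma,\gamma'\}$ and otherwise fixes germs (which is possible with $|\Gamma^k|\geq 3$). No matter how you choose an orthant-transposition $\tau$, its support $K\cup K'$ and the support $Kg\cup K'g$ of $\tau^g$ cannot be made disjoint: one of $K,K'$ represents a germ in $\{\gamma,\gamma'\}$, and then $Kg$ or $K'g$ represents the \emph{same} germ, so they are commensurable and therefore never disjoint. The paper's Case~2 handles this by choosing $K$ in a $2$-cycle germ and $K'$ in a $g$-fixed germ, so that $[g,\tau]$ is again an orthant-$3$-cycle; the disjoint transposition pair only appears in Case~3, when there are \emph{two} disjoint $2$-cycles (whence $|\Gamma^k|\geq 4$).

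For iv) you correctly identify the issue but do not resolve it, and the identity $\lambda^2=[\tau,\lambda]$ alone is not enough: to put $\lambda^2$ into $gp_G(g)$ you would need both $\tau$ and $\lambda$ there, which is not given. The paper's route is more combinatorial: when $gp_G(g)$ contains an orthant-$3$-cycle, conjugacy of $3$-cycles (Theorem~\ref{alternation}~ii, valid for $|\Gamma^k|\geq 4$) gives all of them, hence $C^{\mathrm{ord}}(\Gamma^k)$ (each unit-pei-translation is a product of two $3$-cycles), hence a normal subgroup of index $4$, which must be $G'$ because $(G_k)_{\mathrm{ab}}\cong\mathbb Z_2\oplus\mathbb Z_2$. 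When $gp_G(g)$ contains no $3$-cycle -- forced to be $|\Gamma^k|=4$ with two $2$-cycles -- products of disjoint orthant-transposition pairs give products of disjoint unit-pei-translation pairs, from which squares of unit-pei-translations and hence $C^{\mathrm{ord}}(\Gamma^k)^2$ are extracted. That dichotomy is exactly where your proposal is silent, and it is the content of iv).
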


\begin{proof}
\textit{i)}
Let $L\in\Lambda$ be an orthant contained in 
${\rm supp}(g)$. Then -- regardless whether $Lg=L$ or 
$Lg\neq L$ -- there is a rank-(k-1) face $F$ 
of $L$ with $F\neq Fg$. We claim that one can 
choose an orthant-transposition $\tau=(K,K')$ of 
rank $k-1$ supported on suborthants of $L$ 
parallel to $F$ and of distancce 1 to each 
other, with the feature that $\tau$ and $\tau g$ 
are disjoint. Indeed, if $Lg\neq L$ then taking 
$K:=F$ and $K'$ its parallel neighbor of 
distance 1 will do; and if $Lg=L$ then we can 
take $K$ to be the orthant obtained by shifting 
$F$ diagonally into itself by two diagonal 
units, and $K'$ its parallel neighbor of 
distance 1 (which has distance $>1$ from all 
other faces). Due to Lemma \ref{closed under complements} $\tau\in G_k$. Then the commutator $[\tau,g]=\tau\tau^g$ is the product of two disjoint rank-$(k-1)$ orthant-transpositions and contained in $gp_{G_k}(g)$. As $\rvert\Gamma^{k-1}\lvert =\infty$ we know from Theorem \ref{alternation} iii) that the conjugates of $[\tau,g]$ generate  ${\rm alt}G_{k-1}$.

\textit{ii)} 
By assumption $\Gamma^k$ has a $g$-orbit of length $\geq 2$, and we consider the corresponding covering orthant-orbit $\Lambda$. Let $L\in\Lambda$, $K\subseteq L$ an arbitrary rank-k suborthant, and $\sigma$ a single-orthant-reflection of $K$. Due to Lemma~ \ref{germ stabilizer} $\sigma\in G_k$, hence  $[g,\sigma]=\sigma^g\sigma\in gp_{G_k}(g)$, and $K\cap Kg=\emptyset$. Thus, $gp_{G_k}(g)$ contains a product of two disjoint $g$-conjugate single-orthant-reflections of rank k.\\
We can apply this to the suborthant $Kt_y\subset K$ and recall from Lemma 
\ref{lemma4.7}~v) that 
$\eta=\sigma\sigma^{t_y}$ is a unit-endotranslation. As both $\sigma^g\sigma$ and ${\sigma^{t_y}}^g
\sigma^{t_y}$ are in $gp_{G_k}(g)$, so is their 
product $(\sigma^g\sigma)({\sigma^{t_y}}^g
\sigma^{t_y})=\eta^g\eta$. This shows that $gp_G(g)$ also contains products of pairs of disjoint $g$-conjugate (unit)-endotransla\-tions.

\textit{iii)}
We assume that $g$ acts non-trivially on $\Gamma^k$ and $\rvert\Gamma^k\lvert\geq 3$. Three cases occur:\\
Case 1: $\Gamma^k$ contains a $g$-orbit of length $\geq 3$. Let $\Lambda$ 
be the corresponding covering orthant-orbit, and $K\subseteq L$ an arbitrary rank-k suborthant of some $L\in\Lambda$. Then $K, Kg, Kg^2$ are pairwise disjoint. We put $\tau:=(K,Kg)$ to be the orthant-transposition defined by the restriction $g\rvert_K:K\rightarrow Kg$, and observe that the 
commutator $[g,\tau]=\tau^g\tau=g^{-1}g^\tau$ is contained in $gp_G(g)$ and is the orthant-3-cycle $(K\xrightarrow{g}Kg\xrightarrow{g}Kg^2\xrightarrow{g^{-2}}K)$.\\
Case 2: $\Gamma^k$ contains a $g$-orbit of length 2 (with covering orthant-orbit $L
\xrightarrow{g}Lg\xrightarrow{g}L$), and disjoint to it is a $g$-invariant rank-k orthant $L'$ (with covering orthant-orbit $L'\xrightarrow{g}L'$). We choose arbitrary rank-k suborthants $K\subseteq L$, $K'\subseteq L'$, pick an 
orthant-transposition $\tau:=(K',K)$, and observe that the commutator $[g,\tau]=\tau^g\tau=g^{-1}g^\tau$ is contained in $gp_G(g)$ and is the orthant-3-cycle $(K\xrightarrow{\tau} 
K'\xrightarrow{\tau^g}Kg\xrightarrow{\tau^g\tau}K)$. Appropriate products 
of two orthant-3-cycles are products of pairs of disjoint orthant-transpositions.  By Theorem \ref{alternation} ib) all products of pairs of disjoint orthant transpositions are conjugate and generate ${\rm alt}ST_k$. Hence $gp_G(g)$ contains the unique subgroup of index 2 in $C(\Gamma^k)$ and all alternating finite orthant-permutations; together these generate the commutator subgroup of $G_k$. \\
Case 3: $\Gamma^k$ contains two $g$-orbit of length 2 (with corresponding 
covering orthant-orbits $L_i\xrightarrow{g}L_ig\xrightarrow{g}L_i$, i=1, 2). We choose arbitrary rank-k 
suborthants $K_1\subseteq L_1$, $K_2\subseteq L_2$, pick an orthant-transposition $\tau:=(K_1,K_2)$, and observe that the commutator $[g,\tau]=\tau^g\tau=g^{-1}g^\tau$ is contained in $gp_G(g)$ and is the product $\tau_1\tau_2$ of two disjoint $g$-conjugate orthant-transpositions. By Theorem \ref{alternation} ib) all products of pairs of disjoint orthant transpositions are conjugate and generate ${\rm alt}ST_k$.

\textit{iv)}
Now we assume $\rvert\Gamma^k\lvert\geq 4$. By Theorem \ref{alternation} ii) we know 
that all orthant-3-cycles are conjugate. 
Thus, if $gp_G(g)$ contains an orthant-3-
cycle of rank k then $\rvert\Gamma^k
\lvert\geq 4$ implies that it
contains all of them. We claim that this 
implies $C^{ord}(\Gamma^k)\leqslant 
gp_G(g)$.\\
 To prove this recall that every unit-pei-translation $\lambda$ is the products of two orthant-transpositions $\lambda=\tau\tau'$ of the form $(K,L)(K,L-F)$ (see the proof of  Lemma \ref{lemma4.7}ii)). As an additional rank-k orthant $M$ disjoint to $K\cup L$ is available we can write $\tau\tau'$ as the product $\lambda=(K,L)(K,L-F)=(K,L)(K,M)(K,M)(K,L-F)=(K,L,M)(K,M,L-F)$ of two orthant-3-cycles, whence $\lambda\in gp_G(g)$. Our claim follows since the unit-pei-translations generate $C^{ord}(\Gamma^k)$.\\
By \textit{iii)} we know that $gp_G(g)$ contains also ${\rm alt}ST_k$, and together with $C^{ord}(\Gamma^k)$ this yields the unique subgroup of index 2 in $C(\Gamma^k)$. Moreover the orthant-3-cycles generate, in the symmetric group $G/C^{ord}(\Gamma^k)$, the alternating subgroup of index 2. 
Hence $gp_G(g)$ contains a normal subgroup of index 4;  this can only be the commutator subgroup $G'$.\\
It remains to consider the case $gp_G(g)$ contains no orthant-3-cycle -- this happens only when  $\rvert\Gamma^k\lvert=4$ (which implies that $k=n=rk(S))$ and $\Gamma^k$ is the union of two $g$-orbits of length 2. 
In that case assertion iii) still tells us that $gp_G(g)$ contains all of 
$E_k$ and products $\tau_1\tau_2$ of pairs of disjoint orthant-transpositions of rank k.\\ 
From pairs of disjoint orthant-transpositions we can obtain the result for pairs of disjoint translations: We use our disjoint orthant-transpositions $\tau_i=(K_i,L_i)$ to construct the disjoint unit-pei-translations $\lambda_j=(K_i,L_i)(K_i,L_i-F_i)$, where $F_i$ stands for a rank-$(k-1)$-face of $L_i$. Then we have $\lambda_1\lambda_2=(K_1,L_1)(K_2,L_2)(K_1,L_1-F_1)(K_2,L_2-F_2)$ which shows that $\lambda_1\lambda_2\in gp_G(g)$. It follows that all products of pairs of disjoint unit-pei-translations are in $gp_G(g)$. By multiplying two appropriate such pairs we find that $gp_G(g)$ contains all squares of unit-pei-translations and therefore 
all translations of even length, i.e., $C^{ord}(\Gamma^k)^2\leqslant gp_G(g)$. Hence $gp_G(g)$ contains $C^{ord}(\Gamma^k)^2{\rm alt}ST_k$ and all 
products of disjoint pairs of orthant-transpositions. We leave it to the reader to prove that this is a finte index subgroup of $G_k$ is a subgroup of finite index in $G_k$.  

\textit{v)}
We assume that $g$ acts non-trivially on $\Gamma^k$ and $\rvert\Gamma^k\lvert\geq 5$. We know by 
assertion iii) that $gp_G(g)$ contains either an 
orthant-3-cycle or a product of two disjoint 
orthant-transpositions. As $(K,L,M)(L,M,N)=(K,M)
(L,N)$ we have products of disjoint orthant-
transpositions in either case and we know, by 
Theorem \ref{alternation}, that they generate  $
{\rm alt}G_k\leqslant G_k$ as a normal subgroup.
\end{proof}

We can now prove that the index-2 pairs ${\rm alt}
G_k<G_k$ are "bottlenecks'' for the normal 
subgroups $N$ of $G$, i.e., either $N\leqslant G_k$ 
or ${\rm alt}G_k\leqslant N$. Or, equivalently: 
    
\begin{theorem}\label{Bottleneck Theorem}\emph{(Bottleneck Theorem)} For every normal subgroup 
 $N\leqslant G$ of rank $rk(N)=k$ we have  ${\rm alt}G_{k-1}\leqslant N\leqslant G_k;$ (recall that $G_{-1}:=1$).
\end{theorem}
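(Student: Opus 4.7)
The theorem splits into two containments. The first, $N \leqslant G_k$, is immediate from the definition of $rk(N)$ as the supremum of $rk(g)$ over $g \in N$: every $g \in N$ has $rk(g) \leq k$ and hence lies in $G_k$. The content lies in the other containment ${\rm alt}G_{k-1} \leqslant N$, which I would prove by adapting the argument of Lemma~\ref{lemma4.17}(i) and freeing it from the finite-order hypothesis on $\bar g \in G_k/G_{k-1}$. The case $k = 0$ is vacuous since $G_{-1} = 1$ and hence ${\rm alt}G_{-1} = 1 \leqslant N$ trivially, so assume $k \geq 1$.

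Pick $g \in N$ with $rk(g) = k$; by Lemma~\ref{lemma3.3} there is a rank-$k$ orthant $L \subseteq {\rm supp}(g)$ on which $g|_L : L \to Lg$ is a non-identity isometric embedding. The heart of the proof is to construct a rank-$(k-1)$ orthant-transposition $\tau = (K_1, K_2)$ with $K_1, K_2 \subseteq L$ such that the four orthants $K_1, K_2, K_1 g, K_2 g$ are pairwise disjoint. Granted this, the commutator $[g, \tau] = \tau^g \tau = (K_1 g, K_2 g)(K_1, K_2)$ lies in $N$ by normality of $N$, and is a nontrivial product of two disjoint rank-$(k-1)$ orthant-transpositions --- hence a nontrivial element of $N \cap {\rm alt}G_{k-1}$.

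The construction of $\tau$ splits into two geometric cases. If $L \cap Lg$ has rank $<k$, then $L \setminus Lg$ still has rank $k$, and any two disjoint rank-$(k-1)$ suborthants $K_1, K_2 \subseteq L \setminus Lg$ work since their $g$-images lie in $Lg$ and are automatically disjoint from the $K_i$. If instead $L \cap Lg$ has rank $k$, then $Lg$ is commensurable to $L$ and $g|_L$ is the restriction of an affine isometry $x \mapsto a + Ax$ with $A \in O(k, \mathbb Z)$ of finite order, whose fixed-point set in $L$ is a proper sub-orthant. I would then place $K_1, K_2$ as parallel shifts of a single face $F$ of $L$, pushed deep into $L$ along a direction transverse to the fixed-point locus of $g|_L$; for such deep placements, each $K_i$ and each $K_j g$ end up in mutually disjoint coordinate strata of $L$. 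The main obstacle is precisely this sub-case --- notably when $A$ non-trivially permutes the axes of $L$ so that $Lg = L$ as sets --- where naive choices (such as the face-plus-distance-one-neighbor construction in the proof of Lemma~\ref{lemma4.17}(i)) let $K_i g$ meet $K_j$, and one must translate the chosen face deep enough into $L$ to separate the images.

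Finally, since $1 \leq k \leq n = rk(S)$, we have $k - 1 < n$, so $|\Gamma^{k-1}(S)| = \infty \geq 5$. Theorem~\ref{alternation}(iii) then asserts that all products of pairs of disjoint rank-$(k-1)$ orthant-transpositions are $G$-conjugate and together generate ${\rm alt}G_{k-1}$. The $G$-normal closure of the element $\tau^g \tau \in N$ produced above is therefore ${\rm alt}G_{k-1}$, giving ${\rm alt}G_{k-1} \leqslant N$ and completing the proof.
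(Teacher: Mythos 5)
Your proposal is correct in its overall structure but takes a genuinely different route from the paper's proof. The paper splits into two cases according to whether the image of $g$ in $G_k/G_{k-1}$ has finite or infinite order. The finite-order case is delegated to Lemma~\ref{lemma4.17}(i), which relies on the covering-orthant-orbit machinery of Lemma~\ref{lemma4.14} to obtain the clean dichotomy $Lg = L$ or $L\cap Lg = \emptyset$. In the infinite-order case, the paper passes to a power $g^p$ landing in $C^{ord}(\Gamma^k)$ (possible because $G_k/C^{ord}(\Gamma^k)$ is a torsion group), so that $g^p$ acts on some rank-$k$ orthant $L$ as a nontrivial parallel translation; from the orbit $K, Kg^p, Kg^{2p}, \ldots$ it extracts an orthant-$3$-cycle of rank $k-1$ via the commutator construction of Lemma~\ref{lemma4.17}(iii) Case~1, and finishes with Lemma~\ref{lemma4.17}(v). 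You bypass both the finite/infinite dichotomy and the power-passing: for any $g$ of rank $k$ you construct a rank-$(k-1)$ orthant-transposition $\tau$ with ${\rm supp}(\tau) \cap {\rm supp}(\tau^g) = \emptyset$, obtain $\tau^g\tau \in N$ as a product of two disjoint rank-$(k-1)$ transpositions, and invoke Theorem~\ref{alternation}(iii) with $|\Gamma^{k-1}(S)|=\infty$. What you gain is a conceptually simpler argument that does not invoke the torsion structure of $G_k/C^{ord}(\Gamma^k)$; what you lose is the cleanness that reduction to a pure translation buys.

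The one place where your proposal remains a sketch is precisely the commensurable sub-case: when $g|_L$ extends to an affine isometry $\phi\colon x \mapsto a+Ax$ of $\langle L\rangle$ with $A$ a permutation of the canonical axes, the recipe ``push $K_1, K_2$ deep into $L$ transverse to the fixed-point locus'' is not yet a construction. It can be made one: since $\phi\ne\mathrm{id}$, there is a canonical axis direction $i$ that is either genuinely moved by $A$ or has $a_i\ne 0$. Take $K_1, K_2$ to be parallel shifts of the corank-1 face of $L$ perpendicular to axis $i$, placed at levels $x_i=m_1$ and $x_i=m_2$ with $|m_1-m_2|$ large, and with all remaining coordinates bounded below by a constant $M$ large relative to $m_1$, $m_2$, and the entries of $a$; then each image $K_j\phi$ sits in a coordinate slab disjoint from $K_1, K_2$, and the required pairwise disjointness follows. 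With that spelled out, your argument closes; without it, it points in the right direction but has not yet arrived.
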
 

\begin{proof}
The key here is proving that the assertion i) of Lemma \ref{lemma4.17}, i.e., $rk(g)=k$ alone implies ${\rm alt}G_{k-1}\leqslant gp_G(g)$, holds  without the assumption that the image of $g\in G_k$ in $G_k/G_{k-1}$ be of finite order. To prove this we can now assume that $g$ is of infinite orde. 

As $\Gamma^k(supp(g))$ is finite and $G_k/C^{ord}(\Gamma^k)$ is a torsion 
group, some power $g^p$ is a non-trivial pei-isometry of rank $k$ which fixes the germ $\gamma(L)$ of some rank-k orthant $L$. Hence $g^p$ parallel shifts t $L$ to an orthant $Lg^p\neq L$ commensurable to $L$. Then one finds inside $L$ rank-$(k-1)$ othants $K\subset L$ with the property that 
$K, Kg^p, Kg^{2p}, Kg^{3p} ... $ 
are sequences of $length \geq 3$ of pairwise 
disjoint parallel orthants. As in the proof of 
Lemma ~\ref{lemma4.17} iii), Case 1. We find in $gp_G(g)$ an orthant-3-cycle of rank $k-1$. Since $\rvert\Gamma^{k-1}\lvert=\infty$ assertion v) 
of Lemma~ \ref{lemma4.17} applies in rank $k-1$ and yields  ${\rm alt}G_{k-1}\leqslant gp_G(g)$.
\end{proof}

We use the Bottleneck Theorem to recover the rank 
of elements $g\in G=pei(S)$ as a group theoretic 
property. For this we introduce the 
\emph{translation-rank} of the elements $g\in G$,  by putting 
$$trk(g):= min_{0\le k\leq rk(g)}\{k\mid G_k\cap gp(g)\neq 1\}.$$
Note that $trk(g)\leq rk(g)$, and $trk(g)=0 $ if and only if $g$ is a torsion element. And if $g$ is torsion-free then $trk(g)$ is the maximal $t\in\mathbb N$ with the property that supp$(g)$ contains a rank-$t$ orthant $L$ on which the restriction $g^p\mid_L:L\rightarrow S$ of some $g^p\in gp(g)$ is also induced by a non-trivial 
translation $\vartheta:\langle L\rangle\rightarrow\langle L\rangle$. 

Now we put $t=trk(g)$, choose a generator $g^q$ of $G_k\cap gp(g)$ and consider the sequence 
of normal subgroups $N_p:=gp_G(g^p)$, $p\in \mathbb N$. As $t\leq rk(g^p)$ for all $p\in\mathbb N$ we 
know from the  Bottleneck Theorem that ${\rm alt}G_{t-1}\leqslant N_p$ for all $p\in\mathbb N$. On the other hand, as $G_t/C^{ord}(\Gamma^t)$ is a 
torsion group and $g^q\in G_t$ we know some power $g^p$ of $g$ is contained in $C^{ord}(\Gamma^t)$. Since $C^{ord}(\Gamma^t)$ is a characteristic subgroup of $G$ it follows that $N_p=gp_G(g^p)$ is also contained in $C^{ord}(\Gamma^t)$. 

Putting things together we thus fond that the intersection of all normal subgroups $N_p$ -- a quantity which depends only upon the group structure 
of $G$ -- satisfies

Now we can use that $C^{ord}(\Gamma^t)/G_{t-1}$ is free-Abelian and hence 
residually finit: the subgroup $N_p/G_{t-1}$ is generated by the $G$-translates of $g^	p/G_{t-1}$, hence for each $i\in\mathbb N$, we have $N_{pi}/G_{t-1}=gp_G(g^{pi}/G_{t-1})=(N_{p}/G_{t-1})^i$, whence $\bigcap_{i\in \mathbb N}N_{pi}\leqslant G_{t-1}$.

Putting things together we find that the intersection of all normal subgroups $N_p$ -- a quantity that depends only on the group structure of $G$ -- satisfies, for each $g\in G$,
$${\rm alt}G_{trk(g)-1}\leqslant~\bigcap_{p\in\mathbb N}gp_G(g^p)~\leqslant~G_{trk(g)-1}.$$
Since ${\rm alt}G_{t-1}$ and $G_{t-1}$ uniquely determine one another this shows that they are characterized in terms of the group structure of $G=pei(S)$.\\
 Summarizing we have

\begin{corollary}\label{corollary4.19} $G=pei(S)$ satisfies the maximal 
condition for normal subgroups. \\
The rank-groups $G_k$ are uniquely determined by the group structure of $pei(S)$. In particular, the poly-(Abelian-by-locally-finite) length of $G$ which is equal to rkS+1, is an invariant of the group structure.\hfill$\square$
\end{corollary}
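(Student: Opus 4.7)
I will establish the three assertions of the corollary in the order: max-n, characterization of the rank filtration, poly-length invariance. All three rest on the Bottleneck Theorem (Theorem~\ref{Bottleneck Theorem}), the Noetherianness of the $Q_k(S)$-module $A^k(S)$ (Corollary~\ref{Corollary4.13}), and the intersection-of-normal-closures argument developed in the paragraphs just before the corollary.

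For max-n, I take an ascending chain $N_1 \leq N_2 \leq \cdots$ of normal subgroups of $G$. The sequence $\mathrm{rk}(N_i) \in \{-1,0,\ldots,n\}$ is non-decreasing and bounded, hence eventually constant at some $k$. The Bottleneck Theorem then sandwiches $\mathrm{alt}\,G_{k-1} \leq N_i \leq G_k$ for all large $i$, so I may work in the quotient $G_k/\mathrm{alt}\,G_{k-1}$. There the refined filtration
\[
1 \leq G_{k-1}/\mathrm{alt}\,G_{k-1} \leq C^{\mathrm{ord}}(\Gamma^k)/\mathrm{alt}\,G_{k-1} \leq C(\Gamma^k)/\mathrm{alt}\,G_{k-1} \leq G_k/\mathrm{alt}\,G_{k-1}
\]
has successive factors $\mathbb{Z}_2$, the Noetherian module $A^k(S)$, the direct sum $\bigoplus_{\gamma \in \Gamma^k(S)} \mathrm{sym}_k X(\gamma)$, and the finitary symmetric group $\mathrm{sym}(\Gamma^k(S))$ (Theorem~\ref{classifies G_k(S)/G_k-1(S)}). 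In each of these factors the lattice of $G$-invariant subgroups satisfies ACC: trivially for $\mathbb{Z}_2$; by Corollary~\ref{Corollary4.13} for $A^k(S)$; because the high transitivity of the $G$-action on $\Gamma^k(S)$ forces each $G$-invariant subgroup of the direct sum to have the diagonal form $\bigoplus_\gamma H$ for some normal subgroup $H \leq \mathrm{sym}_k$ (only finitely many choices); and by the classical classification of normal subgroups of the finitary symmetric group. A finite induction through the filtration then delivers ACC for the image chain in $G_k/\mathrm{alt}\,G_{k-1}$, hence for $(N_i)$ itself.

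For the characteristic property of the rank subgroups, the paragraphs immediately preceding the corollary already produce the purely group-theoretic subgroup
\[
K(g) \;:=\; \bigcap_{p \in \mathbb{N}} gp_G(g^p), \qquad \mathrm{alt}\,G_{t-1} \;\leq\; K(g) \;\leq\; G_{t-1} \ \text{ with } \ t = \mathrm{trk}(g).
\]
Choosing, for each $t \in \{0,1,\ldots,n\}$, a rank-$t$ pei-translation $g_t$ (so $\mathrm{trk}(g_t) = t$) pins down the index-$2$ pair $\{\mathrm{alt}\,G_{t-1},\ G_{t-1}\}$; since these two subgroups determine one another, the full rank filtration $1 = G_{-1} \leq G_0 \leq \cdots \leq G_n = G$ is a characteristic series of $G$. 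Poly-length invariance then follows at once, because Theorem~\ref{preview}~ii) identifies $\mathrm{rk}\,S + 1$ as the minimum length of a normal series of $G$ with Abelian-by-locally-finite factors, a quantity intrinsic to the group structure. The main obstacle lies in the wreath-product factor in the max-n argument: verifying ACC on the $G$-invariant subgroups of $G_k/C^{\mathrm{ord}}(\Gamma^k) \cong \mathrm{sym}_k \wr \mathrm{sym}(\Gamma^k(S))$ is what forces a careful combination of the high transitivity of the action on germs with the small normal-subgroup lattice of the finitary symmetric group to rule out thin invariant subgroups of the wreath base.
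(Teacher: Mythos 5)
Your argument for the characteristic property of the rank filtration is the same as the paper's: you reproduce the translation-rank $trk(g)$ and the normal subgroup $K(g)=\bigcap_{p}gp_G(g^p)$ sandwiched between $\mathrm{alt}\,G_{trk(g)-1}$ and $G_{trk(g)-1}$, and then observe (as the paper does) that the two subgroups of the index-$2$ pair determine each other and that every level $t\in\{0,\dots,n\}$ is realized by a rank-$t$ pei-translation. The poly-length statement is the paper's own reduction to Theorem~\ref{preview}~ii).

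For max-n the paper gives no explicit proof, and your supplied argument is the natural one: eventual stability of $\mathrm{rk}(N_i)$, the Bottleneck sandwich $\mathrm{alt}\,G_{k-1}\leqslant N_i\leqslant G_k$, and ACC on each $G$-invariant section of the finite normal series of $G_k/\mathrm{alt}\,G_{k-1}$ from Theorem~\ref{classifies G_k(S)/G_k-1(S)} and Corollary~\ref{Corollary4.13}. However, one intermediate claim is false as stated: it is not true that high transitivity forces every $G$-invariant subgroup of $\bigoplus_{\gamma\in\Gamma^k(S)}\mathrm{sym}_k X(\gamma)$ to be of the diagonal form $\bigoplus_\gamma H$. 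The kernel of the total sign homomorphism $(h_\gamma)\mapsto\prod_\gamma\mathrm{sgn}(h_\gamma)\in\mathbb Z_2$ is $G$-invariant (conjugation preserves sign on each $X(\gamma)$ and the $G$-action only permutes the indices), has index $2$ in the full base, and is not of the form $\bigoplus_\gamma H$. Fortunately the conclusion you need --- that there are only finitely many $G$-invariant subgroups of this section --- survives: besides the diagonal subgroups $\bigoplus_\gamma H$ for $H\trianglelefteq\mathrm{sym}_k$ one also gets the preimages of the finitely many subgroups of the finite quotient $(\mathrm{sym}_k)_{\mathrm{ab}}$ under the total-abelianization map, and a short transitivity argument of the kind sketched in the proof of Lemma~\ref{lemma4.11} shows that this exhausts them. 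So the ACC still holds, and the proof goes through once this step is repaired. You flag this step as "the main obstacle" in your last sentence, but the explicit diagonal-form claim in the body of the argument should be corrected rather than asserted.
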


\textit{Exercise}: In \cite{os02} and \cite{ww15} Osin and Wesolek-Williams define fine-meshed (ordinal valued) ranks which measure the complexity of elementary amenable groups $G$. Use the Bottleneck Theorem to compute compute these ranks for $G=$pei$(S)$.

\chapter{The Euclidean case II: The finiteness length}\label{The Finiteness Length}
\section{A lower bound for the finiteness length of pei(S)}
In this section we will define a certain ``diagonal'' subgroup, ${{\rm pei}}_{{\rm {\rm dia}}}(S) \leq {{\rm pei}}(S)$, and prove
\begin{theorem}\label{theorem5.1}  For every orthohedral set $S$ we have 
$$ fl \big( {\rm pei}  (S) \big) \ge 
fl \big( {\rm pei}_{\emph{dia}} (S) \big) = h(S) - 1.$$
\end{theorem}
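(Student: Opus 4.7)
The plan is to adapt Ken Brown's filtration-and-Morse-theory approach from \cite{br87}, which he used to compute $fl(H_n)=n-1$ for Houghton's groups, to the higher-rank, higher-height setting. The key structural inputs will be the rank-filtration $1=G_{-1}\leqslant G_0\leqslant\cdots\leqslant G_n=G$, the flow homomorphisms, and the classification of the factors $G_k/G_{k-1}$ from Theorem~\ref{classifies G_k(S)/G_k-1(S)}.

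\textbf{Lower bound for both groups.} The subgroup ${\rm pei}_{{\rm dia}}(S)\leqslant{\rm pei}(S)$ should consist of those elements whose induced map on the top-rank germ set $\Gamma^{{\rm rk}(S)}(S)$ acts, modulo $G_{{\rm rk}(S)-1}$, by a single common ordered isometry diagonally on all $h(S)$ tangent cosets. To establish $fl({\rm pei}(S))\geq fl({\rm pei}_{{\rm dia}}(S))\geq h(S)-1$, I would construct a contractible complex $X$ whose vertices parametrise collections of pairwise disjoint rank-${\rm rk}(S)$ orthant representatives of the top-rank germs, with higher simplices encoding common refinements in the spirit of Brown--Stein. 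Both ${\rm pei}(S)$ and ${\rm pei}_{{\rm dia}}(S)$ act on $X$; filtering $X$ by a ``distance-from-infinity'' function (measuring, orthant by orthant, how deeply the representatives sit inside $S$) yields a cocompact filtration $X_0\subseteq X_1\subseteq\cdots$ whose cell stabilisers are extensions of finite groups by pei-groups on lower-rank or smaller-height orthohedral subsets; by induction on the pair $({\rm rk}(S),h(S))$ these are of type $F_\infty$. A Morse-theoretic analysis of the descending links should identify them up to homotopy with a join of $h(S)$ sphere-like pieces, giving $(h(S)-2)$-connectivity of the filtration pairs, and Brown's criterion then delivers type $F_{h(S)-1}$ for both groups.

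\textbf{Upper bound for the diagonal subgroup.} For $fl({\rm pei}_{{\rm dia}}(S))\leq h(S)-1$, I would exploit the top-rank flow homomorphism \eqref{flow map}. Restricted to ${\rm pei}_{{\rm dia}}(S)$, its image in $\bigoplus_{\gamma\in\Gamma^{{\rm rk}(S)}(S)}\mathbb{Z}$ lies in the hyperplane cut out by the total-flow relation \eqref{total flow sum} and, by the diagonal constraint, is free-Abelian of rank exactly $h(S)-1$; its kernel is itself a ${\rm pei}_{{\rm dia}}$-type group supported on strictly lower rank. Combining induction with the $\Sigma$-invariant machinery of \cite{bns86,r89} applied to this extension by $\mathbb{Z}^{h(S)-1}$ should produce a non-vanishing homological obstruction in dimension $h(S)$, ruling out type $F_{h(S)}$ (and in fact $FP_{h(S)}$).

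\textbf{Main obstacle.} The hardest step will be the connectivity computation for the descending links in the lower bound. Unlike the Houghton case, where the links are genuine spheres on which an explicit shelling is available, here the mixing of the ordering data on each tangent coset with the rank stratification makes the descending links into more intricate simplicial complexes, and establishing $(h(S)-2)$-connectivity will likely require a delicate combinatorial matching or Bestvina--Brady style collapsing argument. A secondary difficulty is to pin down the definition of ${\rm pei}_{{\rm dia}}(S)$ precisely enough that the $\Sigma$-invariant obstruction and the Brown criterion meet exactly at $h(S)-1$, since any looseness in the diagonal condition would leave a gap between the bounds.
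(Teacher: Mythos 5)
Your broad strategy — Brown's filtration criterion applied to a contractible poset of ``approximations to infinity,'' with an induction on rank and a connectivity computation for descending links — is the right shape, and it is indeed what the paper does. But there is a genuine gap in the way you handle the cell stabilizers, and this is precisely the place where Brown's original Houghton argument earns its keep.

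You assert that the stabilizers are ``extensions of finite groups by pei-groups on lower-rank or smaller-height orthohedral subsets; by induction on the pair $({\rm rk}(S),h(S))$ these are of type $F_\infty$.'' This is false. The stabilizer of a vertex $f$ in the paper's complex is ${\rm pei}(S-Sf)$, and $S-Sf$ is an orthohedral set of rank $<{\rm rk}(S)$ and \emph{finite} height $h(f)$; by the very theorem you are proving (applied inductively in lower rank), its finiteness length is only $h(f)-1$, not $\infty$. If you feed stabilizers of unbounded-below finiteness length into Brown's criterion, the argument collapses. The fix — which is the crux of Brown's treatment of the Houghton groups and of the paper's proof — is to truncate the filtration from below: work only with $M^{[r,\infty]}=\{f\mid h(f)\ge r\}$ for a large constant $r$, so that every stabilizer ${\rm pei}(S-Sf)$ satisfies $h(S-Sf)=h(f)\ge r$ and hence, by the rank-$<n$ inductive hypothesis, is of type $F_{r-1}$. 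Choosing $r\ge h(S)+1$ then makes all stabilizers finitary enough for Brown's criterion to conclude $fl=h(S)-1$. Your proposal omits this truncation entirely and therefore cannot supply the hypothesis on stabilizer finiteness that Brown's theorem requires. (Relatedly, your description of ${\rm pei}_{\rm dia}(S)$ as acting ``by a single common ordered isometry diagonally on all tangent cosets'' is stronger than the paper's actual diagonal condition, which only asks each induced translation $\tau_{(f,\gamma)}$ to be an integral multiple of the diagonal vector of $\langle\gamma\rangle$, with no coupling between different $\gamma$.)

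Two further points of difference worth noting. First, your complex of ``collections of pairwise disjoint orthant representatives'' is in the Brown--Stein spirit, whereas the paper works directly with the order complex $\lvert M_{\rm dia}(S)\rvert$ of the monoid of diagonal pei-injections under the ordering $f\le f'\Leftrightarrow tf=f'$ for $t\in{\rm mon}(T)$; the latter carries a height function $h$ directly and makes the nerve/descending-link computation tractable via a coloured-graph lemma (Lemma~\ref{lemma5.8}, a generalization of Sach's Lemma) rather than a Bestvina--Brady collapse. Second, your separate $\Sigma$-invariant argument for the upper bound is unnecessary: once one knows the descending links are bouquets of $(h(S)-1)$-spheres whose number grows along the filtration, Brown's criterion already yields the non-$F_{h(S)}$ conclusion, so both inequalities fall out of the single construction. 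Your route might be made to work, but it would be extra machinery aimed at a fact the main construction already delivers.
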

We follow the strategy of Brown's proof in the influental paper [Br87] which covers the case when S is a stack of rays; and we also take full advantage of the technical results and the insight provided by the second named author's Diploma Thesis \cite{sa92}.

\subsection{The height of a pei-injection \texorpdfstring{$\mathbf{f:S\to 
S}$}{f:S\rightarrow S}.}
We start with a general observation on the set of germs, when an orthohedral set $S$ comes with
a decomposition of a disjoint union  $S = A\cup B$ of two orthohedral subsets.
In that case every orthant $L\subseteq S$ inherits the decomposition  $L = 
(A\cap L)\cup (B\cap L)$, which shows that one of the orthants of either $A$ or $B$ is commensurable to $L$. 
This shows that the germs of S have an induced disjoint decomposition $\Gamma^k(S)  =  \Gamma^k(A)\cup \Gamma^k(B)$ for
each  $k.$

Now let $S$ be an orthohedral set of rank ${{\rm rk}}S = n.$ We can represent the
rank-$n$ germs of $S$ by pairwise disjoint orthants $L_1, \ldots , L_h, h 
=
h(S)$, with the property that the restriction of $f$ to each $L_i$ is an
isometric embedding into $S$. $f(L_i)$ is then commensurable to some $L_j$, and
since $f$ is injective it follows:
\\ $f$ permutes the germs  $\gamma (L_1),\ldots , \gamma (L_p)$, and $ {{\rm rk}}(S - f(S)) < {{\rm rk}}S.$
\\
As $S - f(S)$ is an orthohedral set, we now obtain  that the number of
rank-$(n-1)$ germs in $S - f(S)$ is finite. We call this number \emph{the
height of} $f$, denoted $h(f) = h\big( S - f(S)\big) = h(S - Sf).$

\begin{lemma}\label{lemma5.2}
%\mbox{}\vspace{-\topskip}
\begin{enumerate}[i)]
\item If $g, f: S\to S$ are two pei-injections, then $h(gf) = h(g) + h(f).$
\item If  $A\subseteq S$ is an orthohedral subset whose complement $A^c = 
S - A
$ has rank ${\emph{rk}}A^c < n = {\emph{rk}}S$, then the height of any pei-injection $f: S\to S$
is given by  $h(f) = $ \mbox{$h\big( A\cap f(A)^c \big)$} $-$ $h\big( A^c\cap
f(A)\big).$
\end{enumerate}
\end{lemma}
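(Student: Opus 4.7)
My plan is to track $h$ through explicit disjoint decompositions, using the observation following Lemma \ref{lemma3.3} that any pei-injection induces a bijection on germs of maximal rank, together with the remark after Lemma \ref{lemma3.2} giving height-additivity over disjoint orthohedral unions of equal top rank. For convenience I interpret $h$ uniformly as the count of rank-$(n-1)$ germs (which is $0$ on an orthohedral set of rank $<n-1$); with this convention, additivity of $h$ holds unconditionally over disjoint orthohedral unions, since two commensurable rank-$(n-1)$ orthants contained in disjoint subsets would force those subsets to meet in a set of rank $n-1$.

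For (i), because $g$ is injective,
\[
S - gf(S) \;=\; (S - g(S)) \;\sqcup\; g(S - f(S)),
\]
with disjointness coming from $g(S-f(S))\subseteq g(S)$. Both summands are orthohedral of rank at most $n-1$, and the pei-injection $g$ restricted to $S-f(S)$ preserves the count of top-rank germs, so $h(g(S-f(S))) = h(S-f(S)) = h(f)$. Additivity then yields $h(gf) = h(g) + h(f)$.

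For (ii), I would split $S-f(S)$ along the partition $S = A \sqcup A^c$ and simplify via pei-invariance on $A^c$. First I would record the two disjoint decompositions
\[
A - f(A) \;=\; (A - f(S)) \;\sqcup\; (A \cap f(A^c)), \qquad A^c - f(A^c) \;=\; (A^c - f(S)) \;\sqcup\; (A^c \cap f(A)),
\]
which, after taking heights and rearranging (and using $A - f(A) = A \cap f(A)^c$), translate into
\[
h(A - f(S)) \;=\; h(A \cap f(A)^c) - h(A \cap f(A^c)), \qquad h(A^c - f(S)) \;=\; h(A^c - f(A^c)) - h(A^c \cap f(A)).
\]
The crucial ingredient is the identity $h(A \cap f(A^c)) = h(A^c - f(A^c))$, which I would obtain by comparing two splittings of the pei-invariance $h(f(A^c)) = h(A^c)$: namely $f(A^c) = (A \cap f(A^c)) \sqcup (A^c \cap f(A^c))$ and $A^c = (A^c - f(A^c)) \sqcup (A^c \cap f(A^c))$ share the summand $A^c \cap f(A^c)$, so the remaining two summands have equal height. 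Substituting into $h(f) = h(A - f(S)) + h(A^c - f(S))$, the two middle terms cancel and I am left with $h(f) = h(A \cap f(A)^c) - h(A^c \cap f(A))$.

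The main obstacle is keeping the bookkeeping uniform: once I have committed to $h$ as the rank-$(n-1)$ germ count and verified additivity and pei-invariance at that rank even when some of the intervening orthohedral sets have smaller top rank (so that their contribution is simply $0$), both (i) and (ii) become straightforward chases through the partitions $S = A \sqcup A^c$ and $f(S) = f(A)\sqcup f(A^c)$.
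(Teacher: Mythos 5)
Your proof is correct and follows essentially the same route as the paper's: part (i) uses the same two‑piece disjoint decomposition of $S - gf(S)$ together with pei‑invariance of the rank‑$(n{-}1)$ germ count, and part (ii) is the paper's germ‑level computation, which splits $S^\ast - f^\ast(S^\ast)$ along $A^\ast \cup B^\ast$ and cancels via $|B^\ast| = |A^\ast\cap f^\ast(B^\ast)| + |B^\ast\cap f^\ast(B^\ast)|$, merely repackaged directly in terms of orthohedral sets, with your key identity $h(A\cap f(A^c)) = h(A^c - f(A^c))$ being a rearrangement of exactly that cancellation. Your explicit convention that $h$ denotes the rank‑$(n{-}1)$ germ count (zero on lower‑rank sets) is a clean way of making precise the paper's implicit passage from orthohedral sets $P$ to germ sets $P^\ast = \Gamma^{n-1}(P)$.
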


\begin{proof}
i) Consider the disjoint union $S = (S - Sg)\cup Sg$. As $f$ is injective $Sf=$ \mbox{$(Sf - Sgf)$} $\cup~Sgf$ is also a disjoint union. Hence 
so is

$$S = (S - Sf)\cup Sf = (S - Sf)\cup(Sf -Sgf)\cup Sgf,$$ 
and we find \[	S - Sgf = (S - Sf)\cup (Sf - Sgf). \] Now, $f$ is a pei-bijection
between $(S - Sg)$ and $(S - Sg)f =  (Sf - Sgf)$; and a pei-bijection of a an orthohedral set induces a pei-bijection on its germs. Thus the number of rank-$(n-1)$ germs of $(S - Sg)$ and $(Sf - Sgf)$ are the same. This proves i).

ii) Each pei-injection $f: S\to S$ induces an injection $f^\ast:
\Gamma^{n-1}(S)\to \Gamma^{n-1}(S)$. We abbreviate $B = A^c$ and know from ${{\rm rk}}B < n$  that $\Gamma^{n-1}(B)$ is finite. Hence $f^\ast$ restricts to a bijection  $f^\ast: \Gamma^{n-1}(B)\to \Gamma^{n-1}\big(f(B)\big)$. On the complement we find the induced injection $f^\ast: \Gamma^{n-1}(A) \to  \Gamma^{n-1}\big(f(A)\big).$
\\We use the abbreviation $P^\ast:=\Gamma^{n-1}(P) $ for $P = S, A, B$, and
consider the disjoint union
$$\begin{aligned}
	S^\ast - f^\ast(S^\ast) 	=& \big(A^\ast - A^\ast\cap f^\ast(S^\ast)\big) \cup  \big(B^\ast - B^\ast\cap f^\ast(S^\ast)\big)\\
			=  & \big( A^\ast - A^\ast\cap f^\ast(A^\ast) - A^\ast\cap f^\ast(B^\ast) \big) \\ & \cup \big( B^\ast - B^\ast\cap f^\ast(A^\ast) - B^\ast\cap f^\ast(B^\ast) \big).
\end{aligned}$$
By definition,  $h(f) = $ \mbox{$h\big( S^\ast - f^\ast(S^\ast)\big)$}. 
Using the fact that $B^\ast$ as well as $A^\ast - f^\ast(A^\ast)$ are finite, we find
$$\begin{aligned}  h(f)  = &  h \big(
A^\ast- A^\ast\cap f^\ast(A^\ast)\big) - h \big( A^\ast\cap f^\ast(B^\ast) \big) \\ & + h(B^\ast) - h \big( B^\ast\cap f^\ast(A^\ast)\big) - h \big( B^\ast\cap f^\ast(B^\ast)\big).\end{aligned}$$
Now we apply that $h(B^\ast) = h\big( f(B^\ast) \big)$ and observe that
\[ 
\begin{array}{lcl}
- h \big( A^\ast \cap f^\ast(B^\ast)\big) &+& h(B^\ast) - h \big( B^\ast\cap f^\ast(B^\ast)\big)\\
 &=& - h\big( A^\ast\cap f^\ast(B^\ast)\big) + h\big( f(B^\ast)\big) - h \big( B^\ast\cap f^\ast(B^\ast)\big)\\
 &=& h\big( f(B^\ast) - h\big((A^\ast\cap f^\ast(B^\ast) \big)\cup\big( 
B^\ast\cap f^\ast(B^\ast)\big)\big) = 0.
\end{array}
\]

% \[ \begin{array}{lcl}	
% - h \big( A^\ast &\cap& f^\ast(B^\ast)\big) + h(B^\ast) - h \big( B^\ast\cap f^\ast(B^\ast)\big)\\
%  &=& - h\big( A^\ast\cap f^\ast(B^\ast)\big) + h\big( f(B^\ast)\big) - h \big( B^\ast\cap f^\ast(B^\ast)\big)\\
%  &=& h\big( f(B^\ast) - h\big((A^\ast\cap f^\ast(B^\ast) \big)\cup\big( B^\ast\cap f^\ast(B^\ast)\big)\big) = 0.
% \end{array}\]
Hence our expression for $h(f)$ simplifies to
$$\begin{aligned}
h(f)  = &  h \big( A^\ast- A^\ast\cap f^\ast(A^\ast) \big)  - h \big( B^\ast\cap f^\ast(A^\ast) \big)\\ = &  h \big( A^\ast\cap f^\ast(A^\ast)^c\big)  - h \big(B^\ast\cap f^\ast(A^\ast)\big)\end{aligned}$$
as asserted.
\end{proof}

\subsection{Monoids of pei-injections.}\label{Monoids of pei-injections}
Let $S$ be an orthohedral set of rank $n = {\text{rk}}S$ in pet-normal form.  In
particular $S$ is the pairwise disjoint union of finitely many specified stacks of orthants. By Lemma \ref{lemma3.2} the set of all maximal germs of S, max$\Gamma^\ast(S)$, is finite. We write  $M(S)$ for the monoid of all pei-injections $S \to S$. It is endowed with the height function  $h: 
M(S)\to
\mathbb N$ of Section 3.1. Let $M_0(S)$ be the submonoid of all pei-endoinjections of $S$, which fix all maximal germs of $S.~ M_0(S)$
is of  finite index in $M(S)$ since max$\Gamma ^\ast(S)$ is finite.  Just 
as we have observed for pei-permutations, each $f\in M_0$ induces an isometry $\tau_{(f,\gamma)}: \langle \gamma \rangle \to \langle \gamma \rangle$ on the tangent coset of each germ $\gamma\in \max\Gamma^\ast(S).$ Thus 
we  have a homomorphism

\begin{itemize}
\item[(5.1)] ~~~~ $\kappa : M_0(S)\twoheadrightarrow\bigoplus_{\gamma\in \max\Gamma^\ast(S)} {\rm Iso} \left(\langle \gamma\rangle \right)$,       
       ~~~~~   given by
$$\kappa (f) =
\bigoplus\limits_{\gamma\in \max\Gamma^\ast(S)} \tau_{(f,\gamma)}.$$
\end{itemize}

The \emph{translation submonoid} $M_{tr}(S)\subseteq M_0(S)$ consists of all
$f\in M_0(S)$ with the property that the induced maps $\tau_{(f,\gamma)}: 
\langle \gamma \rangle \to \langle \gamma \rangle$
are translations for each $\gamma\in \max\Gamma^\ast(S)$.
Since the translation subgroup of  Isom$(\langle \gamma \rangle)$ is of finite index, $M_{tr}(S)$ has finite index in
$M_0(S)$. And restricting (5.1) yields a surjective homomorphism
 
\begin{itemize}
\item[(5.2)]\label{item(5.2)}
\begin{equation}\kappa : M_{\rm tr}(S)\twoheadrightarrow \bigoplus_{\gamma\in\max\Gamma^\ast(S)} \mathbb{Z}^{{\rm rk}(\gamma)} = \mathbb{Z}^N,
\end{equation} with $N = \Sigma_{\gamma\in \max\Gamma^\ast(S)} {\rm rk}\gamma.$
\end{itemize}

Every orthant $L$ contains a characteristic \emph{diagonal element} $u_L\in
L$: the sum of the canonical basis of $L$. We write $t_L: L\to L$ for the
translation given by addition of $u_L$ and call this the \emph{diagonal
unit-translation} of $L$. The general diagonal translations on $L$ (i.e., 
on $\langle L \rangle$) are given by addition of
an integral multiple  of $u_L$. By the \emph{diagonal submonoid}  $M_{{\rm dia}}(S)\subseteq M_{\rm tr}(S)$ we mean the set of all
elements $f\in M_{\rm tr}(S)$ with the property, that for each $\gamma \in \max\Gamma^\ast(S)$ the induced isometry $\tau_{(f,\gamma)}:
\langle \gamma \rangle \to \langle \gamma \rangle$ is a diagonal translation. Restricting \ref{item(5.2)} yields the homomorphism

\begin{itemize}
\item[(5.3)]\label{item(5.3)} $\kappa : M_{{\rm dia}}(S)\twoheadrightarrow\bigoplus_{\gamma\in \max\Gamma^\ast(S)} \mathbb Z
 = \mathbb Z^{\mid \max\Gamma^\ast(S) \mid}$.
 \end{itemize}
 
 We write max$\Omega^*(S)$ for the set of all maximal orthants of the stacks of S, and consider the set $T = \{t_L\mid
 L\in \max\Omega^\ast(S)\}$ of all diagonal unit-translations of these orthants. Each $t_L \in T$ extends canonically to a pei-injection on \mbox{$t_L : S
 \to S$}, which is the identity on $S - L$.
 We denote it by the same symbol $t_L $, and with this interpretation $T$ 
generates a free-Abelian submonoid mon$(T)\le M_{{\rm dia}} (S)$. 

Following the strategy of \cite{br87} we put
\begin{definition}
Given $f, f '\in M_{{\rm dia}} (S)$ we define $f\le f '$ if there is some 
$t\in \rm {mon}(T)$ with $tf = f'$.
\end{definition}
\textbf{Observation.} $M_{{\rm dia}} (S)$ is a \emph{directed partially ordered set}.

It is an important fact that the height function $h: M(S) \to  \mathbb N$ 
 is
order preserving and its restrictions to totally ordered subsets of $M(S)$ are
injective.  We will also have to consider slices of  $M_{{\rm dia}} (S)$. 
For
given $r,s \in \mathbb N_0$, $r \le s$ we put 
\begin{equation*}
\begin{aligned}
M^{[r,s]} &:= \{f\in M_{{\rm dia}} (S)\mid r \le h(f) \le s\} \text{ and}\nonumber\\
M^{[r,\infty]} &:= \{f\in M_{{\rm dia}} (S)\mid r \le h(f)\}.\nonumber
\end{aligned}
\end{equation*}
$M^{[r,\infty]}$ inherits the partial ordering from $M_{{\rm dia}} (S)$ and is
also a directed set.

\subsection{Maximal elements \texorpdfstring{$\mathbf{< f}$}{< f} in \texorpdfstring{$\mathbf{M_{{\rm dia}} (S)}$}{TEXT}.}\label{maximal elements}~
From now on we assume that all maximal orthants of the stacks of $S$ have the same finite rank
$n = {\rm rk}S$. We put $\Lambda := \max\Omega^\ast(S)$. We write
\[	M_{< f} = \{a\in M_{{\rm dia}} (S)\mid a< f\}, \quad M_{\le f} = \{a\in M_{{\rm dia}} (S)\mid a\le f\}  \] 
for the \emph{``open resp. closed cones below $f$''} and aim to understand the set of all maximal elements
of $M_{< f}$ .
For this it will be convenient to introduce an abbreviation for the points on the (finite)
boundary of the maximal orthants $L$; so we set \mbox{$\partial L := L - Lt_L$}.

\begin{lemma}\label{lemma5.4}
Let $b$ be a maximal element of  $M_{< f}$. Then there is a unique maximal
orthant $L \in \Lambda$ with the property that $f = t_L b$ and $h(f) = 
h(b) + n$.
Furthermore $b$ is given as the union $b = b'\cup b''$, where $b': \partial L \to (S - Sf)$ is a pei-injection, and $b'':
(S - \partial L) \to Sf$ is the restriction $(t_L^{ -1}f)\mid_{ (S - \partial L)}$.
Converseley, if $c': \partial L \to (S -  Sf)$ is an arbitrary pei-injection distinct from
$b'$, then the union $c = c'\cup b''$ is a maximal element of $M_{< f}$
distinct from $b$.
\end{lemma}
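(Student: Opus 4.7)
The plan is to exploit the freeness of $\mathrm{mon}(T)$ as a commutative monoid on $T = \{t_L : L \in \Lambda\}$, so that maximality of $b$ in $M_{<f}$ corresponds to irreducibility of the witnessing element. First, $b < f$ gives some $t \in \mathrm{mon}(T) \setminus \{1\}$ with $tb = f$ (using the paper's left-to-right composition convention, so that $tb$ means ``first $t$, then $b$''). Factoring $t$ as a product of generators $t_{L_1} \cdots t_{L_r}$, if $r \ge 2$ then setting $s := t_{L_2} \cdots t_{L_r}$ yields the intermediate element $sb$, strictly between $b$ and $f$ via the witnesses $s$ and $t_{L_1}$, contradicting maximality; hence $r = 1$ and $t = t_L$ for some $L \in \Lambda$, uniquely determined because $t_L b = t_{L'} b$ forces $t_L = t_{L'}$ by the injectivity of $b$, and hence $L = L'$. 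The identity $h(f) = h(b) + n$ then drops out of Lemma~\ref{lemma5.2}(i) together with $h(t_L) = h(\partial L) = n$: the complement $S - St_L = \partial L$ is the union of the $n$ boundary faces of $L$, each a rank-$(n-1)$ orthant representing a distinct maximal germ.

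Next I would read off the structure of $b$ from the equation $b(t_L(y)) = f(y)$ for all $y \in S$. Since $t_L$ restricts to a bijection $S \to S - \partial L$ with inverse $t_L^{-1}: S - \partial L \to S$, substituting $z = t_L(y)$ uniquely determines $b$ on $S - \partial L$: for $z \in S - \partial L$, $b(z) = f(t_L^{-1}(z))$, so $b'' := b|_{S - \partial L}$ equals $(t_L^{-1} f)|_{S - \partial L}$ and takes values in $Sf$. By contrast, $\partial L$ lies outside the range of $t_L$, so $tb = f$ imposes no constraint on $b|_{\partial L}$. The only remaining requirements, that $b$ be an injective pei-map $S \to S$, force $b(\partial L) \cap Sf = \emptyset$; so $b' := b|_{\partial L}$ is a pei-injection into $S - Sf$, and this is the only constraint.

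For the converse, given any pei-injection $c': \partial L \to S - Sf$ distinct from $b'$, I would set $c := c' \cup b''$. Then $c$ is a pei-injection $S \to S$ by disjointness of the images $c'(\partial L) \subseteq S - Sf$ and $b''(S - \partial L) = Sf$; the identity $t_L c = f$ is a routine check using that $t_L(y) \in S - \partial L$ for every $y \in S$, where $c$ agrees with $b''$ and so $c(t_L(y)) = (t_L^{-1} f)(t_L(y)) = f(y)$; and maximality of $c$ in $M_{<f}$ then follows from the irreducibility of $t_L$ in $\mathrm{mon}(T)$, exactly as in the first paragraph. The one nontrivial point, and the main obstacle of the proof, is verifying that $c$ lies in $M_{\mathrm{dia}}(S)$: I would argue that on every $M \in \Lambda$ the element $c$ agrees with $b''$ on a suborthant of $M$ commensurable to $M$ (for $M \ne L$ this is immediate after shrinking $M$ to avoid $\partial L$, and for $M = L$ the commensurable suborthant $Lt_L \subseteq L$ works), so $c$ induces on the tangent coset $\langle \gamma(M) \rangle$ the same translation as $t_L^{-1} f$, which is diagonal because $f \in M_{\mathrm{dia}}(S)$ is.
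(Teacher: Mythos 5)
Your argument is correct and follows essentially the same strategy as the paper's: freeness of $\mathrm{mon}(T)$ together with the height function forces the witness $t$ to be a single generator $t_L$, the restriction $b''$ is recovered as $(t_L^{-1}f)|_{S-\partial L}$ since $t_L$ is a bijection onto $S-\partial L$, and $b'$ is an unconstrained pei-injection into $S-Sf$. Your explicit verification that $c = c'\cup b''$ lies in $M_{\mathrm{dia}}(S)$ is a detail the paper leaves implicit, and you handle it correctly by comparing the induced translations on the tangent cosets.
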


\begin{proof}
For each element $b\in M_{< f}$ there is some $t\in {\rm mon}(T)$ with $f 
= tb.~ t$ has a unique reduced expansion as a product of elements 
of $T$; let $l(t)$ denote the length of this expansion. As $h(t_L ) = L 
= n$ for each $L\in \Lambda$  we have $h(t) = n~l(t)$. It follows that if 
$b$ is maximal, then $h(t) = n$ and $t = t_L \in T$ for some $L\in \Lambda$.
The maximal orthant $L$ is uniquely determined by the fact that the restriction of $f$ and $b$ coincide on $S - L.$ 
The restriction $b''$ of $b$ to $(S - \partial L)$ coincides with $(t_L^{
-1}f)\mid_{ (S - \partial L)}$, and has its image in Sf. The restriction $b'$ of $b$ to  $\partial L$ is not determined by $f$ and $L.$ As $b$ and 
$f$ are injective we know that \[ \begin{array}{rcl} \emptyset  = (\partial L)b\cap (S - \partial L)b &=& (\partial L)b\cap \big( (S - L)b\cup 
Lt_L b\big)\\
				& =& (\partial L)b\cap \big( (S - L)f\cup Lf\big)\\ 
			&	 = &(\partial L)b\cap Sf.
\end{array} \]

Hence $b'$ can be viewed as a pei-injection $\partial L\to (S - Sf)$.
If we replace $b'$ by another pei-map $c': \partial L\to (S - Sf)$, the union
$c = c'\cup b''$ will still satisfy  $f = t_L c$ and $h(f) = h(c) + 
n$. This shows that $c$ will also be maximal in $M_{< f} .$
\end{proof}

\begin{lemma}\label{lemma5.5}
Let $B\subseteq M_{< f}$ be a finite set of maximal elements of $M_{< f}$. Then the following conditions are equivalent:
\begin{enumerate}[i)]
\item   The elements of $B$ have a common lower bound $\delta$ in $M_{< f}$
\item For every pair $(b,b')\in B\times B$ with $b\not= b'$ and $tb=f 
=t'b'$ for diagonal unit-translations $t$, $t'$, we
have
\begin{enumerate}[a)]
\item  $t\not= t'$  and 
\item 	  $b(\partial L)\cap b'(\partial L') = \emptyset$, where $L$ resp. $L'$ are the maximal orthants of $S$ on which
$t$ resp. $t'$ acts non-trivially.
\end{enumerate}
\end{enumerate}
\end{lemma}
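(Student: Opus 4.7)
The proof has two directions; both pivot on the left-cancellativity of the monoid of pei-injections (any pei-injection $\delta$ is injective, so $g_1\delta=g_2\delta$ as maps $S\to S$ forces $g_1=g_2$), combined with the fact that $\mathrm{mon}(T)$ is a free commutative monoid (hence cancellative).

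\textbf{(i)$\Rightarrow$(ii).} Assume $\delta\in M_{<f}$ satisfies $\delta\le b$ and $\delta\le b'$, so there exist $s,s'\in\mathrm{mon}(T)$ with $s\delta=b$ and $s'\delta=b'$. Using Lemma~\ref{lemma5.4} write $tb=f=t'b'$ with $t=t_L$ and $t'=t_{L'}$. Then $t_Ls\delta=f=t_{L'}s'\delta$; by left-cancellativity, $t_Ls=t_{L'}s'$ in $\mathrm{mon}(T)$. If $L=L'$, free cancellation would give $s=s'$ and hence $b=b'$, contradicting $b\neq b'$; this proves (a). Since $L\neq L'$, comparing coordinates in the free commutative monoid shows $s=t_{L'}r$ and $s'=t_L r$ for a common $r\in\mathrm{mon}(T)$. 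Put $\epsilon:=r\delta$. Then $b=t_{L'}\epsilon$ and $b'=t_L\epsilon$. Because $\partial L\subseteq L$ is disjoint from $L'$ (maximal orthants of $\Lambda$ are pairwise disjoint), $t_{L'}$ acts trivially on $\partial L$; hence $(\partial L)b=\epsilon(\partial L)$, and similarly $(\partial L')b'=\epsilon(\partial L')$. Injectivity of $\epsilon$ together with $\partial L\cap\partial L'=\emptyset$ yields (b).

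\textbf{(ii)$\Rightarrow$(i).} Enumerate $B=\{b_1,\dots,b_k\}$ and let $t_{L_i}b_i=f$; by (a) the orthants $L_i\in\Lambda$ are pairwise distinct. Define $\delta:S\to S$ piece by piece by
\[
\delta(x)=\begin{cases}f(x)&x\in S-\bigcup_i L_i,\\ f(x-u_{L_i})&x\in L_i-\partial L_i,\\ b_i(x)&x\in \partial L_i.\end{cases}
\]
The image decomposes as $\delta(S)=Sf\sqcup\bigsqcup_i(\partial L_i)b_i$: the first two clauses together reproduce $Sf$ (since $L_i-\partial L_i=L_i+u_{L_i}$, translating back by $-u_{L_i}$ covers all of $L_i$), and the pieces $(\partial L_i)b_i\subseteq S-Sf$ are pairwise disjoint by (b). Thus $\delta$ is injective. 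Each defining clause is an isometry on an orthohedral piece, so $\delta$ is a pei-injection. Putting $t:=\prod_i t_{L_i}\in\mathrm{mon}(T)$, one checks directly on each piece that $t\delta=f$ (so $\delta\in M_{\text{dia}}$ and $\delta\le f$, with $\delta\neq f$ as soon as $k\ge1$), and that for each $i$, $\bigl(\prod_{j\neq i}t_{L_j}\bigr)\delta=b_i$, so $\delta\le b_i$. Hence $\delta\in M_{<f}$ is the desired common lower bound.

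\textbf{Main obstacle.} The substantive content is the (ii)$\Rightarrow$(i) construction: one must package together the boundary data $b_i|_{\partial L_i}$ (whose pairwise disjointness is exactly what hypothesis (b) guarantees) with the shared ``interior'' data coming from $f$ on the complement, and verify that the resulting map is simultaneously a pei-injection \emph{and} uniformly sits below every $b_i$ via a product of diagonal unit-translations. The cancellation argument in (i)$\Rightarrow$(ii) is then the clean algebraic mirror of this geometric construction.
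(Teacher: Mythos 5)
Your proof is correct and follows essentially the same route as the paper's: the common lower bound you construct in (ii)$\Rightarrow$(i) is exactly the paper's $\delta_B$ (written out piece by piece), and your (i)$\Rightarrow$(ii) argument is a mild reformulation of the paper's -- where the paper observes directly that $(\partial L)d\subseteq L$ and $(\partial L')d'\subseteq L'$ are disjoint and then applies the injective $\delta$, you first factor $s=t_{L'}r$, $s'=t_L r$ and apply the injective $\epsilon=r\delta$, which comes to the same thing. One small terminological slip: cancelling the common right factor $\delta$ in $g_1\delta=g_2\delta$ is \emph{right}-cancellation, not left-cancellativity.
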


\begin{proof}
i) $\Rightarrow$ ii). Let $\delta$ be a common lower bound of the elements
of $B$. Then for every pair $(b,b')\in B\times B$ there are diagonal translations $d, d'\in {{\rm mon}}(T)$ with $d\delta
= b$ and $d'\delta = b'$.
From $tb = f = t'b'$ we obtain  $td\delta =  t'd'\delta$ and  conclude $td = t'd'$.
The assumption $t = t'$ would now imply $d = d'$ and hence $b = b'$.
\\
Let $L$ resp. $L'$ denote the maximal orthants of $S$ on which $t$ resp. $t'$ acts non-trivially. As $d, d'$ are diagonal translations, we have 
$d(L)\subseteq L$ and $d'(L')\subseteq L'$. From $t \not=  t'$ we know $L\cap L' = \emptyset$ , and hence $(\partial L)d\cap (\partial L')d' = 
\emptyset$. 
Since $\delta$ is injective, this implies  $\emptyset = (\partial L)d\delta\cap (\partial L')d'\delta = (\partial L)b\cap (\partial L')b'$, as 
asserted.

ii)$\Leftarrow$ i). For each $b\in B$ we have some diagonal unit-translation $t_b\in T$ with $t_b b = f$, and we put

\begin{itemize}
\item[(5.4)] $t_B := \prod_{ b\in B}t_b$.
\end{itemize}

By assumption (i) the maximal orthants $L_b $ on which $t_b $ is a diagonal unit-translation are pairwise disjoint. 
Thus $|B|$ $\le h(S)$, and $S$ decomposes in the disjoint union  $S  =
\left(\bigcup_{b\in B}L_b \right) \cup  S'$.
We define the pei-injection $\delta_B: S\to S$ as follows:
$$ \delta_B := \begin{cases}
{t_b}^{-1} f & \text{ on each } L_b t_b \\ 
b & \text{ on the complements } \partial L_b = L_b - L_b t_b \\ 
f & \text{ on } S'. 
\end{cases} $$

Assumption (ii) guarantees that the restriction of $\delta_B $ to the union 
$$\bigcup\limits_{ b\in B}\partial L_b  = \bigcup_{ b\in B} (L_b  - L_b 
t_b ) = (S - St_B)$$ is injective.  And since the image of each $\partial L_b$ is disjoint to
$f(S)$, we also find that the image of $(S - St_B)$ is disjoint to $f(S)$, and also to $f(S')\subseteq f(S)$. This
shows that $\delta$  is a pei-injection. It remains to prove that $\delta_B $ is a common lower bound for the elements of $B$.
By commutativity we find elements $s_b\in ~{\rm mon}~(T)$ with $t_B \delta_B  = t_b s_b\delta_B $, where 

\begin{itemize}
\item[(5.5)]\label{(5.5)} $s_b = \prod_ { x\in (B - \{b\})}t_x$.
\end{itemize}

One observes that $s_b\delta_B $ and $b$ agree on $(S- St_b) = (L_b  - L_b t_b)$, and that $t_B\delta_B = f = t_b b$. 
Hence $b$ and $s_b\delta_B $ agree on $S$. 
\end{proof}

\begin{lemma}\label{lemma5.6}
In the situation of Lemma \ref{lemma5.5} we have for the lower bound $\delta_B$ defined in the proof:
\begin{enumerate}[i)]
\item $\delta_B$  is, in fact, a largest common lower bound of the elements of $B$
\item $h(\delta_B ) \ge h(f) - h(S)n$.
\end{enumerate}
\end{lemma}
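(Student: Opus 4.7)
\textbf{Part (ii)} is straightforward. By construction, $t_B\delta_B = f$ with $t_B = \prod_{b\in B}t_b$. Each factor $t_b$ has height $h(t_b) = h(\partial L_b) = n$, since $\partial L_b$ is the disjoint union of the $n$ rank-$(n-1)$ faces of the rank-$n$ orthant $L_b$. Iterating Lemma~\ref{lemma5.2}(i) yields $h(t_B) = |B|\cdot n$ and then $h(f) = h(t_B) + h(\delta_B)$. Because the $L_b$'s are distinct elements of $\Lambda$ by Lemma~\ref{lemma5.5}(ii)(a) and $|\Lambda| = h(S)$, we have $|B|\le h(S)$, and so $h(\delta_B) = h(f) - |B|n \ge h(f) - h(S)n$.

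\textbf{Part (i)} is the substantive claim. Let $\delta$ be an arbitrary common lower bound of $B$; my plan is to exhibit $s\in\mathrm{mon}(T)$ with $s\delta=\delta_B$. For each $b\in B$ fix $d_b\in\mathrm{mon}(T)$ with $d_b\delta = b$; the proof of Lemma~\ref{lemma5.5} supplies $s_b\delta_B=b$ for $s_b=\prod_{x\in B-\{b\}}t_x$. Applying the translation homomorphism $\kappa$ of (5.3) to both equations gives $\kappa(d_b)+\kappa(\delta) = \kappa(s_b)+\kappa(\delta_B)$, so the vector
\begin{equation*}
c \;:=\; \kappa(\delta_B)-\kappa(\delta) \;=\; \kappa(d_b)-\kappa(s_b)
\end{equation*}
is independent of $b\in B$. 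It is coordinate-wise non-negative: if $L\in\Lambda$ is not any $L_x$ ($x\in B$), then $\kappa(s_b)_L=0$ for every $b$; and if $L=L_{b'}$ for some $b'\in B$, specializing to $b=b'$ gives $\kappa(s_{b'})_{L_{b'}}=0$ (the product omits $t_{b'}$). In both cases $c_L=\kappa(d_b)_L\ge 0$. Hence there is a unique $s\in\mathrm{mon}(T)$ with $\kappa(s)=c$, satisfying $s_{b_0}s=d_{b_0}$ for every $b_0\in B$.

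What remains, and is the \emph{main obstacle}, is the pointwise identity $s\delta=\delta_B$ on $S$, which does not follow from $\kappa$-equality alone since the maps in $M_{\mathrm{dia}}(S)$ are not determined by their translation data on finite boundary layers. Left-multiplication by $s_{b_0}$ for a fixed $b_0\in B$ yields $s_{b_0}(s\delta) = d_{b_0}\delta = b_0 = s_{b_0}\delta_B$; since $s_{b_0}$ is a pei-injection with image $s_{b_0}(S) = S - \bigcup_{x\in B-\{b_0\}}\partial L_x$, the two maps $s\delta$ and $\delta_B$ agree on $s_{b_0}(S)$. On the residual boundary $\partial L_x$ (with $x\in B-\{b_0\}$), by the definition of $\delta_B$ we have $\delta_B|_{\partial L_x}=x|_{\partial L_x}$, while specializing the $\kappa$-identity to $b=x$ gives $c_{L_x}=\kappa(d_x)_{L_x}$, so $s$ and $d_x$ translate $\partial L_x$ by the same amount. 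Hence for $y\in\partial L_x$,
\begin{equation*}
y(s\delta) \;=\; (ys)\delta \;=\; (yd_x)\delta \;=\; y(d_x\delta) \;=\; yx \;=\; y\delta_B,
\end{equation*}
which completes the pointwise verification that $\delta\le\delta_B$, and so $\delta_B$ is a (necessarily unique) largest common lower bound of $B$.
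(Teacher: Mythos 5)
Your proof is correct, and both parts follow essentially the same strategy as the paper. For part (i) the paper simply defines $t'\in\mathrm{mon}(T)$ directly by the piecewise formula ($t'=u_{b'}$ on $S'$ and $t'=u_b$ on each $L_b$) and verifies $t'\gamma=\delta_B$ separately on $S'$ and on each $L_b$ using $u_b\gamma=b$ and $s_b\delta_B=b$; your $s$, pinned down by $\kappa(s)=c$, is the same element of $\mathrm{mon}(T)$, and your verification (agreement on $Ss_{b_0}$ by cancelling the injection $s_{b_0}$, then agreement on the residual boundaries $\partial L_x$) covers $S$ by a slightly different decomposition but proves the same pointwise identity — so this is the paper's argument with a somewhat longer derivation of the witness via the homomorphism $\kappa$. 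Part (ii) matches the paper exactly.
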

\begin{proof}
i) We compare an arbitrary common lower bound $\gamma$ with
$\delta_B$, the lower bound constructed in the proof above. Thus for each 
$b\in
B$ we are given $u_b\in {\rm mon} (T)$ with $u_b \gamma  = b.$ We fix a 
base element $b'\in B$ and define the diagonal translation 
$t'\in {\rm mon} (T)$ by its action on $S$ as $$ t' := \begin{cases} u_{b'} & \text{ on } S' \\ 
u_b & \text{ on each } L_b.
\end{cases} $$
We use the elements $s_b$ of \ref{(5.5)} and observe that

\begin{tabular}{r l l}
	& $xt'\gamma  = xu_{b'} \gamma  = xb' = xs_{b'}\delta_B  = x\delta_B$ & for
	$x\in S'$\\
	& $xt'\gamma  = xu_b \gamma  = xb = xs_b\delta_B  = x\delta_B$ & 
for $x\in L_b$.
\end{tabular}

This shows that  $t'\gamma  = \delta_B$ , hence  $\delta_B  \ge \gamma$.
\\
ii) For the translation $t$ of (3.4) we have $t\delta_B  = f$  and can deduce
that $h(\delta_B ) =$ \mbox{$h(f) -  h(t)$} $=$ \mbox{$h(f) -  |B| n$} $\geq$ $h(f) - h(S)n.$
\end{proof}

\subsection{The simplicial complex of \texorpdfstring{$\mathbf{M_{\mathbf{{\rm dia}}(S)}}$}{TEXT}.} \label{subsection5.4}
We consider the simplicial complex $|M_{{\rm dia}}(S)|$,  whose vertices are the elements of $M_{{\rm dia}} (S)$ and whose chains of length $k,  
a_0 < a_1 < \ldots < a_k$, are the $k$-simplices. As the partial ordering 
on
$M_{{\rm dia}} (S)$ is directed, $|M_{{\rm dia}}(S)|$ is contractible.

In this section we aim to prove

\begin{lemma}\label{lemma5.7}
If $h(f)\ge 2\cdot \emph{rk}S\cdot h(S)$, then $|M_{< f}|$  has the homotopy type of a bouquet of $\big( h(S) - 1
\big)$-spheres.
\end{lemma}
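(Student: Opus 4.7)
The plan is to cover $|M_{<f}|$ by the closed sub-complexes $U_b := |\{a \in M_{<f} \mid a \leq b\}|$ as $b$ ranges over the maximal elements of the poset $M_{<f}$, and then to apply the nerve theorem. Every $a \in M_{<f}$ lies below some such maximal $b$: left-multiplying $a$ by unit diagonal translations $t_L \in T$ increases $h$ by $\mathrm{rk}S$ at each step and must stay strictly below $h(f)$, so the chain terminates at a maximal element $\geq a$. Each $U_b$ is the order complex of a directed poset with top $b$ and hence contractible. Lemma \ref{lemma5.5} says the intersection $\bigcap_{b \in B} U_b$ is empty unless the combinatorial conditions (a)--(b) there hold for $B$, and in that case Lemma \ref{lemma5.6} furnishes the largest common lower bound $\delta_B$, so the intersection equals $U_{\delta_B}$ and is again contractible. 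Moreover, with the height bound $h(f) \geq 2\cdot\mathrm{rk}S\cdot h(S)$ and Lemma \ref{lemma5.6}(ii) we have $h(\delta_B) \geq h(f) - h(S)\cdot\mathrm{rk}S \geq h(S)\cdot\mathrm{rk}S$, which in particular keeps $\delta_B$ safely inside $M_{<f}$. The nerve theorem then yields a homotopy equivalence $|M_{<f}| \simeq \mathcal{N}$, where $\mathcal{N}$ is the nerve of this good cover.

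By Lemma \ref{lemma5.4}, the vertex set of $\mathcal{N}$ partitions as $V = \bigsqcup_{L \in \Lambda} V_L$, with $V_L$ the set of pei-injections $b' : \partial L \to Y$ where $Y := S - Sf$; and a finite $B \subseteq V$ spans a simplex precisely when (a) the map $b \mapsto L_b$ is injective on $B$, and (b) the images $b'(\partial L_b) \subseteq Y$ are pairwise disjoint. Hence $\dim \mathcal{N} \leq h(S) - 1$, and top simplices correspond to using each $L \in \Lambda$ exactly once with pairwise disjoint images in $Y$. The goal then reduces to showing $\mathcal{N}$ is $(h(S)-2)$-connected, so that it is homotopy equivalent to a wedge of $(h(S)-1)$-spheres.

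Following Brown's strategy in \cite{br87} and the Sach thesis \cite{sa92}, the plan is to exhibit a discrete Morse matching on $\mathcal{N}$ whose critical cells consist of a single vertex together with top-dimensional simplices. Fix an enumeration $L_1, \ldots, L_{h(S)}$ of $\Lambda$ and a well-ordering of each $V_L$. For a simplex $B$ not of top dimension, locate the first ``missing colour'' $L_i$ (i.e., the smallest $i$ with $B \cap V_{L_i} = \emptyset$), and let $b^\star \in V_{L_i}$ be the first pei-injection whose image $b^\star(\partial L_i) \subseteq Y$ is disjoint from all $b(\partial L_b)$ for $b \in B$; pair $B$ with $B \cup \{b^\star\}$ if $b^\star$ is not already the ``canonical witness'' for $B \cup \{b^\star\}$, and with $B \setminus \{b^\star\}$ otherwise. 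The height bound $h(f) \geq 2 \cdot \mathrm{rk}S \cdot h(S)$ is what makes $b^\star$ always exist outside top dimension: any $B$ of size $< h(S)$ blocks at most $(h(S)-1)\cdot\mathrm{rk}S$ rank-$(n-1)$ germs of $Y$, leaving more than $\mathrm{rk}S\cdot h(S)$ germs free, room enough to realise $\partial L_i \hookrightarrow Y$ disjointly.

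The main obstacle is rigorously verifying acyclicity of this matching, i.e., that the induced pairing on the Hasse diagram of $\mathcal{N}$ contains no directed cycles, and then checking that the remaining critical cells genuinely organise into a basis of $H_{h(S)-1}(\mathcal{N})$. Both steps amount to translating the bookkeeping of pei-injections $\partial L_i \to Y$ into a purely combinatorial model on the finite set of rank-$(n-1)$ germs of $Y$ — essentially a coloured matching / chessboard-complex analysis. The factor of $2$ in the height hypothesis is consumed in precisely two places: one $\mathrm{rk}S\cdot h(S)$ ensures the nerve cover is populated at all (the $\delta_B$-estimate above), while the second $\mathrm{rk}S\cdot h(S)$ provides the slack needed for the Morse pairing to extend past every non-top simplex, equivalently for each non-top link in $\mathcal{N}$ to be sufficiently connected.
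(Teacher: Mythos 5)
Your reduction to the nerve of the cover by the closed cones $|M_{\le b}|$, and the identification of its vertex set with $A=\bigsqcup_{L\in\Lambda}A_L$ using Lemmas \ref{lemma5.4}--\ref{lemma5.6}, agrees exactly with the paper. After that point, however, the two arguments diverge, and your half contains a genuine gap.

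The paper finishes by observing that the nerve is the flag complex of an $h(S)$-coloured graph $\Gamma(f)$ and then appeals to the purely combinatorial Lemma~\ref{lemma5.8} (a generalisation of Sach's Lemma~4.7 and Brown's Lemma~5.3, proved by Vogtmann's method of adjoining cones over links, so that acyclicity is automatic from the CW-building order). The hypothesis that makes Lemma~\ref{lemma5.8} applicable is condition~(2): for \emph{any} $2(h-1)$ vertices outside a colour class, there must be \emph{two} common neighbours inside it. This is exactly what Lemma~\ref{lemma5.9} verifies, and the entire factor $2$ in $h(f)\ge 2\cdot\mathrm{rk}\,S\cdot h(S)$ is consumed there (blocking $2(h-1)n$ germs while still leaving at least $2n>n$ free). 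Your accounting --- one $\mathrm{rk}\,S\cdot h(S)$ for the $\delta_B$-estimate, the other for ``Morse slack'' --- is off: the $\delta_B$-estimate of Lemma~\ref{lemma5.6}(ii) is not invoked in the proof of Lemma~\ref{lemma5.7} at all; it is needed only for the Remark at the end of Section~\ref{subsection5.4}, where one restricts to $M_{r,f}$.

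The replacement you propose for Lemma~\ref{lemma5.8} is a discrete Morse matching, and this is where the argument breaks down rather than merely being incomplete. Your pivot rule sends a non-top simplex $B$ to the first missing colour $L_i$ and the first valid witness $b^\star\in V_{L_i}$, and you then ask whether ``$b^\star$ is already the canonical witness for $B\cup\{b^\star\}$''. But $B\cup\{b^\star\}$ has a \emph{different} first missing colour (namely some $L_j$ with $j>i$), so its pivot lies in $V_{L_j}$ and is never $b^\star$; the proposed down-matching of $B\cup\{b^\star\}$ to $B$ therefore never fires, and the rule as written matches every non-top simplex up, which is not an involution. A workable version (e.g.\ pivoting always at colour $1$) can be set up, but then acyclicity of the Hasse matching and the identification of critical cells are exactly the content that Lemma~\ref{lemma5.8} carries, and you explicitly flag this as an unproved ``main obstacle.'' That step is not a formality: it is the theorem. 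As it stands, the proposal reproduces the easy (nerve) half of the paper's proof and leaves the essential coloured-graph connectivity statement --- the precise analogue of Brown's Lemma~5.3 and Sach's Lemma~4.7 --- unproved.
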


The first step towards proving Lemma \ref{lemma5.7} is to consider the covering of  $|M_{< f}|$  by the subcomplexes $|M_{\le b}|$ , where $b$ runs 
through the maximal elements of  $M_{< f}$. We write $N(f)$ for the nerve 
of this covering. Lemma \ref{lemma5.5} and
\ref{lemma5.6} show that all finite intersections of such subcomplexes $|M_{\le b}|$ are again cones and hence
contractible. It is a well known fact that in this situation the space is 
homotopy equivalent to the nerve of the covering. Hence we have
$$|M_{<f}|  ~\text{is homotopy equivalent to the nerve}~ N(f)$$
and it remains
to compute the homotopy type of $N(f)$.

The next step is to use the results of Section \ref{maximal elements} to find a combinatorial model
for the nerve $N(f)$. The vertices of $N(f)$ are the maximal elements of $M_{<f}
$, and hence, by Lemma \ref{lemma5.4}, in 1-1-correspondence to the disjoint
union  $A = \bigcup_{L\in \Lambda} A_L $, where $A_L $ stands for the set of all pei-injections  $a: \partial L\to (S - Sf ).$ 
Lemma \ref{lemma5.4} allows to translate the simplicial structure of $N(f)$ into a simplicial complex $\Sigma(f)$ on $A$:
\\
the $p$-simplices of $N(f)$ are the $p$-element sets of maximal elements $B\subseteq M_{<f} $ with a common lower bound, and the corresponding
p-simplices of $\Sigma(f)$ are the sequences $(a_L)_{L\in \Lambda '}$, where $\Lambda '$ is a $p$-element subset of $\Lambda$  and $a_L\in A_L $ with the property
$$(\ast)\text{ \textit{The intersections of the images} }a_L(\partial L), 
L\in \Lambda '\text{\textit{, are pairwise disjoint.}}$$

The next Lemma \ref{lemma5.8} below on coloured graphs will 
enable us to determine the homotopy type of $
\Sigma(f)$. This is a natural generalization of the 
second author's (Heike Sach's) Lemma 4.7 in
\cite{sa92}, where it served as a major technical 
key in extending computation of $fl$(pet($S$)) from the 
case when $S$ is a stack of rays (the Houghton 
Group result of \cite{br87}, to the case when $S$ 
is a stack of quadrants. Sach's Lemma and its proof 
were based on but are in parts rather different 
from Brown's Lemmas 5.2/5.3. Ken Brown, in turn, 
remarks that the inductive proof of his Lemma 5.3 
uses ''a method due to K. Vogtmann [private 
communication]''.

Let $\Gamma  = (V,E)$ be a combinatorial graph, given by a set $V$ of \emph{vertices} and set $E$ of \emph{edges},
where an edge is a set consisting of two non-equal vertices. A \emph{clique} of $\Gamma$  is any subset $C\subseteq V$ with the property, that any
two vertices of $C$ are joined by an edge of $\Gamma$. The \emph{flag-complex} $K(\Gamma )$ is the simplicial complex on $V$ whose
$p$-simplices are the cliques consisting of $p+1$ vertices of $V$.  Our main example here is the complex $\Sigma(f)$, which is easily seen to be the 
flag-complex of its $1$-skeleton $\Gamma (f)$.

Let $h$ be a natural number. We say that the graph $\Gamma_h=(V,E)$ is \emph{$h$-colored} if its vertex set $V$ is the pairwise disjoint union of $h$ subsets $V_1, 
\ldots V_h$ (where the index $i$ is the colour of the vertices in $V_i$), 
and no edge has endpoints with the same color. 
\begin{lemma}\label{lemma5.8}
If all colours $i\in \{1, 2, \ldots , h\}$ of an $h$-coloured graph $\Gamma_h = (V,E)$ satisfy the two properties
\begin{enumerate}[i)]
\item[\emph{(1)}] $V_i$ contains at least two distinct elements, and
\item[\emph{(2)}]  For any choice of $2(h - 1)$ vertices $u^1, \ldots u^{2(h - 1)}$ in $V - V_i $ there are two vertices $v, w$ in 
$V_i $ which are adjacent to each $u^j$; in other words, for each $j\in \{1, 2, \ldots 2(h - 1)\}$ there is an edge path of length
$2$ in $\Gamma_h$ joining $v$ and $w$ via $u^j$.
\end{enumerate}
Then the flag-complex $K(\Gamma_h)$ has the homotopy type of a bouquet of 
$(h - 1)$-spheres.
\end{lemma}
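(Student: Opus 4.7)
My plan is induction on the number of colours $h$. The base case $h = 1$ is immediate: since $\Gamma_1$ has no edges, $K(\Gamma_1)$ is the discrete vertex set $V_1$, which has at least two points and is thus a wedge of $0$-spheres. For the inductive step I will use two standard facts. First, every simplex of $K(\Gamma_h)$ is a clique, and vertices of a common colour are never adjacent, so each simplex carries at most one vertex per colour and $\dim K(\Gamma_h) \leq h - 1$; in particular, $H_i\!\left(K(\Gamma_h)\right) = 0$ for $i \geq h$ and $H_{h-1}\!\left(K(\Gamma_h)\right)$, being a subgroup of the top-dimensional free abelian chain group, is itself free. Second, a CW complex of dimension $\leq n$ that is $(n{-}1)$-connected is homotopy equivalent to a wedge of $n$-spheres (Hurewicz together with Whitehead). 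Consequently, it suffices to prove that $K(\Gamma_h)$ is $(h{-}2)$-connected.

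My strategy is to build $K(\Gamma_h)$ from the full subcomplex $K(\Gamma')$ on $V' := V - V_h$ by successively attaching the closed stars of the vertices $v \in V_h$. A brief check shows that $\Gamma'$, viewed as an $(h{-}1)$-coloured graph, inherits both hypotheses from $\Gamma_h$: hypothesis (1) is immediate, and for (2) one pads any $2(h{-}2)$ vertices in $V' - V_i$ with two vertices from $V_h$ and invokes hypothesis (2) of $\Gamma_h$. By induction $K(\Gamma')$ is a wedge of $(h{-}2)$-spheres. Further, since distinct vertices of $V_h$ are never adjacent, the link of any $v \in V_h$ in $K(\Gamma_h)$ is the full subcomplex $L_v := K(\Gamma'|_{N(v)})$; a similar padding argument, adjoining $v$ and one auxiliary vertex from $V - V_i$, shows that $\Gamma'|_{N(v)}$ also satisfies the hypotheses with $h{-}1$ colours, so inductively $L_v$ is a wedge of $(h{-}2)$-spheres, in particular $(h{-}3)$-connected.

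The main obstacle is to show that these successive attachments collectively raise the connectivity to $(h{-}2)$. Each attachment glues a cone $v \ast L_v$ along $L_v$, i.e.\ forms the mapping cone of $L_v \hookrightarrow X$; since $L_v$ is $(h{-}3)$-connected and $X$ is built from $K(\Gamma')$ and such cones, standard connectivity-preservation results keep $X$ $(h{-}3)$-connected throughout, so only $\pi_{h-2}(K(\Gamma'))$ remains to be killed. Hypothesis (2) applied with $i = h$ does exactly this: a generating $(h{-}2)$-cycle of $K(\Gamma')$ is realised as a ``join-type'' sphere $\{a_1, b_1\} \ast \cdots \ast \{a_{h-1}, b_{h-1}\} \cong S^{h-2}$, determined by a choice of two vertices in each colour class $V_1, \dots, V_{h-1}$; hypothesis (2) then furnishes a $v \in V_h$ adjacent to all $2(h{-}1)$ of these vertices, so the sphere lies entirely in $L_v$ and becomes null-homotopic once the star of $v$ is attached. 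The technical heart of the argument is the bookkeeping needed to verify that join-type spheres really do generate $H_{h-2}(K(\Gamma'))$ and that no extraneous $(h{-}2)$-cycles are introduced by the attachments; this is cleanest via a cellular chain calculation or a discrete Morse matching on the coloured-simplex poset, and is where the combinatorial content of the hypotheses is used most critically. Once done, the two facts from the first paragraph give $K(\Gamma_h) \simeq \bigvee S^{h-1}$.
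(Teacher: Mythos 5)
Your plan is close in spirit to the paper's, but it chooses a different starting point for the attachment process, and that choice introduces a genuine gap which you acknowledge but do not close.

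You build $K(\Gamma_h)$ from the full subcomplex $K(\Gamma')$ on $V - V_h$ by coning off the links $L_v$, $v\in V_h$. By your inductive hypothesis $K(\Gamma')$ is itself a wedge of $(h-2)$-spheres, so after attaching the cones the $(h-2)$-homology is the cokernel of the map $\bigoplus_{v\in V_h}\widetilde H_{h-2}(L_v)\to \widetilde H_{h-2}(K(\Gamma'))$, and you need this map to be surjective. You propose to see this by arguing that $\widetilde H_{h-2}(K(\Gamma'))$ is generated by ``join-type'' spheres $\{a_1,b_1\}\ast\cdots\ast\{a_{h-1},b_{h-1}\}$, each of which lands in some $L_v$ by hypothesis (2). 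But the generation claim is not a routine bookkeeping lemma: for an $(h-1)$-coloured graph satisfying the hypotheses it is precisely the sort of statement that requires the full inductive machinery of the lemma itself (already for $h=3$ it asserts that the cycle space of a bipartite graph is generated by $4$-cycles, which is false without the hypotheses and nonobvious with them). There is also a secondary issue for $h=3$, where $(h-2)$-connectedness means simple connectivity, so the Mayer--Vietoris computation of $H_1$ must be supplemented by a van Kampen argument showing that the $\pi_1$-classes of all the $L_v$ normally generate $\pi_1(K(\Gamma'))$; this again reduces to the same unproved generation statement.

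The paper's proof sidesteps this difficulty entirely by choosing a different base for the filtration: it starts from $K_0=\mathrm{st}(v_1)$, the closed star of a fixed base vertex $v_1\in V_1$, which is \emph{contractible}. It then enlarges $K_0$ colour by colour, adjoining at stage $i$ the vertices of $V_i$ not adjacent to $v_1$ and taking the full subcomplex. Each new vertex contributes a cone over its link in the current complex, and the combinatorics (using hypothesis (2) with the two ``reserved'' vertices $v_1$ and $v$) shows this link again satisfies the hypotheses with $h-1$ colours, hence by induction is a wedge of $(h-2)$-spheres. Because the process starts from a contractible complex, it never has to \emph{kill} any pre-existing $(h-2)$-homology; up to homotopy each step simply wedges on or attaches $(h-1)$-cells, and the dimension bound forces the end result to be a wedge of $(h-1)$-spheres. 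In short: your decomposition forces a surjectivity claim that is as hard as the lemma; the paper's decomposition makes that claim unnecessary. If you wish to keep your decomposition you would need to carry the ``join-type spheres generate'' statement as part of a strengthened induction hypothesis and verify it survives the attachments, which is substantially more work than what you have written.
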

\begin{remark}
Note that (2) holds vacously if h = 1; and if $h>1$ then (1) actually follows from (2).
\end{remark}
\begin{proof}
We use induction on $h$, starting with the observation that the statement 
is trivial when $h=1$. For $h\ge 2$ we assume that $K(\Gamma_{ h-1})$ is homotopy equivalent to a bouquet of  $(h - 2)$-spheres, if $\Gamma_{h-1}$ is an $(h - 1)$-coloured graph which satisfies the properties (1) and (2). We construct $K(\Gamma_h)$  in several steps, similar to the method applied in Brown's proof for Hougthon's groups \cite{br87}. We start with 
choosing a base vertex $v_1\in V_1$ and consider its star in $K(\Gamma_h)$, 
\[ K_0  :=  {\rm st}_{K(\Gamma h)}(v_1)\]
Then we proceed with $i = 1, 2, \ldots , h$ by taking the union of $K_{i-1} \cup V_i '$, where $V_i '$ is the set of all vertices of $V_i$  which are not joined with the base vertex $v_1$ by an edge. And we put  
\[ K_i := ~\text{ full subcomplex of}~ K(\Gamma_h) ~\text{ generated by}~ K_{i-1} \cup V_i '.\]

One observes that $(V_1 \cup  \ldots \cup V_i )\subseteq K_i $ and  $(V_{i +1}\cup  \ldots \cup V_h)\cap K_i  = K_0$. In particular, $K_h = K(\Gamma_h).~ K_i $ is obtained from $K_{i -1}$ by adjoining vertices $v\in 
V_i$   that are not connected to the base vertex $v_1$ by an edge; then taking the full subcomplex of $K(\Gamma_h)$. Thus $K_i $ is obtained from $K_{i -1}$ by adjoining for these vertices $v$ the cone over
\[ {\rm lk}(K_{i -1} ,v) := ~\text{ the link of}~ v ~\text{ in}~ K_{i -1}. \]

The $1$-skeleton of ${\rm lk} (K_{i -1} ,v)$ has vertex set
\[ \begin{array}{rcl}
		W &=& W_1\cup  \ldots \cup W_{i-1}\cup W_{i+1}\cup  \ldots \cup W_h ~\text{ with }\\ 
		W_j &:=& \text{set of vertices of}~ V_j  ~\text{which are joined with}~ v~\text{ by an edge},\\
&& \text{ for}~  j =  1, \ldots , i-1\\
		W_j &:=& \text{ set of vertices of}~ V_j  ~\text{ which are joined with}~ v~\text{  and}~ v_1~\text{ by an}\\
&& \text{ edge, for}~j =  i+1, \ldots , h.
\end{array}\]

Thus, the $1$-skeleton of ${\rm lk}(K_{i-1} ,v)$  is an $(h - 1)$-coloured subgraph $\Gamma_{h-1}$ of $\Gamma_h$ with vertex set $W$ and colours 
$\{ 1, 2, \ldots , h\} -\{ i\} $, and ${\rm lk}(K_{i-1} ,v)$  is the flag-complex $K(\Gamma_{h-1})$. Now we consider any $2(h -2)$ vertices 
$u^1 , \ldots, u^{2(h - 2)}$  of  $W - W_j$   with colours in $\{ 1, 2, \ldots , h\} -\{ i, j\} $ for some $j \in \{ 1, 2, \ldots , h\} -\{ i\} $. 

Together with the vertices $v_1$ and $v$, we obtain $2(h - 1)$ vertices
$$u^1 , \ldots , u^{2(h - 2)}, v_1, v \text{ of }  V - V_j.$$
By property (2) of 
$\Gamma_h$, there exists two vertices $w, w'$ in $V_j$ , which can be joined by an edge path of length $2$ via $u^k$ for each $k \in \{ 1, 2, \ldots , 2(h - 2)\} $, and additionally via $v_1, v$. In particular, $w$ and $w'$ can be joined by an edge with $v_1$ and $v$, and so they are vertices of  $W_j$.  
Hence $\Gamma_{h-1}$ satisfies the two properties of the lemma, and in view of the inductive hypothesis, ${\rm lk}(K_{i-1} ,v)$  is homotopy equivalent to a 
bouquet of  $(h - 2)$-spheres.

From here we can use the same arguments as in the proof of Lemma 5.3 in 
\cite{br87}: Starting with the contractible complex $K_0,  K_1$  is obtained
from $K_0$ by adjoining for each vertex  $v\in V_1 '$ a cone over ${\rm lk}(K_0 ,v)$.  Using the homotopy type of ${\rm lk}(K_0 ,v)$, we can deduce that $K_1$ is homotopy equivalent to a bouquet of $(h - 1)$-spheres. For the next steps in the construction of $K_h$, we know that $K_i$
is obtained from $K_{i-1}$ by adjoining for each vertex $v\in V_i '$ a cone over ${\rm lk}(K_{i-1} ,v)$.  In view of the homotopy type of ${\rm lk}(K_{i-1} ,v)$,  we see that, up
to homotopy, the passage from $K_{i-1}$ to $K_i$   consists of the adjunction of $(h - 1)$-cells to a bouquet of $(h - 1)$-spheres.
\end{proof}

We will now apply Lemma \ref{lemma5.8} to the $1$-skeleton  $\Gamma (f)$ of $\Sigma(f)$.
By definition its vertex set is the disjoint union $A = \bigcup_{ L\in \Lambda}
A_L$, and we regard the various $A_L$  as the colouring of  $\Gamma (f)$. 
The edges of $\Gamma$  are the pairs of such pei-injections $\{ a_L, a'_{L'}\}$  with disjoint images.
Thus $\Gamma (f)$ is an $h(S)$-colored graph $\Gamma (f)_{h(S)}$ in the sense above, and in order to establish Lemma \ref{lemma5.7} it remains to prove

\begin{lemma}\label{lemma5.9}
If  $h(f) \ge 2\cdot \emph{rk}S \cdot h(S)$, then $\Gamma (f)_{h(S)}$ satisfies the assumptions of Lemma \ref{lemma5.8}.
\end{lemma}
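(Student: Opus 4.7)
The plan is to verify both hypotheses of Lemma~\ref{lemma5.8} for the $h(S)$-coloured graph $\Gamma(f)_{h(S)}$. Write $h := h(S)$ and $n := {\rm rk}S$. The vertex set is $V = \bigcup_{L\in\Lambda} A_L$, where $a_L\in A_L$ is a pei-injection $\partial L \to R$ with $R := S - Sf$; two vertices of different colours are adjacent precisely when their images in $R$ are disjoint; and by definition of the height of a pei-injection, $h(R) = h(f) \geq 2nh$.

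To verify property~(2), I fix a colour $L_i\in\Lambda$ and $2(h-1)$ vertices $u^1,\dots,u^{2(h-1)}\in V - V_{L_i}$ with $u^k\in A_{L_{j_k}}$, and set $U := \bigcup_{k=1}^{2(h-1)} u^k(\partial L_{j_k})\subseteq R$. First I bound $h(U)$: the set $\partial L$ is the union of the $n$ corank-$1$ faces of a rank-$n$ orthant, which meet pairwise in rank-$(n-2)$ subsets, so iterating the Remark after Lemma~\ref{lemma3.2} gives $h(\partial L) = n$. Since pei-injections preserve rank and height, each $u^k(\partial L_{j_k})$ has rank $n-1$ and height $n$, and iterated use of the same Remark yields $h(U) \leq 2(h-1)n$. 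Because $h(U) < h(R)$, the complement $R - U$ must contain some rank-$(n-1)$ orthant, so ${\rm rk}(R - U) = n - 1$; then $(R - U) \cup U = R$ is a disjoint union of two rank-$(n-1)$ orthohedral sets with empty (hence lower-rank) intersection, and a final application of the Remark gives $h(R - U) = h(R) - h(U) \geq 2nh - 2(h-1)n = 2n$.

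Next I extract $2n$ pairwise disjoint rank-$(n-1)$ orthants $N_1,\dots,N_{2n}\subseteq R - U$ by taking one representative of each of the $\geq 2n$ maximal rank-$(n-1)$ germs of $R - U$ and shrinking each to a commensurable suborthant that avoids the others (possible since any two distinct maximal germs are represented by orthants whose intersection has rank $<n-1$). By the pei-normal form (Corollary~\ref{corollary3.6}), $\partial L_i$ is pei-isomorphic to a stack of $n$ rank-$(n-1)$ orthants; fixing such an identification I define $v:\partial L_i \to R$ as the composition with an isometric embedding of this $n$-stack into $N_1\cup\dots\cup N_n$ sending each stack level into one $N_s$, and $w$ analogously into $N_{n+1}\cup\dots\cup N_{2n}$. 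Both images lie in $R - U$, hence are disjoint from every $u^k(\partial L_{j_k})$, so $v$ and $w$ are common neighbours of all $u^k$ in $\Gamma(f)_{h(S)}$; and $v(\partial L_i)\cap w(\partial L_i) = \emptyset$ by disjointness of the chosen $N_s$, so $v \neq w$. Property~(1) is the same construction run with no obstructions. The main obstacle is not any single deep step but the clean book-keeping of height on disjoint orthohedral unions: the hypothesis $h(f) \geq 2\,{\rm rk}S \cdot h(S)$ is precisely calibrated so that subtracting the $2(h-1)$ obstructions (each of height $n$) from $R$ still leaves rank-$(n-1)$ height at least $2n$, i.e.\ enough room for two disjoint copies of $\partial L_i$.
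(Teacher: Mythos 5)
Your proof is correct and takes essentially the same route as the paper's: reduce property (2) of Lemma~\ref{lemma5.8} to a height computation on $R = S - Sf$, bound $h\bigl(\bigcup u^k(\partial L_{j_k})\bigr) \le 2(h-1)n$, deduce $h(R - U) \ge 2n > n = h(\partial L_i)$, and conclude that two disjoint pei-injections $\partial L_i \to R - U$ exist. The only difference is cosmetic: where the paper ends with ``one observes easily that there are arbitrarily many different pei-injections,'' you spell out the existence step explicitly by extracting $2n$ pairwise disjoint rank-$(n-1)$ orthants from $R-U$ and mapping $\partial L_i$ (via its pei-normal form as an $n$-stack) into the first $n$ and the last $n$ respectively, which is a useful elaboration rather than a departure.
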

\begin{proof}
Let $n := S$ and $h := h(S)$. By the remark following Lemma 3.8 we can assume $h>1$ and have to prove (2). For this we fix $L\in \Lambda$  and 
consider a set of $2(h - 1)$ elements $F\subseteq A - A_L$ . We have to show that
there are two elements $a, b\in A_L$  with the property that for each $c\in F$ the image im$(c) = c\left( \partial L(c)\right)$ is disjoint to  both $a(\partial L)$ and
$b(\partial L)$. In other words: there are two pei-injections 
\[   a, b:  \partial L  \to  \big(S - Sf \big) - \Big(\bigcup_{ c\in F}{\rm im}(c)\Big). \] 
To show this it suffices to compare the height function - i.e., 
the number of rank-$(n-1)$ germs -  of domain and target. Clearly, $h\left(a(\partial L))\right) = h(\partial L) = n$, and the same applies to 
every 
vertex of $A$. Hence $h\left(\bigcup_{ c\in F}{\rm im}(c)\right) \le 2(h - 1)n.$ By assumption $h\big(S - Sf \big) \ge 2hn$, and so the target orthohedral 
set has height at least $2hn - 2(h - 1)n = 2 n$, which is more than the 
height $h(\partial L) = n$ of the domain. In this  situation one observes easily
that there are arbitrarily many different pei-injections in $A_L$ whose image is
disjoint to  $\bigcup_{ c\in F}{\rm im}(c)$. This proves the lemma.
\end{proof}
\begin{remark}
If we replace $M_{<f}$ by the subset $M_{r,f} :=\{a \in M_{{\rm dia}}(S) \mid h(a) \geq r ~ \text{ and} ~ a < f \}$, the
assertion of Lemma \ref{lemma5.7} true, provided $f$ satisfies the additional condition $h(f) \geq r + h(S)$.
In this case we know by Lemma \ref{lemma5.6} that $h(\delta_B) \geq r$, where $\delta_B$ stands for the largest lower bound of a
finite set $B$ of maximal elements of $M_{r,f}$. Thus $\delta_B$ is an element of $M_{r,f}$ and the proof of Lemma~ \ref{lemma5.6}
works the same way for the reduced simplicial complex $|M_{r,f}|$.
\end{remark}

\subsection{Stabilizers and cocompact skeletons of M(S)}
 The group $G(S)$ of all pei-permutations acts on $M(S)$ from the right, and as $h(g) = 0$ for all $g\in G(S)$ the height 
 function $h: M(S) \to  \mathbb N$  is invariant under this action. Correspondingly, $G_{\#}(S) := G(S)\cap M_{\#} (S)$ acts on 
 $M_{\#} (S)$, where $\# $ stands for $0,$ tr, or dia.  We will also restrict attention to the various $G_{\#} (S)$-invariant 
 subsets  $M^{[r,s]}_{\#}(S)= \{ f\in M_{\#} (S)\mid r\le h(f)\le s\} $ 
for prescribed numbers $r \le s$ in $\mathbb
 N_0$. And also, mutatis mutandis, for the corresponding pet-groups ${\rm 
pet}_{\#}(S)$ - note that ${\rm pet}_0(S) = {\rm pet}_{{\rm tr}}(S)$.

%$r \le s$ in $\mathbb N_0$ the simplicial complex of  $M^{[r,s]}_{\#}(S)= \{ f\in M_{\#} (S)\mid r\le h(f)\le s\}$
We start with the following easy observation:
\begin{lemma}\label{lemma5.10}
  Two elements $f, f '\in  M(S)$ are in the same \emph{pei}$(S)$-orbit if 
and only if  $(S - Sf )$ and $(S -
Sf ')$ are pei-isomorphic.
\end{lemma}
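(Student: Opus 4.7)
The plan is to translate the orbit condition directly into a decomposition of pei-bijections, using the fact that every pei-injection $f: S \to S$ is a pei-isomorphism from $S$ onto its image $Sf$. The convention that $\mathrm{pei}(S)$ acts from the right on $M(S)$ is to be read as $f \cdot g = fg$, i.e., ``$f$ followed by $g$'', which makes the image transform by $S(fg) = (Sf)g$; this is the action compatible with the right-action on $S$ itself.

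The forward direction is immediate: if $f' = fg$ with $g \in \mathrm{pei}(S)$, then $Sf' = (Sf)g$, and since $g$ is a bijection of $S$ we get $S - Sf' = (S - Sf)g$. The restriction $g|_{S-Sf}$ is then the required pei-isomorphism $S - Sf \xrightarrow{\cong} S - Sf'$, being the restriction of a pei-permutation to an orthohedral subset (orthohedrality of $S - Sf$ follows from Lemma~\ref{closed under complements}, since $Sf$ is orthohedral as the union of the isometric images of the pieces witnessing that $f$ is a pei-injection).

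For the converse, given a pei-isomorphism $h: S - Sf \to S - Sf'$, I would build $g \in \mathrm{pei}(S)$ piecewise on the decomposition $S = Sf \sqcup (S - Sf)$. On $Sf$, set $g$ to be the composition $(f^{-1})(f')$, using that $f: S \to Sf$ is a pei-bijection (so $f^{-1}: Sf \to S$ is itself a pei-map, by applying isometric inverses to each of the finitely many orthant pieces on which $f$ was defined to be an isometric embedding); on $S - Sf$, set $g = h$. Since both halves of $g$ are pei-bijections onto $Sf'$ and $S - Sf'$ respectively, and these are complementary orthohedral subsets of $S$, the union $g$ is a pei-bijection of $S$, hence an element of $\mathrm{pei}(S)$. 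A direct computation then gives $(fg)(x) = g(f(x)) = f'(f^{-1}(f(x))) = f'(x)$, so $f' = f \cdot g$.

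There is essentially no substantive obstacle here: the lemma is a formal consequence of the fact that pei-injections are pei-isomorphisms onto their images and that the category of orthohedral sets with pei-maps is closed under the complement-and-glue operation used to assemble $g$. The only point that deserves explicit mention is that $f^{-1}: Sf \to S$ really is a pei-map, which follows from applying Lemma~\ref{lemma3.3} to a decomposition of $S$ into orthants on which $f$ restricts to isometric embeddings.
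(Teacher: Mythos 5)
Your proof is correct and takes essentially the same route as the paper's: the forward direction restricts $g$ to $S-Sf$, and the converse glues $g' := f^{-1}f'$ on $Sf$ with the given pei-isomorphism on $S-Sf$. You are in fact slightly more careful than the paper, which loosely says ``there is always a pei-isomorphism $g': Sf\to Sf'$'' where the argument actually requires the specific choice $g'=f^{-1}f'$, a choice you make explicit.
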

\begin{proof}
As both $Sf$ and $Sf '$ are pei-isomorphic to $S$ there is always a pei-isomorphism \mbox{$g': Sf \to  Sf '$}.
Assuming there is also a pei-isomorphism $g'': (S - Sf ) \to  (S - Sf ')$ 
implies that the union $g = g'\cup g''$ is a pei-permutation of 
$S$ with $fg = f '$. Conversely, $fg = f '$ implies $(S - Sf ') = (Sg - Sfg) = (S - Sf)g$, hence $(S - Sf ')$ is pei-isomorphic 
to $(S - Sf)$.
\end{proof}

Since orthohedral sets of the same rank and height are pei-isomorphic by Corollary \ref{corollary3.6}, it follows that ${\rm pei}(S)$ acts
transitively on the set of all pei-injections of a given rk$(S - Sf)$ and 
height $k$. The very same can be said for the 
action of  $G_{\#} (S)$ on $M_{\#} (S).$

Let  $\Delta  = (a_0 < a_1 < \ldots < a_{k-1} < a_k)$ be a  $k$-simplex 
of $ |M(S)|$. By definition there are elements 
$t_1, t_2, \ldots , t_k \in {\rm mon} (T)$, with $a_i = t_i a_0$ for all $i$; they are uniquely defined and form a $k$-simplex   
$ \Delta ' = (id<t_1< \ldots <t_{k-1}<t_k) \in |{\rm mon}(T)|$ . Moreover, putting $\sigma (\Delta ) := (\Delta , a_0)$
defines a bijection 
			\[\sigma :~   |M(S)| ~ \longrightarrow ~  |{\rm mon}(T)| \times ~ M(S).\]

The action of ${\rm pei}(S)$ on $ |M(S)|$  is given by $(a_0 <a_1 < \ldots <a_k )g = (a_0 g< a_1 g< \ldots < a_{k-1} g< a_k g)$.
We can leave it to the reader to observe that this action induces, via $\sigma$, on $ |{\rm mon} (T)| \times M(S)$ the
$G(S)$-action given by simple right action on $M(S).$

The simple structure of the $G_{\#} (S)$-action on $ |M_{\#} (S)|$  has two immediate consequences:

\begin{corollary}\label{corollary5.11} \begin{enumerate}[i)]
\item The stabilizer of a $k$-simplex of $ |M_{\#} (S)|$  coincides with the
stabilizer of its minimal vertex $f$ and is isomorphic to $G_{\#} (S - Sf).$
\item  For every numbers $r \le s$ in $\mathbb N\cup\{0\}$ the simplicial 
complex of
$$M^{[r,s]}_{\#}(S)= \{ f\in
M_{\#} (S)\mid r\le h(f)\le s\}$$
is cocompact under the $G_{\#} (S)$-action.
\end{enumerate} \end{corollary}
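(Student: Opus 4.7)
The guiding device is the $G_\#(S)$-equivariant bijection $\sigma:|M_\#(S)|\to|\mathrm{mon}(T)|\times M_\#(S)$ noted just above the statement: under $\sigma$ the action is trivial on the first factor and is the given right action on the second.

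For part (i), a $k$-simplex $\Delta=(a_0<a_1<\cdots<a_k)$ is encoded by a chain $(\mathrm{id}<t_1<\cdots<t_k)$ in $|\mathrm{mon}(T)|$ together with the basepoint $a_0=f$, where $a_i=t_ia_0$. For any $g\in G_\#(S)$ associativity gives $a_ig=t_i(a_0g)$, so the $\mathrm{mon}(T)$-chain is rigidly preserved by $g$ and $\Delta g=\Delta$ reduces to $fg=f$. To identify $\mathrm{Stab}_{G_\#(S)}(f)$ with $G_\#(S-Sf)$, note that injectivity of $f$ makes $fg=f$ equivalent to $g$ fixing every point of $Sf$; hence $g|_{S-Sf}$ is a bijection of the orthohedral complement, which is of type $\#$ iff $g$ is, and any $\#$-permutation of $S-Sf$ extends (by the identity on $Sf$) to an element of the stabilizer.

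For part (ii), the same $\sigma$ reduces cocompactness to two finiteness claims. First, any $k$-simplex in $|M_\#^{[r,s]}(S)|$ has its shape-chain $(t_1,\ldots,t_k)$ bounded by $h(t_k)=h(a_k)-h(a_0)\le s-r$ (via Lemma~\ref{lemma5.2}(i)); since $\mathrm{mon}(T)$ is free-Abelian on the finite set $T=\{t_L:L\in\Lambda\}$ with each $h(t_L)=n>0$, only finitely many such chains occur in any fixed length $k$. Second, the number of $G_\#(S)$-orbits on the vertex set $\{f\in M_\#(S):r\le h(f)\le s\}$ is finite: for the pei-case this follows from Lemma~\ref{lemma5.10} together with Corollary~\ref{corollary3.6}, since pei-orbits correspond to pei-isomorphism classes of $S-Sf$, and such classes are determined by the pair $(\mathrm{rk}(S-Sf),h(S-Sf))$, which is bounded (rank $<n$, height $\le s$). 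For $\#\in\{0,\mathrm{tr},\mathrm{dia}\}$ one proves an analogue of Lemma~\ref{lemma5.10} in which orbits are detected by the $\#$-isomorphism class of $S-Sf$, and then invokes the pet-normal form of Proposition~\ref{proposition3.5} to see that only finitely many such classes can occur with bounded height and bounded rank. Multiplying the two finite counts, dimension by dimension, gives the stated cocompactness.

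The main foreseeable obstacle is the $\#$-refined analogue of Lemma~\ref{lemma5.10} for $\#\in\{\mathrm{tr},\mathrm{dia}\}$: its proof for pei glues a pei-isomorphism $Sf\to Sf'$ to one on the complements, and in the $\#$-refined setting one must be able to choose the glued map inside $G_\#(S)$. This should follow because by construction any element of $M_{\mathrm{tr}}(S)$ (resp.\ $M_{\mathrm{dia}}(S)$) already matches the tangent-coset structure of $S$ at each maximal germ with that of $Sf$, so the gluing reduces to a $\#$-problem on $S-Sf$ alone, where finiteness of $\#$-isomorphism classes with bounded rank and height is immediate from the classification results of Chapter~3.
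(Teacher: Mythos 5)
Your proof is correct and takes essentially the same approach as the paper's: both use the $G_\#(S)$-equivariant bijection $\sigma$ onto $|\mathrm{mon}(T)|\times M_\#(S)$ to reduce (i) to the stabilizer of the minimal vertex and to split the orbit count in (ii) into a bounded chain-shape factor and a finite vertex-orbit factor. The paper's proof of (ii) merely asserts the $\#$-refined transitivity after Lemma~\ref{lemma5.10}; your explicit attention to that point, and the observation that it reduces to gluing on the complement $S-Sf$ because $f^{-1}f'$ already matches the required tangent-coset data, is a sound and worthwhile clarification of a step the paper passes over silently.
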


\begin{proof}
i) One observes that right action of $g\in G_{\#} (S)$ on $M_{\#} (S)$ fixes an element $f\in M_{\#} (S)$
if and only if $g$ restricted to $Sf$ is the identity. In other words, the stabilizer of the vertex $f\in M_{\#} (S)$ is isomorphic to $G_{\#} (S - Sf).$
\\
ii) We use the interpretation of a simplex $\Delta  = (a_0 < a_1 < \ldots < a_{k-1} < a_k) \in  |M(S)| $ in $|{\rm mon} (T)| \times~ M(S)$. 
Since $G_{\#} (S)$ acts transitively on the set of all pei-injections in $M_{\#} (S)$ of a given
rk$(S-Sf)$ and height $k$, the bound on $h(a_0)$ allows only finitely many $G_{\#} (S)$-orbits on the second component
$M(S)$. The bound on $h(a_i)$ for $i=1,\ldots,k$ allows only finitely many simplices in the first component \mbox{$| {\rm mon} (T)|$}.
\end{proof}
\subsection{The conclusion.}
Here we put things together to prove Theorem \ref{theorem5.1}, i.e
$fl \big(G(S)\big) \ge fl\big(G_{{\rm dia}}(S)\big) = h(S) - 1.$

\begin{proof}
We will first show, by induction on $n = S$, that $fl\big(G_{{\rm dia}}(S)\big) = h(S) - 1.$ If  $n = 1$,
then the group $G_0(S)$ is the Houghton group on $h(S)$ rays and has finite index in $G(S).$  In that case the assertion is due to Brown \cite{br87}.
Now we assume $n > 1$. Here we use $M^{[r,s]} = \{ f\in M_{{\rm dia}} (S)\mid r\le~ h(f)\le~ s\},\\ r,s~\in~ \mathbb N$. 
Since $f\in M^{[r,s]}$ is a diagonal pei-injection, the height of $f$ is a multiple of $n$.
So we fix the lower bound $r = nk_0$, $k_0 \in \mathbb N$, and consider 
the filtration of  $M := M^{[r,\infty]}$ in terms
of $M^k := M^{[r,nk]}$, with $k\to \infty$. Then we follow the argument 
of Brown \cite{br87}.
\begin{itemize}  
\item First we note that $M$ is a directed partially ordered set and hence $| M|$  is contractible. 
\item $| M^{k+1} |$  is obtained from $| M^k |$  by adjoining cones over the subcomplexes $| M_{<f} |$ for each $f$ with
$h(f)=k+1$. By Lemma \ref{lemma5.7} and the Remark at the end of Section \ref{subsection5.4} we know that the subcomplexes $| M_{<f} |$  have the homotopy type of a
bouquet of $\big(h(S)- 1\big)$-spheres for $k$ sufficiently large.
This shows that the embedding $| M^k | \subseteq | M^{k+1} |$  is homotopically trivial in all dimensions $< h(S).$
\item By Corollary \ref{corollary5.11} we know that the $| M^k |$ have cocompact skeleta.
\item The stabilizers, stab$_{G(S)}(f)$, of the vertices $f\in M$ -  in fact of all simplices - are of the form $G(S -
Sf)$. As rk$(S - Sf) < {\rm rk}S$ the inductive hypothesis applies. The assumption that $M$ contains only injections $f$
with $h(f) \ge r$ implies now, that $fl \big({\rm stab}_{G(S)}(f)\big) \ge r - 1$ for each $f\in M$.
\end{itemize}

We can choose $r$ arbitrarily; if we choose $r \ge h(S) + 1$ the main results of \cite{br87} apply and it follows that $
fl\big(G_{{\rm dia}}(S)\big) = h(S) - 1.$  This completes the inductive 
step.

In order to prove that $fl \big(G(S)\big) \ge fl\big(G_{{\rm dia}}(S)\big)$ we note that $fl \big(G(S)\big) = fl\big(G_{\rm
tr}(S)\big)$, since $G_{\rm tr}(S)$ is of finite index in $G(S).$ Then we 
observe that $G_{{\rm dia}}(S)$ is a normal subgroup of $G_{\rm
tr}(S)$ with $Q = G_{\rm tr}(S)/G_{{\rm dia}}(S)$ finitely generated Abelian. As  $fl(Q) = \infty$ this implies
$fl\big(G_{\rm tr}(S)\big) \ge fl\big(G_{{\rm dia}}(S)\big).$
\end{proof}

\section{A lower bound for the finiteness length of pet(S) for a stack of 
orthants}
In this section we will show
 \begin{theorem} If $S$ is a stack  of orthants then $fl\big({\emph{pet}}(S)\big)\ge h(S) - 1.$
\end{theorem}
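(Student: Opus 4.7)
The plan is to mimic the proof of Theorem~\ref{theorem5.1} almost verbatim, replacing pei by pet throughout. First I would define the pet-analogue of the diagonal monoid by setting $M^{\mathrm{pet}}_{\mathrm{dia}}(S) = M_{\mathrm{dia}}(S) \cap \mathrm{pet}(S)$-injections, i.e., pet-injections $f\colon S\to S$ fixing all maximal germs and inducing a diagonal translation on each tangent coset $\langle\gamma\rangle$, $\gamma\in\max\Gamma^{\ast}(S)$. The diagonal unit-translations $t_L$ are themselves pet-maps, so $\mathrm{mon}(T)\le M^{\mathrm{pet}}_{\mathrm{dia}}(S)$, the partial ordering $f\le f'$ iff $tf=f'$ for some $t\in\mathrm{mon}(T)$ is still directed, and the height function $h$ retains the properties of Lemma~\ref{lemma5.2}. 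The corresponding pet-diagonal group $\mathrm{pet}_{\mathrm{dia}}(S):=\mathrm{pet}(S)\cap G_{\mathrm{dia}}(S)$ acts on $M^{\mathrm{pet}}_{\mathrm{dia}}(S)$ from the right with the same cocompactness on bounded-height slices as in Corollary~\ref{corollary5.11}, and the stabilizer of $f$ is $\mathrm{pet}(S-Sf)$.

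Next I would verify that the combinatorial analysis of $|M_{<f}|$ in Section~\ref{subsection5.4} goes through unchanged. The key Lemmas~\ref{lemma5.4}--\ref{lemma5.6} are statements about how an element just below $f$ decomposes as a restriction of $t_L^{-1}f$ glued to a pei-injection of the finite piece $\partial L$ into $S-Sf$; all the maps involved in this decomposition can be taken to be pet-injections, so the entire discussion transports. Likewise the colored-graph Lemma~\ref{lemma5.8} is purely combinatorial, and Lemma~\ref{lemma5.9} requires only the existence of sufficiently many pet-injections $\partial L\to(S-Sf)-\bigcup_{c\in F}\mathrm{im}(c)$, which is guaranteed once $h(f)\ge 2\cdot\mathrm{rk}S\cdot h(S)$ by the same height counting. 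Hence $|M^{\mathrm{pet}}_{\mathrm{dia}}(S)_{<f}|$ has the homotopy type of a wedge of $(h(S)-1)$-spheres for $h(f)$ sufficiently large.

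With this in place, the filtration $M^{k}:=\{f\in M^{\mathrm{pet}}_{\mathrm{dia}}(S)\mid r\le h(f)\le nk\}$ satisfies all four bullets needed for Brown's criterion in \cite{br87}: (i) the direct limit is contractible; (ii) the embeddings $|M^{k}|\hookrightarrow|M^{k+1}|$ are null-homotopic up to dimension $h(S)-1$ for $k$ large; (iii) the skeleta are cocompact under $\mathrm{pet}_{\mathrm{dia}}(S)$; (iv) the stabilizer of a simplex with minimal vertex $f$ is $\mathrm{pet}(S-Sf)$, with $\mathrm{rk}(S-Sf)<\mathrm{rk}S$. Taking $r\ge h(S)+1$ and combining gives $fl(\mathrm{pet}_{\mathrm{dia}}(S))\ge h(S)-1$, and since $\mathrm{pet}_{\mathrm{dia}}(S)$ has finitely generated free-Abelian quotient inside $\mathrm{pet}(S)$ (the same argument as at the end of §5.1), the lower bound transfers to $\mathrm{pet}(S)$.

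The hard part is the inductive step: unlike the pei case, $S-Sf$ is generally \emph{not} pet-isomorphic to a stack of orthants, so the statement of the theorem as written does not directly feed the induction. The remedy is to strengthen the inductive hypothesis to cover the class of orthohedral sets that actually arise: for a diagonal pet-injection $f$ on a stack of $h$ rank-$n$ orthants, $S-Sf$ is a pairwise disjoint union, indexed by the $n$ coordinate facets of each of the $h$ summands, of thickened $(n-1)$-facets — i.e., a stack of $(n-1)$-skeletons in the sense of the generalized Theorem~\ref{stack of n-skeletons}. One therefore proves the more general statement (a stack of $k$-skeletons of rank-$n$ orthants of multiplicity $h$ has $fl(\mathrm{pet})\ge h-1$) by simultaneous induction on $n-k$; the base case $k=0$ or $n=1$ recovers Brown's Houghton group result $fl(H_h)=h-1$, and the inductive step is exactly the Brown-criterion argument above applied to the enlarged class.
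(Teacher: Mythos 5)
You correctly identify the central obstacle (that $S-Sf$ for diagonal $f$ on a stack of rank-$n$ orthants is a stack of $(n-1)$-skeletons rather than orthants, so the inductive hypothesis must be broadened to stacks of $k$-skeletons), which is precisely what the paper does in Proposition~\ref{proposition6.2}. But there is a genuine gap in the claim that the inductive step is ``exactly the Brown-criterion argument above applied to the enlarged class.'' Once $S$ is a stack of $n$-skeletons of a rank-$r$ orthant with $n<r$, each component has $\binom{r}{n}>1$ maximal orthants, and a merely \emph{diagonal} pet-injection $f$ may assign different translation lengths to the maximal orthants within a single component. In that case $\mathring{S}-\mathring{S}f$ has unequal counts of rank-$(n-1)$ germs in the different facet directions, so it is \emph{not} pet-isomorphic to a stack of $(n-1)$-skeletons of an orthant. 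This breaks both the orbit-transitivity on vertices of given height (hence cocompactness of the filtration pieces, Corollary~\ref{corollary6.10}) and the applicability of the strengthened inductive hypothesis to the stabilizers $\mathrm{pet}(\mathring{S}-\mathring{S}f)$.

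The paper's resolution is the \emph{super-diagonal} condition of Section~\ref{reduction to the diagonal subgroup}: one restricts to pet-injections that induce diagonal translations with a translation length that is constant across all maximal orthants of each component. Lemma~\ref{lemma6.9}\,ii) then shows $\mathring{S}-\mathring{S}f$ is always a stack of $(n-1)$-skeletons with $c=h(f)/\binom{r}{n-1}$ components, Lemma~\ref{lemma6.9}\,iii) restores transitivity, and the reduction from $\mathrm{pet}(\mathring{S})$ to $\mathrm{pet}_{\mathrm{sdia}}(\mathring{S})$ is harmless since the latter is the kernel of a homomorphism to a finitely generated Abelian group. (The paper also works with the regularized set $\mathring{S}$ rather than $S$ so that the maximal orthants become pairwise disjoint; this is a smaller technicality your sketch also omits.) At the top level of the induction --- a stack of full orthants --- each component is a single orthant, so diagonal and super-diagonal coincide, which is why your description of $S-Sf$ is correct there; the constraint only becomes binding, and necessary, at the lower levels where you propose to re-apply the argument.
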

The steps to prove this lower bound of  $fl\big({{\rm pet}}(S)\big)$ are similar to those in Section 5 for
the corresponding pei-result. We will use a certain poset of injective pet-maps $f: S \to  S$ to form a simplicial
complex, and we will choose a diagonal subgroup of ${{\rm pet}}(S)$ for the action on the complex.
However, the part concerning the finiteness length of the stabilizers of $f$ is more difficult here, because the set $(S - Sf)$ is generally not pet-isomorphic to a stack of orthants with lower rank
 (there are different parallelism classes of rank-$(n-1)$ germs in $S$ if 
rk$S = n)$.
So even if the stabilizers are isomorphic to ${\rm pet}(S - Sf)$, there is no base for an induction argument.

In order to set up an inductive proof we need a version of Theorem 5.1, which makes the assertion not only
for stacks of orthants but also for stacks $S$ of parallel copies of a ``rank-n-skeleton'' of an orthant.  In combination with special
injective pet-maps $f$ (the ``super-diagonal'' maps), such a stack $S$ leads to a set $(S - Sf)$, which has the structure
of a stack of rank-$(n-1)$-skeletons.

\subsection{Stack of skeletons of an orthant.}\label{stack of skeletons}
Let $X$ be the canonical basis of the standard orthant $\mathbb N^N$.
Every orthant $L$ is of the form $a+\oplus_{y\in Y}\mathbb N y$, where $Y$ is a subset of $X$. $L$ carries the structure of a
simplex whose faces, indexed by the subsets $Z\subseteq Y$, are the suborthants $L_Z = a+\oplus_{z\in Z}\mathbb N z
\subseteq L.$ We refer to $L_Z$ as a \emph{rank-$k$ face} of $L$ if $| Z| 
=k.$ By the \emph{rank-$k$ skeleton} of $L$,
denoted $L^{(k)}$, we mean the union of all rank-$k$-faces of $L$. Thus the skeleta of $L$ form an ascending chain of
orthohedral set \[ \{ a\}  = L^{(0)} \subseteq  L^{(1)} \subseteq  \ldots \subseteq  L^{(k)} \subseteq  \ldots \subseteq  L^{({{\rm rk}}L)}
 = L.\] Let $L^{(n)}$ be the rank-$n$ skeleton of a rank-$r$ orthant $L 
= a+\bigoplus_{ y\in Y}\mathbb N y$. Then
 $L^{(n)}$ is the union of $h(L^{(n)}) = \binom{r}{n}$ pairwise non-parallel rank-$n$ orthants.

Now we consider a stack $S$ of parallel copies of the rank-$n$-skeleton $L^{(n)}$ of an rank-$r$ orthant - in other words,
$S = R^{(n)}$ is the rank-$n$-skeleton of a stack $R$ of rank-$r$ orthants. We call each copy of $L^{(n)}$
in such a stack S a \emph{component of} $S$, and we write $c(S)$ for the number of components of $S$. Note that
$h(S)= c(S)\binom{r}{n}$.
The next proposition shows a lower bound for $fl\big({\rm pet}(S)\big)$, and the case $n=r$ yields the assertion of
Theorem 5.1.

\begin{proposition}\label{proposition6.2} If $S$ is a stack of rank-$n$-skeletons of an orthant then 
$fl\big(\emph{pet}(S)\big) \ge c(S) - 1$.
\end{proposition}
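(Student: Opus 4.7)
The plan is to follow the Brown \cite{br87} framework used for Theorem 5.1, adapted so that induction on $n = \text{rk}\,S$ proceeds through pet-injections despite the obstacle flagged in the preamble to Section 6. For the base case $n = 1$, a stack of rank-$1$-skeletons of a rank-$r$ orthant is a disjoint union of $c(S)$ $r$-rayed ``stars'', and ${\rm pet}(S)$ contains with finite index a Houghton-type group on $c(S)\cdot r$ rays; its finiteness length is at least $c(S)\cdot r-1\ge c(S)-1$ by Brown's original computation.

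For the inductive step, I would introduce a ``super-diagonal'' submonoid $M_{\rm sdia}(S)\subseteq M(S)$ of pet-injections whose free-abelian generating set $T$ has exactly one ``component-diagonal'' unit translation $t_C$ per component $C$ of $S$. The defining feature must be that for every $f\in M_{\rm sdia}(S)$, the complement $S-Sf$ is again a stack of rank-$(n-1)$-skeletons of a rank-$r$ orthant, with $c(S-Sf)$ growing linearly with $h(f)$; this is the whole reason for enlarging the statement from ``stacks of orthants'' to ``stacks of skeletons'', since otherwise complementation inside a pet-orbit breaks the class. Stabilizers in ${\rm pet}(S)$ of vertices $f\in|M_{\rm sdia}(S)|$ are then of the form ${\rm pet}(S-Sf)$ and by induction have finiteness length $\ge c(S-Sf)-1$, which is arbitrarily large for $h(f)$ large; cocompactness of the skeleta of the slices $M^{[r,s]}_{\rm sdia}$ follows exactly as in Corollary \ref{corollary5.11}.

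The main technical obstacle is the analogue of Lemma \ref{lemma5.7}: showing that $|M_{<f}|$ has the homotopy type of a bouquet of $(c(S)-1)$-spheres once $h(f)$ is large. I would run the Section \ref{subsection5.4} argument verbatim, covering $|M_{<f}|$ by the closed cones $|M_{\le b}|$ below maximal elements $b<f$; Lemmas \ref{lemma5.5}--\ref{lemma5.6} go through because the super-diagonal generators $t_C$ are indexed by pairwise disjoint components, so their simultaneous ``backward'' moves below $f$ are controlled by the same disjoint-image condition as in the pei case. The nerve becomes the flag complex of a $c(S)$-coloured graph $\Gamma(f)_{c(S)}$, where the colours are now indexed by \emph{components} rather than by maximal germs, and the vertices of colour $C$ are pet-injections $\partial C\to S-Sf$. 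This colour count is exactly why the expected spherical dimension is $c(S)-1$ rather than $h(S)-1$. Verifying condition (2) of Lemma \ref{lemma5.8} then reduces to a germ-count estimate as in Lemma \ref{lemma5.9}: given $2(c(S)-1)$ forbidden images of height $h(\partial C)$ each, one must produce two further pet-injections $\partial C\to S-Sf$ avoiding them. The subtle point, and the hard part of the adaptation, is that fewer isometries are available in the pet setting (no reflections), so one must check that the skeletal structure of $S-Sf$ still provides arbitrarily many genuine translational pet-embeddings $\partial C\to S-Sf$; this should follow from $S-Sf$ being a stack of $(n-1)$-skeletons of the same ambient rank-$r$ orthant, hence carrying all parallelism classes of $\partial C$ in abundance.

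With the connectivity lemma in hand, Brown's criterion applies as in Section 5.6: the filtration $M^k:=\{f\in M_{\rm sdia}(S)\mid r\le h(f)\le k\}$ exhausts a contractible ${\rm pet}(S)$-CW-complex with cocompact skeleta, homotopically trivial inclusions $|M^k|\hookrightarrow|M^{k+1}|$ in dimensions below $c(S)$, and stabilizer finiteness length $\ge r-1$ by the inductive hypothesis. Choosing $r\ge c(S)$ yields $fl({\rm pet}_{\rm sdia}(S))\ge c(S)-1$. Finally, since ${\rm pet}_{\rm sdia}(S)$ is normal in ${\rm pet}(S)$ (up to finite index refinements) with finitely generated abelian quotient, an elementary extension argument identical to the closing lines of Section 5.6 upgrades the bound to $fl({\rm pet}(S))\ge c(S)-1$, completing the induction.
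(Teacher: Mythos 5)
Your proposal is essentially the paper's own argument: a super-diagonal submonoid with one generator per component, the observation that $S-Sf$ is again a stack of $(n-1)$-skeletons (which is the reason the statement is enlarged from stacks of orthants to stacks of skeletons), the nerve-of-cones / coloured-graph analysis yielding bouquets of $(c(S)-1)$-spheres, and Brown's criterion with the height filtration, followed by the finite-index-plus-abelian-quotient reduction back to ${\rm pet}(S)$. The only cosmetic differences are that the paper works throughout with the pet-isomorphic set of regular points $\mathring{S}$ to get a clean pet-normal form, and your base case cites Brown directly for a finite-index Houghton group on $c(S)\cdot r$ rays, which is if anything slightly cleaner than the paper's wording.
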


For later purpose in this section we consider the subset $\mathring{S}\subseteq S$ of all regular
points of $S$, which is defined as follows: If $S$ is an orthant, then $\mathring{S}$ is the image $t_S(S$) of $S$ under
the diagonal unit-translation; and if $S$ is a stack of rank-$n$ skeletons of an orthant, a point $p\in S$ is regular if $S$ contains a
maximal suborthant of rank equal to $S$, which contains $p$ as a regular point. The complement, denoted sing$(S) = S -
\mathring{S}$, is the set of all \emph{singular points} of $S$. 
 In the case when $S$ is a stack of orthants, we will also use the geometrically more suggestive notation $\partial S$ for 
 sing$(S)$. If $S = R^{(n)}$ is the $n$-skeleton of a stack of rank-$r$ 
orthants $L$, then sing$(S) = R^{(n-1)}$ and 
 $\mathring{S}$  has the canonical decomposition as the disjoint union of 
the regular points of the maximal orthants of $S$.
 By a \emph{component} of  $\mathring{S}$ we mean \mbox{$C \cap \mathring{S}$}, the intersection of $\mathring{S}$ with
 a component $C$ of $S$. Note that $c(S) = c(\mathring{S})$.
\begin{lemma} For the sets $S$ and $\mathring{S}$ the following holds
\begin{enumerate}[i)]
\item $S$ and $\mathring{S}$ are pet-isomorphic. Hence  $\emph{pet}(S$) is isomorphic to
$\emph{pet}(\mathring{S})$.
\item $h\big(sing (\mathring{S})\big) = h\big(sing(S)\big)(r - n + 1)$, 
where $r$ is the rank of the stack $R$ with $S
= R^{(n)}$.
\end{enumerate}
\end{lemma}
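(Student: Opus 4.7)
For (i), the plan is to verify that $S$ and $\mathring{S}$ agree on the pet-classifying data and invoke the uniqueness of the pet-normal form (Proposition~\ref{proposition3.5} together with Corollary~\ref{corollary3.8}). The argument reduces to a single component $C = L^{(n)}$, where $L$ is a rank-$r$ orthant with direction set $X(L)\subseteq\{1,\ldots,r\}$. First, I would describe the maximal germs of $C$: each maximal rank-$n$ orthant $L_D\subset C$ corresponds to an $n$-element subset $D\subseteq X(L)$, giving $\binom{r}{n}$ such germs, pairwise non-parallel. Next I would check quasi-normality by showing that the indicator image $C_\tau$ equals the 0-based $n$-skeleton $L_0^{(n)}$ of $\mathbb{N}^r$, and that $\max\Gamma_0^\ast(C_\tau)$ coincides with $\mathrm{supp}(h_C)$: both are the $\binom{r}{n}$ rank-$n$ 0-based orthants with direction class $D\subseteq X(L)$, and $h_C\equiv 1$ there. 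The same computation applied to $\mathring{C}=\bigsqcup_{D}\mathring{L_D}$ gives an identical $h_{\mathring{C}}$ and identical indicator image, so $C\cong_{\mathrm{pet}}\mathring{C}$. Summing over the $c(S)$ components yields $h_S=h_{\mathring{S}}$, hence $S\cong_{\mathrm{pet}}\mathring{S}$; conjugation by the bijection then realizes the group isomorphism $\mathrm{pet}(S)\cong\mathrm{pet}(\mathring{S})$ just as in Corollary~\ref{corollary4.1}.

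For (ii), I would compute both heights directly. On the one hand, the characterization of regular points shows that $p\in L^{(n)}$ is singular iff the set $D(p)$ of directions in which $p$ has positive coordinate has cardinality strictly less than $n$, so $\mathrm{sing}(S)=R^{(n-1)}$. This is a stack of $c(S)$ rank-$(n-1)$ skeletons of rank-$r$ orthants, of height $h(\mathrm{sing}(S))=c(S)\binom{r}{n-1}$. On the other hand, $\mathring{S}$ is (pet-isomorphic to) a disjoint union of $h(S)=c(S)\binom{r}{n}$ rank-$n$ orthants; the singular part of such a stack is the disjoint union of the boundaries $\partial\mathring{L_i}$, and each boundary contributes exactly $n$ pairwise non-parallel rank-$(n-1)$ faces. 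Thus $h(\mathrm{sing}(\mathring{S}))=n\cdot c(S)\binom{r}{n}$. The asserted equality follows from the elementary identity $n\binom{r}{n}=(r-n+1)\binom{r}{n-1}$.

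The subtlest point is the quasi-normality check in (i), which needs care because $C=L^{(n)}$ is neither a stack nor in pet-normal form, yet its indicator image $C_\tau$ must be shown to agree with $\mathring{C}_\tau$ at every rank. An alternative, more hands-on approach would construct an explicit pet-bijection $C\to\mathring{C}$ by stratifying $C$ according to the direction sets $D(p)$, assigning each rank-$k$ stratum $(k<n)$ to a specific maximal rank-$n$ orthant via a fixed ordering of coordinates, and realizing the assignment by the unit diagonal translation along the missing directions. This is feasible but combinatorially tedious, whereas the normal-form argument is clean and reuses machinery already in place.
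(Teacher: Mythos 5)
Your part~(ii) reproduces the paper's computation exactly: $\operatorname{sing}(S)=R^{(n-1)}$ of height $c(S)\binom{r}{n-1}$, each of the $h(S)=c(S)\binom{r}{n}$ open maximal orthants of $\mathring{S}$ contributes $n$ pairwise non-parallel rank-$(n-1)$ faces, and the identity $n\binom{r}{n}=(r-n+1)\binom{r}{n-1}$ closes the argument.

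For part~(i) you take a genuinely different and rather more elaborate route than the paper. The paper's proof is a one-line observation: $S=\mathring{S}\sqcup\operatorname{sing}(S)$, and every maximal orthant of $\operatorname{sing}(S)$ has rank $<n$ and is therefore parallel to a suborthant of a maximal orthant of $\mathring{S}$; the ``feeding in'' step of the pet-normal-form construction (the discussion preceding Proposition~\ref{proposition3.5}) then absorbs $\operatorname{sing}(S)$ into $\mathring{S}$ and produces the desired pet-bijection directly. You instead compute the height function $h_C$ on each component $C=L^{(n)}$, verify quasi-normality, observe $h_C=h_{\mathring{C}}$, and appeal to the classification of quasi-normal sets by their height functions. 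This is correct, and your verification that all rank-$k$ germs of $L^{(n)}$ with $k<n$ are non-maximal is sound. One caveat: the step you need — two quasi-normal orthohedral sets with the same $h$-function are pet-isomorphic — is the \emph{converse} of Corollary~\ref{corollary3.8}; the remark following that corollary asserts the characterization for sets already in pet-normal form, and passing from ``quasi-normal with equal $h_S$'' to ``pet-isomorphic'' requires the additional (true, but unstated) observation that quasi-normality guarantees no maximal germs are lost when passing to normal form. The paper's argument sidesteps this entirely, and your ``hands-on'' alternative (stratify $C$ by direction sets and push each lower-dimensional stratum into a chosen maximal orthant by a diagonal unit translation) is in fact closer in spirit to what the paper actually does; it is worth noting that this explicit construction is not really any more tedious than what you describe, since it is precisely the general pet-normal-form mechanism specialized to $R^{(n-1)}\hookrightarrow\mathring{S}$.
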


\begin{proof}
i) $S$ is the disjoint union of $\mathring{S}$ and $(S- \mathring{S})$. As each maximal orthant of $(S-
\mathring{S})$ is parallel to a subortant of $\mathring{S}$, the assertion follows from the pet-normal form.
\\
ii) Since $\mathring{S} = R^{(n)} - R^{(n-1)} $, sing$(\mathring{S})$ is the disjoint union of $h(R^{(n)}) \cdot n$
rank-$(n-1)$ orthants. So $h({\rm sing}(\mathring{S})) = h(R^{(n)})n$. For the height of $S$ and sing$(S)$ we have
$h(S)=h(R^{(n)}) = c(S)\binom{r}{n}$ and $h({\rm sing}(S))=h(R^{(n-1)})= c(S)\binom{r}{n-1}$. As $\binom{r}{n} n =
\binom{r}{n-1} (r-n+1)$, we get $h(R^{(n)}) n = h(R^{(n-1)})(r-n+1)$.
\end{proof}

\subsection{Reduction to the diagonal subgroup.}\label{reduction to the diagonal subgroup}
From now on we assume that $S$ is a stack of rank-$n$ skeletons of an
orthant. Since $S$ and $\mathring{S}$ are pet-isomorphic, it suffices to establish Proposition \ref{proposition6.2} for the set
$\mathring{S}$, which is more suitable for some parts of the proof.
As noted above $\mathring{S}$ is canonically in pet-normal form. 
In particular, every maximal germ of S (or $\mathring{S}$) is represented 
by a unique
maximal orthant of $\mathring{S}$. Thus we can conceptually simplify matters by replacing the set of all maximal germs,
max$\Gamma^\ast(S)$ = max$\Gamma^\ast(\mathring{S})$, by the set of the 
canonical representatives
max$\Omega^\ast(\mathring{S})$, the set of all maximal orthants of $\mathring{S}$.
\par
Let $M_{{\rm dia}} (\mathring{S})$ denote the monoid of all diagonal
pei-injections of $\mathring{S}$ introduced in Section \ref{Monoids of pei-injections}. $M_{{\rm dia}} (\mathring{S})$ is a submonoid of
$M_{{\rm tr}} (\mathring{S})$, the translation submonoid of $M(\mathring{S})$.
Its elements f have the property that they induce, for each
$L \in {\rm max}\Omega^\ast(\mathring{S})$, a diagonal translation $\tau_{(f,L)}: \langle L \rangle \to \langle L
\rangle$. 
 Now we consider the submonoid $M^{{\rm pet}}_{{\rm sdia}}(\mathring{S})\subseteq M_{{\rm dia}} (\mathring{S})$
 consisting of all diagonal pet-injections $f: \mathring{S}\to \mathring{S}$ which satisfy the additional \emph{super-diagonality} condition:

\begin{itemize}
\item[(6.1)] \emph{When two maximal orthants $L, L' of  \mathring{S}$ are 
contained in the same component of~$S$, then the diagonal translations $\tau_{(f,L)}$ and $\tau_{(f,L')}$ have the same translation length.}
\end{itemize}
 
The restriction of the homomorphism \ref{item(5.3)} of Section \ref{Monoids of pei-injections} to $M^{{\rm pet}}_{{\rm sdia}}(\mathring{S})$ can thus be interpreted as a map
\begin{itemize}\item[(6.2)]
\begin{equation}\label{eq15} 
\lambda : M^{{\rm pet}}_{{\rm sdia}}(\mathring{S})\twoheadrightarrow \bigoplus_{ C\in {\rm Comp}(\mathring{S})}\mathbb Z = \mathbb Z^{c(S)},
\end{equation}
\end{itemize}
which associates to each super-diagonal pet-injection $f$ the translation 
length $\lambda 
(f,C)$ on each component $C$ of $\mathring{S}$.

The group of all invertible elements of $M^{{\rm pet}}_{{\rm sdia}} (\mathring{S})$ is the \emph{super-diagonal pet-group} 
${\rm pet}_{{\rm sdia}} (\mathring{S})$. 

Let be ${\rm pet}_{{\rm tr}} (\mathring{S})$ the group of all invertible elements of $M_{{\rm tr}} (\mathring{S})$.
It is a subgroup of ${\rm pet}(\mathring{S})$, which has finite index in ${\rm pet}(\mathring{S})$.
Analogous to (5.2) in Section \ref{Monoids of pei-injections} is a homomorphism
\begin{itemize}
 \item[(6.3)]
\begin{equation}
\kappa : {\rm pet}_{{\rm tr}}(\mathring{S})\twoheadrightarrow \bigoplus_{ 
L\in \max
\Omega^\ast(\mathring{S})} {\rm Tran}(\langle L \rangle )\text{, given by 
}\kappa (g) = \bigoplus\limits_{ L\in \max\Omega^\ast(\mathring{S})}\tau (g,L)
\end{equation}
\end{itemize}
\noindent which associates to each pet-injection $g\in {\rm pet}_{{\rm tr}}(\mathring{S})$ the translation length $\tau 
(g,L)$ on each maximal orthant $L$ of max$\Omega^\ast(\mathring{S})$. We observe that a permutation $g\in{\rm pet}_{{\rm tr}}(\mathring{S})$ is in 
${\rm pet}_{{\rm sdia}} (\mathring{S})$, if and only if the translations 
$\tau_{ (g,L)}$ are diagonal for each $L$ and its translation length constant as $L$ runs through the maximal orthants of
a component $C$ of $\mathring{S}.$ 
\par
Given a component $C$ of $\mathring{S}$, we consider the set $\Lambda(C):={\rm max}\Omega^\ast({C})$ of all $h(C) =
\binom{r}{n}$ rank-$n$ orthants of $C$. For each orthant $L\in \Lambda(C)$, we
write $Y(L)$ for its canonical basis. The translation $\tau_{ (g,L)} :\langle L \rangle \to\langle L\rangle$ has the
canonical decomposition into the direct sum of translations $\tau^y_{(g,L)}$ in the directions $y\in Y(L)$, and we write
$l^y(g, L)\in \mathbb Z$ for the corresponding translation lengths. 
\\
Therefore, for $g\in {\rm pet}(\mathring{S})$ to be super-diagonal, means 
that the numbers  $l^y(g, L)\in\mathbb Z$  coincide 
for all pairs in $P(C) := \{ (y, L)\mid y\in L\in \Lambda(C) \}$ 
- and this is so for all components $C$. Hence, associating to $g$ the sequence $$\big( l^y(g, L) - l
^{y'}(g,L')\big)_{( i(C),C)},$$
with $i(C)$ running through all pairs $\big((y, L), (y', L')\big) \in  P(C)$, and
$C$ through the components of $\mathring{S}$, exhibits the super-diagonal 
pet-group ${\rm pet}_{{\rm sdia}} (\mathring{S})$
as the kernel of a homomorphism of ${\rm pet}_{{\rm tr}}(\mathring{S})$ into
a finitely generated Abelian group. It is well known that in this situation 
$fl\big({\rm pet}_{{\rm tr}}(\mathring{S})\big) \ge fl\big({\rm pet}_{{\rm sdia}} (\mathring{S})\big)$. Since
${\rm pet}_{{\rm tr}}(\mathring{S})$ has finite index in ${\rm pet}(\mathring{S})$, we have
$fl\big({\rm pet}(\mathring{S})\big) = fl\big({\rm pet}_{{\rm tr}}(\mathring{S})\big)$, hence
$$fl\big({\rm pet}(\mathring{S})\big) \ge fl\big({{\rm pet}}_{{\rm sdia}} 
(\mathring{S})\big).$$ 
The proof of Proposition \ref{proposition6.2} is thus reduced to a proof of $fl\big({{\rm pet}}_{{\rm sdia}} (\mathring{S})\big) =
c(S) - 1.$ To show this, we follow the arguments in the proof of the corresponding pei-result: 
$fl\big({{\rm pei}}_{{\rm dia}}(S)\big) = h(S) - 1$, where $S$ was a stack of $h(S)$ orthants of rank $n$. 
In the present situation, where $\mathring{S}$ is the set of regular points of the $n$-skeleton of the stack $R$ of $h(R)$ 
orthants, the components $C$ of $\mathring{S}$ have to take over the role 
previously played by the orthants $L$ of the stack $S$.
Correspondingly we now have to work with the multiplicative submonoid ${\rm mon} (T)\subseteq M^{{\rm pet}}_{{\rm sdia}} (\mathring{S})$ 
freely generated by the set $T$ of all super-diagonal unit-translations $t_C: C\to C$ as $C$ runs through the
components of $\mathring{S}$, where each $t_C=\prod_{L\in \Lambda(C)}t_L$ is the composition of the diagonal unit-translations
$t_L$ defined in Section \ref{Monoids of pei-injections}. As at the end of Section \ref{Monoids of pei-injections} we use the action of ${\rm mon} 
(T)$ by left multiplication to
endow $M^{{\rm pet}}_{{\rm sdia}} (\mathring{S})$ with a partial ordering; and we observe that this partial ordering is directed.

\subsection{Maximal elements \texorpdfstring{$\mathbf{< f}$}{TEXT} in \texorpdfstring{$\mathbf{M^{{\rm pet}}_{{\rm sdia}} (\mathring{S})}$}{TEXT}.} 
To adapt notation to the one used in the corresponding pei-situation in Section 5, we write $\Lambda$  for the set of all
components $C$ of $\mathring{S}$, and $\partial C := C - Ct_C$  for each component $C\in \Lambda$ .
Note that $C$ is the disjoint union of $h(C) = \binom{r}{n}$ rank-$n$ orthants
-- using the notation of Section \ref{stack of skeletons} one for each $n$-element set $Z\subseteq Y$.
Hence $h(\partial C) = n \binom{r}{n}.$ We are still in the situation that all maximal orthants of $\mathring{S}$ have the same
 finite rank $n = rk S.$  And given $f\in M^{{\rm pet}}_{{\rm sdia}} (\mathring{S})$ we write
\[	M_{ <f}   = \{ a\in M^{{\rm pet}}_{{\rm sdia}} (\mathring{S})\mid a <f  \} , ~ 	M_{\le f} = \{ a\in M^{{\rm pet}}_{{\rm sdia}} (\mathring{S})\mid a\le f\} ,\]  
for the \emph{``open resp. closed cones below $f$''}, aiming to understand the set of all maximal
elements of $M_{<f}$.

\begin{lemma} Let $b$ be a maximal element of  $M_{<f}$. Then there is a unique component $C$ of $\mathring{S}$ with the 
property that $f = t_Cb$, and $h(f) = h(b) + n$. Furthermore, $b$ is given as the union $b = b'\cup b''$, where b': 
$\partial C\to (\mathring{S} - \mathring{S}f)$ is a pet-injection, and $b'': (\mathring{S} - \partial C) \to 
\mathring{S}f$ is the restriction $t_C^{-1}f|_{(\mathring{S} - \partial C)}$.
Converseley, if $c': \partial C\to (\mathring{S} - \mathring{S}f)$ is an arbitrary pet-injection distinct to
$b'$, then the union $c = c'\cup b''$ is a maximal element of $M_{< f}$ 
distinct to $b$.
\end{lemma}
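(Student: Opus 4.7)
The plan is to mimic the proof of Lemma \ref{lemma5.4}, substituting the super-diagonal unit-translation $t_C$ of a component $C$ for the diagonal unit-translation $t_L$ of a maximal orthant. Throughout, $\mathrm{mon}(T)$ is freely generated by $T = \{t_C : C \in \Lambda\}$, and the height function $h$ is additive under composition of pet-injections (Lemma \ref{lemma5.2}~i), viewed now within $M^{{\rm pet}}_{{\rm sdia}}(\mathring{S})$.

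First I would argue that maximality of $b$ forces the factor $t \in \mathrm{mon}(T)$ with $f = tb$ to be a single generator $t_C$. Writing $t = t_{C_1} \cdots t_{C_p}$ in reduced form, additivity gives $h(t) = \sum_i h(\partial C_i)$, where every $h(\partial C_i)$ is a fixed positive constant (all components of $\mathring{S}$ are pet-isomorphic, so their boundaries have the same height). If $p \geq 2$, then $b < t_{C_2} \cdots t_{C_p} b < f$ contradicts maximality; hence $p = 1$ and $t = t_C$, yielding the height equation $h(f) = h(b) + h(\partial C)$. Uniqueness of $C$ is immediate because $t_C$ fixes $\mathring{S} \setminus C$ pointwise and shifts each maximal orthant of $C$ by a nonzero diagonal vector, so $f = t_C b = t_{C'} b$ with $C \neq C'$ would contradict injectivity of $b$.

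Second, I would unravel the equation $f = t_C b$ using the right-action convention $xf = (xt_C)b$. Since $t_C$ is a bijection of $\mathring{S}$ onto $\mathring{S} - \partial C$, every $y \in \mathring{S} - \partial C$ has a unique pre-image $yt_C^{-1} \in \mathring{S}$, forcing $b|_{\mathring{S}-\partial C} = t_C^{-1} f|_{\mathring{S}-\partial C} =: b''$, whose image lies in $\mathring{S}f$. On $\partial C$ the equation places no constraint on $b$ (as $\partial C$ is disjoint from the image of $t_C$), but injectivity of $b$ requires $b'(\partial C) := b(\partial C)$ to be disjoint from $\mathring{S}f = b''(\mathring{S}-\partial C)$; thus $b'$ is a pet-injection $\partial C \to \mathring{S}-\mathring{S}f$.

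Conversely, for any pet-injection $c': \partial C \to \mathring{S}-\mathring{S}f$ distinct from $b'$, the union $c := c' \cup b''$ is an injective pet-map satisfying $t_C c = f$ (verified piece-by-piece on $\mathring{S}-C$, on $Ct_C$, and on $\partial C$). The hard step, and the main obstacle, is checking that $c$ lies in the super-diagonal submonoid $M^{{\rm pet}}_{{\rm sdia}}(\mathring{S})$. For each maximal orthant $L$, the commensurable suborthant $Lt_L$ (when $L \subseteq C$) or $L$ itself (when $L \not\subseteq C$) is contained in $\mathring{S}-\partial C$, where $c$ agrees with $t_C^{-1}f$; reading the induced translation of $c$ on $\langle L\rangle$ through such a suborthant (Lemma \ref{lemma3.3}) shows that $c$ is diagonal and that its translation length on $L$ differs from that of $f$ by the same constant for all $L \subseteq C$ and by $0$ for all $L \not\subseteq C$. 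Because $f$ is super-diagonal and the perturbation is component-wise constant, $c$ inherits condition (6.1); the choice of $c'$ affects $c$ only on the lower-rank set $\partial C$ and thus does not perturb the rank-$n$ translation data. Finally, maximality of $c$ in $M_{<f}$ follows because all generators of $\mathrm{mon}(T)$ share the common positive height $h(\partial C)$, so the gap $h(f) - h(c) = h(\partial C)$ admits no proper subdivision; and $c \neq b$ because $c' \neq b'$.
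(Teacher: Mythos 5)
Your proof is correct and follows the same line as the paper's, which for this lemma simply refers the reader to the argument of the pei-version (Lemma \ref{lemma5.4}). You supply two details that the paper's terse pointer leaves implicit. First, the height increment is $h(\partial C)=n\,h(C)=n\binom{r}{n}$, not the ``$n$'' printed in the statement: since $t_C=\prod_{L\in\Lambda(C)}t_L$ one has $h(t_C)=h(\partial C)$, and it is this quantity that the later estimate $h(\delta_B)\ge h(f)-h(S)n$ in Lemma \ref{lemma6.6} in fact uses; your computation quietly corrects a slip in the statement. Second --- and this is the genuinely new point in passing from the diagonal pei-setting to the super-diagonal pet-setting --- you verify that the modified map $c=c'\cup b''$ really lands in $M^{\rm pet}_{\rm sdia}(\mathring{S})$: because $c$ agrees with $t_C^{-1}f$ on $\mathring{S}-\partial C$, which contains a commensurable suborthant of every maximal orthant, its induced translation on each $\langle L\rangle$ is that of $t_C^{-1}f$, differing from $f$'s by a component-wise constant, so condition (6.1) is preserved. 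The paper's one-line reference does not address this check; your write-up supplies exactly the missing verification.
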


\begin{proof}See argument in Lemma \ref{lemma5.4}.\end{proof}

\begin{lemma}\label{lemma6.5} Let $B\subseteq M_{<f}$  be a finite set of 
maximal elements of $M_{<f}$  . Then the following conditions are equivalent:
\begin{enumerate}[i)]
\item The elements of $B$ have a common lower bound $\delta $ in $ M_{<f}.$
\item For every pair $(b,b')\in B\times B$, with $b\not= b'$ and $tb =f =t'b'$ for super-diagonal unit-translations $t$,
$t'$, we have
\begin{enumerate}[a)]
\item		 $t\not= t'$,  and 
\item		  $b(\partial C )\cap b'(\partial C') = \emptyset$,
 where $C$ resp. $C'$ are the components of $\mathring{S}$ on which $t$ resp. $t'$ acts non-trivially.
\end{enumerate}
\end{enumerate}
\end{lemma}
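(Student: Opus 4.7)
The plan is to imitate the proof of Lemma \ref{lemma5.5} almost verbatim, with \emph{component of $\mathring{S}$} in place of \emph{maximal orthant of $S$} and \emph{super-diagonal unit-translation $t_C$} in place of \emph{diagonal unit-translation $t_L$}. What makes this transfer work is exactly the geometry encoded in (6.1): a super-diagonal unit-translation $t_C$ lives on the single component $C$ and acts trivially on every other component, so super-diagonal translations attached to distinct components commute and have disjoint non-fixed sets, mirroring the role played by diagonal unit-translations of disjoint maximal orthants in Section~\ref{maximal elements}.

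For (i)$\Rightarrow$(ii), given a common lower bound $\delta \in M_{<f}$, write $d_b\delta = b$ and $d_{b'}\delta = b'$ with $d_b, d_{b'} \in \mathrm{mon}(T)$. From $tb = f = t'b'$ one gets $td_b = t'd_{b'}$ by injectivity of $\delta$, and free generation of $\mathrm{mon}(T)$ then forces: if $t=t'$ then $d_b=d_{b'}$, hence $b = b'$, a contradiction. For (b), since $d_b, d_{b'}$ are super-diagonal they preserve each component, so $d_b(C)\subseteq C$ and $d_{b'}(C')\subseteq C'$. As $t\neq t'$ forces $C\neq C'$ and different components of $\mathring{S}$ are disjoint, $(\partial C)d_b\cap (\partial C')d_{b'} = \emptyset$, and the injectivity of $\delta$ propagates this to $(\partial C)b\cap (\partial C')b' = \emptyset$.

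For (ii)$\Rightarrow$(i), let $t_b = t_{C_b}$ be the unique super-diagonal unit-translation with $t_b b = f$. By (ii)(a) the components $C_b$, $b\in B$, are pairwise distinct, so the product $t_B := \prod_{b\in B} t_b$ is well-defined and commutative. Define
\[
\delta_B := \begin{cases} t_b^{-1} f & \text{on } C_b t_b \text{ for each } b\in B,\\ b & \text{on } \partial C_b = C_b - C_b t_b,\\ f & \text{on } \mathring{S} - \bigcup_{b\in B} C_b. \end{cases}
\]
Condition (ii)(b) makes the various $b$'s mutually compatible on $\bigcup_b \partial C_b$, and since each image $b(\partial C_b)$ lies in $\mathring{S} - \mathring{S}f$ it is disjoint from the $f$-type pieces, so $\delta_B$ is a pet-injection. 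Setting $s_b := \prod_{x\in B-\{b\}} t_x$, one checks directly that $s_b\delta_B = b$, so $\delta_B$ is a common lower bound.

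The main thing to verify carefully is that $\delta_B$ actually lies in $M^{\mathrm{pet}}_{\mathrm{sdia}}(\mathring{S})$ rather than merely in $M_{\mathrm{tr}}(\mathring{S})$. For a component $C$ not of the form $C_b$, the restriction $\delta_B|_C = f|_C$ is super-diagonal by hypothesis on $f$. For $C = C_b$, the restriction $\delta_B|_{C_b t_b} = t_b^{-1}f|_{C_b t_b}$ induces on each maximal orthant of $C_b$ the common diagonal translation length $\lambda(f,C_b)-1$, which by Lemma~\ref{lemma3.3} is the value of $\tau_{(\delta_B, L)}$ for every $L\in\Lambda(C_b)$ (the finite-rank correction coming from $\partial C_b$ drops out since the induced map on $\Gamma^\ast$ is determined by any commensurable suborthant). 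This uniform length across $\Lambda(C_b)$ is exactly condition (6.1), so $\delta_B$ is super-diagonal. The principal technical point --- and the one that differs in spirit from the pei-case --- is precisely this check that the ``piecewise'' assembly respects the \emph{rigid} super-diagonal constraint (6.1) coupling all maximal orthants within a component; everything else is a direct translation of Lemma \ref{lemma5.5}.
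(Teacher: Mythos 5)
Your proof is correct and follows essentially the same route as the paper, which constructs $\delta_B$ for (ii)$\Rightarrow$(i) and otherwise defers to the arguments of Lemma~\ref{lemma5.5}. The one point where you go beyond the paper's terse treatment is the explicit check that the piecewise-assembled $\delta_B$ still satisfies the super-diagonality constraint (6.1); this is a legitimate concern in the pet-setting (where (6.1) rigidly couples all maximal orthants within a component, unlike the orthant-by-orthant diagonality of Lemma~\ref{lemma5.5}), and your argument---that $\delta_B$ agrees with $t_b^{-1}f$ on the commensurable suborthant $C_b t_b$, giving the uniform diagonal length $\lambda(f,C_b)-1$ on every $L\in\Lambda(C_b)$---settles it correctly.
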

\begin{proof}
For each $b\in B$ we have a super-diagonal unit-translation $t_b\in T$ with $t_b b = f$, and we put

\begin{itemize}
\item[(6.4)] \begin{equation}\label{eq17}
t_B := \prod_{ b\in B}t_b.
\end{equation}\
\end{itemize}

By assumption (a) the components $C_b$, on which $t_b $ acts non-trivially, are pairwise disjoint. 
Thus $|B|$ $\le c(S)$, and $S$ decomposes in the disjoint union  $S  =
\left(\bigcup_{b\in B}C_b \right) \cup  S'$.
We define the pet-injection $\delta_B: S\to S$ as follows:
$$ \delta_B := \begin{cases}
{t_b}^{-1} f & \text{ on each } C_b t_b \\ 
b & \text{ on the complements } \partial C_b = C_b - C_b t_b \\ 
f & \text{ on } S'. 
\end{cases} $$
To show that $\delta_B$ is a common lower bound, see arguments in Lemma \ref{lemma5.5}.  \end{proof}

\begin{lemma}\label{lemma6.6} In the situation of Lemma \ref{lemma6.5} we 
have for the lower bound
$\delta_B$ defined in the proof:
\begin{enumerate}[i)]
\item $\delta_B$ is, in fact, a largest
common lower bound of the elements of $B$
\item $h(\delta_B ) \ge h(f) - h(S)n$.
\end{enumerate}
\end{lemma}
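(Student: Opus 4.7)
The plan is to adapt the argument of Lemma \ref{lemma5.6} to the present super-diagonal, skeleton-stack setting essentially verbatim, since the only structural change is that components $C$ of $\mathring{S}$ now play the role previously played by individual maximal orthants $L$, and $T$ consists of super-diagonal unit-translations $t_C = \prod_{L\in\Lambda(C)} t_L$ rather than single diagonal unit-translations $t_L$. The monoid $\mathrm{mon}(T)$ remains free Abelian, its generators act non-trivially on pairwise disjoint components, and the compatibility of the operations with the super-diagonality condition \textup{(6.1)} is built into the definition of $T$.

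For part (i), I would mimic the proof of Lemma \ref{lemma5.6}(i). Let $\gamma$ be an arbitrary common lower bound of the elements of $B$ in $M_{<f}$, so that for each $b\in B$ there is some $u_b \in \mathrm{mon}(T)$ with $u_b \gamma = b$. Fix a base element $b'\in B$ and define the super-diagonal translation $t'\in \mathrm{mon}(T)$ piecewise by
\[
t' := \begin{cases} u_{b'} & \text{on } S', \\ u_b & \text{on each } C_b, \end{cases}
\]
where $S' = \mathring{S} - \bigcup_{b\in B} C_b$ as in the proof of Lemma \ref{lemma6.5}. Using the elements $s_b = \prod_{x\in B\setminus\{b\}} t_x$ (see (5.5)), a direct inspection on each of the $G$-invariant pieces $S'$ and $C_b$ gives $x t'\gamma = x\delta_B$ on $S'$ (via $x u_{b'}\gamma = xb' = xs_{b'}\delta_B$) and on each $C_b$ (via $xu_b\gamma = xb = xs_b\delta_B$). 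Hence $t'\gamma=\delta_B$, proving $\delta_B\geq\gamma$.

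For part (ii), from (6.4) we have $t_B \delta_B = f$, so by Lemma \ref{lemma5.2}(i)
\[
h(\delta_B) = h(f) - h(t_B) = h(f) - \sum_{b\in B} h(t_{C_b}).
\]
For each super-diagonal unit-translation $t_C$ we have $\mathring{S} - \mathring{S}t_C \subseteq \bigcup_{L\in\Lambda(C)} \partial L$, and since each $\partial L$ has $n$ rank-$(n-1)$ faces while $|\Lambda(C)| = \binom{r}{n}$, we get $h(t_C)\leq \binom{r}{n}\cdot n$. Combined with $|B|\leq c(S)$ and the identity $h(S) = c(S)\binom{r}{n}$, this yields $h(t_B) \leq c(S)\binom{r}{n}\cdot n = h(S)\,n$, and therefore $h(\delta_B) \geq h(f) - h(S)n$.

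The only subtlety I foresee is bookkeeping: one has to check that the piecewise translation $t'$ defined in (i) is actually an element of $\mathrm{mon}(T)$, i.e.\ a product of super-diagonal unit-translations; this is clear because each $u_b$ already lies in $\mathrm{mon}(T)$ and the components $C_b$ and $S'$ on which the various $u_b$ and $u_{b'}$ are glued are pairwise disjoint and super-diagonal-invariant. No further adaptation of the pei-argument is needed.
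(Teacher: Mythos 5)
Your proposal is correct and follows essentially the same route as the paper, which for part (i) simply refers back to the argument of Lemma \ref{lemma5.6}(i) and for part (ii) performs the height computation $h(\delta_B) = h(f) - h(t_B) = h(f) - |B|\,h(C)\,n \geq h(f) - c(S)\,h(C)\,n = h(f) - h(S)\,n$. Your only deviation is cosmetic: you state $h(t_C) \leq \binom{r}{n}\,n$ as an inequality where in fact $\mathring{S} - \mathring{S}t_C = \bigcup_{L\in\Lambda(C)}\partial L$ and these boundaries are pairwise disjoint, so equality holds --- but the inequality suffices for the conclusion, so nothing is lost. Your remark on well-definedness of the piecewise translation $t'$ (that it lies in $\mathrm{mon}(T)$ because the components and $S'$ are pairwise disjoint and $\mathrm{mon}(T)$ is free abelian on generators with disjoint supports) is exactly the right point to flag and is, if anything, handled more explicitly than in the paper.
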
 	
\begin{proof}
For (i) see argument in Lemma 5.6. For (ii) we use the translation  $t=
\Pi_{ b\in B}t_b$ of \eqref{eq17} which satisfies $t\delta_B  = f$  and 
yields:
\[ 
\begin{array}{lclr}
h(\delta_B ) = h(f) - h(t ) &=& h(f) - |B| \cdot h(t_C) &\\
&=&  h(f) - |B| \cdot h(C)n &\\
&\ge& h(f) - c(S)h(C)n = h(f) - h(S)n.&
\end{array}\]
\end{proof}

\subsection{The simplicial complex of \texorpdfstring{$\mathbf{M^{{\rm pet}}_{{\rm sdia}} (\mathring{S})}$}{TEXT}.}
We consider the simplicial complex
$|M^{{\rm pet}}_{{\rm sdia}} (\mathring{S})|$, whose vertices are the elements of $M^{{\rm pet}}_{{\rm sdia}} (\mathring{S})$ and whose chains of
length $k,  a_0  < a_1  < \ldots < a_k$, are the $k$-simplices. As the partial ordering on $M^{{\rm pet}}_{{\rm sdia}} (\mathring{S})$
is directed, $|M^{{\rm pet}}_{{\rm sdia}} (\mathring{S})|$  is contractible.

In this section we aim to prove    
\begin{lemma}\label{lemma6.7} If $h(f)\ge2\cdot \emph{rk}S\cdot h(S)$ then $|M_{<f}|$  has the homotopy type of a bouquet of
$\big(c(S) - 1\big)$-spheres.
\end{lemma}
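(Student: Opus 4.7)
The plan is to mimic the proof of Lemma~\ref{lemma5.7} step by step, with the role previously played by the maximal orthants $L\in\Lambda=\max\Omega^\ast(S)$ now played by the components $C$ of $\mathring{S}$, and the previous count $h(S)$ replaced by $c(S)$. First I would cover $|M_{<f}|$ by the closed cones $|M_{\le b}|$, with $b$ running over the maximal elements of $M_{<f}$. By Lemma~\ref{lemma6.5} a finite set $B$ of such maximal elements has a common lower bound iff the images $b(\partial C_b)$ are pairwise disjoint and the associated super-diagonal unit-translations $t_b\in T$ act on distinct components; and when this holds, Lemma~\ref{lemma6.6}(i) produces a \emph{largest} common lower bound $\delta_B$, so $\bigcap_{b\in B}|M_{\le b}|$ is the cone on $|M_{\le\delta_B}|$ and is contractible. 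The nerve theorem then gives a homotopy equivalence between $|M_{<f}|$ and the nerve $N(f)$ of this covering.

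Next I would build a combinatorial model $\Sigma(f)$ for $N(f)$ as in Section~\ref{subsection5.4}. By the pet-analogue Lemma~6.4 of Lemma~\ref{lemma5.4}, each maximal element of $M_{<f}$ is encoded by a pair $(C, b')$ with $C\in\Lambda$ a component and $b'\in A_C$ a pet-injection $\partial C\to \mathring{S}-\mathring{S}f$. Thus the vertex set of $N(f)$ is $A=\bigsqcup_{C\in\Lambda}A_C$, and simplices are sets of such vertices from distinct colour classes whose images are pairwise disjoint. So $\Sigma(f)$ is the flag complex of a $c(S)$-coloured graph $\Gamma(f)_{c(S)}$, and I would finish by applying Lemma~\ref{lemma5.8} with $h=c(S)$.

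It remains to verify the two hypotheses of Lemma~\ref{lemma5.8} under the standing bound $h(f)\ge 2\cdot\mathrm{rk}S\cdot h(S)$. Writing $n=\mathrm{rk}S$ and $r$ for the rank of the stack whose $n$-skeleton is $S$, one has $h(\partial C)=n\binom{r}{n}$, $h(S)=c(S)\binom{r}{n}$, and $h\bigl(\mathring S-\mathring Sf\bigr)=h(f)\ge 2nc(S)\binom{r}{n}$. Given a set $F\subseteq A-A_C$ of size $2(c(S)-1)$, the union $\bigcup_{c\in F}\mathrm{im}(c)$ uses at most $2(c(S)-1)\cdot n\binom{r}{n}$ rank-$(n-1)$ germs of $\mathring{S}-\mathring{S}f$. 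Consequently the complement
\[
\bigl(\mathring{S}-\mathring{S}f\bigr)-\bigcup_{c\in F}\mathrm{im}(c)
\]
still has height at least $2n\binom{r}{n}\ge 2\cdot h(\partial C)$. Since $\mathring{S}$ (and therefore the complement of $\mathring{S}f$ in $\mathring{S}$) contains many pairwise parallel copies of each parallelism class of rank-$(n-1)$ orthant occurring in $\partial C$, this excess height yields arbitrarily many pet-injections $\partial C\to\bigl(\mathring{S}-\mathring{S}f\bigr)-\bigcup_{c\in F}\mathrm{im}(c)$; in particular two of them, establishing both conditions (1) and (2).

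The main obstacle lies in the last step: unlike the pei-case handled in Lemma~\ref{lemma5.9}, where the abundance of different isometric embeddings was essentially automatic once the height count was met, the pet-setting only allows parallel translates. Hence one has to check, using the structure of $\mathring{S}$ as a stack of rank-$n$ skeletons of rank-$r$ orthants together with the pet-normal form, that the orthohedral set $(\mathring{S}-\mathring{S}f)-\bigcup_{c\in F}\mathrm{im}(c)$ really does contain enough disjoint parallel translates of each required $(n-1)$-face of $\partial C$. Once this parallelism-compatible height accounting is verified, Lemma~\ref{lemma5.8} applies directly and identifies the homotopy type of $|M_{<f}|$ with a bouquet of $(c(S)-1)$-spheres.
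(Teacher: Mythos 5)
Your outline follows the paper's own proof step for step: cover $|M_{<f}|$ by the closed cones $|M_{\le b}|$, use Lemma~\ref{lemma6.5} and Lemma~\ref{lemma6.6}(i) to see that all finite intersections of such cones are again cones and hence contractible, pass to the nerve, model the nerve as the flag complex of the $c(S)$-coloured graph $\Gamma(f)_{c(S)}$ on $A=\bigcup_{C\in\Lambda}A_C$, and invoke Lemma~\ref{lemma5.8}. Your height bookkeeping (residual height $2n\binom{r}{n}=2h(\partial C)$ after deleting $\bigcup_{c\in F}\mathrm{im}(c)$) also matches the paper's.

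The point where you stop is precisely where the paper reaches for one more result, and you have correctly diagnosed why: pet-maps only allow parallel translates, so after removing $\bigcup_{c\in F}\mathrm{im}(c)$ the raw height count does not by itself certify that the target still contains two disjoint copies of \emph{every} parallelism class of rank-$(n-1)$ orthant occurring in $\partial C$. The paper closes this with Lemma~\ref{lemma6.9}(ii), which shows that $(\mathring{S}-\mathring{S}f)$ is pet-isomorphic to a stack of $h(f)/\binom{r}{n-1}$ copies of $L^{(n-1)}$; this structural description (in effect, a stack of parallel copies of $\partial C$ up to pet-isomorphism) is exactly the parallelism-compatible abundance your last paragraph asks for. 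So invoke Lemma~\ref{lemma6.9}(ii) (proved independently, from the superdiagonal flow computation) to produce the two required pet-injections, after which Lemma~\ref{lemma5.8} applies and completes the proof.
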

 The first step towards proving Lemma \ref{lemma6.7} is to consider the covering of  $|M_{<f}|$  by the subcomplexes $|M_{\le b}|$,
 where $b$ runs through the maximal elements of  $M_{<f}$. We write $N(f)$ for the nerve of this covering. Lemma~ \ref{lemma6.6}~ i) asserts that all finite intersections of such subcomplexes $|M_{\le b}|$  are again cones and hence contractible. It is a well known fact that in this situation the space is homotopy equivalent to the nerve of the covering. Hence we 
have \[  |M_{<f}| ~\text{ is homotopy equivalent to the nerve}~ N(f),\]
and it remains to compute the homotopy type of~ $N(f).$

The next step - replacing the nerve $N(f)$ by the combinatorial complex $\Sigma(f)$ - follows the arguments in Section 3:
We find that the set of vertices of $\Sigma(f)$ is the disjoint union  $A 
= \bigcup_{ C\in \Lambda} A_C $, where $A_C $
stands for the set of all pet-injections  $a: \partial C\to (\mathring{S} 
- \mathring{S}f )$; 
and the $p$-simplices of $\Sigma(f)$ are the sequences $(a_C)_{C\in \Lambda'}$, where $\Lambda ' $ is a $p$-element subset
of $\Lambda$ whose entries $a_C\in A_C $ satisfy the condition
\begin{itemize}
 \item [(6.5)] \quad \emph{The intersections of the images}~ $ a_C(\partial C), C\in \Lambda'$, ~\emph{are pairwise disjoint.}
\end{itemize}
The homotopy type of $\Sigma(f)$ can again be computed by Lemma \ref{lemma5.7}, which we apply to the $1$-skeleton  $\Gamma (f)$ of 
$\Sigma(f)$, viewed as a $c(S)$-colored graph $\Gamma (f)_{c(S)}$.
At the end it remains to prove 
\begin{lemma} If  $h(f)
\ge 2\cdot\emph{rk}S\cdot h({S})$  then $\Gamma (f)_{c(S)}$ satisfies the 
assumptions of Lemma \ref{lemma5.8}.
\end{lemma}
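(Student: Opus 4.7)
The plan is to mimic the argument of Lemma \ref{lemma5.9} verbatim, with the components $C$ of $\mathring{S}$ replacing the maximal orthants $L$ of $S$, and pet-injections replacing pei-injections. Set $n := {\rm rk}\,S$ and $c := c(S)$; by the Remark after Lemma \ref{lemma5.8} we may assume $c>1$ and need only verify condition (2).

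First I would fix a colour $C\in\Lambda$ and a set $F\subseteq V - V_C$ of exactly $2(c-1)$ vertices. Each $c'\in F$ is a pet-injection $c':\partial C'\to (\mathring{S}-\mathring{S}f)$ whose image has height $h(\partial C') = n\binom{r}{n}$. The aim is to produce two distinct pet-injections
\[
a,b:\partial C \longrightarrow U := (\mathring{S}-\mathring{S}f) - \bigcup_{c'\in F}{\rm im}(c'),
\]
so that $\{a,c'\}$ and $\{b,c'\}$ are edges of $\Gamma(f)_{c}$ for every $c'\in F$. A height count gives $h(U) \geq h(f) - 2(c-1)n\binom{r}{n} \geq 2nc\binom{r}{n} - 2(c-1)n\binom{r}{n} = 2n\binom{r}{n} = 2\,h(\partial C)$, so there is enough ``room'' in $U$ for at least two disjoint copies of $\partial C$.

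Now comes the delicate step: upgrading pei-injections to pet-injections. Here I would invoke the super-diagonality of $f$. Because $f$ acts on each component $C_0\in\Lambda$ by a super-diagonal translation of length $\lambda(f,C_0)\geq 1$, the set $\mathring{S}-\mathring{S}f$ decomposes componentwise as a disjoint union
\[
\mathring{S} - \mathring{S}f \;=\; \bigsqcup_{C_0\in\Lambda}\;\bigsqcup_{k=0}^{\lambda(f,C_0)-1} (\partial C_0)\cdot t_{C_0}^{k},
\]
which is a pet-stack of parallel copies of each $\partial C_0$. Consequently its rank-$(n-1)$ germs split into exactly the parallelism classes already present in the various $\partial C_0$, each class occurring with multiplicity proportional to $\lambda(f,C_0)$. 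Under the hypothesis $h(f)\geq 2n\,h(S)$ this multiplicity is so large that, even after excising the $2(c-1)$ images ${\rm im}(c')$, the complement $U$ still contains, for each of the $n\binom{r}{n}$ rank-$(n-1)$ parallelism classes appearing in $\partial C$, at least two pairwise disjoint representatives. Matching $\partial C$ to any such choice by parallel translation yields a pet-injection $\partial C\to U$; picking two different choices yields the required $a\neq b$.

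The main obstacle, absent in the proof of Lemma \ref{lemma5.9}, is precisely this last upgrade: a priori the complement $\mathring{S}-\mathring{S}f$ need not supply translates in every direction needed to parallel-ship $\partial C$ inside it, and only super-diagonality (via the homomorphism \eqref{eq15}) guarantees that each component contributes a full pet-slab. Once this is in place, the combinatorics of the height count and the appeal to Lemma \ref{lemma5.8} proceeds exactly as in the pei-case, yielding that $\Gamma(f)_{c(S)}$ satisfies both hypotheses of the coloured-graph lemma and hence that $|M_{<f}|$ has the homotopy type of a bouquet of $(c(S)-1)$-spheres.
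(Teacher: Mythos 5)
Your overall strategy is essentially the one the paper uses: a height count to show the target has enough room, followed by the observation that the pet-isomorphism type of $\mathring{S}-\mathring{S}f$ permits pet-injections $\partial C\to U$. The arithmetic of the height count agrees with the paper's (after unwinding $h(S)=c(S)h(C)$ and $h(C)=\binom{r}{n}$), and you correctly identify the genuine obstacle that distinguishes this from the pei-Lemma \ref{lemma5.9}: a crude height count alone only produces pei-injections, and one needs the pet-isomorphism type of $\mathring{S}-\mathring{S}f$ to upgrade them to pet-injections.

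There is, however, a real gap in the way you close that obstacle. You assert that because $f$ is super-diagonal, ``$f$ acts on each component $C_0\in\Lambda$ by a super-diagonal translation of length $\lambda(f,C_0)$'', and you deduce that $\mathring{S}-\mathring{S}f$ is \emph{literally} the disjoint union $\bigsqcup_{C_0}\bigsqcup_{k<\lambda(f,C_0)}(\partial C_0)t_{C_0}^{k}$. That is false in general: an element $f\in M^{\rm pet}_{\rm sdia}(\mathring{S})$ is a piecewise-translation injection, and super-diagonality constrains only the germ-level translations $\tau_{(f,L)}$ induced on the tangent cosets, not the pointwise behavior of $f$ on a component. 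The set $\mathring{S}-\mathring{S}f$ can be quite irregular near the singular strata; only its pet-isomorphism type is controlled. What is actually true, and what the paper's proof cites at exactly this point, is Lemma \ref{lemma6.9}.ii): $h(f)$ is a multiple of $(r-n+1)\binom{r}{n-1}$ and $\mathring{S}-\mathring{S}f$ is pet-isomorphic to a stack of $h(f)/\binom{r}{n-1}$ copies of $L^{(n-1)}$. Establishing this is not a one-line consequence of super-diagonality; the paper's proof of that lemma decomposes the height function into the refined counters $h_Y$ (one for each $(n-1)$-subset $Y$ of the canonical basis), verifies the identity $h_Y(f)=\lambda(r-n+1)$ via the flow formula, concludes that $h_Y(f)$ is independent of $Y$, and only then reads off the pet-normal form. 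Your proposal either needs to invoke that lemma explicitly, or reproduce that computation; as written, the ``delicate step'' you flagged is resolved by a premise that does not hold.
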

\begin{proof}
Let $n := {\rm rk}S$ and $h := c(S)$. Assumption (1) is a consequence 
of assumption (2) except in the trivial case $h = 1.$ 

To prove (2) we fix $C\in \Lambda$  and consider a set of $2(h - 1)$ elements $F\subseteq A - A_C$ . We have to show that
there are two elements  $a, b\in A_C$  with the property that for each $d\in F, d: \partial C_d \to  (\mathring{S} -
\mathring{S}f), im(d) = d(\partial C_d)$ is disjoint to both $a(\partial C)$ and $b(\partial C).$ In other words: there
are two pet-injections

(6.6)	\quad	$a, b:  \partial C  \to  (\mathring{S} - \mathring{S}f) - \big(\bigcup_{ d\in F} im(d)\big).$
 
For this it suffices to compare the height function - i.e., the number of 
rank-$(n-1)$ germs - of domain and target. 
Clearly, $h\big(a(\partial C)\big) = h(\partial C) = nh(C)$, and the same applies to every vertex of $A.$ Hence 
$h\big(\bigcup_{ d\in F} im(d)\big) \le 2(h - 1)nh(C)$. By assumption $h(\mathring{S} - \mathring{S}f) \ge
2nh({S})$, and so the target orthohedral set has height at least $2nh({S}) - 2(h - 1)nh(C) = 2n h(C)$, which is more than at 
least twice the height $h(\partial C) = nh(C)$ of the domain when $h(C)$ is positive. Moreover, by Lemma 4.10 below, the
set $(\mathring{S} - \mathring{S}f)$ is pet-isomorphic to a stack of copies of $\partial C$. In this situation one observes that the two different 
pet-injections required in (4.6) certainly do exist. 
This proves the Lemma 4.8 and hence Lemma 4.7.
\end{proof}

\begin{remark}
By the same argument as in Remark 3.10, the assertion of Lemma 4.7 holds true if $M_{<f}$ is replaced with the subset
$M_{r,f} :=\{a \in M^{{\rm pet}}_{{\rm sdia}} (\mathring{S}) \mid h(a) \geq r ~ and ~ a < f \}$ and $f$ satisfies the
additional condition $h(f) \geq r + h(S)$.
\end{remark}

\subsection{Stabilizers and cocompact skeletons of \texorpdfstring{$\mathbf{|M^{{\rm pet}}_{{\rm sdia}} (\mathring{S})|}$}{TEXT}.}
Here we consider the monoid
$M^{{\rm pet}}(S)$ of all pet-injections endowed with the height function 
$h: M^{{\rm pet}}(S)\to \mathbb Z$ inherited from $M(S)$ and the
${\rm pet}(S)$-action induced by right multiplication. Our main interest, 
however, is the
super-diagonal submonoid $M^{{\rm pet}}_{{\rm sdia}} (\mathring{S})\subseteq M^{{\rm pet}}(S)$ acted on by the super-diagonal pet-group
${{\rm pet}}_{{\rm sdia}} (\mathring{S}).$ 
\begin{lemma}\label{lemma6.9} \begin{enumerate}[i)]
\item $f,f' \in M^{\emph{pet}}(S)$ are in the same
\emph{pet}$(S)$-orbit if and only if $(S - Sf )$ and $(S - Sf ')$ are 	pet-isomorphic.
\item Let $S$ be a stack of copies of $L^{(n)}$, where $L$ is a rank-$r$ orthant. If $f\in M^{\emph{pet}}_{\emph{sdia}}
(\mathring{S})$ with $h(f) > 0$, then $h(f)$ is a multiple of $(r-n+1)\binom{r}{n-1}$ and $(\mathring{S} - \mathring{S}f)$
is pet-isomorphic to a stack of $h(f) / \binom{r}{n-1}$ copies of  $L^{(n-1)}$.
\item Two elements $f,f '\in M^{\emph{pet}}_{\emph{sdia}} (\mathring{S})$ 
with $h(f) = h(f ') > 0$ are in the same
$\emph{pet}_{\emph{sdia}} (\mathring{S})$-orbit.
\end{enumerate}
\end{lemma}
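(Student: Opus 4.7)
Part~(i) is the pet-analogue of Lemma~\ref{lemma5.10}; I would carry over its proof verbatim. If $fg = f'$ for some $g \in \mathrm{pet}(S)$, then $g|_{S-Sf}$ is a pet-isomorphism onto $S-Sf'$; conversely, given any pet-isomorphism $h\colon (S-Sf) \to (S-Sf')$, the union $g := (f'f^{-1}) \cup h$ is a pet-permutation of $S$ satisfying $fg = f'$, because $f'f^{-1}\colon Sf \to Sf'$ is automatically a pet-isomorphism.

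For~(ii), I would reduce everything to the effect of a single diagonal unit-shift on one component $C = \mathring{L^{(n)}}$: the length-$\lambda$ boundary $C - t_C^{\lambda}(C)$ decomposes as $\lambda$ successive ``layers'' each pet-isomorphic to the unit boundary $C - t_C(C)$, and $\mathring{S}-\mathring{S}f$ is their disjoint union over components with $\lambda = \lambda(f,C)$. In coordinates $L = \mathbb{N}^r$, direct inspection shows that $C - t_C(C)$ consists of $n\binom{r}{n}$ rank-$(n-1)$ orthants, distributed into the $\binom{r}{n-1}$ parallelism classes indexed by $(n-1)$-subsets of the canonical basis $Y$, with exactly $r-n+1$ orthants per class, one for each way of extending an $(n-1)$-subset to an $n$-subset of $Y$. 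Combining this count with Proposition~\ref{proposition3.5} and Corollary~\ref{corollary3.8}, the unit boundary is pet-isomorphic to $r-n+1$ parallel copies of $L^{(n-1)}$; summing over components yields $h(f) = \big(\sum_C \lambda(f,C)\big)(r-n+1)\binom{r}{n-1}$ and the asserted pet-isomorphism.

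For~(iii), by~(ii) both complements $\mathring{S}-\mathring{S}f$ and $\mathring{S}-\mathring{S}f'$ are pet-isomorphic to a stack of $h(f)/\binom{r}{n-1}$ copies of $L^{(n-1)}$, hence to one another; I would pick any such pet-isomorphism $h$ and form $g := (f'f^{-1}) \cup h$ as in~(i). The decisive observation is that this $g$ is automatically super-diagonal: on any maximal orthant $L \subset C$, the restriction of $g$ to the commensurable suborthant $t_L^{\lambda(f,C)}(L) \subseteq \mathring{S}f$ is the diagonal translation of length $\lambda(f',C) - \lambda(f,C)$, a value depending only on the component $C$ of $L$; the behaviour of $g$ on the lower-rank complementary boundary strip (where it is dictated by $h$) does not affect the translation length on $L$. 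Bijectivity of $g$ and the identity $fg = f'$ are built into the construction, so $f$ and $f'$ lie in the same $\mathrm{pet}_{\mathrm{sdia}}(\mathring{S})$-orbit.

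\textbf{Main obstacle.} The bookkeeping in~(ii) is the technically most delicate step: beyond the parallelism-class count, one must verify that the unit boundary strip truly reassembles as $r-n+1$ copies of $L^{(n-1)}$ rather than merely as some orthohedral set with matching orthant totals per direction. I plan to extract this from the pet-invariant $h_S$ on $\Gamma_0^\ast$ furnished by Corollary~\ref{corollary3.8}, since both sides share the same quasi-normal form once the relevant $(n-1)$-directions have been identified.
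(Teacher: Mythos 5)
Parts (i) and (iii) of your proposal follow the paper's argument accurately. In (i) you carry over Lemma~\ref{lemma5.10} to the pet-setting exactly as the paper does, and in (iii) you quote more of the verification that $g$ is super-diagonal than the paper bothers to write, but the substance — that $f'f^{-1}$ restricted to any sufficiently deep suborthant of a maximal orthant $L\subset C$ is a diagonal translation of length $\lambda(f',C)-\lambda(f,C)$ — is correct and is what the paper implicitly uses.

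There is, however, a genuine gap in (ii). The assertion that $\mathring{S}-\mathring{S}f$ \emph{is} the disjoint union over components $C$ of the boundary strips $C - t_C^{\lambda(f,C)}(C)$ is false for a general $f\in M^{\mathrm{pet}}_{\mathrm{sdia}}(\mathring{S})$. Super-diagonality constrains only the \emph{germ} of $f$ at each maximal orthant: there is a commensurable suborthant $K\subseteq L$ on which $f$ restricts to the diagonal translation of length $\lambda(f,C)$, but outside the union of those suborthants $f$ is an arbitrary pet-injection, and $\mathring{S}f$ can be an essentially arbitrary cofinite orthohedral set compatible with the germ data. Consequently the literal set $\mathring{S}-\mathring{S}f$ is in general nothing like a disjoint union of strips, and the subsequent unit-layer bookkeeping, however correct for $t_C$ itself, does not apply to it. Trying to fix this by appealing to the pet-invariant $h_S$ of Corollary~\ref{corollary3.8} is circular: $h_S$ is preserved by pet-isomorphisms, but to compare $h_{\mathring{S}-\mathring{S}f}$ with $h_{\mathring{S}-\mathring{S}(\prod_C t_C^{\lambda_C})}$ you would first need the two complements to be pet-isomorphic, which is precisely what (ii) is supposed to establish.

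The missing tool is an $h_Y$-graded analogue of Lemma~\ref{lemma5.2}(ii), i.e.\ the identity
$h_Y(f) = h_Y\big(A\cap(Af)^c\big) - h_Y\big(A^c\cap Af\big)$
valid for any orthohedral $A\subseteq\mathring{S}$ with $\mathrm{rk}\,A^c<n$, where $h_Y$ counts rank-$(n-1)$ germs parallel to $\langle Y\rangle$. This formula lets one \emph{choose} $A$ to be the disjoint union of the deep suborthants $K_i$ on which $f$ acts by a diagonal translation; the crucial disjointness $K_i\cap K_jf=\emptyset$ for $i\neq j$ then reduces the right-hand side to $\sum_i \lambda_{C(i)}$, which after summing over the $r-n+1$ maximal orthants in each component that have $\langle Y\rangle$ as a face direction gives $h_Y(f)=\lambda(r-n+1)$ independently of $Y$. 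From this, $h(f)=\lambda(r-n+1)\binom{r}{n-1}$ and the claimed pet-isomorphism type follow by pet-normal form. Your local counting of the unit boundary strip is correct and reappears inside this argument, but without the $h_Y$-formula you cannot legitimately pass from the unit case to an arbitrary super-diagonal $f$.
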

\begin{proof}
The proof of i) is analogous to the pei-version in Section 3.\\  
ii) The key here is a pet-version of Lemma 3.2i). The set of germs $\Gamma^{ n-1}~(\mathring{S})$ decomposes into its
parallelism classes, and as these are ${\rm pet}(\mathring{S})$-invariant, the height function 
$h: M^{{\rm pet}}_{{\rm sdia}} (\mathring{S})\to \mathbb Z$ can be written as the sum of functions 
$h_Y: M^{{\rm pet}}_{{\rm sdia}} (\mathring{S})\to \mathbb Z$, with $Y$ running through all $(n-1)$-element subsets of the canonical basis 
of $L$, that count the number of germs in $\Gamma^{ n-1}(\mathring{S} - \mathring{S}f)$ parallel to $\langle Y\rangle .$

We need the $h_Y $-version of Lemma 3.2i), asserting that we have for all 
rank-$(n-1)$ faces $Y$ of $L$
  
\begin{itemize}
\item[(6.7)] $h_Y (f)  =  h_Y \big(A\cap (Af)^c\big) - h_Y (A^c \cap Af)$, \emph{for every orthohedral subset $A\subseteq S$
      with} ${\rm rk}A^c  < n$.
 \end{itemize}
  
The proof is a straightforward adaptation of the one in Section 3.1 and can be left to the reader.

We can refine (6.7) by exhibiting $A$ as the disjoint union of rank-$n$ orthants $K_i$  on which $f$ acts by
(super-diagonal) translations. Since $f$ is super-diagonal, the corresponding translation lenghts $\lambda_{C(i)}$ depend
only on the component $C(i)$ of $\mathring{S}$ containing $K_i$. One observes that $f(K_i )$ is
contained in the uniquely defined maximal orthant of $\mathring{S}$ containing $K_i$. This has the consequence that for
$i \not=  j,  K_i  \cap (K_jf)^c  = K_i$   and $K_i^c\cap K_jf = K_jf,$ from which one finds

\begin{itemize}
\item[(6.8)]   $h_Y (f)$ = $h_Y  \big(\bigcup_i (K_i \cap (K_i f)^c\big)- h_Y  \big(\bigcup_i ( K_i^c \cap K_i f)\big) \\ = \sum_i h_Y(K_i \cap (K_i f)^c -  h_Y( K_i^c \cap K_i f) = \sum_i \lambda_{C(i)}$.
\end{itemize}

Clearly, for each
component $C$ of $S$, $sing(C)$ contains exactly one orthant parallel to $\langle Y\rangle$. By Lemma 4.3ii) this orthant
is parallel to a face of exactly $(r - n + 1)$ maximal orthants $K_i$  in 
$C$, and each of them gives rise to a summand
$\lambda_{C(i)}$ .
Hence summation over all $K_i$  contained in a single component $C$ of $S$ yields  $\lambda_C(r - n + 1)$. And summation over all I, finally,

 $h_Y (f)  =  \lambda (r - n + 1)$ , where $\lambda$  is the sum of $\lambda_C$,  with $C$ running through all components of $S$.

This shows, in particular, that $h_Y (f)$ is independent of $Y$. As we are assuming that $h(f) > 0$ it follows that 
$\lambda  > 0$  and  $h(f) = \lambda \cdot (r - n + 1)\cdot \binom{r}{n-1}$.

It follows that $(\mathring{S} - \mathring{S}f)$  is pet-isomorphic to disjoint union $S'\cup S''$, where $S'$ is a stack of 
$h_Y (f) = \lambda (r - n + 1)$ copies of $L^{(n-1)}$ and $S''$ a subset of rank $< n - 1$. As $\lambda  > 0$, $S'$
contains at least one copy of $L^{(n-1)}$. In this situation $S'$ contains orthants parallel to any given maximal orthant of
$S''$. In view of the the pet-normal form of  $S'\cup S''$  it follows that  $S'\cup  S''$ is pet-isomorphic to $S'$, i.e., 
to a stack of    $\lambda (r - n + 1)$ copies of $L^{(n-1)}$.

iii) Part ii) shows that $h(f) = h(f ') > 0$ implies that $\mathring{S} 
- \mathring{S}f$ and $\mathring{S} - \mathring{S}f
'$ are pet-isomorphic. Hence, by assertion i), there is a pet-permutation 
$g\in {{\rm pet}}(\mathring{S})$ with $f ' =
fg$. The assumption that $f$ and $f '$ are in $M^{{\rm pet}}_{{\rm sdia}} 
(\mathring{S})$ implies that $g\in
{{\rm pet}}_{{\rm sdia}} (\mathring{S}).$
\end{proof}

 \begin{corollary} \label{corollary6.10}
 \begin{enumerate}[i)]
 \item The stabilizer of  $f\in M^{\emph{pet}}_{\emph{sdia}} (\mathring{S})$ in
 ${\emph {pet}}_{{\rm sdia}} (\mathring{S})$ is isomorphic to ${\emph{pet}}(\mathring{S} - \mathring{S}f)$
\item For every number $r, s\in \mathbb N\cup\{0\}$, the simplicial complex of $M^{[r,s]}:= \{ f\in
M^{\emph{pet}}_{\emph{sdia}} (\mathring{S})\mid r\le h(f)\le s\}$  is cocompact under the \emph{pet}$_{\emph{sdia}}
(\mathring{S})$-action.
\end{enumerate}
\end{corollary}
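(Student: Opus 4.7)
The strategy is to mimic the argument for Corollary \ref{corollary5.11}, with two adjustments: replace orthants by components, and use Lemma \ref{lemma6.9}(iii) in place of Lemma \ref{lemma5.10} for the orbit count.

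For part (i), the plan is to observe that $g \in \mathrm{pet}_{\mathrm{sdia}}(\mathring{S})$ fixes $f \in M^{\mathrm{pet}}_{\mathrm{sdia}}(\mathring{S})$ under right multiplication if and only if $fg = f$, which happens precisely when the restriction of $g$ to $\mathring{S}f$ is the identity. Equivalently, the support of $g$ is contained in $\mathring{S} - \mathring{S}f$. Thus the stabilizer is the set of all $g \in \mathrm{pet}_{\mathrm{sdia}}(\mathring{S})$ supported on $\mathring{S} - \mathring{S}f$, and the restriction-to-complement map is visibly an injective group homomorphism into $\mathrm{pet}(\mathring{S} - \mathring{S}f)$.

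For surjectivity I would go the other way: given any $h \in \mathrm{pet}(\mathring{S} - \mathring{S}f)$, extend by the identity on $\mathring{S}f$ to a pet-permutation $g$ of $\mathring{S}$. The crucial point is that such a $g$ automatically lies in $\mathrm{pet}_{\mathrm{sdia}}(\mathring{S})$: since $\mathrm{rk}(\mathring{S} - \mathring{S}f) < n = \mathrm{rk}\,\mathring{S}$, the element $g$ has rank $< n$, so by Lemma \ref{lemma3.3} each maximal orthant $L$ of $\mathring{S}$ contains a commensurable suborthant on which $g$ restricts to the identity. Hence the induced translation $\tau_{(g,L)}$ is zero, which is diagonal with translation length $0$; the super-diagonality condition (6.1) holds vacuously because all translation lengths are equal to zero on every component. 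This establishes the isomorphism $\mathrm{stab}_{\mathrm{pet}_{\mathrm{sdia}}(\mathring{S})}(f) \cong \mathrm{pet}(\mathring{S} - \mathring{S}f)$.

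For part (ii), I would copy the bijection argument from the end of Section \ref{Monoids of pei-injections}: a $k$-simplex $\Delta = (a_0 < a_1 < \cdots < a_k)$ of $|M^{\mathrm{pet}}_{\mathrm{sdia}}(\mathring{S})|$ is uniquely given by the pair consisting of its minimal vertex $a_0$ together with the chain $(\mathrm{id} < t_1 < \cdots < t_k)$ in $|\mathrm{mon}(T)|$ defined by $a_i = t_i a_0$, and the $\mathrm{pet}_{\mathrm{sdia}}(\mathring{S})$-action acts only on the $M^{\mathrm{pet}}_{\mathrm{sdia}}(\mathring{S})$-factor (by right multiplication on $a_0$). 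Hence counting orbits reduces to two independent finiteness statements. First, the number of $\mathrm{pet}_{\mathrm{sdia}}(\mathring{S})$-orbits of minimal vertices $a_0$ with $r \leq h(a_0) \leq s$ is finite: by Lemma \ref{lemma6.9}(iii) the orbit of $a_0$ is determined by $h(a_0)$ alone once $h(a_0) > 0$ (and the height-zero case contributes at most one orbit), so we get at most $s - r + 2$ orbits. Second, since each generator $t_C \in T$ has $h(t_C) = n \cdot h(C)$ (a positive constant depending only on $\mathring{S}$), the condition $h(a_k) = h(t_k a_0) \leq s$ forces the word length of $t_k$ to be bounded, and hence the number of chains $(\mathrm{id} < t_1 < \cdots < t_k)$ in $|\mathrm{mon}(T)|$ with $h(t_k) \leq s$ is finite.

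The verification is essentially bookkeeping; the main content is really packed into Lemma \ref{lemma6.9}, and in particular the non-trivial height-counting formula in part (ii) of that lemma is what guarantees that the orbit of $a_0$ depends only on $h(a_0)$. The only subtle point in the argument above is the verification that the extension by the identity in part (i) is super-diagonal, but once one notices that rank $< n$ forces zero translation lengths, there is no obstacle.
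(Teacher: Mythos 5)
Your proof is correct and follows the same approach as the paper, which for both parts simply defers to the arguments of Corollary \ref{corollary5.11} together with Lemma \ref{lemma6.9}(iii). You usefully spell out the one point the paper leaves implicit: that the extension-by-identity of a pet-permutation of $\mathring{S}-\mathring{S}f$ is automatically super-diagonal because its rank is $<n$, forcing all induced translations on maximal orthants to be trivial.
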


\begin{proof}
i) Same reasoning as the proof of Corollary 3.12. 
\\ii) Lemma 4.10iii) asserts that ${{\rm pet}}_{{\rm sdia}} (S)$ acts
cocompactly on the vertices of a given height in the  simplicial complex $| M^{{\rm pet}}_{{\rm sdia}} (S)|$. Just as in the proof 
of Corollary 3.12ii) this yields the claimed assertion.
\end{proof}

\subsection{The conclusion.}
It is an elementary observation that right action of $g\in {{\rm pet}}(S)$ on
$M^{{\rm pet}}(S)$ fixes an element $f\in M^{{\rm pet}}(S)$ if and only if $g$ restricted to $f(S)$ is the identity. In other words, the stabilizer of a vertex
$f\in M^{{\rm pet}}(S)$ is isomorphic to \mbox{${\rm pet}(S - Sf)$}. This 
will be crucial for the inductive step in the
following inductive

\begin{proof}[Proof of Proposition 6.2]
In Section \ref{reduction to the diagonal subgroup} we roved already, $fl\big({\rm pet}(\mathring{S})\big) \geq\\
fl\big({\rm pet}_{{\rm sdia}}(\mathring{S})\big)$; hence it suffices to show
$fl\big({{\rm pet}}_{{\rm sdia}}(\mathring{S})\big) \ge c(S) - 1$. We will argue by induction on $n = {\rm rk}S$.
If $n = 1$, then $h(S)=c(S)\cdot r$, and the group ${\rm pet}_{{\rm sdia}}(\mathring{S})$ is the Houghton group on $h(S)$ rays.
By Brown \cite{br87} this implies that $fl\big({{\rm pet}_{{\rm sdia}}}(\mathring{S})\big) \ge h(S) - 1 \ge c(S) - 1$.
This establishes the case $n = 1$ of the induction.
      
Now we assume $n > 1$. By induction we can assume that $fl\big({{\rm pet}}(S')\big) \ge c(S') - 1$ holds for
every stack S' of copies of a rank-$(n-1)$ skeleton of an orthant $L$. To 
prove the inductive step we start with restricting
attention to the subgroup ${\rm pet}_{{\rm sdia}} (\mathring{S})$ acting on the super-diagonal monoid
$M^{[u,v]} = \{ f\in  M^{{\rm pet}}_{{\rm sdia}} (\mathring{S})\mid u\le h(f)\le v\} , u,v\in  \mathbb N$. By
Lemma 4.10ii) the $h(f)$  is a multiple of $s := (r - n + 1)\binom{r}{n-1}$. So we fix a lower bound $u = sk_0, k_0\in 
\mathbb N$, and consider the filtration of  $M:= M^{[u, \infty]}$ in terms of $M^k = : M^{[u,sk]},$ with $k\to \infty.$
Then we argue as follows
\begin{itemize}
\item First we note that $M$ is a directed partially ordered set and hence $|M|$  is contractible. 
\item $| M^{k+1} |$  is obtained from $| M^k |$  by adjoining cones over the subcomplexes $| M_{<f} |$ for each $f$
with $h(f)=k+1$. By Lemma 4.7 and Remark 4.9 we know, that the subcomplexes $| M_{<f} |$  have the homotopy type of a
bouquet of $\big(c(S)- 1\big)$-spheres for k sufficiently large. This shows that the embedding $|M_k| \subseteq |M_{k+1}|$
is homotopically trivial in all dimensions $< c(S)$.
 \end{itemize}
\begin{itemize}
\item By Corollary \ref{corollary6.10} ii) we know that each $| M^k|$  has cocompact skeleta.
\item The stabilizer of the $f\in M$ under the action of ${\rm pet}_{{\rm 
sdia}} (\mathring{S})$ on $M$ coincides with
${\rm pet}(\mathring{S} - \mathring{S}f)$ by Corollary \ref{corollary6.10} i). Lemma \ref{lemma6.9}ii) asserts that if $h(f) > 0$ then $(\mathring{S} -
\mathring{S}f)$ is pet-isomorphic to a stack of copies of the rank-$(n-1)$ skeleton of an orthant.  The stack height here is
 $c(\mathring{S} - \mathring{S}f) = h(f)/\binom{r}{n-1}$. We can choose 
u arbitrarily; if we choose $u = \big(c(S) +  1)
 \binom{r}{n-1}$ the inductive hypothesis together with the assumption that $h(f)\ge u$ yields
 $${\rm fl}\big({{\rm pet}}(\mathring{S} - \mathring{S}f)\big) \ge c(\mathring{S} - \mathring{S}f) - 1 = h(f) \left/
 \binom{r}{n-1} - 1 \right. \ge c(S),$$
 for all~$f\in M$.
\item The main results of \cite{br87} now establishes $fl\big({\rm pet}_{{\rm sdia}}(\mathring{S})\big) \ge c(S) - 1.$
This completes the inductive step.
\end{itemize}
\end{proof}

\section{The upper bounds of \texorpdfstring{$fl\left({\rm pet}(S)\right) 
$}{TEXT} when \texorpdfstring{$S \subseteq  \mathbb{N}^N$}{TEXT}}

\subsection{More structure at infinity.}
Here we assume, for simplicity, that our orthohedral sets~$S$ are contained in
$\mathbb N^N$. By Corollary 1.5 this is not a restriction for the pei-group ${\rm pei}(S)$, and it is a basic
special case for the  pet-group ${\rm pet}(S)$:

Given an element $x\in X$ (i.e. a coordinate axis), we write $\Gamma_{x}^1(S)$ for the set of all germs of rank-$1$
orthants of $S$ parallel to $\mathbb N x$. We have a canonical embedding $\kappa : \Gamma_{x}^1(S)\to  \mathbb N^{ N-1}$
defined as follows: Each $\gamma \in \Gamma_{x}^1(S)$ is represented by a 
unique maximal orthant $L\in \Omega^1(S)$; we
delete the $x$-coordinate of the base point of $L$ and put $\kappa (\gamma )$ to the remaining coordinate vector. We write
$\partial_xS$ for the image $\kappa \big(\Gamma_{x}^1(S)\big)$, and we will often identify $\Gamma_{x}^1(S)$ with
$\partial_xS$ via $\kappa .~ \partial_xS$ can be viewed as the boundary of $S$ \emph{at infinity in direction}~$x$.
\begin{lemma}For $\partial_xS$ the following hold.
\begin{enumerate}[i)]
\item $\partial_xS$ is an orthohedral subset of $\mathbb N^{N-1}.$
\item For very rank-$(k-1)$ orthant $L\subseteq \partial_xS$ there is a unique rank-$k$ orthant $L'\subseteq S$, which
is maximal with respect to the property that for each point of $p\in L~ \kappa^{ -1}(p)$ is  represented by a suborthant of $L.$
\end{enumerate}
\end{lemma}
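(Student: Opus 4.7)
My plan is to reduce both parts to a direct analysis via a finite orthant decomposition $S = \bigsqcup_i L_i$, exploiting the fact that a rank-$1$ ray parallel to $\mathbb{N}x$ is cofinite in its germ and hence, by pigeonhole, eventually lies entirely inside a single $L_i$ whose direction set contains $x$.

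For part (i), I would let $\pi_x : \mathbb{N}^N \to \mathbb{N}^{N-1}$ denote the projection that forgets the $x$-coordinate, and prove the identity
\[
\partial_x S \;=\; \bigcup_{i \,:\, x \in \mathrm{dirs}(L_i)} \pi_x(L_i).
\]
The inclusion $\supseteq$ is immediate (each $\pi_x(L_i)$ is the shadow of an orthant of $S$ containing many rays in direction $x$). For $\subseteq$, take $b \in \partial_x S$; by definition there is a ray $(b,a)+\mathbb{N}x \subseteq S$, and since the $L_i$ are finite in number and the ray is infinite, one $L_i$ (necessarily with $x$ as a direction) swallows a cofinite tail, whence $b \in \pi_x(L_i)$. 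Since each $\pi_x(L_i)$ is an orthant of $\mathbb{N}^{N-1}$, this exhibits $\partial_x S$ as a finite union of orthants, hence orthohedral.

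For part (ii), write $L = b + \bigoplus_{y \in Y}\mathbb{N}y$ with $Y \subseteq X\setminus\{x\}$ of size $k-1$, and for each $p \in L$ let $a_p := \min\{t \in \mathbb{N} : (p,t) + \mathbb{N}x \subseteq S\}$, the base of the unique maximal representative of $\kappa^{-1}(p)$. The key quantitative step is to bound $a_p$ uniformly over $p \in L$: since $L \subseteq \partial_x S \subseteq \bigcup_i \pi_x(L_i)$, every $p \in L$ belongs to some $\pi_x(L_i)$ with $x \in \mathrm{dirs}(L_i)$, and then $a_p$ is bounded above by the $x$-coordinate of the base point of $L_i$; taking the maximum over the finitely many relevant $L_i$ gives a uniform bound $a \in \mathbb{N}$. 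Then
\[
L' \;:=\; (b,a) + \mathbb{N}x + \bigoplus_{y\in Y} \mathbb{N}y
\]
is a rank-$k$ orthant contained in $S$, and for each $p \in L$ the tail $(p,a)+\mathbb{N}x$ of $R_p$ is a suborthant of $L'$ representing $\kappa^{-1}(p)$.

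For uniqueness and maximality, any rank-$k$ orthant $L''\subseteq S$ satisfying the suborthant condition must have $x$ as one of its directions (to contain rays parallel to $\mathbb{N}x$) and its shadow $\pi_x(L'')$ must be commensurable to a superset of $L$; since $L''$ has rank $k$ and $L$ has rank $k-1$, the direction set of $L''$ is forced to be exactly $Y \cup \{x\}$, and the shadow is commensurable to $L$. Hence every such $L''$ represents the same germ as the $L'$ constructed above, and the maximal representative of a given germ inside $S$ is unique by construction of germs; replacing $L'$ by this maximal representative (if larger) yields the unique maximal $L'$ asserted. The main obstacle I anticipate is being careful about the distinction between \emph{maximal with respect to the stated property} and \emph{maximal representative of a germ in $S$}, and in particular verifying that the uniform bound $a$ really does produce the largest orthant rather than a merely valid one — this requires checking that no orthant in $S$ of the same germ extends $L'$ further "downward" in the $x$-direction, which follows directly from the definition of $a = \sup_p a_p$.
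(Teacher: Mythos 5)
Your proof of part (i) is essentially correct, and since the paper declares this lemma ``Easy'' without argument, your level of detail is reasonable. One small precision worth adding: pigeonhole only guarantees that \emph{some} $L_i$ contains infinitely many points of the ray; to upgrade this to a cofinite tail you should observe that a coordinate orthant meets a ray parallel to $\mathbb{N}x$ either in the empty set, in a single point, or in a cofinite tail, and only in the last case is $x$ a direction of that orthant.

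The uniqueness argument in (ii) has a real gap. You reduce it to the claim that ``the maximal representative of a given germ inside $S$ is unique by construction of germs.'' That is true for rank-$1$ germs (the paper uses exactly this in defining $\kappa$, because the union of commensurable rays in $S$ is again a ray in $S$), but it already fails in rank~$2$: in $S=\mathbb{N}^2\setminus\{(0,0)\}$ the commensurable orthants $(1,0)+\mathbb{N}^2$ and $(0,1)+\mathbb{N}^2$ both lie in $S$, both are maximal in $S$, and neither contains the other. What \emph{is} unique is the tangent coset $\langle\gamma\rangle$, but that is generally not contained in $S$; you may be conflating the two. And the gap is not cosmetic: with $S=\bigl((1,0)+\mathbb{N}^2\bigr)\cup\bigl((0,5)+\mathbb{N}^2\bigr)\subseteq\mathbb{N}^2$, $x$ the second coordinate and $L=1+\mathbb{N}\subseteq\partial_xS=\mathbb{N}$, both $(1,0)+\mathbb{N}^2$ and $(0,5)+\mathbb{N}^2$ lie in $S$, each realizes every $\kappa^{-1}(p)$, $p\in L$, by a suborthant, and they are incomparable, so a literal reading of ``the unique maximal such $L'$'' needs more care than you are giving it. Note that the $L'$ you actually construct has $\pi_x(L')=L$ exactly (base $(b,a)$), not merely $\pi_x(L')\supseteq L$; restricted to that normalisation uniqueness \emph{does} hold and amounts to the well-definedness of $a=\sup_{p\in L}a_p$. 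You should formulate and prove uniqueness in that restricted sense rather than by appealing to uniqueness of maximal germ representatives.
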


\begin{proof}Easy.\end{proof}

\subsection{Short exact sequences of pet-groups.}
From now on we assume that $S = \bigcup_j S_j \subseteq  \mathbb N^m$
where $m$ is minimal and $S$ is in pet-normal form as defined after Proposition 1.5. Given $x\in X$ arbitrary we note that
$S$ is the disjoint union $S = S(x)\cup S^{\bot}(x)$, where $S(x)$ collects the stacks $S_j$ which contain a rank-$1$ orthant parallel to $\mathbb N
x$, and $S^{\bot}(x)$ the stacks $S_j$ which are perpendicular to $x$. We 
note that $\partial_x S = \partial_x S(x)$, and
we have an obvious projection $\pi_x: S(x) \twoheadrightarrow \partial_x S$. Moreover, there is a canonical injection 
$\sigma_x: \partial_x S \to  S(x)$ which maps each germ $\gamma \in \partial_x S$ to the base point of the unique maximal 
rank-$1$ orthant representing $\gamma$, and is right-inverse to  $\pi_x: S(x) \twoheadrightarrow \partial_x S.$

As the action of ${\rm pet}(S)$ on the $\Omega^1(S)$ preserves directions 
it induces, for each coordinate axis $x\in X$, an
action on $\Gamma_{x}^1 (S) = \partial_x S$, and one observes that this 
is an action by pet-permutations. This yields an
induced homomorphism \mbox{$\vartheta_x : {\rm pet}(S)\to {\rm pet}(\partial_x S)$}. The kernel of $\vartheta_x$  is the set of
all pet-permutations fixing all rank-$1$ germs parallel to $x$. And we note the following:

\begin{enumerate}[i)]
\item
\mbox{$\sigma_x: \partial_x S \to  S(x)$} induces an embedding of ${\rm pet}(\partial_x S)$ as a subgroup of
 ${\rm pet}\big(S(x)\big)$, which splits the surjective homomorphism
 $$\vartheta_x : {{\rm pet}}(S(x)) \twoheadrightarrow 
 {{\rm pet}}(\partial_x S)$$
 induced by $\pi_x$.
\item  Every pet-permutation $g\in {{\rm pet}}\big(S(x)\big)$ extends to a pet-permutation of $S$ by the identity on $S^{\bot}(x)$. 
This exhibits ${{\rm pet}}\big(S(x)\big)$ as a canonical subgroup of ${\rm pet}(S)$. Even though we do not have ${\rm pet}(S)$ acting on
$S(x)$, we do have that the surjective homomorphism
$$\vartheta_x : {{\rm pet}}(S)\twoheadrightarrow {\rm pet}(\partial_x
S)$$
splits by the embedding ${{\rm pet}}(\partial_x S)\le {{\rm pet}}\big(S(x)\big)\le {{\rm pet}}(S).$
\end{enumerate}
Summarizing we have
\begin{proposition}
${\emph{pet}}(\partial_x S)$ is a retract both of $
{\emph{pet}}(S)$ and of ${\emph{pet}}\big(S(x)\big)
$. In other words, we have split exact sequences 
$$1 \to  K \to  {\emph{pet}}(S) \to {\emph{pet}}(\partial_x S)\to ~ 1$$ 
and 
$$1 \to  K^\dag
\to  {\emph{pet}}\big(S(x)\big) \to {\emph{pet}}(\partial_x S) \to  1.$$\hfill$\square$
\end{proposition}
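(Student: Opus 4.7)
The plan is to construct an explicit section $\ell_x : {\rm pet}(\partial_x S) \to {\rm pet}(S(x))$ of $\vartheta_x$, then extend it by the identity on $S^\perp(x)$ to obtain a section $\tilde\ell_x$ into ${\rm pet}(S)$. Both claimed split short exact sequences will then follow at once, with $K^\dag := \ker\vartheta_x|_{{\rm pet}(S(x))}$ and $K := \ker\vartheta_x|_{{\rm pet}(S)}$; the surjectivity of $\vartheta_x$ in both settings comes for free from the existence of the section.

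First I would define $\ell_x(\bar g)$ for $\bar g \in {\rm pet}(\partial_x S)$ fibrewise over $\pi_x$. Each point $p \in \partial_x S$ corresponds to a unique maximal rank-$1$ orthant $R_p \subseteq S$ parallel to $\mathbb N x$, namely the fibre $\pi_x^{-1}(p)$, and these fibres partition $S(x)$ by the very construction of $S(x)$ as the union of those stacks containing some $x$-parallel ray. Let $\ell_x(\bar g)$ be the bijection of $S(x)$ that maps each fibre $R_p$ to $R_{p\bar g}$ by the canonical $x$-coordinate-preserving parallel translation. This assignment is multiplicative in $\bar g$ since parallel translations of parallel rays compose correctly.

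Second I would verify $\ell_x(\bar g) \in {\rm pet}(S(x))$. Choose a decomposition of $\bar g$ as a finite disjoint union of translations on orthants $L_i \subseteq \partial_x S$. By part (ii) of the lemma preceding the proposition each $\pi_x^{-1}(L_i) \subseteq S(x)$ is an orthant of $S(x)$ of rank $\mathrm{rk}(L_i)+1$ whose canonical basis adjoins $x$ to that of $L_i$, and $\ell_x(\bar g)$ restricts to a translation on it (the translation prescribed by $\bar g$ in the $L_i$-directions and the identity in the $x$-direction). Taking the disjoint union of these finitely many pieces exhibits $\ell_x(\bar g)$ as a pet-map. Extending by the identity on $S^\perp(x)$, which is an orthohedral set disjoint from $S(x)$ by the pet-normal-form hypothesis, produces $\tilde\ell_x(\bar g) \in {\rm pet}(S)$; because $S^\perp(x)$ contains no rank-$1$ orthant parallel to $\mathbb N x$ by its definition, the extension contributes nothing to $\partial_x S$ and still projects to $\bar g$ under $\vartheta_x$.

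The splitting identity $\vartheta_x \circ \tilde\ell_x = \mathrm{id}$, and likewise $\vartheta_x \circ \ell_x = \mathrm{id}$ on ${\rm pet}(S(x))$, is then immediate: on germs $\ell_x(\bar g)$ realises $\bar g$ by construction. This simultaneously yields surjectivity of $\vartheta_x$ onto ${\rm pet}(\partial_x S)$ from both ${\rm pet}(S)$ and ${\rm pet}(S(x))$, exhibits a group-theoretic section in each case, and completes both split exact sequences. The main obstacle is the only non-formal point, namely the compatibility of the orthohedral structure on $\partial_x S$ with the fibration $\pi_x$ -- i.e.\ that an orthant decomposition of $\partial_x S$ pulls back along $\pi_x$ to an orthant decomposition of $S(x)$ on which the lift acts by translations. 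This is exactly the content of part (ii) of the structural lemma preceding the proposition, so once it is invoked the remainder of the argument is routine bookkeeping.
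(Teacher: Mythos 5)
Your proposal follows the paper's route exactly: building a fibrewise section of $\vartheta_x$ over the fibration $\pi_x\colon S(x)\twoheadrightarrow\partial_x S$ is precisely what the paper packages as the embedding ``induced by $\sigma_x$'', and the extension by the identity on $S^{\bot}(x)$ is the paper's observation~(ii). So the strategy and the use of the preceding structural lemma match the paper; the paper is in fact terser than you are, stating the claim with a~$\square$ and leaving essentially all of your bookkeeping implicit.

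The one point to tighten is the choice of fibrewise translation. You carry $R_p$ to $R_{p\bar g}$ by ``the canonical $x$-coordinate-preserving parallel translation'' and later describe the lift as ``the identity in the $x$-direction.'' This is only correct when the two maximal fibres $R_p$ and $R_{p\bar g}$ have the same base $x$-coordinate; in general they need not, in which case a translation fixing the $x$-coordinate either fails to cover all of $R_{p\bar g}$ or carries points of $R_p$ outside $S(x)$, and $\ell_x(\bar g)$ would not be a permutation. The correct translation is the one matching base points, i.e.\ the unique parallel translation sending $\sigma_x(p)$ to $\sigma_x(p\bar g)$ — exactly the transport along the paper's set-theoretic section $\sigma_x$, which may involve a nontrivial shift in the $x$-direction. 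With this fix your verification that the lift is a pet-map goes through unchanged: refining the decomposition $\{L_i\}$ if necessary so that each $L_i$ lies inside a single stack of $\partial_x S$ (where the base $x$-level of $\sigma_x$ is constant and $\pi_x^{-1}(L_i)$ is, up to a lower-rank discrepancy, the orthant $L'$ of the lemma), $\ell_x(\bar g)$ restricts on $\pi_x^{-1}(L_i)$ to the $\bar g$-translation in the $L_i$-directions together with a constant shift in the $x$-direction — still a translation, and multiplicativity is restored since composing base-point-matching translations matches base points again.
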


\subsection{An upper bound of the finiteness length of pet(S)}
To deduce an upper bound for the finiteness
lengths of the pet-groups we need the following elementary lemma which was overlooked in [BE75]; the first occurrence in
the  literature we are aware of is [Bu04].
\begin{lemma}
Let $G$ be a group. If a subgroup $H\le G$ is a retract of $G$ then  $fl(H) \ge fl(G).$
\end{lemma}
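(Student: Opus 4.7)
My plan is to prove the stronger statement that each type-$F_m$ property passes to retracts; the inequality $fl(H)\geq fl(G)$ then follows by taking the supremum over $m$. Let $\iota\colon H\hookrightarrow G$ denote the inclusion and $\rho\colon G\twoheadrightarrow H$ the retraction with $\rho\circ\iota = \mathrm{id}_H$. Recall from Subsection \ref{finiteness length} that being of type $F_m$ is the conjunction of (i) finite presentability (trivial for $m\leq 1$) and (ii) type $FP_m$, i.e.\ the existence of a $\mathbb{Z}G$-projective resolution of the trivial module $\mathbb{Z}$ which is finitely generated in all dimensions $\leq m$. I would treat these two properties separately.

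Finite generation of $H$ is immediate from $H=\rho(G)$. For finite presentability, write $G=N\rtimes\iota(H)$ with $N:=\ker\rho$: starting from a finite presentation $G=\langle X\mid R\rangle$, decompose each $x\in X$ uniquely as $n_x\cdot\iota(h_x)$ with $n_x\in N$ and $h_x\in H$; the symbols $n_x$ can then be eliminated by Tietze transformations using $\rho(x)=h_x$, producing a finite presentation of $H$. This is the classical step alluded to in the Lemma.

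The core of the argument is the $FP_m$ step, and here I would invoke the Bieri--Eckmann colimit criterion: a group $\Gamma$ is of type $FP_m$ if and only if, for every filtered system $(M_\lambda)$ of $\mathbb{Z}\Gamma$-modules, the canonical map
$$\varinjlim_\lambda H^k(\Gamma;M_\lambda)\;\longrightarrow\; H^k\bigl(\Gamma;\varinjlim_\lambda M_\lambda\bigr)$$
is an isomorphism for $k<m$ and a monomorphism for $k=m$. Given a filtered system $(M_\lambda)$ of $\mathbb{Z}H$-modules, pull it back to the filtered system $(\rho^*M_\lambda)$ of $\mathbb{Z}G$-modules via $\rho$ (so that $G$ acts through $H$), and apply the criterion to $G$, which is available by hypothesis.

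To descend the conclusion from $G$ to $H$, note that $\rho\circ\iota=\mathrm{id}_H$ implies $\iota^*\rho^*M=M$ as $\mathbb{Z}H$-modules for every $\mathbb{Z}H$-module $M$, so the induced maps on cohomology form a natural splitting
$$H^k(H;M)\xrightarrow{\rho^*} H^k(G;\rho^*M)\xrightarrow{\iota^*} H^k(H;M)$$
whose composite is the identity. These splittings are natural in $M$ and commute with filtered colimits, hence fit into a commutative diagram of Bieri--Eckmann maps with the $G$-row on top and the $H$-row on the bottom; a short diagram chase then transports ``iso'' (resp.\ ``mono'') from the top to the bottom, establishing $FP_m$ for $H$. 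I expect the main obstacle to be the finite-presentability step, which is not a formal consequence of the retract hypothesis and genuinely uses the semidirect-product structure $G=N\rtimes H$; by contrast the $FP_m$ descent is purely categorical and requires neither flatness of $\mathbb{Z}H$ over $\mathbb{Z}G$ nor any finiteness hypothesis on the kernel $N$.
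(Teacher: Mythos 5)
Your proof is correct, and it takes a genuinely different (and somewhat more careful) route than the paper's. The paper argues in one line using Brown's homological criterion: $afl(G)\geq s$ iff the homology functors $H_k(G;-)$ commute with direct products for $k<s$, and this property is inherited by retracts because $H_k(-;-)$ is functorial in the pair $(G,A)$ — the retraction and its section induce a natural splitting of $H_k(H;M)\to H_k(G;\rho^*M)\to H_k(H;M)$, exactly as in your cohomological diagram. You instead invoke the Bieri--Eckmann filtered-colimit criterion in cohomology; the two criteria are equivalent characterizations of $FP_m$, and the descent-by-splitting mechanism is identical. What your version buys is an explicit treatment of the finite-presentability half of $F_m$: the paper's one-sentence proof really only addresses the $afl$/$FP$-theoretic content and leaves implicit the classical fact that a retract of a finitely presented group is finitely presented, whereas you spell out the semidirect-product decomposition $G=N\rtimes H$ and note that $N$ is normally generated by the finitely many $n_x$, so $H\cong G/N$ inherits finite presentability. (One small caution: the Bieri--Eckmann criterion is often stated with vanishing filtered systems, i.e.\ $\varinjlim M_\lambda=0$ implies $\varinjlim H^k(\Gamma;M_\lambda)=0$ for $k\leq m$; your ``iso for $k<m$, mono for $k=m$'' formulation is an equivalent variant, and your diagram-chase transports it correctly, since the splitting is natural and filtered colimits are exact.) Both proofs pivot on the same essential observation that (co)homology of the retract is a natural direct summand of that of the ambient group with coefficients pulled back along $\rho$; you have simply chosen the dual coordinate system and filled in the gap the paper leaves unspoken.
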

\begin{proof}
The assertion  $fl(G)\ge s$ is equivalent to saying that on the category of $G$-modules the homology functors  
$H_k(G; - )$ commute with direct products for all $k<s.$ That this is inherited by retracts follows from the fact that 
$H_k( - ; - )$ is a functor on the appropriate category of pairs $(G,A)$, 
with $G$ a group and $A$ a $G$-module. 
\end{proof}

If $S \subseteq \mathbb N^N$ is an orthohedral set of rank ${\rm rk}S = 
n$ in pet-normal form, then
$\Omega_0^\ast(\mathbb N^m)$ is canonically bijective to the set $P(X)$ of all subsets of $X$. Hence we can view the
height function (1.1) of section 1.4 as a map
$$h_S:P(X) \to \mathbb N\cup\{0\},$$
and organize stacks of maximal orthants of $S$ as follows: For every subset $Y\subseteq X$ we have
the (possibly empty) stack $S(Y) \subseteq S$ of $h_S(Y)$ orthants parallel to the orthant $\langle Y \rangle$ defined by
$Y$.
\\For each $(n-1)$-element subset $Y\subseteq X$ we consider the link ${{\rm Lk}}(Y)$ of $Y$ in $S_\tau$, by this we mean the set
of all $n$-element sets $Y'\subseteq Y$ with the property, that $\langle Y' \rangle\subseteq S_\tau$, noting that $\langle
Y' \rangle \in {\rm max} \Omega_0^\ast(S_\tau)$. Then we put $S({{\rm Lk}}(Y)) \subseteq S$ to be the union of the stacks $S(Y')$
with $Y'$ running through ${{\rm Lk}}(Y)$. The height, $h\big(S\big({{\rm 
Lk}}(Y)\big)\big)$, is the sum of all
stack-heights $h_S(\langle Y' \rangle)$ as $Y'$ runs through ${{\rm Lk}}(Y)$.

\begin{theorem}
If $S \subseteq  \mathbb N^m$ is orthohedral of rank $n$ in pet-normal form, then each indicator $(n-1)$-subset $Y
\subseteq X$ with non-empty link ${\emph{Lk}}(Y)$ imposes an upper bound $fl\big(\emph{pet}(S)\big)\leq
h\big(S\big({\emph{Lk}}(Y)\big)\big)-1$.
\end{theorem}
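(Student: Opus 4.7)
The plan is to induct on the rank $n = \mathrm{rk}\, S$, applying at each step the split surjection $\vartheta_y: \mathrm{pet}(S) \twoheadrightarrow \mathrm{pet}(\partial_y S)$ from the Proposition of the previous subsection together with the retract lemma $fl(H) \geq fl(G)$ whenever $H$ is a retract of $G$.

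For the base case $n = 1$, the subset $Y$ must be empty, and $\mathrm{Lk}(\emptyset)$ records all directions occurring in $S$, so $h(S(\mathrm{Lk}(\emptyset))) = h(S)$. Because pet-maps preserve coordinate directions, the disjoint union of rays decomposes as $\mathrm{pet}(S) = \prod_{x \in X} \mathrm{pet}(S^x)$, where $S^x$ is the stack of $h_x := h_S(\langle x\rangle)$ rays parallel to $x$; each factor is a Houghton group $H_{h_x}$ with $fl(H_{h_x}) = h_x - 1$ by Brown \cite{br87}. Since each factor is a retract of the product, the retract lemma gives $fl(\mathrm{pet}(S)) \leq \min_x (h_x - 1) \leq \sum_x h_x - 1 = h(S) - 1$, as required.

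For the inductive step $n \geq 2$, I would pick any $y \in Y$ and use the Proposition and retract lemma to reduce to $\mathrm{pet}(\partial_y S)$, obtaining $fl(\mathrm{pet}(S)) \leq fl(\mathrm{pet}(\partial_y S))$. After replacing $\partial_y S$ by its pet-normal form (permissible by Corollary \ref{corollary4.2}, since the pet-group is preserved), this is orthohedral of rank exactly $n - 1$: the bound $\leq n-1$ is structural, while the hypothesis $\mathrm{Lk}(Y) \neq \emptyset$ provides a rank-$n$ stack in $S$ whose direction contains $y$, yielding a rank-$(n-1)$ stack in $\partial_y S$. Setting $Y' := Y \setminus \{y\}$ (an $(n-2)$-subset of the coordinate set $X \setminus \{y\}$ of $\partial_y S$), the $(n-1)$-subsets of $X \setminus \{y\}$ containing $Y'$ have the form $Y'' = Y' \cup \{x\}$ with $x \in X \setminus Y$, and the rank-$(n-1)$ stacks of $\partial_y S$ of direction $\langle Y''\rangle$ correspond bijectively and height-preservingly to the rank-$n$ stacks of $S$ of direction $\langle Y'' \cup \{y\}\rangle = \langle Y \cup \{x\}\rangle$. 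Summing heights yields $h((\partial_y S)(\mathrm{Lk}(Y'))) = h(S(\mathrm{Lk}(Y)))$, so the inductive hypothesis applied to $(\partial_y S, Y')$ delivers $fl(\mathrm{pet}(\partial_y S)) \leq h(S(\mathrm{Lk}(Y))) - 1$, completing the induction.

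The main obstacle is the bookkeeping behind the height-preserving bijection between stacks of $\partial_y S$ with direction containing $Y'$ and stacks of $S$ with direction containing $Y$. One must check that pet-normalizing $\partial_y S$ neither merges nor creates extra rank-$(n-1)$ stacks in this correspondence, and that the potential reduction of ambient dimension in the pet-normal form does not corrupt the link computation. Given the explicit stack-by-stack description of $\partial_y S$, this should be routine but requires care, and it is the one place where the structural hypotheses on $S$ (rank-$n$ in pet-normal form) really enter the argument.
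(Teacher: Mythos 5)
Your inductive step is essentially sound and in fact mirrors the paper's own proof, which carries out exactly the same reduction by composing the projections $\pi_y$ for all $y\in Y$ in a single pass rather than setting up a formal induction on rank. Your worry about having to renormalize $\partial_y S$ is also harmless: when $S$ is in pet-normal form the directions $W\in\max\Omega_0^\ast(S_\tau)$ are pairwise incomparable, so the directions $W\setminus\{y\}$ of the stacks of $\partial_y S$ (for $W\ni y$) are again pairwise incomparable and $\partial_y S$ comes out already in pet-normal form; the height and link bookkeeping you describe therefore works as claimed.

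The base case, however, contains a genuine error. The decomposition $\mathrm{pet}(S)=\prod_{x\in X}\mathrm{pet}(S^x)$ is false. A pet-permutation is only required to be a translation on each of finitely many orthants; the finite remainder can be moved anywhere in $S$, and the flow constraint on germs is a single global balance, not one balance per direction. Concretely, with $S=L_1\cup L_2$, $L_1=\mathbb N e_1$, $L_2=e_2+\mathbb N e_2$, the map sending $ae_1\mapsto (a+1)e_1$, $be_2\mapsto(b-1)e_2$ for $b\geq2$, and $e_2\mapsto 0$ is in $\mathrm{pet}(S)$ but preserves neither $L_1$ nor $L_2$, so it is not in $\mathrm{pet}(L_1)\times\mathrm{pet}(L_2)$. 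Your intermediate bound $fl(\mathrm{pet}(S))\leq\min_x(h_x-1)$ is therefore false whenever $S$ has rays in more than one direction: for the example above it would claim $fl(\mathrm{pet}(S))\leq 0$, whereas in fact $\mathrm{pet}(S)$ is commensurable with $H_2$ and $fl=1$. Indeed, choosing for each ray $L_i$ a bijection $L_i\cong\{i\}\times\mathbb N$ identifies the germ-fixing subgroup $\mathrm{pet}_0(S)$ with the full Houghton group $H_{h(S)}$ on $h(S)$ rays, independently of their directions, and $\mathrm{pet}_0(S)$ is of finite index in $\mathrm{pet}(S)$; hence $fl(\mathrm{pet}(S))=h(S)-1$. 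This commensurability with $H_{h(S)}$ is what the base case actually needs, and it is precisely what the paper invokes at the end of its iterated reduction ($\mathrm{pet}(S')$ contains $H_{h(S(\mathrm{Lk}(Y)))}$ as a subgroup of finite index, so $fl(\mathrm{pet}(S'))=h(S(\mathrm{Lk}(Y)))-1$). Replacing your product decomposition with this commensurability argument repairs the base case and the rest of your induction then goes through.
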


\begin{proof}
We choose any $y\in Y$ and consider the projection \mbox{$\pi_y: S \twoheadrightarrow
\partial_y S\cup\{\emptyset\}$}, where the symbol $\{\emptyset\}$ is the image of $S-S(y)$. Proposition 5.2 asserts, that
${\rm pet}(\partial_y S)$ is a retract of ${\rm pet}(S)$. We have ${\rm rk}\partial_yS={\rm rk}S-1$; in fact, $\partial_yS$ is the
disjoint union of stacks $S(Z)$, with $Z$ running through all subsets of $X$ avoiding y and satisfying $\langle Z\cup \{y\}\rangle\in
{\rm max} \Omega_0^\ast(S_\tau)$. Thus note that $S(Z)$ is a stack of rank-$(n-1)$ orthants with unchanged stack height
$h(S(Z))=h_S(Z\cup \{y\})$.
\\We can choose the next element $y'\in~ Y - \{y\}$, consider the projection $\pi_{y'}: \partial_y S \twoheadrightarrow
\partial_{y'}\partial_y S\cup\{\emptyset\}$. Upon putting $\pi_y(\emptyset)=\emptyset$ for all $y\in Y $, we can iterate
the argument with all elements of $Y=\{y,\ldots,z\}$, noting that only the stacks in $S({{\rm Lk}}(Y))$ survive all  these projections. The composition
$$
\pi_y = \pi_z \ldots \pi_y: S \twoheadrightarrow \partial_z \ldots\partial_y \cup \{\emptyset\}
$$
projects the stacks of $S({{\rm Lk}}(Y))$ onto stacks of rank-1 orthants with the original stack heights. This shows that ${\rm pet}(S)$
admits a retract isomorphic to the pet-group ${\rm pet}(S')$ of a disjoint union of $h(S({{\rm Lk}}(Y))$ rank-1 orthants. But ${\rm pet}(S')$
contains Houghton's group on $h\big(S\big({{\rm Lk}}(Y)\big)\big)$ copies 
of $\mathbb N$ as a subgroup of finite index.
Hence Lemma 5.3 together with Ken Brown's result \cite{br87} yields  $fl\big({\rm pet}(S)\big) \leq fl\big({\rm pet}(S')\big) 
= h\big(S({{\rm Lk}}(Y)\big) - 1$, as asserted.
\end{proof}

\subsection{Application to stacks of the n-skeleton of an orthant.}
Let $S $ be a stack of $c(S)$ copies of the rank-$n$
skeleton $K^{(n)}$ of a rank-$r$ orthant $K$. The link of each cardinality-$(n-1)$ subset $Y$ of the cardinality-$r$ set
$X$ contains exactly $(r - n + 1)$ cardinality-$n$ subsets $Y'$. And $S$ contains exactly $c(S)$ orthants parallel to
$\langle Y'\rangle$.
Hence the height of disjoint union of the stacks of $S$ over the link ${{\rm Lk}}(Y)$ is  $h(S({{\rm Lk}}(Y)) = c(S)(r - n + 1)$.
Combining Proposition 4.2 with Theorem 5.4 thus yields

\begin{theorem}\label{stack of n-skeletons} If $S$ is the rank-n skeleton 
of a stack of rank-r orthants then
	\[		c(S) - 1  \le  fl  \big(\emph{pet}(S)\big)  \le  c(S)(r - n + 1) - 1.\]
\end{theorem}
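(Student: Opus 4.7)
The plan is to get the two inequalities separately by invoking results that are already available in the excerpt. The lower bound $c(S)-1 \le fl({\rm pet}(S))$ is simply a direct application of Proposition \ref{proposition6.2}, which was established by the whole machinery of Sections 6.1--6.5 (super-diagonal pet-groups, the combinatorial complex $\Sigma(f)$, the coloured-graph Lemma \ref{lemma5.8}, and Ken Brown's criterion in \cite{br87}). So nothing new is required on that side.

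For the upper bound I would apply Theorem 5.4 (the retract-to-Houghton-group estimate). First I would write $S$ explicitly in pet-normal form: fixing a rank-$r$ orthant $K$ with canonical basis $X$, the $n$-skeleton $K^{(n)}$ is the union of the $\binom{r}{n}$ rank-$n$ faces $\langle Y' \rangle$ with $Y' \subseteq X$, $|Y'|=n$, and $S$ is a stack of $c(S)$ parallel copies, giving $h_S(\langle Y' \rangle) = c(S)$ for every such $Y'$. Next I would pick any $(n-1)$-element subset $Y \subseteq X$; then the link ${\rm Lk}(Y)$ inside $S_\tau$ consists exactly of the $n$-element sets $Y \cup \{z\}$ with $z \in X \smallsetminus Y$, and there are precisely $r-n+1$ of them. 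Consequently
\[
h\bigl(S({\rm Lk}(Y))\bigr) \;=\; \sum_{Y' \in {\rm Lk}(Y)} h_S(\langle Y' \rangle) \;=\; c(S)(r-n+1).
\]
Plugging this into Theorem 5.4 yields $fl({\rm pet}(S)) \le c(S)(r-n+1) - 1$, as required.

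The hard part of this theorem is already behind us: the nontrivial content sits in Proposition \ref{proposition6.2} (which needed the entire super-diagonal construction to upgrade Brown's argument from stacks of rays to stacks of skeletons) and in Theorem 5.4 (which needed the iterated retracts $\pi_y$ along boundary-at-infinity projections). So the only step that still needs a small check is the combinatorial identity $h(S({\rm Lk}(Y))) = c(S)(r-n+1)$, and that in turn boils down to the observation that every $(n-1)$-face of the $n$-skeleton of $K$ is a face of exactly $r-n+1$ distinct $n$-faces of $K$, matching the count used already in Lemma \ref{lemma6.9}(ii). No further obstacle should arise.
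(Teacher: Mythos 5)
Your proof is correct and follows essentially the same route as the paper's: the lower bound is Proposition \ref{proposition6.2} and the upper bound comes from the link-based retract theorem, with the combinatorial count that each $(n-1)$-subset $Y$ of the $r$-element index set lies in exactly $r-n+1$ of the $\binom{r}{n}$ maximal $n$-faces, each contributing stack height $c(S)$. Nothing is missing.
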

\begin{corollary}
If $S$ is a stack of orthants then $fl  \big(\emph{pet}(S)\big) = c(S) - 1.$
\end{corollary}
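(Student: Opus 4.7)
The plan is to derive this corollary directly from Theorem \ref{stack of n-skeletons} by specializing $r=n$. A stack of orthants in the sense of Section \ref{stack of skeletons} is, tautologically, the rank-$n$ skeleton of a stack of rank-$n$ orthants: each orthant $K$ of rank $r$ equals its own top skeleton $K^{(r)}$. Hence the hypothesis of Theorem \ref{stack of n-skeletons} is satisfied with $r=n$, and moreover $h(S)=c(S)$ since each ``component'' in the sense of Section \ref{stack of skeletons} consists of a single maximal orthant.

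Plugging $r=n$ into the double inequality of Theorem \ref{stack of n-skeletons} gives
\[
c(S)-1 \;\leq\; fl\bigl(\mathrm{pet}(S)\bigr) \;\leq\; c(S)(r-n+1)-1 \;=\; c(S)\cdot 1-1 \;=\; c(S)-1,
\]
so both bounds coincide and the equality $fl(\mathrm{pet}(S))=c(S)-1$ follows. No further work is required: the lower bound comes from Proposition \ref{proposition6.2} (the inductive construction using super-diagonal pet-injections, diagonal pet-subgroup, and Brown's discrete Morse theory applied to the complex $|M^{\mathrm{pet}}_{\mathrm{sdia}}(\mathring S)|$), while the upper bound comes from Section 7 (realizing a Houghton group $H_{c(S)(r-n+1)}$ as a retract of $\mathrm{pet}(S)$ by iterated boundary projections $\pi_y$, and then invoking Brown's computation $fl(H_h)=h-1$ and Lemma 5.3 on retracts).

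There is no genuine obstacle here: the corollary is a numerical consequence of a theorem already proved in the excerpt. The only thing to verify carefully is the bookkeeping that for a stack of orthants $S$ we do have $r=n$ in the statement of Theorem \ref{stack of n-skeletons}, which is immediate from the definition of ``stack of orthants'' in Section \ref{Integral orthants} (each summand is isometric to $\mathbb N^n$, and $\mathbb N^n$ is the rank-$n$ skeleton of the rank-$n$ orthant $\mathbb N^n$ itself). One might also wish to remark, as a sanity check against Theorem B in the introduction, that $c(S)=h(S)$ in this specialization, so the equality reads $fl(\mathrm{pet}(S))=h(S)-1$, matching the statement announced there.
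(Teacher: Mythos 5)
Your proof is correct and is exactly the intended derivation: the paper states the corollary without proof immediately after Theorem \ref{stack of n-skeletons}, and the only possible reading is the specialization $r=n$, where $\binom{n}{n}=1$ forces $h(S)=c(S)$ and the two bounds collapse. Your bookkeeping checks (that a stack of orthants is its own rank-$n$ skeleton, hence $r=n$, and that $c(S)=h(S)$ in that case, matching Theorem B) are the right things to verify and are all in order.
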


\small

\end{document}